\newcommand{\xMapsto}[2][]{{\ext@arrow}0599{\Mapstofill@}{#1}{#2}}
\def\Mapstofill@{\arrowfill@{\Mapstochar\Relbar}\Relbar\Rightarrow}
\newtheorem{thm}{Theorem}[section]
\newtheorem{prop}[thm]{Proposition}
\newtheorem{lem}[thm]{Lemma}
\newtheorem{cor}[thm]{Corollary}
\theoremstyle{definition}
\newtheorem{defn}[thm]{Definition}
\newtheorem{rem}[thm]{Remark}
\newtheorem{conv}[thm]{Convention}
\newtheorem{exmp}[thm]{Example}
\newtheorem{ques}[thm]{Question}
\renewcommand{\bar}[1]{\overline{#1}}
\newcommand{\set}[2]{\{\,{#1} \mid{#2} \,\}}
\renewcommand{\emptyset}{\varnothing}
\renewcommand{\setminus}{-}
\newcommand{\field}[1]{\mathbb{#1}}
\newcommand{\Z}{\field{Z}}
\newcommand{\R}{\field{R}}
\newcommand{\N}{\field{N}}
\newcommand{\E}{\field{E}}
\newcommand{\PP}{\field{P}}
\newcommand{\cH}{\mathcal{H}}
\newcommand{\cG}{\mathcal{G}}
\newcommand{\cV}{\mathcal{V}}
\newcommand{\cL}{\mathcal{L}}
\newcommand{\cE}{\mathcal{E}}
\newcommand{\cA}{\mathcal{A}}
\newcommand{\cP}{\mathcal{P}}
\newcommand{\cS}{\mathcal{S}}
\newcommand{\cC}{\mathcal{C}}
\renewcommand{\implies}{\Rightarrow}
\DeclareMathOperator{\Isom}{Isom}
\DeclareMathOperator{\proj}{proj}
\DeclareMathOperator{\Aut}{Aut}
\DeclareMathOperator{\length}{length}
\DeclareMathOperator{\Image}{Im}
\DeclareMathOperator{\lk}{Link}
\DeclareMathOperator{\Stab}{Stab}
\newcommand{\Haus}{\text{Haus}}
\DeclareMathOperator{\Cusp}{Cusp}
\DeclareMathOperator{\diam}{diam}
\DeclareMathOperator{\join}{join}
\newcommand{\showcomments}{yes}
\newsavebox{\commentbox}
{\ifthenelse{\equal{\showcomments}{yes}}%
{\footnotemark{}
\begin{lrbox}{\commentbox}
\begin{minipage}[t]{1.25in}\raggedright\sffamily\upshape\tiny
\footnotemark[\arabic{footnote}]}%
{\begin{lrbox}{\commentbox}}}%
{\ifthenelse{\equal{\showcomments}{yes}}%
{\end{lrbox}\end{minipage}\marginpar{\usebox{\commentbox}}}%
{\end{lrbox}}}
\newcounter{acomments}
\newcounter{hcomments}
\newcommand{\incfig}[1]{%
	\def\svgwidth{\columnwidth}
	\begingroup%
  \makeatletter%
  \providecommand\color[2][]{%
    \errmessage{(Inkscape) Color is used for the text in Inkscape, but the package 'color.sty' is not loaded}%
    \renewcommand\color[2][]{}%
  }%
  \providecommand\transparent[1]{%
    \errmessage{(Inkscape) Transparency is used (non-zero) for the text in Inkscape, but the package 'transparent.sty' is not loaded}%
    \renewcommand\transparent[1]{}%
  }%
  \providecommand\rotatebox[2]{#2}%
  \newcommand*\fsize{\dimexpr\f@size pt\relax}%
  \newcommand*\lineheight[1]{\fontsize{\fsize}{#1\fsize}\selectfont}%
  \ifx\svgwidth\undefined%
    \setlength{\unitlength}{356.80129164bp}%
    \ifx\svgscale\undefined%
      \relax%
    \else%
      \setlength{\unitlength}{\unitlength * \real{\svgscale}}%
    \fi%
  \else%
    \setlength{\unitlength}{\svgwidth}%
  \fi%
  \global\let\svgwidth\undefined%
  \global\let\svgscale\undefined%
  \makeatother%
  \begin{picture}(1,0.56431541)%
    \lineheight{1}%
    \setlength\tabcolsep{0pt}%
    \put(0,0){\includegraphics[width=\unitlength,page=1]{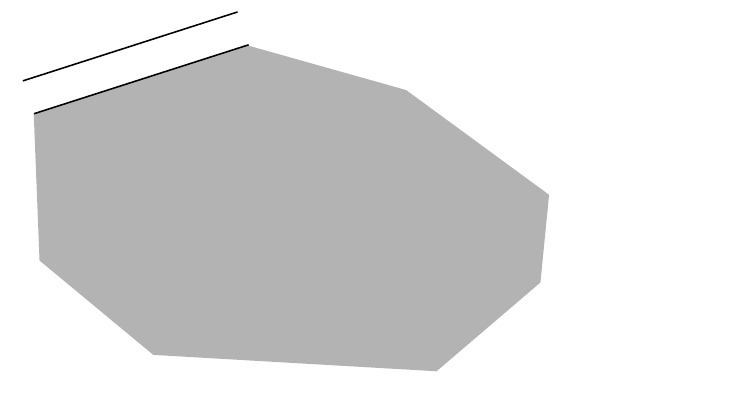}}%
    \put(0.34822336,0.50409119){\color[rgb]{0,0,0}\makebox(0,0)[lt]{\lineheight{1.25}\smash{\begin{tabular}[t]{l}$X_{e}$\end{tabular}}}}%
    \put(0.74675786,0.27297166){\color[rgb]{0,0,0}\makebox(0,0)[lt]{\lineheight{1.25}\smash{\begin{tabular}[t]{l}$X_{e_1}$\end{tabular}}}}%
    \put(0.3310658,0.54409406){\color[rgb]{0,0,0}\makebox(0,0)[lt]{\lineheight{1.25}\smash{\begin{tabular}[t]{l}$X_{\bar e}$\end{tabular}}}}%
    \put(0,0){\includegraphics[width=\unitlength,page=2]{path.pdf}}%
    \put(0.79951428,0.34753741){\color[rgb]{0,0,0}\makebox(0,0)[lt]{\lineheight{1.25}\smash{\begin{tabular}[t]{l}$X_{\bar {e_1}}$\end{tabular}}}}%
    \put(0,0){\includegraphics[width=\unitlength,page=3]{path.pdf}}%
    \put(0.73119287,0.18465701){\color[rgb]{0,0,0}\makebox(0,0)[lt]{\lineheight{1.25}\smash{\begin{tabular}[t]{l}$X_{e_2}$\end{tabular}}}}%
    \put(0.81130669,0.12114732){\color[rgb]{0,0,0}\makebox(0,0)[lt]{\lineheight{1.25}\smash{\begin{tabular}[t]{l}$X_{\bar {e_2}}$\end{tabular}}}}%
    \put(0,0){\includegraphics[width=\unitlength,page=4]{path.pdf}}%
    \put(0.15442737,0.01119734){\color[rgb]{0,0,0}\makebox(0,0)[lt]{\lineheight{1.25}\smash{\begin{tabular}[t]{l}$X_{\bar {e_{n}}}$\end{tabular}}}}%
    \put(0,0){\includegraphics[width=\unitlength,page=5]{path.pdf}}%
    \put(0.21139305,0.09133196){\color[rgb]{0,0,0}\makebox(0,0)[lt]{\lineheight{1.25}\smash{\begin{tabular}[t]{l}$X_{e_{n}}$\end{tabular}}}}%
    \put(0,0){\includegraphics[width=\unitlength,page=6]{path.pdf}}%
    \put(0.37429857,0.24754846){\color[rgb]{0,0,0}\makebox(0,0)[lt]{\lineheight{1.25}\smash{\begin{tabular}[t]{l}$X_v$\end{tabular}}}}%
    \put(0.12071911,0.41187492){\color[rgb]{0,0,0}\makebox(0,0)[lt]{\lineheight{1.25}\smash{\begin{tabular}[t]{l}$y$\end{tabular}}}}%
    \put(0.26506039,0.45619077){\color[rgb]{0,0,0}\makebox(0,0)[lt]{\lineheight{1.25}\smash{\begin{tabular}[t]{l}$x$\end{tabular}}}}%
    \put(0.39444127,0.33769402){\color[rgb]{0,0,0}\makebox(0,0)[lt]{\lineheight{1.25}\smash{\begin{tabular}[t]{l}$\gamma_0$\end{tabular}}}}%
    \put(0.11961021,0.29772757){\color[rgb]{0,0,0}\makebox(0,0)[lt]{\lineheight{1.25}\smash{\begin{tabular}[t]{l}$\gamma_n$\end{tabular}}}}%
    \put(0.26457892,0.13220584){\color[rgb]{0,0,0}\makebox(0,0)[lt]{\lineheight{1.25}\smash{\begin{tabular}[t]{l}$\gamma_{n-1}$\end{tabular}}}}%
    \put(0.54817199,0.11170722){\color[rgb]{0,0,0}\makebox(0,0)[lt]{\lineheight{1.25}\smash{\begin{tabular}[t]{l}$\gamma_2$\end{tabular}}}}%
    \put(0.60994834,0.25599548){\color[rgb]{0,0,0}\makebox(0,0)[lt]{\lineheight{1.25}\smash{\begin{tabular}[t]{l}$\gamma_1$\end{tabular}}}}%
    \put(0.59189138,0.43888635){\color[rgb]{0,0,0}\makebox(0,0)[lt]{\lineheight{1.25}\smash{\begin{tabular}[t]{l}$\alpha_1$\end{tabular}}}}%
    \put(0.73020064,0.13079308){\color[rgb]{0,0,0}\makebox(0,0)[lt]{\lineheight{1.25}\smash{\begin{tabular}[t]{l}$\alpha_2$\end{tabular}}}}%
    \put(0.14591046,0.07026407){\color[rgb]{0,0,0}\makebox(0,0)[lt]{\lineheight{1.25}\smash{\begin{tabular}[t]{l}$\alpha_{n}$\end{tabular}}}}%
    \put(0.72504948,0.45397502){\color[rgb]{0,0,0}\makebox(0,0)[lt]{\lineheight{1.25}\smash{\begin{tabular}[t]{l}$\delta_1$\end{tabular}}}}%
    \put(0.79440319,0.00493716){\color[rgb]{0,0,0}\makebox(0,0)[lt]{\lineheight{1.25}\smash{\begin{tabular}[t]{l}$\delta_2$\end{tabular}}}}%
    \put(0.00949809,0.0307307){\color[rgb]{0,0,0}\makebox(0,0)[lt]{\lineheight{1.25}\smash{\begin{tabular}[t]{l}$\delta_{n}$\end{tabular}}}}%
    \put(0.70216364,0.35620064){\color[rgb]{0,0,0}\makebox(0,0)[lt]{\lineheight{1.25}\smash{\begin{tabular}[t]{l}$\beta_1$\end{tabular}}}}%
    \put(0.64088807,0.04676999){\color[rgb]{0,0,0}\makebox(0,0)[lt]{\lineheight{1.25}\smash{\begin{tabular}[t]{l}$\beta_2$\end{tabular}}}}%
    \put(0.0501156,0.1475382){\color[rgb]{0,0,0}\makebox(0,0)[lt]{\lineheight{1.25}\smash{\begin{tabular}[t]{l}$\beta_{n}$\end{tabular}}}}%
  \end{picture}%
\endgroup%

}
\begin{document}

\title{Quasi-isometric rigidity of extended admissible groups}

\author{Alex Margolis}
\address{Alex Margolis, Department of Mathematics, The Ohio State University,  Mathematics Tower,  231 W 18th Ave,  Columbus,  OH  43210, USA}
\email{margolis.93@osu.edu}

\author{Hoang Thanh Nguyen}
\address{Hoang Thanh Nguyen, Department of Mathematics, FPT University, Hoa Hai ward, Ngu Hanh Son district, Da Nang, Vietnam}
\email{hoangnt63@fe.edu.vn}

\date{January 5, 2024}

\begin{abstract}
	We introduce the class of extended admissible groups, which include both  fundamental groups of non-geometric 3-manifolds and Croke--Kleiner admissible groups. We show that the class of  extended admissible groups is quasi-isometrically rigid.
\end{abstract}

\subjclass[2010]{%
	57M50, %
	20F65,  %
	20F67} %
\maketitle
\begin{center}
\end{center}

\section{Introduction}
A central idea in geometric group theory is that a finitely generated group
equipped with the word metric is a geometric object in its own right. This
metric is well-defined up to quasi-isometry. Geometric group theory explores the connection between algebraic and large-scale geometric properties of
finitely generated groups.
One of the fundamental questions of geometric group theory, posed by Gromov~\cite{gromov1993asymptotic}, is the following.

\begin{ques}[Quasi-isometric rigidity]\label{ques:QI}
	Given a class $\mathcal{C}$ of finitely generated groups, is any finitely generated group quasi-isometric to a group in $\mathcal{C}$  virtually isomorphic to a group in $\mathcal{C}$?
\end{ques}

The study of quasi-isometric rigidity is a major focus of geometric group theory. This has led to the emergence of numerous new concepts that have far-reaching implications. These include connections between the topological notion of ends and algebraic splittings~\cite{Sta68, Dun85}, the advancement of quasi-conformal geometry~\cite{Tuk88, Gab92, CJ94, Sch95, BP00}, and the analysis of asymptotic cones~\cite{Gro81, vdDW84, kapovichleeb1997Quasiisometries}.

The primary objective of this paper is to address Question~\ref{ques:QI} for this class of \emph{extended admissible groups}. These are groups possessing a similar graph of groups structure to that of non-geometric 3-manifolds, which will be briefly discussed.

\subsection{Motivation}
We assume 3-manifolds are compact, connected, orientable and irreducible,  with empty or toroidal boundary.  By the geometrization theorem  of Perelman %
and Thurston, a 3-manifold $M$ is either  \emph{geometric}, in the sense that its interior admits one of the following geometries: $S^3$, $\mathbb{E}^3$, $\mathbb{H}^3$, $S^{2} \times \mathbb{R}$, $\mathbb{H}^{2} \times \mathbb{R}$, $\widetilde{SL(2, \mathbb{R})}$, Nil, and Sol; or the manifold $M$ is {\it non-geometric}.
The class of 3-manifold groups is known to be quasi-isometrically rigid. Kapovich--Leeb provide a complete solution to Question~\ref{ques:QI} for fundamental groups of non-geometric 3-manifolds~\cite{kapovichleeb1997Quasiisometries}. For more general 3-manifold groups, including those with surface boundaries, see~\cite{HL20}.

Let $M$ be a non-geometric 3-manifold. The torus decomposition of $M$ yields a nonempty minimal union $\mathcal{T} \subset M$ of disjoint essential tori, unique up to isotopy, such that each component $M_v$ of $M \backslash \mathcal{T}$, called a \emph{piece}, is either  Seifert fibered or hyperbolic.
There is an induced graph of groups decomposition $\mathcal{G}$ of $\pi_1(M)$ with underlying graph $\Gamma$ as follows. For each piece $M_v$, there is a vertex $v$ of $\Gamma$  with vertex group $\pi_1(M_{v})$. For each torus $T_e\in \mathcal T$ contained in the closure of pieces $M_v$ and $M_{w}$, there is an edge $e$ of $\Gamma$ between vertices $v$ and $w$. The associated edge group is $\pi_1(T_e)\cong \Z^2$ and the edge monomorphisms are the maps induced by inclusion.

A \emph{$\Z$-by-hyperbolic group} is a finitely generated group $G$ containing an infinite cyclic normal
subgroup $H\cong \Z$ such that the  quotient $G/H$ is a non-elementary hyperbolic group. Each Seifert fibered piece $M_v$ in the JSJ decomposition of $M$ admits a Seifert fibration over a hyperbolic 2-orbifold $\Sigma_v$; thus $\pi_1(M_v)$  contains an infinite cyclic normal subgroup $\Z$ such that the quotient $\pi_1(M_v)/ \Z$ is $\pi_1(\Sigma_v)$. In particular, $\pi_1(M_v)$ is a $\Z$-by-hyperbolic group.
If $M_v$ is a hyperbolic piece, then $\pi_1(M_v)$ is hyperbolic relative to  $\{\pi_1(T_1), \ldots, \pi_1(T_\ell) \}$, where $\{T_1, \ldots, T_{\ell} \}$ is the collection of boundary tori of $M_v$.

Croke--Kleiner defined the class of admissible groups, which have a graph of groups decomposition generalizing that of graph manifolds~\cite{CK02}. In this paper, we work with the more general class of
\emph{extended admissible groups}, which possess a graph of groups decomposition generalizing that of any non-geometric 3-manifold. In an extended admissible group, we allow any $\Z$-by-hyperbolic group instead of a Seifert fibered piece, and we allow any toral relatively hyperbolic group instead of a hyperbolic piece.   For the precise definition of extended admissible groups, we refer the reader to Definition~\ref{defn:extended}.

There has been a recent focus in geometric group theory of studying groups that are not hyperbolic, but exhibit some features of coarse negative curvature. The class of non-geometric 3-manifold groups, and more generally of (extended) admissible groups, provide a rich source of such groups, being among the prototypical examples of acylindrically hyperbolic groups and of hierarchically hyperbolic spaces and groups~\cite{minasyanosin2015acylindrical, behrstockhagensistosro2019hierarchically, HRSS22}. Determining to what extent these forms of coarse negative curvature are invariant under quasi-isometry, as well as studying quasi-isometric rigidity and classification problems for such groups,  is an important problem in the area.

\subsection{Quasi-isometric rigidity}
The main result of this paper is the following quasi-isometric rigidity theorem for extended admissible groups:

\begin{thm}\label{thm:CKrigidity_qi}
	Let $G$ be an extended admissible group. If  $G'$ is a finitely generated group quasi-isometric to $G$, then $G'$ has a finite index subgroup that is an extended admissible group.
\end{thm}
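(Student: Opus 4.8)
The strategy follows the well-established blueprint for quasi-isometric rigidity of graph-of-groups structures, as pioneered by Kapovich--Leeb for non-geometric 3-manifolds and refined by Behrstock--Neumann, Cashen--Martin, and others; the key is that the graph-of-groups decomposition of an extended admissible group is canonically visible at the level of the asymptotic geometry. First I would establish that an extended admissible group $G$ admits a geometric action on a $\CAT(0)$ or tree-graded model space $X$ in which the vertex spaces (quasi-isometric to the $\Z$-by-hyperbolic vertex groups, which split as lines times hyperbolic planes, or to the cusped spaces of the toral relatively hyperbolic vertex groups) and edge spaces (quasi-isometric to flats $\R^2$, or more generally to the peripheral $\Z^n$'s) are coarsely characterized. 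The central structural input is a \emph{coarse rigidity} statement: any quasi-isometry $\varphi\colon G\to G'$ coarsely permutes the collection of vertex and edge spaces. This is proved by identifying the edge spaces as the maximal quasiflats (or coarse intersections of vertex spaces) and the vertex spaces as the maximal subspaces with controlled divergence/with a fixed coarse fibration structure — properties preserved by quasi-isometry. The separation of the two types of vertex spaces is detected by whether the Seifert-type piece has a coarsely well-defined fiber direction (an invariant line, giving linear divergence transverse to it) versus the relatively hyperbolic piece.

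Granting the coarse rigidity, the plan is to transfer the combinatorial data to $G'$. Since $G'$ is quasi-isometric to $G$, the preimages under $\varphi$ of the vertex and edge spaces of $G$ equip $G'$ with a pattern of subspaces with the same incidence structure, giving a quasi-action of $G'$ on the Bass--Serre tree $T$ of $G$. I would then show this quasi-action can be promoted to a genuine action: the quasi-action on $T$ is cobounded and the vertex and edge stabilizers act geometrically on the corresponding vertex and edge spaces, so by QI-rigidity of the building blocks — namely, QI-rigidity of $\Z$-by-hyperbolic groups (work of Kapovich--Kleiner--Leeb, Mosher--Sageev--Whyte, and Whyte on QI-rigidity of groups with a $\Z$ normal subgroup and hyperbolic quotient, together with the rigidity of the fiber direction) and QI-rigidity of toral relatively hyperbolic groups relative to their peripheral $\Z^n$'s (Drutu--Sapir, Behrstock--Drutu--Mosher) — each quasi-stabilizer is a finite distance from a genuine group acting geometrically on the model vertex space, hence contains a finite-index subgroup of the appropriate type. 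A now-standard argument (in the spirit of Mosher--Sageev--Whyte's quasi-action rigidity for trees of spaces, or Cashen's work) upgrades the quasi-action of $G'$ to a geometric action on a tree of spaces with the right vertex and edge groups, after passing to a finite-index subgroup $G''\le G'$ to kill the finite kernels and indices; the quotient graph of groups realizes $G''$ as an extended admissible group.

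The main obstacle, and the technical heart of the paper, is the coarse rigidity step: proving that an arbitrary quasi-isometry preserves the decomposition into vertex and edge spaces, and in particular distinguishes the $\Z$-by-hyperbolic vertex spaces from the relatively hyperbolic vertex spaces. This requires developing the large-scale geometry of extended admissible groups carefully — understanding quasiflats, the behavior of the Bass--Serre tree at infinity, and the coarse fibration of the Seifert-type pieces — generalizing the corresponding analysis in the Croke--Kleiner admissible case (where all pieces are Seifert-type) and in the non-geometric 3-manifold case. A secondary subtlety is handling the edge monomorphisms and the "gluing" data correctly when promoting the quasi-action to an action, since one must ensure the peripheral structures match up across edges; this is where passing to a finite-index subgroup is essential, and where one must invoke that the relevant building-block QI-rigidity results are in a relative/structured form compatible with the edge spaces.
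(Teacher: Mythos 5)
Your overall architecture — establish a coarse rigidity statement that quasi-isometries permute vertex and edge spaces, then transfer the graph-of-groups structure to $G'$ and invoke QI-rigidity of the building blocks — matches the paper's, and the references you cite for the building blocks (Kapovich--Kleiner--Leeb, Dru\c{t}u--Sapir, etc.) are the right ones. However, your proposed route to coarse rigidity has a genuine gap right at the point the paper identifies as its central difficulty.

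You open by assuming that $G$ acts geometrically on a $\CAT(0)$ or tree-graded model space whose vertex spaces ``split as lines times hyperbolic planes.'' This is precisely the Kapovich--Leeb/Croke--Kleiner setup that the authors say they \emph{cannot} use: an arbitrary $\Z$-by-hyperbolic vertex group is not known to be quasi-isometric to a $\CAT(0)$ space, the vertex and edge spaces of the tree of spaces are not convex or even quasi-convex a priori, and the whole admissible group is thick (Lemma~\ref{lem:NRHGadmissible}), so it is not tree-graded or asymptotically tree-graded. The technical heart of the admissible case is doing the asymptotic cone argument \emph{without} convexity of ultralimits of pieces: one first has to prove edge spaces are quasi-isometrically embedded (Theorem~\ref{thm:edge_qi_embedded}), then develop the combinatorics of $\omega$-vertex spaces, $\omega$-edge spaces, $\omega$-fibers, essential separation, and paths with no essential backtracking (Lemma~\ref{lem:good_ultracone}, Propositions~\ref{prop:essential_split} and~\ref{prop:biLip-TxE}). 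Your proposal doesn't anticipate any of these, and your suggested replacements — detecting vertex spaces by divergence or a coarse fibration structure, identifying edge spaces as maximal quasiflats, or invoking Mosher--Sageev--Whyte quasi-action rigidity — are disclaimed by the paper as inapplicable except in special cases.

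The second gap is that you do not have a mechanism for the type~$\mathcal{H}$ vertex spaces, which are relatively hyperbolic and have no product or fibered coarse structure, so the asymptotic cone/quasiflat picture does not detect them. The paper's solution is a genuinely separate argument: observe that once the admissible subcomponents are collapsed, the whole group $G$ is hyperbolic relative to the maximal admissible components, isolated type~$\mathcal{S}$ vertices, and certain peripheral tori (Lemma~\ref{lem:RHGadmissible}); pass to the cusped space via Groff's theorem so that a self-quasi-isometry of $G$ induces a boundary homeomorphism of $\partial(G,\mathbb{P})$; and then identify the type~$\mathcal{H}$ vertex groups as stabilizers of non-trivial cyclic elements of the Bowditch boundary, using the JSJ/maximal-peripheral-splitting theory of Bowditch, Dahmani--Haglund, and Haïssinsky--Haulmark (Theorem~\ref{thm:HH238.1}, Proposition~\ref{prop:Prop8.2GH23}, Remark~\ref{rem:samesplittings}). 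Your fiber-direction/divergence heuristic distinguishes types only informally; it is the cut-point structure of the Bowditch boundary that makes the distinction rigorous and QI-invariant. Finally, you do not address the verification of conditions~(3) and~(4) of Definition~\ref{defn:extended} for $G'$ (coarse non-commensurability of edge groups, and the two kernels generating a finite-index subgroup of each edge group), which the paper handles via Lemmas~\ref{lem:edge_vertex_int_admissible} and~\ref{lem:subgp_commensurability}.
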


The main ingredient needed to prove Theorem~\ref{thm:CKrigidity_qi} is the following result of independent interest, demonstrating that quasi-isometries preserve the graph of groups decomposition of an extended admissible group.
\begin{thm}\label{thm:CKrigidity_graphofgroups}
	Let $G$ be an extended admissible group and let $(X,T)$ be the associated tree of spaces.
	For every quasi-isometry $f\colon X \to X$, there is a tree isomorphism $f_*:T\to T$ such that for every  vertex or edge space $X_x$ of $X$,  $f(X_x)$ is at finite Hausdorff distance from $X_{f_*(x)}$.
\end{thm}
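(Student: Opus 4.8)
The plan is to decompose the quasi-isometry $f$ according to the coarse geometry of the vertex and edge spaces, exploiting the fact that the vertex spaces of an extended admissible group come in two very different flavors. The key structural input is that each Seifert-type vertex space is (coarsely) a $\Z$-by-hyperbolic space, hence contains a canonical quasi-isometrically embedded line — its \emph{fiber} — which is the unique maximal quasiflat of that type; whereas each relatively-hyperbolic-type vertex space is a toral relatively hyperbolic space whose peripheral subgroups are precisely $\Z^2$'s corresponding to the incident edge spaces. The edge spaces themselves are quasi-flats, i.e.\ copies of $\Z^2$. So the first step is to recall (or establish) the relevant \emph{coarse invariants}: (i) the fibers of the Seifert pieces are preserved by quasi-isometries of $X$, because they are characterized coarsely inside $X$ (e.g.\ as maximal subsets with linear divergence, or via the normal $\Z$ detected by the hierarchically hyperbolic / asymptotic cone structure); (ii) each edge space $\Z^2$ is a maximal quasiflat of $X$ of dimension $2$, and is not coarsely contained in a single relatively-hyperbolic vertex space's ``bounded'' part nor equal to a neighborhood of a single fiber.

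Next I would show that $f$ coarsely permutes the maximal $2$-dimensional quasiflats of $X$. Every edge space is such a quasiflat; conversely, by a standard argument (using asymptotic cones, or the work on quasiflats in hierarchically hyperbolic spaces), every top-dimensional quasiflat in $X$ lies within finite Hausdorff distance of one of the edge spaces $X_e$ — the Seifert vertex spaces contain $2$-dimensional quasiflats only along their boundary tori, and the relatively hyperbolic vertex spaces contain $2$-dimensional quasiflats only in their peripheral $\Z^2$'s, all of which coincide (coarsely) with edge spaces. Hence $f$ induces a bijection $e\mapsto f_*(e)$ on the edge set of $T$ with $f(X_e)$ within finite Hausdorff distance of $X_{f_*(e)}$, with a uniform bound on the Hausdorff distances (by cocompactness of the $G$-action and a limiting argument, or directly from the quasi-isometry constants).

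Now I would reconstruct the vertex spaces and the incidence relation. A vertex space $X_v$ is coarsely the union of a single ``core'' piece together with half-flats along its incident edge spaces; more usefully, $X_v$ is coarsely characterized as a maximal subspace of $X$ that is either $\Z$-by-hyperbolic (the Seifert case) or toral relatively hyperbolic with the already-identified $\Z^2$'s as peripherals (the non-Seifert case), and that contains the corresponding edge spaces as genuine subspaces. Since $f$ is a quasi-isometry, it sends such a maximal subspace to another one of the same type: in the Seifert case because being $\Z$-by-hyperbolic with linearly-divergent fiber is a coarse property, and in the relatively hyperbolic case by Drutu--Sapir-type invariance of relative hyperbolicity together with the already-established action of $f$ on the peripheral flats. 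This yields a bijection $v\mapsto f_*(v)$ with $f(X_v)$ within finite Hausdorff distance of $X_{f_*(v)}$. Compatibility of $f_*$ with the incidence relation of $T$ follows because $v$ and $e$ are incident if and only if $X_e$ is coarsely contained in $X_v$, a relation preserved by $f$; so $f_*$ is a graph automorphism of $T$, which, $T$ being a tree, is a tree isomorphism.

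The main obstacle will be the \emph{uniformity} of the Hausdorff distances and the \emph{rigidity of the vertex spaces of relatively hyperbolic type}. For the uniformity, one cannot argue edge-space-by-edge-space independently; the cleanest route is to use the action of $G$ on $X$ by isometries together with the fact that there are finitely many orbits of vertex and edge spaces, reducing all the coarse-containment and coarse-equality estimates to finitely many constants, and then propagating via the quasi-isometry constants of $f$. For the vertex-space rigidity, the delicate point is that a relatively hyperbolic vertex space $X_v$ is not convex in $X$ and its ``hyperbolic part'' is entangled with neighboring Seifert fibers along the shared $\Z^2$'s; the resolution is to first pin down \emph{all} the edge spaces (Step 2), and only then recognize $X_v$ as the coarse ``hull'' of the collection of edge spaces incident to it together with the connected hyperbolic piece they bound — a description that is manifestly quasi-isometry invariant once the edge spaces are known. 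I expect that most of these coarse characterizations (linear divergence of fibers, top-dimensional quasiflats, coarse intersection patterns) will already be available from the hierarchically hyperbolic structure cited in the introduction, so the real work is assembling them into the permutation $f_*$ and checking it respects the tree structure.
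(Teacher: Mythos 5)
The core step of your proposal, namely characterizing edge spaces as the maximal $2$-dimensional quasiflats of $X$ and arguing that $f$ must permute them, has a genuine gap. In a type $\cS$ vertex space $X_v$, which is quasi-isometric to $\E\times Y_v$ with $Y_v$ the Cayley graph of a non-elementary hyperbolic group, \emph{every} bi-infinite geodesic $\ell\subseteq Y_v$ produces a top-dimensional quasiflat $\E\times\ell$, not just the finitely many geodesics $\ell_e$ that come from incident edge spaces. Such geodesics are ubiquitous, so there are vastly more maximal $2$-quasiflats in $X$ than edge spaces, and the asserted characterization fails. It fails again on the type $\cH$ side: Definition~\ref{defn:extended} explicitly allows peripheral virtually-$\Z^2$ subgroups of a type $\cH$ vertex group that are \emph{not} edge groups (these appear as the $T_i$ in Lemma~\ref{lem:RHGadmissible}), and each coset of such a subgroup is likewise a top-dimensional quasiflat that is not coarsely an edge space. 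So the bijection $e\mapsto f_*(e)$ cannot be produced this way, and the subsequent reconstruction of the vertex spaces as coarse hulls of edge spaces collapses with it.

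The paper sidesteps quasiflat classification entirely and treats the two vertex types by different mechanisms. For admissible groups (no type $\cH$ vertices), Section~3 works in asymptotic cones and pins down $\omega$-\emph{vertex} spaces first: Proposition~\ref{prop:biLip-TxE} shows the image of any bi-Lipschitz embedding of $\E\times Y_\omega$ (with $Y_\omega$ an everywhere-branching $\R$-tree) lies inside a single $\omega$-vertex space, and $\omega$-edge spaces are then recovered as intersections of adjacent $\omega$-vertex spaces (Corollary~\ref{cor:bilip_preserve_edge}); passing from the cone statement back to $X$ requires the half-space/ultralimit argument of Proposition~\ref{prop:edgespaces_preserved_QI}. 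For the general extended admissible case with at least one type $\cH$ vertex, a completely different input is used: $G$ itself is relatively hyperbolic with respect to the maximal admissible components, certain edge groups, and the non-edge peripherals (Lemma~\ref{lem:RHGadmissible}); a quasi-isometry of $G$ extends over the cusped space and hence induces a homeomorphism of the Bowditch boundary (Proposition~\ref{prop:inducedqi}); and the type $\cH$ vertex groups are identified as stabilizers of non-trivial cyclic elements through the cut-point/JSJ structure of the boundary (Theorem~\ref{thm:HH238.1} and Proposition~\ref{prop:Prop8.2GH23}). Your outline does not engage with either the asymptotic-cone characterization of vertex spaces or the Bowditch-boundary machinery, and the one concrete mechanism it does propose does not hold, so this is not a correct proof.
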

Theorem~\ref{thm:CKrigidity_graphofgroups} is deduced from Theorem~\ref{thm:vertextovertexextend}, a more  quantitative analog version of Theorem~\ref{thm:CKrigidity_graphofgroups}. Theorems~\ref{thm:CKrigidity_qi} and~\ref{thm:CKrigidity_graphofgroups} generalize the main result of Kapovich--Leeb in~\cite{kapovichleeb1997Quasiisometries}, and answers~\cite[Question~1.8]{NY23}.

Our proof of Theorem~\ref{thm:CKrigidity_graphofgroups} makes use of asymptotic cones, which were also used by   Kapovich--Leeb~\cite{kapovichleeb1997Quasiisometries}. However, working in the setting of extended admissible groups presents some genuine difficulties. Indeed, a key point of Kapovich--Leeb’s proof is that up to quasi-isometry, it can be assumed  non-geometric 3-manifolds are CAT(0) and that   ultralimits of geometric pieces are convex~\cite{KL98}.  In our setting, we can no longer appeal to such a result,  since it is not known whether an arbitrary  $\Z$-by-hyperbolic group is quasi-isometric to a CAT(0) space.   The details of our proof are thus different from that of Kapovich--Leeb, and a large part of our proof of Theorem~\ref{thm:CKrigidity_graphofgroups} is devoted to overcoming the fact that vertex and edge spaces of the tree of spaces $(X,T)$ are not typically convex or quasi-convex.

Theorems~\ref{thm:CKrigidity_qi} and~\ref{thm:CKrigidity_graphofgroups} fit into a long history of showing that splitting as a graph of groups in a prescribed manner is a quasi-isometry invariant, including work of Stallings~\cite{Sta68}, Mosher--Sageev--Whyte~\cite{mosher2003quasi, moshersageevwhyte2011quasiactions}, Papasoglu~\cite{papasoglu2005quasiisometry,papasoglu2007group} and Margolis~\cite{margolis2018quasi,margolisxer2021geometry}. These results do not overlap with Theorem~\ref{thm:CKrigidity_graphofgroups} except in a few exceptional cases, e.g.\ where vertex groups are coarse $PD_n$ groups and the results of~\cite{moshersageevwhyte2011quasiactions} can be applied.

\subsection{Applications}
We close the paper with some applications of our main results.
\subsubsection{Quasi-isometric classification}
A well-known  companion problem to Question~\ref{ques:QI} is the following:
\begin{ques}[Quasi-isometric classification]\label{ques:QI_class}
	Given a class $\mathcal{C}$ of finitely generated groups, determine when two elements of $\mathcal{C}$ are quasi-isometric.
\end{ques}
Behrstock--Neumann classified fundamental groups of non-geometric 3-manifolds up to quasi-isometry~\cite{BN08,behrstockneumann2012quasiisometric}, and made crucial use of the special case of Theorem~\ref{thm:CKrigidity_graphofgroups} proven by Kapovich--Leeb~\cite{kapovichleeb1997Quasiisometries}. It is thus natural to study the quasi-isometry classification of extended admissible groups using Theorem~\ref{thm:CKrigidity_graphofgroups} as a starting point.

Each vertex group of an extended admissible group is either  $\Z$-by-hyperbolic or is relatively hyperbolic; we call these  \emph{type $\mathcal{S}$} and \emph{type $\mathcal{H}$} respectively.  The \emph{hyperbolic quotient} of a type $\cS$ vertex group $G_v$ is the quotient of $G_v$  by an infinite cyclic normal subgroup; this hyperbolic quotient is well-defined up to a finite normal subgroup.  It follows from the work of Kapovich--Kleiner--Leeb that two type $\cS$ vertex groups are quasi-isometric if and only if their hyperbolic quotients are quasi-isometric~\cite{kapovichkleinerleeb1998quasiisometries}; see also~\cite[Theorem A]{Mar22}. Combining this with Theorem~\ref{thm:CKrigidity_graphofgroups}, we deduce the following necessary criterion for extended admissible groups to be quasi-isometric:
\begin{cor}\label{cor:qi_class}
	Let $G$ and $G'$ be extended admissible groups. If $G$ and $G'$ are quasi-isometric, then:
	\begin{enumerate}
		\item Every type $\cH$ vertex group of $G$ is quasi-isometric to a type $\cH$ vertex group of $G'$ and vice-versa.
		\item Every hyperbolic quotient of a type $\cS$ vertex group of $G$ is quasi-isometric to a hyperbolic quotient of a  type $\cS$ vertex group of $G'$ and vice-versa.
	\end{enumerate}
\end{cor}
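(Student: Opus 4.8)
The plan is to derive Corollary~\ref{cor:qi_class} directly from Theorem~\ref{thm:CKrigidity_graphofgroups} together with the two cited facts about vertex groups: that type $\cS$ vertex groups are quasi-isometric precisely when their hyperbolic quotients are (\cite{kapovichkleinerleeb1998quasiisometries}, \cite[Theorem~A]{Mar22}), and that the type of a vertex group is itself a quasi-isometry invariant. First I would fix a quasi-isometry $\phi\colon G\to G'$. Since $G$ and $G'$ act geometrically on their respective trees of spaces $(X,T)$ and $(X',T')$, the map $\phi$ induces (after composing with the equivariant quasi-isometries $G\to X$, $G'\to X'$) a quasi-isometry $X\to X'$; the usual trick is to pass to the disjoint union $X\sqcup X'$ and a self-quasi-isometry, or alternatively to invoke the quantitative statement behind Theorem~\ref{thm:CKrigidity_graphofgroups} directly for quasi-isometries between two different such spaces. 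In either formulation one obtains a bijection $\phi_*$ between vertices of $T$ and vertices of $T'$ such that $\phi$ sends each vertex space $X_v$ to within finite Hausdorff distance of $X'_{\phi_*(v)}$, and hence restricts to a quasi-isometry $X_v\to X'_{\phi_*(v)}$, i.e.\ a quasi-isometry of the corresponding vertex groups.

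Next I would argue that $\phi_*$ preserves the type of a vertex. The vertex groups of type $\cH$ are the toral relatively hyperbolic ones and those of type $\cS$ are the (genuinely, non-hyperbolic) $\Z$-by-hyperbolic ones; a $\Z$-by-hyperbolic group with non-elementary hyperbolic quotient has an infinite cyclic normal — hence central after passing to a finite-index subgroup — subgroup whose existence forces linear divergence directions, while a non-elementary toral relatively hyperbolic group has no infinite cyclic normal subgroup and its divergence is superlinear. More cleanly, one can cite that having an infinite-order central element in a finite-index subgroup, or having linear divergence, distinguishes the two classes, and both properties are quasi-isometry invariants. (This distinction is surely already recorded in the body of the paper, so I would simply refer to it; the point is that $X_v$ quasi-isometric to $X'_{\phi_*(v)}$ forces the two vertex groups to be of the same type.)

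With that in hand the corollary follows immediately: given a type $\cH$ vertex group $G_v$ of $G$, the group $G'_{\phi_*(v)}$ is a type $\cH$ vertex group of $G'$ quasi-isometric to it, and applying the same argument to $\phi^{-1}$ (which induces $\phi_*^{-1}$) gives the "vice-versa" direction, establishing (1). For (2), given a type $\cS$ vertex group $G_v$, the group $G'_{\phi_*(v)}$ is a type $\cS$ vertex group quasi-isometric to $G_v$; by \cite{kapovichkleinerleeb1998quasiisometries} (or \cite[Theorem~A]{Mar22}) this is equivalent to the hyperbolic quotients of $G_v$ and of $G'_{\phi_*(v)}$ being quasi-isometric, which is exactly statement (2), and again $\phi^{-1}$ gives the reverse direction.

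The only genuine obstacle is the first step: upgrading the self-quasi-isometry statement of Theorem~\ref{thm:CKrigidity_graphofgroups} to a statement about a quasi-isometry between two (a priori different) trees of spaces, and matching vertex spaces with vertex spaces rather than merely trees with trees. The standard resolution is to note that the proof of Theorem~\ref{thm:CKrigidity_graphofgroups} — or rather its quantitative precursor Theorem~\ref{thm:vertextovertexextend} — is really a statement about quasi-isometries $X\to X'$ between any two extended admissible tree of spaces (one routinely phrases such rigidity results this way, e.g.\ by working in $X\sqcup X'$), and that $\phi_*$ by construction carries vertices to vertices because vertex spaces are characterized coarsely among the pieces (e.g.\ as the maximal subsets on which divergence behaves in the prescribed way), a characterization preserved by any quasi-isometry. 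Once this bookkeeping is dispatched, everything else is a direct citation.
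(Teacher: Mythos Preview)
Your proposal is correct and follows essentially the same route as the paper: apply the vertex-space preservation theorem to a quasi-isometry between the two trees of spaces, observe that types are preserved, and then invoke Kapovich--Kleiner--Leeb for the hyperbolic quotients. The paper's argument is folded into the proof of Theorem~\ref{thm:CKrigidity_qi}, where the quasi-action of $G'$ on $X$ together with Corollary~\ref{cor:tree_isom_extend} (for self-maps of $X$) already yields that each vertex group of $\cG'$ is quasi-isometric to a vertex group of $\cG$; type preservation is then read off from \cite[Theorem~A]{Mar22} and \cite{dructusapir2005treegraded,BDM09}, rather than from the divergence/central-element heuristic you sketch (though your sketch is also valid). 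Your one flagged obstacle---promoting the self-map statement to a two-space statement---is handled in the paper implicitly via this quasi-action trick, which sidesteps the need to restate Theorem~\ref{thm:vertextovertexextend} for $f:X\to X'$; your proposed alternative (noting that the proof goes through verbatim for maps between two such spaces, as indeed Theorem~\ref{thm:qipreservespieces} is already stated that way in the admissible case) is equally legitimate.
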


While this is far from a complete quasi-isometric classification, it does demonstrate that there are infinitely many quasi-isometry classes of extended admissible groups. In particular, a finitely generated group quasi-isometric to an extended admissible group containing at least one type $\cH$ vertex (resp.\ at least one type $\cS$ vertex) must also be an extended admissible group containing at least one type $\cH$ vertex (resp.\ at least one type $\cS$ vertex).

\subsubsection{Admissible groups with hyperbolic manifold quotient groups}
Theorem~\ref{thm:CKrigidity_graphofgroups}  implies that a quasi-isometry between extended admissible groups $G$ and $G'$ induces quasi-isometries between vertex groups of $G$ and of $G'$ that coarsely preserve the collection of incident edge spaces. In certain situations, a quasi-isometry that coarsely preserves some distinguished collection of subspaces is much more rigid than an arbitrary quasi-isometry.  This phenomenon,  called \emph{pattern rigidity}, is originally due to Schwartz~\cite{schwartz1997symmetric} in the context of geodesics in hyperbolic space. By combining Theorem~\ref{thm:CKrigidity_graphofgroups} with Schwartz's pattern rigidity theorem~\cite{schwartz1997symmetric} we deduce the following:

\begin{cor}\label{cor:comm}
	Let $G$ be an extended admissible group such that all vertex groups $G_v$ are of type $\cS$ and have hyperbolic quotient  $Q_v$ isomorphic to the fundamental group of a closed hyperbolic $n_v$-manifold for some $n_v \ge 3$.

	If $G'$ is a finitely generated group quasi-isometric to $G$,  then $G'$ is an extended admissible group such that all vertex groups of $G'$ are of type $\cS$ with hyperbolic quotient virtually isomorphic to some hyperbolic quotient  $Q_v$ of a vertex group of $G$.
\end{cor}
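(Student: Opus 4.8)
The plan is to combine the graph-of-groups rigidity of Theorem~\ref{thm:CKrigidity_graphofgroups} with Schwartz's pattern rigidity theorem. By Theorem~\ref{thm:CKrigidity_qi} we may pass to a finite-index subgroup and assume $G'$ is itself an extended admissible group, with associated tree of spaces $(X',T')$; let $(X,T)$ be that of $G$. A quasi-isometry $G \to G'$ induces a quasi-isometry $X \to X'$, so by Theorem~\ref{thm:CKrigidity_graphofgroups} --- more precisely by its quantitative form Theorem~\ref{thm:vertextovertexextend}, which is what provides \emph{uniform} bounds --- there is a tree isomorphism $T \to T'$ carrying each vertex space $X_v$ to within uniformly bounded Hausdorff distance of a vertex space $X'_{v'}$ and carrying the incident edge spaces of $X_v$ bijectively, again with uniform bounds, onto those of $X'_{v'}$. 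In particular $G_v$ is quasi-isometric to $G'_{v'}$; since all vertex groups of $G$ are of type $\cS$, Corollary~\ref{cor:qi_class}(1) shows the same for $G'$. It remains to prove that for each pair of corresponding vertices $v \leftrightarrow v'$, the hyperbolic quotient $Q'_{v'}$ of $G'_{v'}$ is virtually isomorphic to $Q_v$.

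Fix such a pair, and write $\Z_v \triangleleft G_v$ for the cyclic fibre, so that $G_v/\Z_v = Q_v \cong \pi_1(N_v)$ with $N_v$ a closed hyperbolic $n_v$-manifold, $n_v \ge 3$; thus $Q_v$ acts geometrically on $\Hyp^{n_v}$. By the defining properties of an extended admissible group (Definition~\ref{defn:extended}), the image in $Q_v$ of each incident edge group of $G_v$ is an infinite cyclic subgroup, which acts loxodromically on $\Hyp^{n_v}$; let $\cP_v$ be the $Q_v$-orbit of the axes of these cyclic subgroups. As there are finitely many conjugacy classes of incident edge groups and $G_v$ is incident to at least one edge, $\cP_v$ is a nonempty, locally finite, $Q_v$-invariant pattern of geodesics in $\Hyp^{n_v}$. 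Since $Q'_{v'}$ is quasi-isometric to $Q_v$, and hence to $\Hyp^{n_v}$, quasi-isometric rigidity of real hyperbolic space provides a geometric action of $Q'_{v'}$ on $\Hyp^{n_v}$, and the incident edge groups of $G'_{v'}$ similarly determine a nonempty $Q'_{v'}$-invariant pattern of geodesics $\cP'_{v'}$.

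Next I transport the quasi-isometry to $\Hyp^{n_v}$. By Kapovich--Kleiner--Leeb~\cite{kapovichkleinerleeb1998quasiisometries}, a quasi-isometry between $\Z$-by-hyperbolic groups with non-elementary hyperbolic quotient coarsely preserves the fibre, so the quasi-isometry $G_v \to G'_{v'}$ descends to a quasi-isometry $Q_v \to Q'_{v'}$; composing with the two geometric actions yields a quasi-isometry $\Phi \colon \Hyp^{n_v} \to \Hyp^{n_v}$. Each incident edge space of $G_v$ is coarsely a coset of a $\Z^2$ whose image in $Q_v$ is a coset of one of the relevant cyclic subgroups, hence lies at bounded distance from a geodesic of $\cP_v$; combined with the uniform edge-space bijection from the first paragraph, this shows $\Phi$ carries $\cP_v$ onto $\cP'_{v'}$ within uniformly bounded Hausdorff distance, i.e.\ $\Phi$ is a pattern-preserving quasi-isometry. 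Since $n_v \ge 3$, Schwartz's pattern rigidity theorem~\cite{schwartz1997symmetric} shows that $\Phi$ lies at bounded distance from an isometry $\iota$ of $\Hyp^{n_v}$ with $\iota \cdot \cP_v = \cP'_{v'}$. The setwise stabiliser $L_v \le \Isom(\Hyp^{n_v})$ of $\cP_v$ is discrete: a nontrivial one-parameter subgroup of its closure would fix every geodesic of the locally finite family $\cP_v$ setwise, hence fix the endpoint set of $\cP_v$ pointwise, and this set is dense in $\partial\Hyp^{n_v}$ because $Q_v$ is a lattice, so such a subgroup is trivial. Thus $L_v$ is a lattice containing the cocompact group $Q_v$ with finite index, and likewise contains $\iota^{-1} Q'_{v'} \iota$ with finite index; therefore $Q_v$ and $Q'_{v'}$ are commensurable in $\Isom(\Hyp^{n_v})$, in particular virtually isomorphic, which completes the proof.

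The main obstacle is the third paragraph: upgrading the coarse edge-space preservation supplied by Theorem~\ref{thm:CKrigidity_graphofgroups} into a genuine pattern-preserving quasi-isometry of $\Hyp^{n_v}$ of the kind Schwartz's theorem requires. This needs the uniform Hausdorff bounds of the quantitative Theorem~\ref{thm:vertextovertexextend} rather than Theorem~\ref{thm:CKrigidity_graphofgroups} alone, a careful descent to the hyperbolic quotient through Kapovich--Kleiner--Leeb, and the verification --- using the structural axioms of extended admissible groups --- that the patterns $\cP_v$ are nonempty and rich enough for the stabiliser $L_v$ to be discrete. The hypothesis $n_v \ge 3$ is indispensable here: for $n_v = 2$ there are abundant pattern-preserving quasi-isometries of $\Hyp^2$ that are not close to isometries, and the commensurability conclusion genuinely fails.
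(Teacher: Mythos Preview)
Your proof is correct and follows essentially the same route as the paper's. Both arguments run: apply Theorem~\ref{thm:CKrigidity_qi} to get the graph-of-groups structure on $G'$; descend the induced quasi-isometry between vertex groups to a pattern-preserving quasi-isometry between hyperbolic quotients via Kapovich--Kleiner--Leeb; then invoke pattern rigidity in $\Hyp^{n_v}$ to conclude both quotients sit with finite index in a common discrete group. The only cosmetic differences are that the paper packages the last step into Lemma~\ref{lem:finiteindexinrelativeQI} using Biswas's generalization~\cite{Bis12} of Schwartz's theorem, whereas you cite Schwartz directly and write out the discreteness argument for the pattern stabilizer $L_v$ by hand; and the paper phrases things in terms of the abstract group $\mathrm{QI}(Y_v,\{\ell_e\})$ rather than an explicit conjugating isometry $\iota$. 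One small citation point: Theorem~\ref{thm:vertextovertexextend} is stated for self-quasi-isometries $X\to X$, so for the $X\to X'$ statement you need Theorem~\ref{thm:qipreservespieces} (valid here since both groups are admissible once you know all vertices of $G'$ are type~$\cS$), or simply note that the tree isomorphism is already produced inside the proof of Theorem~\ref{thm:CKrigidity_qi}.
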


\subsubsection{Uniform lattice envelopes of extended admissible groups}
We recall that a \emph{uniform lattice} $G$ in a locally compact group $\hat G$ is a discrete cocompact subgroup of $\hat G$. For example, if $G$ acts faithfully, properly, cocompactly and isometrically on a proper metric space $X$, then $G$ is a uniform lattice in $\Isom(X)$, where $\Isom(X)$ is equipped with the compact-open topology. If $G$ is (isomorphic to) a uniform lattice of $\hat G$, then $\hat G$ is called a \emph{uniform lattice envelope} of $G$.  A well-known problem, originating in work of Mostow, is to classify uniform lattice envelopes of a fixed countable group $G$. Progress on this problem has been made by Furman~\cite{furman2001mostowmargulis}, Dymarz~\cite{dymarz2015envelopes}, Bader--Furman--Sauer~\cite{baderfurmansauer2020lattice} and Margolis--Shepherd--Stark--Woodhouse~\cite{MSSW23}.

We fix an extended admissible group $G$ and let $T$ be the associated Bass--Serre tree of $G$.
Combining Theorem~\ref{thm:CKrigidity_graphofgroups} with the argument in the proof of~\cite[Corollary~11.12]{MSSW23} (see also~\cite{furman2001mostowmargulis}) we deduce the  following:

\begin{cor}\label{cor:ctsaction}
	Let $G$ be an extended admissible group with the associated Bass--Serre tree $T$. If $\hat G$ is a uniform lattice envelope of $G$, then the action of $G$ on $T$ extends to a continuous action of $\hat G$ on $T$. In particular, for each vertex or edge $x$ of $T$, $\Stab_{\hat G}(x) $ is a locally compact group  containing $\Stab_{G}(x)$ as a uniform lattice.
\end{cor}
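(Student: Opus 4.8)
The plan is to run the standard lattice-envelope argument, following the proof of~\cite[Corollary~11.12]{MSSW23} (cf.~\cite{furman2001mostowmargulis}), with Theorem~\ref{thm:CKrigidity_graphofgroups} supplying the quasi-isometric rigidity input. First I would set up the geometry. Since $G$ is a uniform lattice in $\hat G$, the group $\hat G$ is compactly generated and acts properly, cocompactly and isometrically on a proper geodesic metric space $Y$; passing to a quotient by a compact normal subgroup if necessary, we may moreover assume $\hat G$ is second countable (hence Polish), which alters $G$ only up to a finite kernel. The subgroup $G<\hat G$ still acts geometrically on $Y$, so $Y$ is quasi-isometric to $G$ and hence to the tree of spaces $X$; since $G$ acts geometrically on both $X$ and $Y$, the orbit maps furnish a coarsely $G$-equivariant quasi-isometry $q\colon X\to Y$ with coarse inverse $\bar q$. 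Transporting the isometric $\hat G$-action on $Y$ through $q$ yields a quasi-action of $\hat G$ on $X$: each $\phi_{\hat g}:=\bar q\circ\hat g\circ q$ is an $(L,A)$-quasi-isometry of $X$ with $(L,A)$ independent of $\hat g$, the composite $\phi_{\hat g}\circ\phi_{\hat h}$ lies at uniformly bounded distance from $\phi_{\hat g\hat h}$, and for $\gamma\in G$ coarse $G$-equivariance of $q$ places $\phi_\gamma$ at bounded distance from the genuine isometry of $X$ induced by $\gamma$.

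Next I would build the action on $T$. By Theorem~\ref{thm:CKrigidity_graphofgroups} each $\phi_{\hat g}$ induces a tree isomorphism of $T$, and this isomorphism is unique: distinct vertex and edge spaces of $X$ lie at infinite Hausdorff distance, so two quasi-isometries of $X$ at bounded distance from one another induce the same tree map. Writing $\rho(\hat g)$ for this tree isomorphism, uniqueness together with the first paragraph makes $\rho\colon\hat G\to\Aut(T)$ a homomorphism; and because $\phi_\gamma$ is a bounded perturbation of the isometry $\gamma$ of $X$, which by $G$-equivariance of the collapse map $X\to T$ realises the original action of $\gamma\in G$ on $T$, the homomorphism $\rho$ extends the given $G$-action on $T$.

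The hard part is to upgrade $\rho$ to a continuous homomorphism, and here I expect the genuine obstacle to lie; I would argue via automatic continuity, as in~\cite{MSSW23}. Since the graph of groups is finite with finitely generated vertex groups, $T$ is a countable tree, so $\Aut(T)$ with the permutation topology is Polish, and $\hat G$ is Polish by the reduction above. The homomorphism $\rho$ is Borel: for each point $x$ of $X$ the map $\hat g\mapsto\phi_{\hat g}(x)=\bar q(\hat g\cdot q(x))$ is continuous, so for fixed vertices or edges $u,w$ of $T$ and a fixed $R$ the condition that $\phi_{\hat g}(X_u)$ be contained in the $R$-neighbourhood of $X_w$ is Borel in $\hat g$, and ``$\rho(\hat g)u=w$'' is a countable Boolean combination of such conditions. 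A Borel homomorphism between Polish groups is continuous, so $\rho$ is continuous. The point here is exactly the difficulty flagged for Theorem~\ref{thm:CKrigidity_graphofgroups}: the vertex and edge spaces of $X$ are far from quasiconvex, so there is no naive equicontinuity estimate for the quasi-action, and it is the passage to a measurability statement that makes continuity accessible.

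Finally I would read off the statement about stabilisers. Continuity makes $\Stab_{\hat G}(x)=\rho^{-1}\bigl(\Stab_{\Aut(T)}(x)\bigr)$ a closed, hence locally compact, subgroup of $\hat G$ containing the discrete subgroup $\Stab_G(x)$. For cocompactness, fix a compact set $C\subseteq\hat G$ with $\hat G=GC$. Since $\hat g\mapsto\rho(\hat g)\cdot x$ is a continuous map from $\hat G$ to the discrete set of vertices and edges of $T$, the set $C^{-1}\cdot x$ is finite; writing $\hat h\in\Stab_{\hat G}(x)$ as $\gamma c$ with $\gamma\in G$ and $c\in C$ gives $\gamma\cdot x=c^{-1}\cdot x\in C^{-1}\cdot x$, so $\gamma$ ranges over a fixed finite union $\bigcup_{i=1}^{m}g_i\Stab_G(x)$. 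Hence $\Stab_{\hat G}(x)=\bigl(\,(\bigcup_{i}Cg_i)\cap\Stab_{\hat G}(x)\,\bigr)\cdot\Stab_G(x)$, so $\Stab_{\hat G}(x)/\Stab_G(x)$ is a continuous image of a compact set and $\Stab_G(x)$ is a uniform lattice in $\Stab_{\hat G}(x)$, as claimed.
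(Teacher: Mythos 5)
Your proof follows exactly the route the paper points to---combining Theorem~\ref{thm:CKrigidity_graphofgroups} with the lattice-envelope argument of~\cite[Corollary~11.12]{MSSW23}---and the structure (induce a quasi-action on $X$, pass to $\Aut(T)$ via the uniqueness of the induced tree map, upgrade to continuity via a Borel/automatic-continuity argument, then read off cocompactness of stabilizers) is the intended one. Two points in your write-up are not quite right, though both are repairable.

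First, the assertion that ``for each point $x$ of $X$ the map $\hat g\mapsto\phi_{\hat g}(x)=\bar q(\hat g\cdot q(x))$ is continuous'' is false: a coarse inverse $\bar q$ of a quasi-isometry between geodesic spaces is, in general, not continuous (consider already the floor map $\R\to\Z$). Thus you cannot conclude continuity of the orbit map in $\hat g$. Fortunately the weaker statement you actually need---that $\rho$ is Borel---still holds, but the justification must avoid appealing to continuity of $\bar q$. Two clean fixes: either choose $\bar q$ to be Borel (always possible for proper separable metric spaces), so that $\hat g\mapsto\phi_{\hat g}(x)$ is Borel rather than continuous; or, better, bypass $\bar q$ entirely and encode the tree-map condition inside $Y$. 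Since the uniform bound $B=B(K,A,X)$ from Theorem~\ref{thm:vertextovertexextend} applies to every $\phi_{\hat g}$, the condition $\rho(\hat g)u=w$ is equivalent to $d_{\Haus}\bigl(\hat g\cdot q(X_u),q(X_w)\bigr)\leq B'$ for a fixed $B'$, and each of the two containments making up this bounded-Hausdorff-distance condition is a countable intersection (over a countable dense subset of $q(X_u)$, resp.\ $q(X_w)$) of conditions of the form $\hat g\cdot y\in\overline{N_{B'}(q(X_w))}$, each of which is closed in $\hat g$ because the $\hat G$-action on $Y$ is continuous. Thus $\rho^{-1}$ of each basic open set of $\Aut(T)$ is Borel, and the Pettis-type argument applies.

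Second, a small algebraic slip in the cocompactness step: if you write $\hat h=\gamma c$ with $\gamma\in G$, $c\in C$, then $\hat h\cdot x=x$ gives $\gamma^{-1}\cdot x=c\cdot x\in C\cdot x$, not $\gamma\cdot x=c^{-1}\cdot x$, and you end up with $\gamma$ ranging over finitely many \emph{right} cosets $\Stab_G(x)g_i^{-1}$. To get the left cosets as you wrote, use the decomposition $\hat h=c\gamma$ (available since $\hat G=GC$ is equivalent to $\hat G=C^{-1}G$). Either version gives the desired compactness of $\Stab_G(x)\backslash\Stab_{\hat G}(x)$, so the conclusion stands.
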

In~\cite{MSSW23}, statements similar to Corollary~\ref{cor:ctsaction} are an essential ingredient in proving action rigidity for many classes of groups.
Although not pursued in this article, we believe Corollary~\ref{cor:ctsaction} has significant potential in proving similar action rigidity theorems  for certain extended admissible groups.

\subsection*{Acknowledgments} We thank Chris Hruska for
useful conversations.

\section{Preliminaries}
In this section, we review some concepts in geometric group theory that will be used throughout the paper.
\subsection{Coarse geometry}

Let $X$ and $Y$ be metric spaces and $f$ be a map from $X$ to $Y$.
\begin{enumerate}
	\item We say that $f$ is \emph{$(K,A)$-coarse Lipschitz}  if for all $x, y\in X$, \[d(f(x), f(y)) \le K d(x, y) + A.\]
	\item We say that $f$ is a \emph{$(K,A)$--quasi-isometric embedding} if  for all $x, y\in X$,
	      \[
		      \frac{1}{K} d(x, x') - A \le d(f(x), f(x')) \le K d(x,x') + A .
	      \]

	\item We say that $f$ is a \emph{$(K,A)$--quasi-isometry} if it is a $(K,A)$--quasi-isometric embedding such that $Y = N_{A}(f(X))$.

	\item We say that $f$ is a \emph{$K$-bi-Lipschitz equivalence}, if it is a $(K, 0)$-quasi-isometry.

	\item We say two quasi-isometries $f, g \colon X \to Y$ are \emph{$A$--close} if
	      \[
		      \sup_{x \in X} d_{Y} (f(x), g(x)) \le A
	      \] and are \emph{close} if they are $A$--close for some $A \ge 0$.
\end{enumerate}
We say $f$ is \emph{coarse Lipschitz} if it is  $(K,A)$-coarse Lipschitz for some $K\geq 1$ and $A\geq 0$. We define what it means for $f$ to be a quasi-isometric embedding, quasi-isometry etc.\ similarly.

\begin{defn}[Quasi-action]
	If $G$ is a group and $X$ is a metric space, then a \emph{$(K,A)$--quasi-action} of $G$ on $X$ is a collection of maps $\{f_g\}_{g \in G}$ such that
	\begin{itemize}
		\item For every $g$ in $G$, the map $f_g \colon X \to X$ is a $(K,A)$--quasi-isometry.

		\item For every $g, h \in G$,  $f_{gh}$ is $A$--close to $f_{g} \circ f_{h}$.

		\item $f_{1_G}$ is $A$--close to the identity on $X$.
	\end{itemize}
	A \emph{quasi-action} of a group $G$ on $X$ is a $(K,A)$--quasi-action of $G$ on $X$ for some $K \ge 1$ and $A \ge 0$.\end{defn}

\subsection{Bass--Serre theory}\label{sec:TofS}

We assume familiarity with Bass--Serre theory; see~\cite{SW79} for details. However, to fix notation and terminology, we give some brief definitions.

We first establish some terminology regarding graphs. A \emph{graph} $\Gamma$ consists of a set $V\Gamma$ of vertices, a set $E\Gamma$ of oriented edges, and maps $\iota,\tau:E\Gamma\to V\Gamma$. There is a fixed-point free involution $E\Gamma\to E\Gamma$, taking an edge $e\in E\Gamma$ such that $\iota e=v$ and $\tau e=w$ to an edge $\bar e$ satisfying $\iota \bar e=w$ and $\tau \bar e=v$. We also write $e_+$ and $e_-$ to denote $\tau e$ and $\iota e$ respectively. An \emph{unoriented edge} of $\Gamma$ is the pair $\{e,\bar e\}$. If $v$ is a vertex, we define $\lk(v)=\{e\in E\Gamma\mid e_-=v\}$.

Each connected graph can be identified with a  metric space by equipping its topological realization with the path metric in which each edge has length one. A \emph{combinatorial path} in $X$ is a path $p:[0,n]\to X$ for some $n\in \N$ such that for every integer $i$, $p(i)$ is a vertex, and $p|_{[i,i+1]}$ is either constant or traverses an edge of $X$ at unit speed. Every geodesic between vertices of $X$ is necessarily a combinatorial path.

\begin{defn}
	A \emph{graph of groups} $\mathcal{G} = (\Gamma, \{G_{v}\}, \{G_{e} \}, \{\tau_{e} \})$ consists of the following data:
	\begin{enumerate}
		\item a graph $\Gamma$, called the \emph{underlying graph},
		\item a group $G_v$ for each vertex $v \in V \Gamma$, called a \emph{vertex group},
		\item a subgroup $G_e\leq G_{e_-}$ for each edge $e \in E \Gamma$, called an \emph{edge group},
		\item an isomorphism $\tau_{e} \colon {G}_{e} \to {G}_{\bar e}$  for each $e\in E\Gamma$ such that $\tau^{-1}_e=\tau_{\bar e}$, called an \emph{edge map}.
	\end{enumerate}
\end{defn}
The \emph{fundamental group} $\pi_{1}(\cG)$ of a graph of groups $\cG$ is as defined in~\cite{SW79}.

We use the following notation for trees of spaces as in~\cite{CM17}.

\begin{defn}\label{defn:treeofspaces}
	\emph{A tree of spaces} $X:=X\left(T,\left\{X_v\right\}_{v \in VT},\left\{X_e\right\}_{e \in E T},\left\{\alpha_e\right\}_{e \in ET}\right)$ consists of:
	\begin{enumerate}
		\item a simplicial tree $T$, called the \emph{base tree};
		\item a metric space $X_v$ for each vertex $v$ of $T$, called a \emph{vertex space};
		\item a subspace $X_e \subseteq X_{e_{-}}$ for each oriented edge $e$ (with the initial vertex denoted by $e_{-}$) of $T$, called an \emph{edge space};
		\item maps $\alpha_e: X_e \rightarrow X_{\bar{e}}$ for each edge $e \in E T$, such that $\alpha_{\bar{e}} \circ \alpha_e=\operatorname{id}_{X_e}$ and $\alpha_e \circ \alpha_{\bar{e}}=\mathrm{id}_{X_{\bar{e}}}$.
	\end{enumerate}
	We consider $X$ as a metric space as follows: we take the disjoint union of all the $X_v$ and then, for all unoriented edges $\{e, \bar{e}\}$ and every $x \in X_e$, we attach a unit interval between $x \in X_e$ and $\alpha_e(x) \in X_{\bar{e}}$. Each edge and vertex space can be naturally identified with a subspace of $X$.
\end{defn}
We typically omit the data $X_v$, $X_e$ and $\alpha_e$ from the notation and write a tree of spaces as  the pair $(X,T)$, or simply as a space $X$.
We consider $X$ as a metric space by equipping it  with the induced path metric.
We now explain how to associate a tree of spaces to a graph of finitely generated groups. Although this construction is standard, the details and notation are not completely standardized, so we describe it in detail.

Let $\mathcal{G} = (\Gamma, \{G_{v}\}, \{G_{e} \}, \{\tau_{e} \})$ be a graph of finitely generated groups. We recall the associated Bass--Serre tree $T$ is constructed so that vertices (resp.\ edges) of $T$ correspond to left cosets of vertex (resp.\ edge) groups of $\cG$.

We now describe a tree of spaces $X$.  For each $x\in V\Gamma \sqcup E\Gamma$, we fix a finite generating set $S_x$ of $G_x$, chosen such that $\tau_e(S_e)=S_{\bar e}$.
We now define a graph $W$ with vertex set $V\Gamma\times G$ and edge set \[\{((v, g),(v,gs))\mid g\in G, s\in S_v\}.\] The components of $W$ are in bijective correspondence with left cosets of vertex groups of $\cG$, and hence with vertices of $T$. If $\tilde{v}\in VT$ corresponds to the coset $gG_v$, we define $X_{\tilde{v}}$  to be the  component of $W$ with vertex set $\{(v,h)\mid h\in gG_v\}$. We note that the component of $W$ corresponding to a coset $gG_v$ is isometric to the Cayley graph of $G_v$ with respect to $S_v$.

Suppose $\tilde{e}\in ET$ corresponds to a coset $gG_e$. By the definition of $T$, if $v=e_-$ and $w=e_+$, then $\tilde{v}\coloneqq\tilde{e}_-$ and $\tilde{w}\coloneqq\tilde{e}_+$ correspond to the cosets $gG_v$ and $gG_w$. We define the edge space $X_{\tilde{e}}$ to  be \[\{(v,h)\mid h\in gG_e\}\subseteq X_{\tilde{v}}.\]  The attaching map $\alpha_{\tilde e}:X_{\tilde{e}}\to X_{\tilde{w}}$ is defined by  $\alpha_{\tilde{e}}:(v,h)\mapsto (w,g\tau_e({g^{-1}h}))$, where $\tau_e:G_e\to G_{\bar{e}}\leq G_w$ is the edge map of $\cG$. Finally, we equip each $X_{\tilde{e}}$ with the word metric with respect to $S_e$. (More precisely, we require that the map $X_{\tilde{e}}\xrightarrow{(v,h)\mapsto g^{-1}h} G_e$ is an isometry when $G_e$ is equipped with the word metric with respect to $S_e$.)

\begin{defn}\label{defn:treeofspacesassociated}
	Given a graph of finitely generated groups $\cG$, the tree of spaces $X$ constructed above  is \emph{the tree of spaces associated with the graph of groups $\cG$}.
\end{defn}

The tree of spaces $X$ is a proper geodesic metric space (see Lemma 2.13 of~\cite{CM17}). The natural action of $G$ on $W$ (fixing the $V\Gamma$ factor) induces an action of $G$ on $X$. Applying the  Milnor--Schwarz lemma we deduce:
\begin{prop}[Section 2.5 of~\cite{CM17}]\label{prop:treeofspacesBST}
	Suppose $G$, $T$ and $X$ are as above. Then there exists a quasi-isometry $f: G \rightarrow X$ and $A\geq 0$ such that
	$d_{\mathrm{Haus }}\left(f\left(gG_{x}\right), X_{\tilde{x}}\right) \leq A$  for all $\tilde{x} \in V T\sqcup  E T$, where $\tilde{x}$ corresponds to the coset $gG_x$.
\end{prop}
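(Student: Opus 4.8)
The plan is to apply the Milnor--Schwarz lemma to the natural action of $G=\pi_1(\cG)$ on the tree of spaces $X$. First I would record the hypotheses of that lemma. The action is by isometries: it is induced by the $G$--action on $W$ (fixing the $V\Gamma$ factor), which permutes the vertex and edge subgraphs of $W$ and preserves the attaching intervals, hence preserves the path metric on $X$. The space $X$ is proper and geodesic by Lemma~2.13 of~\cite{CM17}. Finally, the action is properly discontinuous and cocompact: each point of a vertex space $X_{\tilde v}$ has trivial stabilizer, since $\Stab_G(X_{\tilde v})$ is a conjugate of $G_v$ acting freely on the vertices of $\Cay(G_v,S_v)$, and cocompactness holds because the underlying graph $\Gamma$ is finite and each $X_{\tilde v}$ is cocompact under $\Stab_G(X_{\tilde v})$, with the connecting intervals falling into finitely many orbits. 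The Milnor--Schwarz lemma then gives that, for any fixed basepoint $x_0\in X$, the orbit map $f\colon G\to X$, $g\mapsto g\cdot x_0$, is a quasi-isometry.

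Next I would reduce the Hausdorff-distance estimate to finitely many cases using equivariance. Fix $x\in V\Gamma\sqcup E\Gamma$ and let $\tilde x_0\in VT\sqcup ET$ correspond to the trivial coset $G_x$. For an arbitrary coset $gG_x$ with corresponding vertex or edge $\tilde x$, the explicit description of the $G$--action on $W$ gives $X_{\tilde x}=g\cdot X_{\tilde x_0}$, while $f(gG_x)=g\cdot f(G_x)=g\cdot(G_x\cdot x_0)$. Since $g$ acts as an isometry of $X$ and Hausdorff distance is isometry-invariant, $d_{\Haus}\bigl(f(gG_x),X_{\tilde x}\bigr)=d_{\Haus}\bigl(G_x\cdot x_0,\, X_{\tilde x_0}\bigr)$. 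So it suffices to bound this last quantity for each of the finitely many $x\in V\Gamma\sqcup E\Gamma$, and then take $A$ to be the maximum of the resulting bounds.

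For the bound itself: $\Stab_G(X_{\tilde x_0})$ contains $G_x$, acting on $X_{\tilde x_0}$ (isometric to $\Cay(G_x,S_x)$, or to $(G_x,d_{S_x})$ when $x$ is an edge) by left multiplication. Hence for every $h\in G_x$ we have $d(h\cdot x_0, X_{\tilde x_0})=d(h\cdot x_0,\, h\cdot X_{\tilde x_0})=d(x_0,X_{\tilde x_0})=:r$, so $G_x\cdot x_0\subseteq\nbd{X_{\tilde x_0}}{r}$. Conversely, choose a point $p$ of $X_{\tilde x_0}$ corresponding to a group element of $G_x$; then $G_x\cdot p$ is exactly the set of such points, which is $1$--dense in $X_{\tilde x_0}$, and $d(h\cdot x_0, h\cdot p)=d(x_0,p)$ for all $h\in G_x$ gives $X_{\tilde x_0}\subseteq\nbd{G_x\cdot x_0}{d(x_0,p)+1}$. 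Therefore $d_{\Haus}\bigl(G_x\cdot x_0, X_{\tilde x_0}\bigr)\le\max\{r,\, d(x_0,p)+1\}<\infty$, which completes the argument.

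I expect no serious analytic or combinatorial obstacle here; the only real work is bookkeeping. The points that need to be checked with care are that the $G$--action on $W$ permutes the vertex and edge spaces compatibly with the coset-to-(vertex/edge) correspondence, so that $X_{\tilde x}=g\cdot X_{\tilde x_0}$ whenever $\tilde x$ corresponds to $gG_x$, and that the action on $X$ equipped with the path metric really is properly discontinuous and cocompact. Both follow directly from the explicit construction of $X$ and $W$ above (and are recorded in Section~2.5 of~\cite{CM17}), but they should be stated explicitly before invoking the Milnor--Schwarz lemma.
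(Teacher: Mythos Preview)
Your proposal is correct and follows exactly the approach the paper indicates: the paper simply writes ``Applying the Milnor--Schwarz lemma we deduce'' before the proposition and cites \cite{CM17} for the details, and you have filled in precisely those details (properness/cocompactness of the action, the orbit map as quasi-isometry, and the equivariance reduction to finitely many Hausdorff-distance estimates).
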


The following lemma is presumably well-known, but we couldn't find an explicit proof in the literature. We provide the proof here for the benefit of the reader.
\begin{lem}\label{lem:vertex_edge_QIembeddings}
	Let $\cG$ be a finite graph of finitely generated groups, and let $G=\pi_1(\cG)$. If every edge group of $\cG$ is quasi-isometrically embedded in $G$, then so is every vertex group.
\end{lem}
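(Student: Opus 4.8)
The plan is to work inside the tree of spaces $(X,T)$ associated to $\cG$ and show that each vertex space $X_{\tilde v}$ is quasi-isometrically embedded in $X$; by Proposition~\ref{prop:treeofspacesBST} this is equivalent to the vertex group $G_v$ being quasi-isometrically embedded in $G$. The upper bound (coarse Lipschitz) is automatic, since $X_{\tilde v}\hookrightarrow X$ is $1$-Lipschitz on the path metric. So the content is the lower bound: there is a constant $L$ such that $d_X(x,y)\ge \tfrac1L d_{X_{\tilde v}}(x,y) - L$ for all $x,y\in X_{\tilde v}$. Since $X$ is a geodesic metric space, it suffices to bound from below the length in $X_{\tilde v}$ of any $X$-geodesic between two points of $X_{\tilde v}$, or rather to compare it with such a geodesic.

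First I would recall the normal form for paths in a tree of spaces. Given $x,y\in X_{\tilde v}$, take an $X$-geodesic $\gamma$ from $x$ to $y$; projecting to the base tree $T$, the geodesic $\gamma$ passes through a finite sequence of vertex spaces $X_{\tilde v}=X_{\tilde v_0}, X_{\tilde v_1}, \ldots, X_{\tilde v_k}=X_{\tilde v}$ (possibly with repetitions), crossing edge spaces $X_{\tilde e_1}, \ldots, X_{\tilde e_k}$ between consecutive ones and spending unit length on each edge; write $\gamma$ as a concatenation $\gamma_0 \cdot \epsilon_1 \cdot \gamma_1 \cdots \epsilon_k \cdot \gamma_k$ where $\gamma_i$ lies in $X_{\tilde v_i}$ and $\epsilon_i$ is the unit edge. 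The key combinatorial observation is the "backtracking/bridge" argument on the tree $T$: every vertex $\tilde v_i$ with $\tilde v_i\ne \tilde v$ that is visited must be entered and later exited, and the first edge $\tilde e$ on the $T$-geodesic from $\tilde v_i$ back towards $\tilde v$ is crossed an even number of times by $\gamma$; more usefully, whenever $\gamma$ leaves $X_{\tilde v}$ across an edge $\tilde e_j$ and returns across $\tilde e_{j'}$ without visiting $\tilde v$ in between, it crosses the edge space $X_{\tilde e_j}=X_{\tilde e_{j'}}$ twice. Using this, I would replace each maximal excursion of $\gamma$ out of $X_{\tilde v}$ by a path inside $X_{\tilde v}$: the endpoints of such an excursion both lie in a single coset realizing the edge space $X_{\tilde e}\subseteq X_{\tilde v}$, and the portion of $\gamma$ making the excursion has length at least the distance in $X$ between those two endpoints.

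This reduces the problem to the following: if $p,q$ are two points of an edge space $X_{\tilde e}\subseteq X_{\tilde v}$, then $d_{X_{\tilde v}}(p,q)$ is bounded above by a linear function of $d_X(p,q)$. But that is exactly the hypothesis: every edge group is quasi-isometrically embedded in $G$, equivalently (again via Proposition~\ref{prop:treeofspacesBST}, applied to the edge coset) every edge space is quasi-isometrically embedded in $X$, so $d_{X_{\tilde e}}(p,q) \le K\, d_X(p,q) + K$, and $X_{\tilde e}$ is isometrically embedded in $X_{\tilde v}$ by construction (its metric is the word metric on $G_e\le G_v$, and $G_e$ is undistorted-by-construction as a \emph{subset}; one only needs $d_{X_{\tilde v}}(p,q)\le d_{X_{\tilde e}}(p,q)$, which is immediate since any path in $X_{\tilde e}$ is a path in $X_{\tilde v}$). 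Assembling: along the geodesic $\gamma$, the total length of the pieces $\gamma_i$ that lie in $X_{\tilde v}$ contributes directly to $d_{X_{\tilde v}}(x,y)$, and each excursion of length $\ell_j$ out of $X_{\tilde v}$ can be replaced by a path in $X_{\tilde v}$ of length at most $K\ell_j + K$; since there are at most $|\gamma| = d_X(x,y)$ excursions, we get $d_{X_{\tilde v}}(x,y)\le K\, d_X(x,y) + K\, d_X(x,y) = 2K\, d_X(x,y)$, giving a quasi-isometric embedding with uniform constants.

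The main obstacle, and the step requiring care, is the tree-combinatorial bookkeeping in the second paragraph: making precise the claim that every maximal sub-path of $\gamma$ leaving $X_{\tilde v}$ has both endpoints lying in a common edge space incident to $\tilde v$, and that these endpoints can be joined inside that edge space using only the fact that $\gamma$ traverses the corresponding edge of $T$ (so the two endpoints are in the \emph{same} coset $gG_e$, not merely in two cosets of the same edge group). This is a standard "every edge of $T$ visited by a path is visited an even number of times if its removal does not separate the endpoints" argument, but one must be attentive that an excursion might itself re-enter and leave $X_{\tilde v}$ several times; the clean way is to induct on $d_X(x,y)$, or on the number of edges of $T$ in the convex hull of the support of $\gamma$, peeling off one outermost excursion at a time and applying the edge-group hypothesis once per peel. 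Everything else — the upper Lipschitz bound, the identification of vertex/edge spaces with Cayley graphs, and the translation between "$G_v\hookrightarrow G$ is a QI embedding" and "$X_{\tilde v}\hookrightarrow X$ is a QI embedding" via Proposition~\ref{prop:treeofspacesBST} — is routine.
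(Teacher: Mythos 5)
Your argument is correct in spirit and takes a genuinely different route from the paper. The paper's proof is purely combinatorial/algebraic: choosing generating sets $S_v\supseteq \tau_e(S_e)$, it decomposes a geodesic word $w$ for $g\in G_v$ as $w_0r_1w_1\cdots r_nw_n$ with $w_i$ in $S_v$ and the $r_i$ in the other generators, invokes normal forms for graphs of groups to conclude each $r_i$ represents an element of an incident edge group, and then rewrites each $r_i$ as a short $S_v$-word using the hypothesis. Your proof instead works geometrically in the tree of spaces, decomposing an $X$-geodesic into segments inside $X_{\tilde v}$ and ``excursions'' that leave and re-enter, and uses the tree separation property (removing $\tilde v$ disconnects $T$) to see that each excursion departs and returns through the \emph{same} edge space $X_{\tilde e}$ (hence the same coset $gG_e$), at which point the edge-group hypothesis shortcuts the excursion inside $X_{\tilde v}$. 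Both are valid; the paper's is shorter and avoids any tree-of-spaces setup, while yours is more visual and makes the role of the edge-space undistortion transparent. One small inaccuracy in your write-up: you assert $X_{\tilde e}$ is isometrically embedded in $X_{\tilde v}$ and that ``any path in $X_{\tilde e}$ is a path in $X_{\tilde v}$''; this needs $S_e\subseteq S_v$, which is not assumed in the construction of Section~2.2 (and is false in general, since the two spaces carry independent word metrics). What you actually need is only the coarse Lipschitz bound $d_{X_{\tilde v}}(p,q)\le C\,d_{X_{\tilde e}}(p,q)+C$ on $X_{\tilde e}$, which holds because $G_e$ is a finitely generated subgroup of $G_v$ and there are finitely many $G$-orbits of edge spaces; combined with the hypothesis $d_{X_{\tilde e}}(p,q)\le K\,d_X(p,q)+K$ this gives the bound you want, at the cost of a slightly larger constant. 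Your final constant-chasing is also loose (and should incorporate this $C$), but none of this affects the validity of the approach. The core combinatorial claim -- that each excursion's two boundary points lie in a common edge space adjacent to $\tilde v$ -- is correctly identified as the crux and your justification via the separation of $T$ at $\tilde v$ is sound; no induction is actually needed once one decomposes $\gamma^{-1}(X_{\tilde v})^c$ into its connected components.
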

\begin{proof}
	Let $\Gamma$ be the underlying graph of $\cG$. For each edge group $G_e$, pick a finite generating set $S_e$ and let $d_e$ be the associated word metric on $G_e$.
	For each vertex group $G_v$ of $\cG$, pick a finite generating set $S_v$ containing $\tau_e(S_e)$ for each edge $e$ with $e_+=v$. Let $d_v$  denote the corresponding word metric on $G_v$. Then $G$ has a finite generating set of the form $S=\bigcup_{v\in V\Gamma}S_v \cup S_0$, where $S_0$ consists of stable letters corresponding to edges outside a spanning tree of $\Gamma$. Let $d$ denote the corresponding word metric on $G$.

	Since each edge group is quasi-isometrically embedded, there is a constant  $K$ such that for each $e\in E(\Gamma)$ and $g\in G_e$, we have $d_e(1,g)\leq K d(1,g)$.
	Pick a vertex $v\in V\Gamma$ and $g\in G_v$. Let $w$ be a word  in $S$ of length $d(1,g)$ with $w=_Gg$. We can write $w=w_0r_1w_1\dots r_nw_n$, where $n\leq d(1,g)$, each  $w_i$ is a word in $S_v$ and each $r_i$ is a word in $\bigcup_{v'\neq v}S_{v'} \cup S_0$. Using normal forms for graphs of groups, we deduce each $r_i\in \tau_e(G_e)$  for some $e\in E(\Gamma)$ with $e_+=v$. Since $S_v$ contains $\tau_e(S_e)$, there is a  word $\hat r_i$ in $S_v$ of length $d_e(1,r_i)\leq Kd(1,r_i)$ with $\hat r_i\stackrel{G}{=} r_i$. Thus $w_0\hat r_1w_1\dots \hat r_nw_n\stackrel{G}{=}g$ is a word in $S_v$ of length at most $Kd(1,g)$. Therefore, $d_v(1,g)\leq Kd(1,g)$ as required.
\end{proof}

The notion of betweenness in a tree $T$ is defined as follows.
\begin{defn}
	If $e\in ET$, let $\mu_e$ be the point of (the metric realization of) $T$ which lies at distance 1/3 along the edge from $e_-$ to $e_+$, and let $\mu_v=v$ for all $v\in VT$. Given $a,b,c\in VT\sqcup ET$, we say \emph{$b$ is strictly between $a$ and $c$} if $\mu_a$ and $\mu_c$ lie in different components of $T\backslash \mu_b$.  We say \emph{$b$ is between $a$ and $c$} if $b$ is either strictly between $a$ and $b$ or is equal to one of  $a$ or $b$.
\end{defn}
In particular, if $b$ is strictly between $a$ and $c$, then $a$, $b$ and $c$ are distinct. We also note that if $e\in ET$, then $e$ is strictly between $e_-$ and $\bar e$. If $(X,T)$ is a tree of spaces and $b$ is between $a$ and $c$, then any path from $X_a$ to $X_c$ must intersect $X_b$.

\subsection{Asymptotic cones}
This section reviews the background on asymptotic cones, a tool used to prove Theorem~\ref{thm:CKrigidity_graphofgroups}. The material presented here is well-known and can be found in~\cite{KL98,KL96,dructusapir2005treegraded}.

\begin{defn}
	A \emph{non-principal ultrafilter} $\omega$ over $\N$ is a  collection of subsets of $\N$ such that the following conditions hold.
	\begin{enumerate}
		\item If $A,B\in \omega$, then $A\cap B\in\omega$.
		\item If $A\in\omega$, and $A \subseteq B \subseteq \N$ then $B\in\omega$.
		\item For every $A \subseteq \N$,  either $A \in \omega$ or $\N - A \in \omega$.
		\item No finite subset of $\N$ is in $\omega$.
	\end{enumerate}
\end{defn}

Fix a non-principal ultrafilter $\omega$ over $\N$. We say a statement $P_i$ depending on $i \in \N$ holds \emph{$\omega$-almost surely} if the set of indices such that $P_i$ holds belong to $\omega$.
If
$(x_i)$ is a sequence of points in a topological space $X$, we write $\lim_{\omega} x_i = x_\infty$ if for every neighborhood $U$ of $x_\infty$,  $x_i \in U$  $\omega$-almost surely.

Fix a sequence $(X_i,b_i,d_i)$ of based metric spaces, i.e.\ $(X_i,d_i)$ is a metric space and $b_i\in X_i$. A sequence $(x_i)$, where each $x_i\in X_i$, is \emph{$\omega$-admissible} if $\lim_\omega d_i(x_i,b_i)<\infty$. We define an equivalence relation $\sim$ on $\omega$-admissible sequences by $(x_i)\sim (y_i)$  if $\lim_\omega d_i(x_i,y_i)=0$. The \emph{ultralimit} $\lim_\omega(X_i,b_i,d_i)$ is defined to be the set of equivalences classes of $\omega$-admissible sequences equipped with the metric $d_\omega([(x_i)],[(y_i)])=\lim_\omega d_i(x_i,y_i)$. The ultralimit of a sequence of complete metric spaces is complete.

\begin{defn}
	If $(X,d)$ is a  metric space, $(b_i)$ is a sequence of basepoints in $X$, and  $(\lambda_i)$ is a sequence in $\R_{>0}$ such that $\lim_i\lambda_i=\infty$, we define the \emph{asymptotic cone}
	\[
		X_\omega((b_i),(\lambda_i)):=\lim_\omega\left(X,b_i,\frac{d}{\lambda_i}\right)
	\]
	When unambiguous, we denote $X_\omega((b_i),(\lambda_i))$ by $X_\omega$.
\end{defn}
If $X$ is cocompact, i.e.\ $\Isom(X)$ acts cocompactly on $X$, then $X_\omega$ is homogeneous and the isometry type of $X_\omega$ doesn't depend on the choice of basepoints.

The following lemma is well-known.

\begin{lem}\label{lem:induced_bilip}
	Let $X_\omega((b_i),(\lambda_i))$ and $Y_\omega((c_i),(\lambda_i))$ be asymptotic cones of $X$ and $Y$.  If $(f_i \colon X_i \to Y_i)$ is a sequence of $(K,A)$-coarse Lipschitz maps such that  $\lim_\omega(\frac{1}{\lambda_i}d_Y(f_i(b_i),c_i))<\infty$, then $(f_i)$ induces a $K$-Lipschitz map $f_\omega:X_\omega\to Y_\omega$ given by $f_\omega([x_i])=([f_i(x_i)])$.

	Moreover, if each $f_i$ is a $(K,A)$-quasi-isometric embedding (resp. $(K,A)$-quasi-isometry), then $f_\omega$ is a $K$-bi-Lipschitz embedding (resp. $K$-bi-Lipschitz equivalence).
\end{lem}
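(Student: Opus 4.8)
\textbf{Proof proposal for Lemma~\ref{lem:induced_bilip}.}

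The plan is to check directly that the formula $f_\omega([x_i])=[f_i(x_i)]$ yields a well-defined $K$-Lipschitz map, and then to upgrade the conclusion under stronger hypotheses on the $f_i$. I would proceed in three steps.

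First, \emph{well-definedness}. Given a point $[x_i]\in X_\omega$, the sequence $(x_i)$ is $\omega$-admissible, so $\lim_\omega \frac{1}{\lambda_i}d_X(x_i,b_i)<\infty$. Using that $f_i$ is $(K,A)$-coarse Lipschitz, $d_Y(f_i(x_i),f_i(b_i))\le K d_X(x_i,b_i)+A$, and combining with the hypothesis $\lim_\omega\frac{1}{\lambda_i}d_Y(f_i(b_i),c_i)<\infty$ and the triangle inequality, we get $\lim_\omega\frac{1}{\lambda_i}d_Y(f_i(x_i),c_i)<\infty$; hence $(f_i(x_i))$ is $\omega$-admissible and represents a point of $Y_\omega$. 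If $(x_i)\sim(x_i')$, i.e.\ $\lim_\omega\frac{1}{\lambda_i}d_X(x_i,x_i')=0$, then the coarse Lipschitz bound gives $\frac{1}{\lambda_i}d_Y(f_i(x_i),f_i(x_i'))\le K\frac{1}{\lambda_i}d_X(x_i,x_i')+\frac{A}{\lambda_i}$, and since $\lambda_i\to\infty$ the additive term $A/\lambda_i$ vanishes $\omega$-almost surely in the limit, so $\lim_\omega\frac{1}{\lambda_i}d_Y(f_i(x_i),f_i(x_i'))=0$, i.e.\ $[f_i(x_i)]=[f_i(x_i')]$. Thus $f_\omega$ is well-defined.

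Second, \emph{the Lipschitz bound}. For $[x_i],[y_i]\in X_\omega$, the same estimate $\frac{1}{\lambda_i}d_Y(f_i(x_i),f_i(y_i))\le K\frac{1}{\lambda_i}d_X(x_i,y_i)+\frac{A}{\lambda_i}$ passes to the $\omega$-limit: the left side tends to $d_\omega(f_\omega[x_i],f_\omega[y_i])$, the first term on the right to $K\,d_\omega([x_i],[y_i])$, and the last term to $0$. Hence $d_\omega(f_\omega[x_i],f_\omega[y_i])\le K\,d_\omega([x_i],[y_i])$, so $f_\omega$ is $K$-Lipschitz.

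Third, \emph{the quasi-isometry upgrade}. If each $f_i$ is a $(K,A)$-quasi-isometric embedding, the lower bound $d_Y(f_i(x_i),f_i(y_i))\ge\frac{1}{K}d_X(x_i,y_i)-A$ likewise survives division by $\lambda_i$ and passage to the $\omega$-limit, giving $d_\omega(f_\omega[x_i],f_\omega[y_i])\ge\frac{1}{K}d_\omega([x_i],[y_i])$, so $f_\omega$ is a $K$-bi-Lipschitz embedding. If moreover each $f_i$ is a $(K,A)$-quasi-isometry, i.e.\ $Y_i=N_A(f_i(X_i))$, then for any admissible $(y_i)$ in $Y$ choose $x_i\in X_i$ with $d_Y(f_i(x_i),y_i)\le A$; one checks $(x_i)$ is $\omega$-admissible using the lower bound from the quasi-isometric embedding property, and then $d_\omega(f_\omega[x_i],[y_i])=\lim_\omega\frac{1}{\lambda_i}d_Y(f_i(x_i),y_i)\le\lim_\omega\frac{A}{\lambda_i}=0$, so $f_\omega$ is surjective and hence a $K$-bi-Lipschitz equivalence. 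None of these steps is a real obstacle; the only point requiring a moment's care is verifying admissibility of preimage sequences in the surjectivity argument, which is where the quasi-isometric \emph{embedding} lower bound (rather than just the coarse Lipschitz upper bound) is genuinely used.
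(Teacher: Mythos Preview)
Your argument is correct and is exactly the standard one. The paper does not actually prove this lemma; it is stated as well-known with no proof given, so there is nothing to compare against beyond noting that your write-up supplies the routine verification the authors omit.
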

\begin{rem}
	If $Y$ is cocompact, then the assumption that $\lim_\omega(\frac{1}{\lambda_i}d_Y(f_i(b_i),c_i))<\infty$ in Lemma~\ref{lem:induced_bilip} is not restrictive, since one can just define a sequence of basepoints in $Y$ by $(f(b_i))$.
\end{rem}

If $G$ is a finitely generated group, it can be equipped with the word metric with respect to a finite generating set. An asymptotic cone $G_\omega$ of $G$ is an asymptotic cone of $G$ equipped with this metric. Since the word metric is well-defined up to bi-Lipschitz equivalence, each asymptotic cone  $G_\omega((b_i),(\lambda_i))$ is well-defined up to bi-Lipschitz equivalence.

\begin{defn}\label{defn:ultralimit}
	Let $X$ be a metric space and let $X_\omega=X_\omega((b_i),(\lambda_i))$ be an asymptotic cone of $X$.
	\begin{enumerate}
		\item If $(A_i)$ is a  sequence of non-empty subsets of $X$, we define
		      \[
			      \lim_{\omega} A_i=\{[(a_i)]\in X_\omega\mid a_i\in A_i \text{ for all $i$}\}.
		      \]
		\item Suppose $\cA$  is a collection of non-empty subsets of $X$. We define
		      \[
			      \cA_\omega\coloneqq \{\lim_\omega A_i\mid \lim_\omega A_i\neq \emptyset \text{ and $A_i\in \cA$ for all $i$} \}.
		      \]
	\end{enumerate}
\end{defn}
\begin{lem}\label{lem:as-cone_subsets}
	Let $X_\omega((b_i),(\lambda_i))$ be an asymptotic cone of $X$ and let $\cA$  be a collection of subsets of $X$.
	Assume that there exist constants $K\geq 1$, $C\geq 0$ and finitely many metric spaces $\mathcal B$ such that for each $A\in \cA$ there is some $B\in \mathcal B$ such that  $A$ is the image of a $(K,C)$-quasi-isometric embedding $f:B\to X$.
	Then every $A_\omega\in \cA_\omega$  is bi-Lipschitz-equivalent to an  asymptotic cone of some $B\in \mathcal B$.
\end{lem}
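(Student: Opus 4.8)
The plan is to realise a given $A_\omega\in\cA_\omega$ as the image, under the map induced by Lemma~\ref{lem:induced_bilip}, of an asymptotic cone of a single space $B\in\mathcal B$. Write $A_\omega=\lim_\omega A_i$ with each $A_i\in\cA$ and $\lim_\omega A_i\neq\emptyset$. For each $i$ I would use the hypothesis to choose a space $B_i\in\mathcal B$ and a $(K,C)$-quasi-isometric embedding $f_i\colon B_i\to X$ with image $A_i$. Since $\mathcal B$ is finite and $\N$ is the union of the finitely many sets $\{i\mid B_i=B\}$, $B\in\mathcal B$, one of these, say $I=\{i\mid B_i=B\}$, lies in $\omega$; after fixing some $i_0\in I$ and redefining $f_i:=f_{i_0}$ for $i\notin I$, one arranges that every $f_i$ is a $(K,C)$-quasi-isometric embedding $B\to X$ with $f_i(B)=A_i$ for all $i\in I$. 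Because $\lim_\omega A_i\neq\emptyset$, there is a sequence $(a_i)$ with $a_i\in A_i$ for all $i$ representing a point of $X_\omega$; in particular each $A_i$, and hence $B$, is non-empty. I would then set basepoints $c_i\in B$ by choosing, for $i\in I$, a point $c_i$ with $f_i(c_i)=a_i$, and $c_i:=c_{i_0}$ for $i\notin I$.

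Next I would apply Lemma~\ref{lem:induced_bilip} to the sequence $(f_i\colon B\to X)$. Since $f_i(c_i)=a_i$ for all $i$ in the $\omega$-large set $I$, the hypothesis $\lim_\omega\tfrac1{\lambda_i}d_X(f_i(c_i),a_i)=0<\infty$ is satisfied, and as each $f_i$ is a $(K,C)$-quasi-isometric embedding the lemma produces a $K$-bi-Lipschitz embedding $f_\omega\colon B_\omega((c_i),(\lambda_i))\to X_\omega$ given by $f_\omega([(c_i')])=[(f_i(c_i'))]$. Here I use that $X_\omega((a_i),(\lambda_i))$ is identified with $X_\omega=X_\omega((b_i),(\lambda_i))$, since $(a_i)$ lies at finite rescaled distance from $(b_i)$. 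As $B_\omega((c_i),(\lambda_i))$ is by definition an asymptotic cone of $B\in\mathcal B$, it then suffices to verify that the image of $f_\omega$ is exactly $A_\omega=\lim_\omega A_i$.

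For the inclusion $f_\omega(B_\omega)\subseteq\lim_\omega A_i$ I would note that for $[(c_i')]\in B_\omega$ one has $f_i(c_i')\in A_i$ for $i\in I$, and that replacing the representative on the $\omega$-null set $\N\setminus I$ by arbitrary points of the (non-empty) sets $A_i$ does not change the equivalence class, putting it in the form required by Definition~\ref{defn:ultralimit}. For the reverse inclusion, given $[(a_i')]\in\lim_\omega A_i$ with $a_i'\in A_i$ for all $i$, I would choose $c_i'\in B$ with $f_i(c_i')=a_i'$ for $i\in I$ (possible since $A_i=f_i(B)$) and set $c_i':=c_i$ otherwise; the lower quasi-isometry bound gives $d_B(c_i',c_i)\le K\bigl(d_X(a_i',a_i)+C\bigr)$ for $i\in I$ and $d_B(c_i',c_i)=0$ for $i\notin I$, so since $[(a_i')],[(a_i)]\in X_\omega$ the sequence $(c_i')$ is $\omega$-admissible, whence $[(c_i')]\in B_\omega((c_i),(\lambda_i))$ and $f_\omega([(c_i')])=[(a_i')]$ because $(f_i(c_i'))$ and $(a_i')$ agree on $I\in\omega$. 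This exhibits $f_\omega$ as a $K$-bi-Lipschitz equivalence from an asymptotic cone of $B\in\mathcal B$ onto $A_\omega$, which is what is wanted.

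I expect the argument to be short, and the only point requiring genuine care is the bookkeeping between the "$\omega$-almost every $i$" choices (of the space $B$ and of preimages under $f_i$) and the "for all $i$" conventions built into the definitions of $\lim_\omega A_i$ and $\cA_\omega$, together with the (routine but essential) use of the quasi-isometric lower bound to lift $\omega$-admissible sequences from $X$ back to $B$.
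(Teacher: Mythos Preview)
The paper states this lemma without proof, treating it as a standard fact about asymptotic cones, so there is no proof in the paper to compare against. Your argument is correct and is the expected one: use finiteness of $\mathcal B$ together with the ultrafilter to reduce to a single model space $B$, invoke Lemma~\ref{lem:induced_bilip} to obtain a $K$-bi-Lipschitz embedding $f_\omega\colon B_\omega\to X_\omega$, and then verify that its image is precisely $\lim_\omega A_i$ by lifting admissible sequences through the quasi-isometric embeddings. Your handling of the $\omega$-null set $\N\setminus I$ and the use of the lower quasi-isometry bound to pull back admissibility are exactly the bookkeeping needed, and you carry them out carefully.
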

\begin{rem}
	The hypotheses of Lemma~\ref{lem:as-cone_subsets} are all satisfied when $G$ is a finitely generated group equipped with  the word metric and  $\cA$  consists of all left cosets of finitely many quasi-isometrically embedded subgroups of $G$.
\end{rem}

We will make use of the following properties concerning asymptotic cones.
\begin{prop}
	\phantomsection\label{prop:wellknownfacts}
	\begin{enumerate}
		\item
		      Let $G$ be a finitely generated group and let $G_\omega$ be an asymptotic cone of $G$.
		      If $G\cong \Z^n$, then $G_\omega$ is bi-Lipschitz equivalent to $\E^n$.
		      If $G$ is a non-elementary hyperbolic group, then $G_\omega$ is  a geodesically complete $\R$-tree that branches everywhere.
		\item 	If $X$ and $Y$ are metric spaces, then every  asymptotic cone of $X\times Y$ is isometric to $X_\omega\times Y_\omega$, where  $X_\omega$ and $Y_\omega$ are asymptotic cones of $X$ and $Y$.

		\item
		      Let $n\in \N$. For any $(K,A)$-quasi-isometric embedding $f:\E^n\to \E^n$, there is a $B=B(K,A,n)$ such that $N_B(\Image(f))=\E^n$. In particular, $f$ is a quasi-isometry.
	\end{enumerate}
\end{prop}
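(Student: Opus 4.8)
The plan is to prove the three statements separately; (1) and (2) are essentially bookkeeping with the definitions, while (3) is where the real work lies.

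For (1), when $G\cong\Z^n$ I would note that the word metric for the standard generating set is the restriction of the $\ell^1$-metric on $\R^n$, which is bi-Lipschitz equivalent to the restriction of the Euclidean metric; since a bi-Lipschitz equivalence induces a bi-Lipschitz equivalence of asymptotic cones (Lemma~\ref{lem:induced_bilip}) and any asymptotic cone of $\E^n$ is isometric to $\E^n$ (rescaling and translating $\E^n$ are isometries, and an ultralimit of copies of $\E^n$ is again $\E^n$), the claim follows. When $G$ is non-elementary hyperbolic I would invoke Gromov's theorem that an asymptotic cone of a $\delta$-hyperbolic geodesic space is $0$-hyperbolic, hence a geodesic $\R$-tree, and is complete since ultralimits of complete spaces are complete; geodesic completeness of $G_\omega$ comes from the fact that in the Cayley graph of an infinite group every geodesic segment extends by a uniform amount up to bounded error, a property that passes to the cone; and $G_\omega$ branches everywhere because it is homogeneous (as $G$ is cocompact) and branches at some point, the latter from choosing three distinct points of $\partial G$ (non-empty and perfect, as $G$ is non-elementary) and taking the coarse center of the associated ideal triangle. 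For (2) I would equip products with the $\ell^2$-metric (any $\ell^p$-metric gives the same cone up to the relevant equivalence); then for basepoints $(b_i,c_i)$ and scalars $\lambda_i\to\infty$, a sequence $((x_i,y_i))$ is $\omega$-admissible for $X\times Y$ iff $(x_i)$ and $(y_i)$ are $\omega$-admissible, the map $[((x_i,y_i))]\mapsto([(x_i)],[(y_i)])$ is a bijection onto $X_\omega\times Y_\omega$, and it is an isometry since $\lim_\omega\tfrac1{\lambda_i}\sqrt{d_X(x_i,x_i')^2+d_Y(y_i,y_i')^2}=\sqrt{(\lim_\omega\tfrac1{\lambda_i}d_X(x_i,x_i'))^2+(\lim_\omega\tfrac1{\lambda_i}d_Y(y_i,y_i'))^2}$.

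For (3), I would argue in two steps. \textbf{Step A (topology):} every $K$-bi-Lipschitz embedding $h\colon\E^n\to\E^n$ is surjective. Indeed $h$ is continuous and injective, so $\Image(h)$ is open by Brouwer's invariance of domain; and $h$ is proper, since the lower Lipschitz bound makes preimages of bounded sets bounded, so being a proper continuous map between locally compact Hausdorff spaces it is closed, whence $\Image(h)$ is closed; as $\E^n$ is connected and $\Image(h)\neq\emptyset$ this forces $\Image(h)=\E^n$. \textbf{Step B (uniformity, via asymptotic cones):} suppose no uniform $B$ exists. Then there are $(K,A)$-quasi-isometric embeddings $f_i\colon\E^n\to\E^n$ and points $y_i\in\E^n$ with $D_i:=d\bigl(y_i,\Image f_i\bigr)>i$. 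Pick $x_i$ with $d(f_i(x_i),y_i)\le D_i+1$, and form asymptotic cones of the source $\E^n$ (basepoints $x_i$) and the target $\E^n$ (basepoints $y_i$) with the common scalars $\lambda_i:=D_i\to\infty$. Since $\lim_\omega\tfrac1{\lambda_i}d(f_i(x_i),y_i)\le1<\infty$, Lemma~\ref{lem:induced_bilip} produces a $K$-bi-Lipschitz embedding $f_\omega\colon\E^n_\omega\to\E^n_\omega$; identifying both cones isometrically with $\E^n$, Step A shows $f_\omega$ is onto. But for the basepoint $y_\infty=[(y_i)]$ of the target cone and any $[(x_i')]\in\E^n_\omega$, Lemma~\ref{lem:induced_bilip} gives $d_\omega\bigl(f_\omega([(x_i')]),y_\infty\bigr)=\lim_\omega\tfrac1{D_i}d(f_i(x_i'),y_i)\ge\lim_\omega\tfrac{D_i}{D_i}=1$, so $y_\infty\notin\Image(f_\omega)$, a contradiction. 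Once such a $B$ is available, $N_B(\Image(f))=\E^n$ makes $f$ a $(K,\max\{A,B\})$-quasi-isometry, giving the final assertion.

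I expect the main obstacle to be Step A together with the correct set-up of Step B. Invariance of domain only yields an open image, and one genuinely needs properness (hence closedness of the image) and connectedness of $\E^n$ to conclude surjectivity; in Step B the delicate point is the choice $\lambda_i=D_i$, which must simultaneously keep $y_i$ at finite rescaled distance from $f_i(x_i)$ (so that $f_\omega$ is defined and Lemma~\ref{lem:induced_bilip} applies) while keeping the basepoint $y_\infty$ at distance at least $1$ from the entire image of $f_\omega$.
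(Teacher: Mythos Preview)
Your proof is correct. The paper itself does not prove this proposition at all: it is stated as a collection of well-known facts and no argument is given, so there is nothing to compare against beyond noting that your arguments are the standard ones. Parts (1) and (2) are handled exactly as one would expect, and your two-step approach to (3)---invariance of domain plus properness to show a bi-Lipschitz self-embedding of $\E^n$ is onto, then a rescaling/ultralimit argument to upgrade this to a uniform $B$---is the natural way to extract a quantitative statement from the qualitative one.
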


\begin{lem}\label{lem:qi_halfspace}
	Let $\E^2_{\geq 0}$ be the half-space $\{(x,y)\in \E^2\mid y\geq 0\}$. There is no quasi-isometric embedding $f:\E^2_{\geq 0}\to \E$.
\end{lem}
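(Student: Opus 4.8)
The plan is to argue by contradiction, exploiting the fact that a quasi-isometric embedding of $\E^2_{\geq 0}$ into $\E$ would have to collapse two independent directions, which is impossible for a coarsely Lipschitz map into a line. Suppose $f\colon \E^2_{\geq 0}\to \E$ is a $(K,A)$-quasi-isometric embedding. First I would restrict $f$ to two unbounded families of coarsely parallel geodesic rays inside the half-space and use the large-scale geometry of $\E$ to derive incompatible growth constraints. Concretely, for a fixed large parameter $R$, consider the segment $\sigma_R=\{(x,R)\mid 0\le x\le R\}$ lying entirely in $\E^2_{\geq 0}$, together with the two ``vertical'' segments $\{(0,t)\mid 0\le t\le R\}$ and $\{(R,t)\mid 0\le t\le R\}$ joining its endpoints down to the boundary segment $\{(x,0)\mid 0\le x\le R\}$. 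These four segments bound a square of side $R$, and the key point is that in $\E$ the images of the two vertical sides and the two horizontal sides cannot all be quasi-isometrically embedded while the whole configuration is coarsely embedded: a coarse Lipschitz map to $\R$ applied around the boundary of the square must, by an intermediate-value argument, identify an interior point of $\sigma_R$ with a boundary point up to error depending only on $K,A$, not on $R$. Letting $R\to\infty$ forces two points at distance $\ge R/2$ in $\E^2_{\geq 0}$ to have images at bounded distance in $\E$, contradicting the lower quasi-isometry inequality once $R>K(2A + (\text{the bound}))$.

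Let me restate the core mechanism more carefully, since this is the step I expect to be the main obstacle: pinning down the intermediate-value argument in a way that is genuinely uniform in $R$. Parametrize the boundary loop of the side-$R$ square by a closed combinatorial path $p_R\colon [0,4R]\to \E^2_{\geq 0}$. Then $f\circ p_R\colon[0,4R]\to\E\cong\R$ is $(K, A)$-coarse Lipschitz, hence $A$-close to a genuine $K$-Lipschitz (in fact continuous) function $g_R$. Since $g_R(0)=g_R(4R)$, the image $g_R([0,4R])$ is an interval, and the midpoint of the top side $\sigma_R$ — the point $m_R=(R/2,R)$, which is at distance $\ge R/2$ from the entire bottom side — maps under $g_R$ to a value $g_R(m_R)$ that is attained again by $g_R$ at some point $q_R$ of the loop; by choosing $m_R$ to be a near-extremum of $g_R$ restricted to the top side and using that $g_R$ returns to $g_R(0)$, a short case analysis shows $q_R$ can be taken on the bottom side or one of the short vertical sides, in any case at distance $\ge R/2 - O(1)$ from $m_R$ in $\E^2_{\geq0}$. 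Then $d_\E(f(m_R), f(q_R)) \le |g_R(m_R)-g_R(q_R)| + 2A = 2A$, while $d_{\E^2_{\geq 0}}(m_R,q_R)\ge R/2 - O(1)$; feeding this into $\frac1K d(m_R,q_R) - A \le d(f(m_R),f(q_R))$ yields $R/2 - O(1) \le K(2A) + KA$, which fails for $R$ large.

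An alternative, cleaner route — which I would actually prefer if the square argument's case analysis becomes unwieldy — is to pass to asymptotic cones: by Lemma~\ref{lem:induced_bilip}, $f$ would induce a bi-Lipschitz embedding $f_\omega\colon (\E^2_{\geq0})_\omega \to \E_\omega$. Now $(\E^2_{\geq 0})_\omega$ is bi-Lipschitz equivalent to $\E^2$ itself (the half-space is quasi-isometric to the plane, e.g.\ by the identity on a $1$-neighborhood after reflecting, or directly: $\E^2_{\geq 0}$ is $1$-dense in $\E^2$ so they are quasi-isometric, hence have bi-Lipschitz cones), while $\E_\omega$ is bi-Lipschitz equivalent to $\E$. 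So we would obtain a bi-Lipschitz, in particular topological, embedding of $\E^2$ into $\E=\R$, contradicting invariance of domain (a continuous injection $\R^2\to\R$ cannot exist, since removing a point disconnects $\R$ but not $\R^2$). This reduces the lemma to the two standard facts that the half-space and the plane are quasi-isometric and that bi-Lipschitz maps are topological embeddings, both of which are elementary; the only mild subtlety is confirming $(\E^2_{\geq0})_\omega\cong\E^2$, which follows because $\E^2_{\geq 0}\hookrightarrow \E^2$ is a quasi-isometry and asymptotic cones are quasi-isometry invariants up to bi-Lipschitz equivalence.
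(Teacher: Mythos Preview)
Your second approach---passing to asymptotic cones and invoking invariance of domain---is exactly the paper's proof. However, your justification contains an error: $\E^2_{\geq 0}$ is \emph{not} $1$-dense in $\E^2$ (points $(0,-R)$ are at distance $R$ from the half-plane), and in fact $\E^2_{\geq 0}$ and $\E^2$ are not quasi-isometric. One way to see this: the asymptotic cone of $\E^2_{\geq 0}$ with basepoints at the origin is $\E^2_{\geq 0}$ itself (the half-plane is scale-invariant about the origin), which is not homeomorphic to $\E^2$, hence not bi-Lipschitz to it; so the two spaces cannot be quasi-isometric.

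The fix is actually simpler than what you wrote. The asymptotic cone of $\E^2_{\geq 0}$ is just $\E^2_{\geq 0}$ again, and the cone of $\E$ is $\E$; so $f$ induces a bi-Lipschitz embedding $f_\omega:\E^2_{\geq 0}\to\E$. Now $\E^2_{\geq 0}$ contains an open subset of $\E^2$ (namely its interior), and invariance of domain forbids a continuous injection from an open subset of $\E^2$ into $\E$. That is precisely how the paper argues---you don't need the cone to be all of $\E^2$, only to contain a $2$-dimensional open set.

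Your first approach (the square/intermediate-value argument) is a genuinely different, more elementary route that avoids asymptotic cones, but as written the ``short case analysis'' is not carried out and the choice of $m_R$ shifts mid-paragraph. A cleaner elementary substitute: for each $n$, the $n^2$ grid points $\{(i,j):1\le i,j\le n\}$ lie in a ball of radius $n\sqrt{2}$ and are pairwise $\ge 1$ apart; their images under a $(K,A)$-quasi-isometric embedding into $\E$ would be $n^2$ points in an interval of length $O(Kn)$ that are pairwise $\ge 1/K-A$ apart, forcing $n^2=O(n)$, a contradiction for large $n$.
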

\begin{proof}
	If such a quasi-isometric embedding were to exist, then after taking ultralimits, it would induce a  bi-Lipschitz embedding $f_\omega:\E^2_{\geq 0}\to \E$ between asymptotic cones. This cannot be the case, since the invariance of domain theorem ensures that there is no continuous injection from an open subset of $\E^2$ to  $\E$.
\end{proof}

\begin{defn}

	A geodesic metric space $X$ is \emph{tree-graded} with respect to
	a collection of closed geodesic subsets $\{P_i \}_{i\in I}$, called \emph{pieces}, if the following hold:
	\begin{enumerate}
		\item $|P_i\cap P_j|\leq 1$ if $i\neq j$
		\item Any simple geodesic triangle in $X$ is contained in some $P_i$.
	\end{enumerate}

\end{defn}

\begin{defn}
	Let $X$ be a metric space and $\cA$ a collection of subsets of $X$. We say $X$ is \emph{asymptotically tree-graded} with respect to $\cA$ if every asymptotic cone $X_\omega$ of $X$ is tree-graded with respect to $\cA_\omega$.
	Suppose $G$ is a finitely generated group and $\cH$ is a collection of subgroups of $G$.
	A finitely generated group $G$ is said to be \emph{asymptotically tree-graded with respect
		to a collection of subgroups} $\mathcal H$ if $G$ is asymptotically tree-graded with respect to  the collection of all left cosets of subgroups in $\cH$.
\end{defn}

Dru\c{t}u--Sapir obtained  the following characterization of relatively hyperbolic groups, which can be taken as a definition for the purpose of this article.
\begin{thm}[\cite{dructusapir2005treegraded}]\label{thm:drutusapir_treegraded}
	A finitely generated group $G$ is hyperbolic relative to a collection of subgroups $\cH$ if and only if $G$ is asymptotically tree-graded with respect to $\cH$.
\end{thm}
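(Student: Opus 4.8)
The plan is to follow the argument of Dru\c{t}u--Sapir \cite{dructusapir2005treegraded}; since this theorem is invoked only as a black box in the sequel, I will just indicate how the two implications go rather than reproduce the many intermediate metric estimates.

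\emph{Relatively hyperbolic $\Rightarrow$ asymptotically tree-graded.} I would start from a finitary definition of relative hyperbolicity, say Farb's: the coned-off Cayley graph $\hat G$ (obtained by collapsing each peripheral coset $gH$, $H\in\cH$, to bounded diameter) is $\delta$-hyperbolic and the Bounded Coset Penetration (BCP) property holds; equivalently $\hat G$ is hyperbolic and fine. Fix an asymptotic cone $G_\omega = G_\omega((b_i),(\lambda_i))$ and verify the two axioms of tree-gradedness for $(G_\omega,\cH_\omega)$. For the first axiom, if $P=\lim_\omega g_iH_i$ and $Q=\lim_\omega g_i'H_i'$ are distinct pieces, then $\omega$-almost surely $g_iH_i\ne g_i'H_i'$, and fineness bounds $\diam\big(N_r(g_iH_i)\cap N_r(g_i'H_i')\big)$ by a function of $r$ independent of $i$; after rescaling by $\lambda_i\to\infty$ this coarse intersection collapses to at most a point, so $|P\cap Q|\le 1$. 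For the second axiom, represent a simple geodesic triangle in $G_\omega$ by a sequence of approximate geodesic triangles in $G$; thinness of triangles in $\hat G$ together with BCP shows that every vertex of such a triangle is uniformly close to a side \emph{unless} the triangle makes a deep excursion into a single peripheral coset, and in the limit this forces the triangle to lie in one $P\in\cH_\omega$.

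\emph{Asymptotically tree-graded $\Rightarrow$ relatively hyperbolic.} This is the substantial direction. The first step is to replace the hypothesis "every asymptotic cone is tree-graded" by an equivalent list of \emph{finitary} metric conditions on $(G,\cH)$ — coarse convexity of peripheral cosets, uniformly bounded coarse pairwise intersection of distinct cosets, and a fellow-travelling property forcing each geodesic of $G$ to shadow a canonical concatenation of subpaths that alternate between pieces of cosets and geodesics of $\hat G$. One proves this reformulation by a compactness scheme: the failure of any one finitary condition, witnessed along a sequence of scales, produces in the corresponding asymptotic cone a configuration (a nontrivial bigon between distinct pieces, or a simple geodesic triangle escaping every piece) that contradicts tree-gradedness. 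From the finitary conditions one then derives that $\hat G$ is hyperbolic and fine — equivalently, that a linear relative isoperimetric inequality holds — which is one of the standard equivalent definitions of relative hyperbolicity.

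I expect the main obstacle to be exactly this extraction of uniform finitary data from the asymptotic-cone hypothesis in the reverse direction: asymptotic cones discard all bounded information, so the uniform constants underlying the isoperimetric inequality can only be recovered indirectly, via the contradiction scheme above, and verifying that the bigon and escaping-triangle configurations genuinely violate the tree-graded axioms requires the delicate geometry of geodesics relative to the pieces that occupies the bulk of \cite{dructusapir2005treegraded}.
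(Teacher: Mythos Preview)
The paper does not prove this theorem at all: it is quoted as a result of Dru\c{t}u--Sapir and explicitly ``can be taken as a definition for the purpose of this article.'' Your sketch is a reasonable high-level outline of the Dru\c{t}u--Sapir argument, but there is nothing in the paper to compare it against.
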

If $G$ is hyperbolic relative to $\cH$,  then elements of $\cH$ are called \emph{peripheral subgroups} of $G$.

We will make use of the following lemma concerning asymptotic cones of relatively hyperbolic groups:
\begin{lem}\label{lem:ultralimitsvscosets}
	Let $G$ be a  finitely generated group that  is hyperbolic relative to a collection $\cH$ of infinite subgroups. Suppose $G_\omega$ is an asymptotic cone of $G$, and $\lim_{\omega}(g_iH_i)={\lim_{\omega}(g'_i{H'}_i)}\neq \emptyset$, where $g_i,g'_i\in G$ and $H_i,H_i'\in \cH$ for all $i$. Then ${g_iH_i}={{g'}_i{H'}_i}$ $\omega$-almost surely.
\end{lem}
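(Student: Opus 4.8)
The plan is to argue by contradiction, using the tree-graded structure of the asymptotic cone $G_\omega$ guaranteed by Theorem~\ref{thm:drutusapir_treegraded}. Suppose that the conclusion fails: then, after passing to a subset of $\N$ still lying in $\omega$, we may assume $g_iH_i\neq g'_iH'_i$ for all $i$. Write $P=\lim_\omega(g_iH_i)$ and $P'=\lim_\omega(g'_iH'_i)$; by hypothesis $P=P'\neq\emptyset$. Since $G$ is asymptotically tree-graded with respect to the left cosets of $\cH$, both $P$ and $P'$ are pieces of the tree-graded space $G_\omega$ — this is exactly the content of $\cA_\omega$ in the definition, together with Lemma~\ref{lem:as-cone_subsets}-type bookkeeping. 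The first axiom of a tree-graded space says that two distinct pieces intersect in at most one point, so the only way $P=P'$ can hold is if $P=P'$ genuinely is the \emph{same} piece, i.e.\ the cosets $g_iH_i$ and $g'_iH'_i$ determine the same element of $\cA_\omega$.

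The main step is therefore to show: if $g_iH_i\neq g'_iH'_i$ $\omega$-almost surely, then $\lim_\omega(g_iH_i)$ and $\lim_\omega(g'_iH'_i)$ are distinct pieces of $G_\omega$. The natural way to see this is to produce, $\omega$-almost surely, two points of $g_iH_i$ whose distance to $g'_iH'_i$ is bounded below linearly in $\lambda_i$, or more precisely to exploit a divergence/separation property of distinct peripheral cosets in a relatively hyperbolic group. Concretely, I would invoke the standard fact (a consequence of Dru\c{t}u--Sapir, or of the "bounded coset penetration" property of relatively hyperbolic groups) that for distinct cosets $gH\neq g'H'$ of peripheral subgroups, the coarse intersection $\nbd{gH}{r}\cap\nbd{g'H'}{r}$ has diameter bounded by a function of $r$ alone, uniformly over all such pairs. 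Given this, if $P=P'$ contained two distinct points $x=[x_i]=[y_i']$ and $z=[z_i]=[w_i']$ with $x_i,z_i\in g_iH_i$ and $y_i',w_i'\in g'_iH'_i$, then $d(x,z)>0$ forces $d_i(x_i,z_i)$ to grow linearly in $\lambda_i$; but $d_i(x_i,y_i')$ and $d_i(z_i,w_i')$ are $o(\lambda_i)$, so $x_i,z_i$ both lie within a sublinear neighbourhood of $g'_iH'_i$ while being at linear distance apart — contradicting the uniform bound on coarse intersections once $i$ is large. Hence $P\cap P'$ is a single point, so $P$ and $P'$ are either equal as pieces or distinct pieces meeting in one point; in the latter case $P\neq P'$, contradicting the hypothesis. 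Therefore they are the same piece, which (again by the injectivity built into the tree-graded structure, together with the fact that the peripheral subgroups are infinite so the pieces are nondegenerate) forces $g_iH_i=g'_iH'_i$ $\omega$-almost surely.

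The hardest point to get cleanly is the last implication — extracting the \emph{literal} equality $g_iH_i=g'_iH'_i$ $\omega$-almost surely from the statement that the two ultralimits coincide as pieces. One has to rule out the degenerate possibility that $g_iH_i\neq g'_iH'_i$ for $\omega$-many $i$ yet the ultralimits still agree; this is precisely where the uniform coarse-intersection bound for peripheral cosets does the work, converting "$\omega$-many distinct cosets" into "$\omega$-many pairs of points at linear distance inside a common sublinear neighbourhood", which is impossible. A secondary technical point is ensuring $\lim_\omega(g_iH_i)$ really is an element of $\cA_\omega$ and not, say, a proper subset of a piece or a union of pieces; this follows because each $H_i$ ranges over the finite (up to conjugacy) peripheral family, so the cosets are uniformly quasi-isometrically embedded and Lemma~\ref{lem:as-cone_subsets} applies to identify $\lim_\omega(g_iH_i)$ with an asymptotic cone of a peripheral subgroup, which is one of the pieces of the tree-graded structure. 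I expect the write-up to be short once these two inputs are in place.
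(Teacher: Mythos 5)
Your core argument is correct and rests on the same technical input as the paper's proof: both use the Sisto/Dru\c{t}u--Sapir projection bounds for peripheral cosets (your ``uniform coarse-intersection bound'' is exactly what follows from Sisto's Theorem~2.14, which the paper invokes directly), the only difference being that the paper constructs a single point $z_i\in P_i$ with $d(z_i,Q_i)\geq\lambda_i$ to exhibit a point of $\lim_\omega P_i\setminus\lim_\omega Q_i$, whereas you show that two cone-distinct points of $\lim_\omega P_i=\lim_\omega Q_i$ would force points of $P_i$ at linear separation inside a sublinear neighborhood of $Q_i$. The opening appeal to the tree-graded structure of $G_\omega$ does no independent work — set-level equality of pieces is precisely the hypothesis, and tree-gradedness cannot upgrade that to $\omega$-a.s.\ equality of cosets — but you correctly diagnose this and the projection-bound argument you land on is what actually carries the proof, in substance the same as the paper's.
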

\begin{proof}
	Suppose $G_\omega=G_\omega((b_i),(\lambda_i))$.  Let $\cP$ be the set of all left cosets of elements of $\cH$, and for each $P\in \cP$, let $\proj_P:G\to P$ be a closest point projection map. Set $P_i\coloneqq g_iH_i$ and $Q_i\coloneqq g_i'H'_i$. By~\cite[Theorem 2.14]{Sis13}, we can choose a constant $C$ such that the following hold:
	\begin{enumerate}
		\item $\diam(\proj_P(P'))\leq C$ for all distinct $P,P'\in \cP$;
		\item for all $x\in X$, $P\in \cP$ and $p\in P$, $d(x,p)\geq d(x,\proj_P(x))+d(\proj_P(x),p)-C$.
	\end{enumerate}
	Since $\lim_{\omega}P_i=\lim_{\omega}Q_i\neq \emptyset$, we can choose sequences $(x_i)$ and $(y_i)$ such that $x_i\in P_i$, $y_i\in Q_i$ and $[(x_i)]=[(y_i)]\in \lim_{\omega}P_i$. Therefore, $d(x_i,y_i)\leq \lambda_i$ $\omega$-almost surely.

	Assume for contradiction that  $P_i\neq Q_i$ $\omega$-almost surely.
	The choice of $C$ ensures that $d(x_i,\proj_{P_i}(y_i))\leq C+\lambda_i$ $\omega$-almost surely. Since $P_i$ is unbounded and $\diam(\proj_{P_i}(Q_i))\leq C$ $\omega$-almost surely, we can choose $z_i\in P_i\setminus N_{C+\lambda_i}(\proj_{P_i}(Q_i))$ with $d(z_i,x_i)\leq 3C+2\lambda_i+1$ $\omega$-almost surely. Then $\lim_\omega \frac{d(z_i,b_i)}{\lambda_i}\leq \lim_\omega \frac{d(z_i,x_i)}{\lambda_i}+\lim_\omega \frac{d(x_i,b_i)}{\lambda_i}<\infty$ and $\lim_\omega \frac{d(z_i,Q_i)}{\lambda_i}\geq 1$. Hence $[(z_i)]\in \lim_{\omega}P_i\setminus \lim_{\omega}Q_i$, contradicting our assumption $\lim_{\omega}P_i=\lim_{\omega}Q_i$.
\end{proof}

\subsection{Extended admissible groups}

We now define the class of extended admissible groups.

\begin{defn}\label{defn:extended}
	A group $G$ is an  \emph{extended admissible group} if it is the fundamental group of a graph of groups $\cG$ such that:
	\begin{enumerate}
		\item The underlying graph $\Gamma$ of $\cG$ is a connected finite graph with at least one edge, and every edge group is virtually  $\Z^2$.
		\item Each vertex group $G_v$ is one of the following two types:
		      \begin{enumerate}
			      \item  Type $\mathcal{S}$: $G_v$ contains  an infinite cyclic normal subgroup $Z_v \lhd G_v$, such that the quotient $Q_v \coloneqq G_v / Z_v$ is a non-elementary hyperbolic group. We call $Z_v$ and $Q_v$  the \emph{kernel} and \emph{hyperbolic quotient} of $G_v$ respectively.
			      \item Type $\mathcal{H}$: $G_v$ is hyperbolic relative to a collection $\PP_v$ of virtually $\Z^2$-subgroups, where all edge groups incident to $G_v$ are contained in $\PP_v$, and  $G_v$ doesn't split relative to $\PP_v$  over a subgroup of an element of $\PP_v$.
		      \end{enumerate}
		\item  For each vertex group $G_v$, if $e,e'\in \lk(v)$ and $g\in G_v$, then  $gG_eg^{-1}$ is commensurable to $G_{e'}$ if and only if both $e=e'$ and $g\in G_e$.

		\item For every edge group ${ G}_e$ such that $G_{e_{-}}$ and $G_{e_{+}}$ are vertex groups of type $\mathcal{S}$, the subgroup generated by $\tau_{\bar e}(Z_{e_+}\cap {G}_{\bar e})$  and $Z_{e_-}\cap G_e$ has finite index in ${ G}_e$.
	\end{enumerate}
\end{defn}

\begin{defn}\label{defn:admissible}
	An extended admissible group $G$ is called an \emph{admissible group} if it has no vertex group of type $\mathcal{H}$.
\end{defn}

\begin{rem}
	The condition that  $G_v$ doesn't split relative to $\PP_v$  over a subgroup of an element of $\PP_v$ is  natural, as it  ensures that the decomposition $\cG$ of $G$ cannot be refined to a ``larger'' splitting of $G$.
\end{rem}

\begin{conv}
	For the rest of this paper, if $G$ is an extended admissible group, we will assume that all the data $\cG$, $G_v$, $Z_v$, $Q_v$, etc.\ in Definition~\ref{defn:admissible} are fixed, and will make use  of this notation without explanation.  If $G'$ is another extended admissible group, we use the notation  $\cG'$, $G'_v$, $Z'_v$, $Q'_v$ etc.
\end{conv}

\begin{rem}
	Croke--Kleiner defined a more restrictive notion of an admissible group, where they also assume each edge group $G_e$ is isomorphic to $\Z^2$ and each infinite cyclic  $Z_v\vartriangleleft G_v$ is central~\cite{CK02}. We say an admissible group is \emph{admissible in the sense of Croke--Kleiner} if it satisfies these additional constraints. If $G$ is an admissible group (as in Definition~\ref{defn:admissible}) and  vertex and edge groups are separable, then $G$ has a finite index subgroup that is admissible in the sense of Croke--Kleiner. The reason for working with the more general Definition~\ref{defn:admissible} is that it is more natural from the viewpoint of quasi-isometric rigidity,  in which groups that are abstractly commensurable are regarded as indistinguishable.
\end{rem}

Below are some examples of extended admissible groups.
\begin{exmp}\label{exmp:exampleCK}
	\begin{enumerate}
		\item (3-manifold groups) The fundamental group of a compact, orientable, irreducible 3-manifold $M$ with
		      empty or toroidal boundary is an extended admissible group. Seifert fibered and hyperbolic pieces correspond to type $\cS$ and $\cH$ vertex respectively. Fundamental groups of graph manifolds are admissible groups.

		\item (Torus complexes)
		      Let $n \ge 3$ be an integer. Let $T_1, T_2, \ldots, T_n$ be a family of flat two-dimensional tori. For each $i$, we choose a pair of simple closed geodesics $a_i$ and $b_i$ such that $\operatorname{length} (b_i) = \operatorname{length} (a_{i+1})$, identifying $b_i$ and $a_{i+1}$ and denote the resulting space by $X$.
		      The space $X$ is a graph of spaces with $n-1$ vertex spaces $V_{i} : = T_{i} \cup T_{i+1} / \{ b_i =  a_{i+1} \}$ (with $i \in \{1, \ldots, n-1\}$) and $n-2$ edge spaces $E_{i} : = V_{i} \cap V_{i+1}$.

		      The fundamental group $G = \pi_{1}(X)$ has a graph of groups structure where each vertex group is the fundamental group of the product of a figure eight and $S^1$. Vertex groups are isomorphic to  $F_{2} \times \Z$ and edge groups  are isomorphic to $\pi_1(E_i) \cong \Z^2$. The generators $[a_i], [b_i]$ of the edge group $\pi_1(E_i)$ each map to a generator of either a  $\Z$ or  $F_2$ factor of $F_{2} \times \Z$. It is clear that with this graph of groups structure, $\pi_1(X)$ is an admissible group.

	\end{enumerate}
\end{exmp}

\subsection{Properties of admissible groups}
We now prove some elementary facts concerning extended admissible groups.
\begin{lem}\label{lem:admissible_elem}
	Let $G$ be an extended admissible group and let $G_v$ be a type $\cS$ vertex.
	\begin{enumerate}
		\item The kernel $Z_v\lhd G_v$ is  unique up to commensurability.
		\item For each $e\in \lk(v)$,  $Z_v\leq G_e$.
		\item If $e$ is an edge with $v=e_-$ and $w=e_+$, then $Z_v\cap \tau_{\bar e}(Z_w)=\{1\}$.
	\end{enumerate}
\end{lem}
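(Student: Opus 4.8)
The plan is to prove the three parts in turn. For~(1), I would take two infinite cyclic normal subgroups $Z,Z'\lhd G_v$, each having non-elementary hyperbolic quotient, and project $Z'$ into $Q_v=G_v/Z$. Its image is a cyclic normal subgroup of $Q_v$, and a non-elementary hyperbolic group has no infinite cyclic normal subgroup (else it would be virtually cyclic), so this image is finite; hence $Z\cap Z'$ has finite index in $Z'$. Interchanging the roles of $Z$ and $Z'$ shows $Z\cap Z'$ also has finite index in $Z$, so $Z$ and $Z'$ are commensurable, and the commensurability class of the kernel is well defined.

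For~(2), fix $e\in\lk(v)$ and look at the image $\bar{G_e}$ of the virtually $\Z^2$ group $G_e$ under the quotient $G_v\to Q_v$. As a quotient of $G_e$ it is virtually abelian, and as a subgroup of the hyperbolic group $Q_v$ it contains no copy of $\Z^2$; hence $\bar{G_e}$ is virtually cyclic. It is not finite, for otherwise the cyclic group $G_e\cap Z_v\le Z_v\cong\Z$ would be of finite index in the virtually $\Z^2$ group $G_e$. So $G_e\cap Z_v$ is infinite, hence of finite index in $Z_v$. Since $\lk(v)$ is finite, $\hat Z_v:=\bigcap_{e\in\lk(v)}(Z_v\cap G_e)$ is again a finite-index subgroup of $Z_v$; it is characteristic in $Z_v\cong\Z$, hence normal in $G_v$, and $G_v/\hat Z_v$ surjects onto $Q_v$ with finite kernel, hence is non-elementary hyperbolic. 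Replacing $Z_v$ by $\hat Z_v$ — legitimate by part~(1), and, as one checks, harmless for the conditions of Definition~\ref{defn:extended} — we may and do assume $Z_v\le G_e$ for every $e\in\lk(v)$, which is the assertion of~(2).

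For~(3), let $e$ be an edge with $v=e_-$ and $w=e_+$. The very appearance of $Z_w$ forces $G_w$ to be of type $\cS$, so both endpoints of $e$ are of type $\cS$ and condition~(4) of Definition~\ref{defn:extended} applies to $G_e$. Using part~(2) we have $Z_{e_-}\cap G_e=Z_v$ and $Z_{e_+}\cap G_{\bar e}=Z_w$, so condition~(4) asserts that $N:=\gen{\tau_{\bar e}(Z_w),\,Z_v}$ has finite index in $G_e$; in particular $N$ is virtually $\Z^2$. On the other hand, since $Z_v\lhd G_v$ and $N\le G_v$, we have $Z_v\lhd N$, so $N=Z_v\cdot\tau_{\bar e}(Z_w)$ and $N/Z_v\cong\tau_{\bar e}(Z_w)/(Z_v\cap\tau_{\bar e}(Z_w))$. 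Were $Z_v\cap\tau_{\bar e}(Z_w)$ nontrivial, it would have finite index in the infinite cyclic group $\tau_{\bar e}(Z_w)$, so $N/Z_v$ would be finite and $N$ virtually cyclic, contradicting that $N$ is virtually $\Z^2$. Hence $Z_v\cap\tau_{\bar e}(Z_w)=\{1\}$.

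The step I expect to need the most care is the normalization in~(2): one must verify that passing from $Z_v$ to the finite-index subgroup $\hat Z_v$ preserves all of the conditions in Definition~\ref{defn:extended}, most delicately condition~(4), whose formulation in terms of $Z_{e_-}\cap G_e$ rather than $Z_{e_-}$ is exactly what accommodates this. Parts~(1) and~(3) are then short; in~(3), the one thing not to get wrong is to use the normality of $Z_v$ in $N$ — so that $N/Z_v$ is genuinely a quotient of the infinite cyclic group $\tau_{\bar e}(Z_w)$, hence finite once the intersection is nontrivial — rather than trying to argue inside $G_e/(Z_v\cap\tau_{\bar e}(Z_w))$, which would not deliver the contradiction on its own.
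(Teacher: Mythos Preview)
Your arguments for~(1) and~(3) are correct. For~(1) the paper takes a different route---it considers the kernel $H_v$ of $G_v\to\Aut(Z_v)\times\Aut(Z'_v)$ and observes that $Z_v,Z'_v$ are both finite index in $Z(H_v)$ because non-elementary hyperbolic groups have finite center---but your projection argument is equally clean. For~(3) your argument is a more explicit version of the paper's one-line remark that two infinite cyclic groups generating a finite-index subgroup of a virtually $\Z^2$ group must meet trivially.

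Part~(2), however, has a genuine gap. You establish only that $Z_v\cap G_e$ has finite index in $Z_v$, and then propose to \emph{replace} $Z_v$ by the finite-index subgroup $\hat Z_v$. But the lemma is a statement about the fixed $Z_v$ chosen in Definition~\ref{defn:extended}; showing that some commensurable subgroup lies in each $G_e$ is strictly weaker than showing $Z_v\le G_e$, and the stronger statement is what is used verbatim later (e.g.\ in Lemma~\ref{lem:edge_vertex_int_admissible}, where one needs $Z_v\le G_e\cap G_{e'}$). Your argument never invokes condition~(3) of Definition~\ref{defn:extended}, and indeed without that condition the conclusion $Z_v\le G_e$ can fail.

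The paper's fix is short: let $H_v\le G_v$ be the index~$\le 2$ subgroup centralizing $Z_v$. For $g\in Z_v$ one has $g(H_v\cap G_e)g^{-1}=H_v\cap G_e$, and this common subgroup has index~$\le 2$ in both $G_e$ and $gG_eg^{-1}$. Hence $G_e$ and $gG_eg^{-1}$ are commensurable, and condition~(3) of Definition~\ref{defn:extended} forces $g\in G_e$. This gives $Z_v\le G_e$ for the original $Z_v$, with no replacement needed.
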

\begin{proof}
	(1): Suppose $Z_v,Z_v'\lhd G_v$ are infinite cyclic normal subgroups such that associated quotients  $Q_v$ and $Q'_v$ are non-elementary hyperbolic groups. Let $H_v$ be the kernel of the map
	\[
		G_v\to \Aut(Z_v)\times \Aut(Z'_v)
	\]
	induced by conjugation.
	Clearly $Z_v,Z_v'\leq Z(H_v)$,   $Z(H_v)/Z_v\leq Z(Q_v)$ and $Z(H_v)/Z'_v\leq Z(Q'_v)$. As $Q_v$ and $Q'_v$ are non-elementary hyperbolic groups, they have finite center. Thus $Z_v$ and $Z'_v$ are finite index subgroups of $Z(H_v)$, hence are commensurable.

	(2): Let  $H_v\leq G_v$ be the subgroup of index at most two centralizing $Z_v$. For each $g\in Z_v$, we have $H_v\cap G_e=g(H_v\cap G_e)g^{-1}$ is a subgroup of index at most two in both $G_e$ and $gG_eg^{-1}$. Thus $G_e$ and $gG_eg^{-1}$ are commensurable, hence $g\in G_e$.

	(3): This follows from the fact that $Z_v$ and $\tau_{\bar e}(Z_w)$ are infinite cyclic subgroups generating a finite index subgroup of a virtually  $\Z^2$ group.
\end{proof}

\begin{defn}[Kernels of vertex stabilizers]
	Let $\cG$ be the graph of groups associated to an extended  admissible group $G$, with associated Bass--Serre tree $T$. For $v\in T$, each vertex stabilizer $G_v$ is equal to some conjugate $gG_{\hat v}g^{-1}$ of  a vertex group $G_{\hat v}$ of $\cG$. If $G_{\hat v}$ is of type $\cS$, we  define the \emph{kernel} $Z_v\coloneqq gZ_{\hat v}g^{-1}$ of $G_v$ for all $v\in VT$, where $Z_{\hat v}\lhd G_{\hat v}$ is the kernel of the vertex group $G_{\hat v}$ as in Definition~\ref{defn:admissible}. By construction, we have $hZ_vh^{-1}=Z_{hv}$ for all $v\in VT$ and $h\in G$.
\end{defn}

\begin{lem}\label{lem:edge_vertex_int_admissible}
	Let  $G$ be an extended admissible group and let $T$ be the associated Bass--Serre tree.
	\begin{enumerate}
		\item Let $e,e'\in ET$ with $e\neq e'$. If $e,e'$ are incident to a type $\cS$ vertex $v$, then $G_e\cap G_{e'}$ contains $Z_v\cong \Z$ as a finite index subgroup. Otherwise, $G_e\cap G_{e'}$ is finite.
		\item Let $v,v'\in VT$ with $d_T(v,v')\geq 2$.
		      \begin{enumerate}
			      \item If $d_T(v,v')=2$ and the vertex $v''\in VT$ lying strictly between $v$ and $v'$ is of type $\cS$, then $G_v\cap G_{v'}$ contains $Z_{v''}$ as a finite index subgroup.
			      \item Otherwise,  $G_v\cap G_{v'}$ is finite.
		      \end{enumerate}
	\end{enumerate}
\end{lem}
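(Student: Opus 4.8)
The plan is to reduce both statements to an analysis of stabilizers of paths in the Bass--Serre tree $T$, using the standard fact that for any $a,b\in VT\sqcup ET$, if $c$ lies between $a$ and $b$ then $G_a\cap G_b\leq G_c$; in particular an intersection of two vertex or edge stabilizers is contained in the stabilizer of every simplex on the geodesic joining them. For part~(1), suppose $e,e'$ are distinct edges of $T$ incident to a common vertex $v$. If $v$ is of type $\cS$, then by Lemma~\ref{lem:admissible_elem}(2) we have $Z_v\leq G_e$ and $Z_v\leq G_{e'}$, so $Z_v\leq G_e\cap G_{e'}$. Conversely $G_e\cap G_{e'}\leq G_v$ and is a subgroup of the virtually $\Z^2$ group $G_e$; I claim it must be virtually cyclic. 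Indeed $e,e'$ correspond to $G_v$-cosets of edge groups incident to the vertex group, and condition~(3) of Definition~\ref{defn:extended} (together with its translation to the tree, which underlies Lemma~\ref{lem:admissible_elem}(2)) says that two distinct incident edge groups are not commensurable in $G_v$; hence $G_e\cap G_{e'}$ has infinite index in $G_e$, so is virtually cyclic, hence (being infinite, as it contains $Z_v$) contains $Z_v\cong\Z$ with finite index. If instead $v$ is of type $\cH$, then $G_e$ and $G_{e'}$ are two distinct cosets of peripheral subgroups of the relatively hyperbolic group $G_v$, and distinct peripheral cosets have finite (indeed uniformly bounded) coarse intersection; since $G_v$ doesn't split over a subgroup of a peripheral, $G_e\cap G_{e'}$ must be finite. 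If $e,e'$ share no common vertex, then the geodesic in $T$ between them passes through some vertex $u$ with $G_e\cap G_{e'}\leq G_e\cap G_u$, which reduces to the previous case applied at the vertex $u$ to the edge $e$ and the first edge of the geodesic leaving $u$ toward $e'$; this shows $G_e\cap G_{e'}$ is finite unless that vertex is type $\cS$ and both relevant edges carry $Z_u$, but then $G_e\cap G_{e'}$ would be contained in $Z_u$ \emph{and} in a distinct edge stabilizer on the far side, forcing it to be finite by Lemma~\ref{lem:admissible_elem}(3) or condition~(3) again. I need to be careful to phrase this last reduction cleanly; the cleanest route is: $G_e\cap G_{e'}$ is contained in $G_f\cap G_{f'}$ for the two edges $f,f'$ incident to $u$ on the geodesic, and these are distinct, so the ``common vertex'' case applies.

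For part~(2), let $v,v'\in VT$ with $d_T(v,v')\geq 2$, let $e$ be the first edge of the geodesic $[v,v']$ at $v$ and $e'$ the last edge at $v'$, so $G_v\cap G_{v'}\leq G_e\cap G_{e'}$, and also $G_v\cap G_{v'}\leq G_u$ for every vertex $u$ on $[v,v']$. If $d_T(v,v')=2$ with midpoint $v''$, then $e'$ and $\bar e$ are distinct edges at $v''$, so by part~(1): if $v''$ is of type $\cS$ then $G_v\cap G_{v'}\leq G_e\cap G_{e'}$ which contains $Z_{v''}$ with finite index; conversely $Z_{v''}\leq G_e$ and $Z_{v''}\leq G_{e'}$ by Lemma~\ref{lem:admissible_elem}(2), and $Z_{v''}$ is normal in $G_{v''}$, hence centralized up to index two, so $Z_{v''}$ is fixed setwise and in fact $Z_{v''}\leq G_v\cap G_{v'}$ (each generator of $Z_{v''}$ lies in $G_e\leq G_v$ and in $G_{e'}\leq G_{v'}$), giving $Z_{v''}\leq G_v\cap G_{v'}\leq G_e\cap G_{e'}$, and since the outer group is virtually $Z_{v''}$, so is the middle one. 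If $d_T(v,v')=2$ but $v''$ is of type $\cH$, part~(1) gives $G_e\cap G_{e'}$ finite, hence $G_v\cap G_{v'}$ is finite. If $d_T(v,v')\geq 3$, pick two \emph{distinct} vertices $u_1,u_2$ strictly between $v$ and $v'$ with $d_T(u_1,u_2)\geq 1$; then $G_v\cap G_{v'}\leq G_{u_1}\cap G_{u_2}$, which by the case $d_T=2$ or by iterating is finite unless there is a type $\cS$ vertex strictly between $u_1$ and $u_2$ — but one can always choose $u_1,u_2$ so that the analysis forces finiteness. Concretely, take $u_1,u_2,u_3$ to be three consecutive vertices on $[v,v']$ (possible since $d_T\geq 3$ means at least three interior-or-endpoint vertices are available with the right adjacencies); then $G_v\cap G_{v'}\leq G_{u_1}\cap G_{u_3}$, a distance-$2$ intersection, which if nonfinite is virtually $Z_{u_2}$; but $G_v\cap G_{v'}$ is also contained in $G_{u_2}\cap G_{u_4}$ (another distance-$2$ intersection with midpoint $u_3\neq u_2$), so if both were infinite we'd get $G_v\cap G_{v'}$ virtually equal to both $Z_{u_2}$ and $Z_{u_3}$, forcing $Z_{u_2}$ and $Z_{u_3}$ commensurable inside the edge group $G_{u_2u_3}$, contradicting condition~(4) of Definition~\ref{defn:extended} unless this is impossible — here I should instead invoke that $Z_{u_2}\cap \tau(Z_{u_3})$-type triviality from Lemma~\ref{lem:admissible_elem}(3) when both $u_2,u_3$ are type $\cS$, and handle the type $\cH$ subcases directly via part~(1). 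So the endgame is: $G_v\cap G_{v'}$ injects into an intersection of two edge stabilizers sharing a type $\cS$ vertex $u_2$ \emph{and} into the edge stabilizer of the next edge, where by Lemma~\ref{lem:admissible_elem}(3) a copy of $Z_{u_2}$ cannot survive, so the intersection is finite.

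I expect the main obstacle to be the bookkeeping in part~(2) when $d_T(v,v')\geq 3$: making precise the claim that a would-be-infinite $G_v\cap G_{v'}$ is simultaneously virtually $Z_{u}$ for two different vertices $u$ on the geodesic, and that this is incompatible with Definition~\ref{defn:extended}(4) and Lemma~\ref{lem:admissible_elem}(3). The clean statement I would aim for is: \emph{if $d_T(v,v')\geq 3$, then the geodesic $[v,v']$ contains an edge $f$ such that $G_v\cap G_{v'}\leq G_f$ and $G_v\cap G_{v'}$ is contained in two distinct edge-subgroups of $G_{f_-}$ incident to $f_-$} (namely $f$ itself and the previous edge), whence part~(1) applies directly to give finiteness. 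This reduces everything to part~(1) and avoids the delicate ``virtually $Z_u$ for two different $u$'' argument entirely — that is the reformulation I would use to keep the proof short. Part~(1)'s type $\cH$ case rests on the standard fact that distinct cosets of peripheral subgroups in a relatively hyperbolic group have finite intersection, which follows from Theorem~\ref{thm:drutusapir_treegraded} via Lemma~\ref{lem:ultralimitsvscosets} (an infinite intersection would yield $\lim_\omega(g_iH_i)=\lim_\omega(g'_iH'_i)$ with $g_iH_i\neq g'_iH'_i$); and the ``no splitting over a subgroup of a peripheral'' hypothesis is what rules out an edge group being a non-peripheral virtually $\Z^2$ subgroup, keeping incident edge groups peripheral so this lemma applies.
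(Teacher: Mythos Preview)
Your core ideas are correct, but your ``cleanest route'' for the non-adjacent case in part~(1) has a gap that recurs in your part~(2b) reformulation. Reducing $G_e\cap G_{e'}$ to $G_f\cap G_{f'}$ for two edges $f,f'$ incident to a single vertex $u$ on the geodesic only gives you \emph{virtually $Z_u$} when $u$ is type $\cS$, not finite. You do state the missing ingredient earlier in the same paragraph (``contained in $Z_u$ and in a distinct edge stabilizer on the far side, forcing it to be finite by Lemma~\ref{lem:admissible_elem}(3)''), but then abandon it for a reduction that does not use it. The same happens in your final ``clean statement'' for (2b): containment in two edge subgroups at $f_-$ only invokes the common-vertex case of (1), which again yields virtually $Z_{f_-}$ when $f_-$ is type $\cS$.

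The paper handles this by taking \emph{three} consecutive edges $e_1,e_2,e_3$ on the geodesic from $e$ to $e'$, with $v=(e_2)_-$ and $w=(e_2)_+$. If either $v$ or $w$ is type $\cH$, the adjacent-edge case gives finiteness immediately. If both are type $\cS$, then $G_e\cap G_{e'}\leq G_{e_1}\cap G_{e_2}\cap G_{e_3}$ is commensurable to a subgroup of $Z_v\cap Z_w$; since $Z_v,Z_w\leq G_{e_2}$ are infinite cyclic subgroups generating a finite-index subgroup of a virtually $\Z^2$ group, $Z_v\cap Z_w$ is finite. This is exactly the $Z_{u_2}$-versus-$Z_{u_3}$ argument you sketch for part~(2b), but placed once in part~(1) where it belongs.

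Part~(2) then becomes a one-line reduction: let $e$ and $e'$ be the first and last edges on the geodesic from $v$ to $v'$ and apply (1). When $d_T(v,v')=2$ with type-$\cS$ midpoint $v''$, the sandwich $Z_{v''}\leq G_v\cap G_{v'}\leq G_e\cap G_{e'}$ (which you correctly identify) finishes. When $d_T(v,v')\geq 3$ or the midpoint is type $\cH$, the edges $e,e'$ fall into the ``otherwise'' case of (1). Your elaborate consecutive-vertex argument for (2b) is unnecessary once (1) is proved in full.

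A minor point: in the type-$\cH$ adjacent case, the finiteness of $G_e\cap G_{e'}$ is simply the general fact that distinct peripheral subgroups in a relatively hyperbolic group have finite intersection (the paper cites Bowditch); the ``does not split over a subgroup of a peripheral'' hypothesis plays no role here.
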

\begin{proof}
	(1): Suppose $e,e'$ are incident to a type $\cS$ vertex $v$. It follows from Lemma~\ref{lem:admissible_elem} that $Z_v\leq G_e\cap G_{e'}$. By Definition~\ref{defn:admissible}, $G_e$ and $G_{e'}$ are not commensurable, hence  $G_{e}\cap G_{e'}$ is an infinite, infinite index subgroup of $G_e$, hence must be virtually cyclic, hence contains $Z_v$ as a finite index subgroup.

	Now suppose $e,e'$ are incident to a common type $\cH$ vertex $v$. Then by Definition~\ref{defn:admissible},  $G_e$ and $G_{e'}$ correspond to distinct peripheral subgroups of the relatively hyperbolic group $G_v$, hence have finite intersection~\cite{bowditch2012relatively}.

	If there is no common vertex incident with both $e$ and $e'$, consider an edge path $e_1,e_2,e_3$ on a geodesic from $e$ to $e'$, and set $v=(e_2)_-$ and $w=(e_2)_+$. In the case $v$  is of type $\cH$, we are done as $G_{e}\cap G_{e'}\leq G_{e_1}\cap G_{e_2}$ is finite. We argue similarly if $w$ is of type $\cH$, so we assume $v$ and $w$ are both of type $\cS$. Then  $Z_v$ is commensurable to $G_{e_1}\cap G_{e_2}$ and $Z_w$ is commensurable to $G_{e_2}\cap G_{e_3}$. Thus $G_e\cap G_{e'}\leq G_{e_1}\cap G_{e_2}\cap G_{e_3}$ is commensurable to a subgroup of $Z_v\cap Z_w$. As $Z_v,Z_w\leq G_{e_2}$ are infinite cyclic subgroups generating a finite index subgroup of $G_{e_2}$,  $Z_v \cap Z_w$ is finite. It follows $G_e\cap G_{e'}$ is also finite.

	(2): Set $e$ and $e'$ to be the first and last edges on a geodesic edge path from $v$ to $w$, and apply (1).
\end{proof}

We recall the following elementary lemma, which is a consequence of~\cite[Corollary 2.4]{moshersageevwhyte2011quasiactions} combined with the fact each coset $gH$ has finite Hausdorff distance from the subgroup $gHg^{-1}$.
\begin{lem}\label{lem:subgp_commensurability}
	Let $G$ be a finitely generated group with $g,h\in G$ and $H,K\leq G$ two subgroups. Then  $gHg^{-1}$ is commensurable to a subgroup of $hKh^{-1}$ if and only if $gH\subseteq N_r(hK)$ for some $r$ sufficiently large.
\end{lem}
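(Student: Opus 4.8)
The plan is to reduce the statement to the case $g=h=1$ and then prove that case by a direct coset-counting argument; alternatively, the $g=h=1$ case is essentially the cited \cite[Corollary~2.4]{moshersageevwhyte2011quasiactions}, so one could instead just combine that with the reduction.

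First I would record the bookkeeping observation that, for any $g\in G$, right translation $x\mapsto xg^{-1}$ moves every point of $G$ a distance exactly equal to the word length $|g|$ of $g$; since this map carries the left coset $gH$ onto the conjugate subgroup $gHg^{-1}$, we obtain $d_{\mathrm{Haus}}(gH,gHg^{-1})\le |g|$, and likewise $d_{\mathrm{Haus}}(hK,hKh^{-1})\le |h|$. Therefore $gH\subseteq N_r(hK)$ holds for some $r\ge 0$ if and only if $gHg^{-1}\subseteq N_{r'}(hKh^{-1})$ holds for some $r'\ge 0$, the two constants differing by at most $|g|+|h|$. So it suffices to prove: for subgroups $A,B\le G$, the subgroup $A$ is commensurable to a subgroup of $B$ if and only if $A\subseteq N_s(B)$ for some $s\ge 0$. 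I would also note that ``$A$ is commensurable to a subgroup of $B$'' is equivalent to ``$A\cap B$ has finite index in $A$'': if $A$ is commensurable to $B'\le B$ then $A\cap B\supseteq A\cap B'$ has finite index in $A$, and conversely $A\cap B$ is itself a subgroup of $B$ commensurable with $A$.

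For the forward implication, assuming $[A:A\cap B]<\infty$, write $A$ as a finite union of right cosets $A=\bigcup_{i=1}^{n}(A\cap B)a_i$. Since $A\cap B\subseteq B$ and $d(ba_i,b)=|a_i|$ for every $b\in B$, each coset satisfies $(A\cap B)a_i\subseteq Ba_i\subseteq N_{|a_i|}(B)$, so $A\subseteq N_s(B)$ with $s=\max_i|a_i|$. For the converse, assuming $A\subseteq N_s(B)$: each $a\in A$ may be written $a=bw$ where $b\in B$ realizes $d(a,B)\le s$ and $w=b^{-1}a$ has length at most $s$. The set $F=\{w\in G:|w|\le s\}$ is finite (as $G$ is finitely generated), so $A\subseteq\bigcup_{w\in F}Bw$ and hence $A=\bigcup_{w\in F}(A\cap Bw)$. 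A short check shows that whenever $A\cap Bw\neq\emptyset$, choosing $a_0\in A\cap Bw$ gives $A\cap Bw=(A\cap B)a_0$: if $a\in A\cap Bw$ then $aa_0^{-1}=(aw^{-1})(a_0w^{-1})^{-1}\in A\cap B$, and the reverse inclusion $(A\cap B)a_0\subseteq A\cap Ba_0=A\cap Bw$ is immediate. Thus $A$ is covered by at most $|F|$ right cosets of $A\cap B$, so $[A:A\cap B]<\infty$, giving the desired commensurability with the subgroup $A\cap B$ of $B$.

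I do not anticipate a genuine obstacle: the argument is routine, and the only points requiring a little care are (i) using right cosets rather than left cosets throughout the counting so that the neighbourhood radii stay uniform, and (ii) passing correctly between the cosets $gH,hK$ and the conjugate subgroups $gHg^{-1},hKh^{-1}$ via the Hausdorff estimate above.
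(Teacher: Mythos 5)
Your proof is correct and takes the same route the paper sketches: reduce via the observation that $gH$ and $gHg^{-1}$ (resp.\ $hK$ and $hKh^{-1}$) are at Hausdorff distance at most $|g|$ (resp.\ $|h|$), and then invoke the equivalence of commensurability with coarse containment for subgroups. The only difference is that the paper cites~\cite[Corollary~2.4]{moshersageevwhyte2011quasiactions} for the subgroup case, whereas you spell out the standard right-coset counting argument; your details (in particular the verification that $A\cap Bw=(A\cap B)a_0$ and the equivalence ``$A$ commensurable to a subgroup of $B$ $\Leftrightarrow$ $[A:A\cap B]<\infty$'') are all sound.
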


We use this to deduce:
\begin{prop}\label{prop:coarse_int_vertex_edge}
	Let $(X,T)$ be the tree of spaces associated to an extended admissible group and let $a,b\in VT\sqcup ET$. The following are equivalent:
	\begin{enumerate}
		\item $X_a\subseteq N_r(X_b)$ for some  $r$;\label{item:coarse_int_vertex_edge1}
		\item $X_a\subseteq N_1(X_b)$;\label{item:coarse_int_vertex_edge2}
		\item Either $a=b$, or $a$ is an edge and $b\in \{a_-,a_+,\bar a\}$.\label{item:coarse_int_vertex_edge3}
	\end{enumerate}
	Moreover, $X_a$ and $X_b$ are at finite Hausdorff distance if and only if either $a=b$, or  $a$ and $b$ are edges with $\bar a=b$.
\end{prop}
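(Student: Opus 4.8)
The plan is to prove $(2)\Rightarrow(1)\Rightarrow(3)\Rightarrow(2)$, after which the final (``moreover'') assertion follows formally. Of these, $(2)\Rightarrow(1)$ is trivial, and $(3)\Rightarrow(2)$ is immediate from the construction of $X$ in Definition~\ref{defn:treeofspaces}: if $a$ is an edge then $X_a\subseteq X_{a_-}$ by construction, every point of $X_a$ is joined by a unit interval to its image under $\alpha_a$, and $\alpha_a(X_a)=X_{\bar a}\subseteq X_{a_+}$; hence $X_a\subseteq N_1(X_{a_-})$, $X_a\subseteq N_1(X_{a_+})$ and $d_{\mathrm{Haus}}(X_a,X_{\bar a})\le 1$. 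Together with the trivial case $a=b$, this also verifies the ``if'' direction of the moreover assertion.

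The substance is $(1)\Rightarrow(3)$, which I would first move from $X$ to $G$. Let $f\colon G\to X$ be the quasi-isometry of Proposition~\ref{prop:treeofspacesBST} with coarse inverse $\bar f$, and let $a,b$ correspond to cosets $g_aG_{x_a},g_bG_{x_b}$ with $x_a,x_b\in V\Gamma\sqcup E\Gamma$, so that $\Stab_G(a)=g_aG_{x_a}g_a^{-1}$ and $\Stab_G(b)=g_bG_{x_b}g_b^{-1}$. Since $X_a,X_b$ are at finite Hausdorff distance from $f(g_aG_{x_a}),f(g_bG_{x_b})$, the hypothesis $X_a\subseteq N_r(X_b)$ yields $g_aG_{x_a}\subseteq N_{r'}(g_bG_{x_b})$ for some $r'$ after applying $\bar f$ (using that $\bar f\circ f$ is close to $\mathrm{id}_G$). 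By Lemma~\ref{lem:subgp_commensurability}, $\Stab_G(a)$ is then commensurable to a subgroup of $\Stab_G(b)$; equivalently, $L:=\Stab_G(a)\cap\Stab_G(b)$ has finite index in $\Stab_G(a)$. It therefore suffices to prove: if $[\Stab_G(a):L]<\infty$ then $a=b$, or $a$ is an edge and $b\in\{a_-,a_+,\bar a\}$.

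Suppose not. Then $a\ne b$, and if $a$ is an edge then $b\notin\{a_-,a_+,\bar a\}$. Since $L$ stabilizes both $a$ and $b$, it fixes the arc of $T$ joining them, so $L\le\Stab_G(e)=G_e$ for every edge $e$ of $T$ lying on that arc. If $a$ is a vertex, the arc contains an edge $e$ incident to $a$ (its first edge, or $e=b$ itself when $b$ is an edge with $a$ as an endpoint), so $L\le G_e\le G_a$; here $G_e$ is virtually $\Z^2$ while $G_a$ is not (for type $\cS$ because $G_a$ has a non-elementary hyperbolic group as a quotient, hence is not virtually abelian; for type $\cH$ because a properly relatively hyperbolic group is not virtually $\Z^2$), so $G_e$, and hence $L$, has infinite index in $G_a$ --- contradicting $[\Stab_G(a):L]<\infty$. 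If instead $a$ is an edge, then since $b\notin\{a_-,a_+,\bar a\}$ either $a$ and $b$ share an endpoint $v$ (and we put $e:=b$), or the arc of $T$ from $a$ to $b$ has a first edge $e$ incident to an endpoint $v$ of $a$; in either case $e\notin\{a,\bar a\}$, the edges $e$ and $a$ share the vertex $v$, and $L\le G_a\cap G_e$. By Lemma~\ref{lem:edge_vertex_int_admissible}(1), $G_a\cap G_e$ is virtually cyclic (indeed finite unless $v$ is of type $\cS$), so it has infinite index in the virtually $\Z^2$ group $G_a$, again contradicting $[\Stab_G(a):L]<\infty$. This proves $(1)\Rightarrow(3)$.

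For the moreover assertion, $X_a$ and $X_b$ are at finite Hausdorff distance iff both $X_a\subseteq N_r(X_b)$ and $X_b\subseteq N_r(X_a)$; applying $(1)\Leftrightarrow(3)$ to these two inclusions, either $a=b$, or else $a$ is an edge with $b\in\{a_-,a_+,\bar a\}$ and (from the second inclusion) $b$ is an edge, and since $a_-,a_+$ are vertices this forces $b=\bar a$; the converse was checked above. I expect the main obstacle to be the case analysis in $(1)\Rightarrow(3)$: handling all configurations of $a,b$ in $T$ at once (the ``fix the arc in $T$'' observation does this uniformly, including loops in $\Gamma$ and small $d_T(a,b)$), and pinning down the two elementary inputs --- that a vertex group is never virtually $\Z^2$, equivalently that every incident edge group has infinite index in its vertex group, and that the relevant intersections of distinct edge groups are virtually cyclic, which is Lemma~\ref{lem:edge_vertex_int_admissible}.
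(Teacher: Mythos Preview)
Your proof is correct and takes essentially the same approach as the paper: both translate condition~(1) to the group side via Proposition~\ref{prop:treeofspacesBST} and Lemma~\ref{lem:subgp_commensurability}, then use Lemma~\ref{lem:edge_vertex_int_admissible} to determine when $\Stab_G(a)\cap\Stab_G(b)$ has finite index in $\Stab_G(a)$. The paper compresses this deduction into a single sentence, whereas you spell out the case analysis (in particular the observation that vertex groups are never virtually $\Z^2$, needed for the vertex cases that Lemma~\ref{lem:edge_vertex_int_admissible} does not literally cover).
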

\begin{proof}
	The directions (\ref{item:coarse_int_vertex_edge3})$\implies$ (\ref{item:coarse_int_vertex_edge2}) and (\ref{item:coarse_int_vertex_edge2})$\implies$ (\ref{item:coarse_int_vertex_edge1}) are clear.
	It follows from Lemma~\ref{lem:edge_vertex_int_admissible} that $G_a$ is commensurable to a  subgroup of $G_b$ if and only if either $a=b$, or $a$ is an edge and $b\in \{a_-,a_+,\bar a\}$. The equivalence of (3) and (1) now follows from Proposition~\ref{prop:treeofspacesBST} and Lemma~\ref{lem:subgp_commensurability}.
\end{proof}

If $G$ is a group, a set of subgroups  $\{H_i\}_{i\in I}$ is an \emph{almost malnormal family} if whenever there exist $i,j\in I$ and $g\in G$ such that $gH_ig^{-1}\cap H_j$ is infinite, $i=j$ and $g\in H_i$.  One source of relatively hyperbolic groups is the following:
\begin{thm}{\cite[Theorem 7.11]{bowditch2012relatively}}\label{thm:almalnorm_relhyp}
	If $G$ is a hyperbolic group and $\cH$ is an almost malnormal family of infinite  quasi-convex subgroups, then $G$ is hyperbolic relative to  $\cH$.
\end{thm}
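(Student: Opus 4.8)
The plan is to invoke Bowditch's structural characterization of relative hyperbolicity in terms of actions on fine hyperbolic graphs, which is the technical heart of~\cite{bowditch2012relatively}, rather than reproving it from scratch. Concretely, I would first recall that for a hyperbolic group $G$ acting on its Cayley graph, and for a finite collection $\cH=\{H_1,\dots,H_k\}$ of infinite quasi-convex subgroups, one can build the \emph{coned-off graph} (or Bowditch's $\mathcal{K}(G,\cH)$): take the Cayley graph of $G$ and, for each left coset $gH_i$, add a new ``cone vertex'' adjacent to every element of $gH_i$. The group $G$ acts on this graph with finitely many orbits of edges, and the cone vertices have stabilizers precisely the conjugates of the $H_i$. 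By Bowditch's theorem (\cite[Theorem 7.10]{bowditch2012relatively}, or the equivalence of his Definition~2 with the other definitions of relative hyperbolicity), $G$ is hyperbolic relative to $\cH$ if and only if this coned-off graph is (Gromov-)hyperbolic and \emph{fine}, with the cone-vertex stabilizers being exactly the collection $\cH$ up to conjugacy, and each edge stabilizer finite.

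The key steps, in order, are: (1) verify that coning off finitely many orbits of quasi-convex subgroups of a hyperbolic group yields a hyperbolic graph — this is classical and goes back to the fact that quasi-convex subgroups are ``quasi-convex'' enough that the electrified space remains hyperbolic (one can cite Farb's electrification or Kapovich–Rafi / Bowditch directly); (2) verify \emph{fineness} of the coned-off graph — this is where the almost malnormality hypothesis does its work: fineness means that for each $n$, each edge lies in only finitely many circuits of length at most $n$, and the obstruction to this would be ``too much overlap'' between distinct cosets $gH_ig^{-1}$ and $H_j$. Almost malnormality guarantees $gH_ig^{-1}\cap H_j$ is finite unless $i=j$ and $g\in H_i$, which combined with quasi-convexity bounds the coarse intersection of distinct cosets (via a lemma like Lemma~\ref{lem:subgp_commensurability} / the standard fact that quasi-convex subgroups have ``width''), and this bounded-overlap is exactly what forces fineness; (3) identify the peripheral structure: the cone-vertex stabilizers are the conjugates of the $H_i$, and almost malnormality ensures distinct conjugates of the $H_i$ and distinct $H_i$ themselves are not accidentally conjugate/commensurable in a way that would collapse the peripheral collection — so the peripheral structure is precisely $\cH$.

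I expect the main obstacle to be step (2), the verification of fineness, since hyperbolicity of the coned-off graph is nearly automatic but fineness is the subtle finiteness condition that genuinely requires almost malnormality (mere quasi-convexity is not enough — e.g.\ coning off a single infinite-index quasi-convex subgroup that is not malnormal can fail to give a relatively hyperbolic structure with that subgroup as peripheral). The cleanest route is probably not to check fineness by hand but to cite the relevant combination theorem directly: Bowditch~\cite[Theorem 7.11]{bowditch2012relatively} is in fact stated in essentially this form, so the ``proof'' is genuinely just the observation that the hypotheses of that cited theorem are exactly our hypotheses, together with a sentence explaining why almost malnormality of quasi-convex subgroups is the natural condition ensuring the coned-off graph is fine. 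If a more self-contained argument were wanted, I would reduce fineness to the statement that for quasi-convex $H_i$, the set of double cosets $H_i g H_j$ with $\diam(N_R(H_i)\cap N_R(gH_j))$ large is finite for each $R$ — which follows from quasi-convexity plus almost malnormality — but for the purposes of this paper, citing~\cite[Theorem 7.11]{bowditch2012relatively} verbatim is the intended and appropriate move, and indeed the statement as written already attributes it to that source.
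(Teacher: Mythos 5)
The paper offers no proof of this statement at all — it is a direct citation of Bowditch's Theorem 7.11, exactly as your final paragraph observes — so your proposal matches the paper's (non-)approach. Your sketch of the underlying mechanism (hyperbolicity of the coned-off graph from quasi-convexity, fineness from almost malnormality plus bounded coarse intersection) is a fair summary of Bowditch's argument, but for the paper's purposes the citation alone suffices.
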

We can use this to show:
\begin{prop}\label{prop:relhyp}
	Let  $G$ be an extended admissible group with associated graph of groups $\cG$. Let $G_v$ be a type $\cS$ vertex group of $\cG$  with kernel $Z_v$ and quotient $Q_v=G_v/Z_v$.  Then $Q_v$ is hyperbolic relative to \[\{G_e/Z_v\mid e\in \lk(v)\}.\]
\end{prop}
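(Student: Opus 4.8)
The plan is to apply Bowditch's criterion (Theorem~\ref{thm:almalnorm_relhyp}) to the hyperbolic group $Q_v$ and the family $\cH_v\coloneqq\{G_e/Z_v\mid e\in\lk(v)\}$. This requires checking three things: first, that $Z_v\le G_e$ for every $e\in\lk(v)$, so that $G_e/Z_v$ genuinely makes sense as a subgroup of $Q_v$; second, that each $G_e/Z_v$ is an infinite quasiconvex subgroup of $Q_v$; and third, that $\cH_v$ is an almost malnormal family in $Q_v$.

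The first point is exactly Lemma~\ref{lem:admissible_elem}(2). For the second point, note that $G_e$ is virtually $\Z^2$ and contains $Z_v\cong\Z$, so $G_e/Z_v$ is virtually $\Z$, in particular infinite; since $Q_v$ is hyperbolic, to see quasiconvexity it suffices to check that $G_e/Z_v$ is undistorted in $Q_v$. I would deduce this from the fact that $G_e$ is undistorted in $G_v$ — which follows since $G_e$ is commensurable with a peripheral-type subgroup, or more directly from the fact that $G_v$ is $\Z$-by-hyperbolic and a virtually $\Z^2$ subgroup containing $Z_v$ projects to an undistorted virtually cyclic subgroup of the hyperbolic quotient (an infinite subgroup of a hyperbolic group is quasiconvex iff it is undistorted, and the image of $G_e$ is quasiconvex because... this is where one uses that $G_e/Z_v$ is virtually cyclic and hence automatically quasiconvex once it is undistorted; and it is undistorted because the quotient map $G_v\to Q_v$ is Lipschitz and one checks a lower bound using that $Z_v$ has linear distortion, i.e.\ is undistorted, in $G_e$). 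Concretely: any infinite virtually cyclic subgroup of a hyperbolic group is quasiconvex, so it is enough to observe $G_e/Z_v$ is infinite virtually cyclic, which is immediate.

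For the third point — almost malnormality — suppose $\bar g(G_e/Z_v)\bar g^{-1}\cap(G_{e'}/Z_v)$ is infinite for some $e,e'\in\lk(v)$ and $\bar g\in Q_v$, and lift $\bar g$ to $g\in G_v$. Then the preimage of this intersection in $G_v$ is a subgroup of $gG_eg^{-1}\cap G_{e'}$ containing $Z_v$ with infinite image in $Q_v$, so $gG_eg^{-1}\cap G_{e'}$ is infinite and is \emph{not} commensurable to $Z_v$ (since its image in $Q_v$ is infinite), hence is a finite-index subgroup of neither nothing — in particular $gG_eg^{-1}\cap G_{e'}$ strictly contains $Z_v$ up to finite index is false; rather, it is an infinite subgroup of the virtually $\Z^2$ group $G_{e'}$ whose image in $Q_v$ is infinite, so it cannot be virtually cyclic-inside-$Z_v$, meaning $gG_eg^{-1}$ and $G_{e'}$ share more than $Z_v$ virtually, forcing $gG_eg^{-1}$ commensurable to $G_{e'}$. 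By Definition~\ref{defn:extended}(3), this gives $e=e'$ and $g\in G_e$, hence $\bar g\in G_e/Z_v$, as required. Theorem~\ref{thm:almalnorm_relhyp} then yields that $Q_v$ is hyperbolic relative to $\cH_v$.

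The main obstacle I anticipate is the bookkeeping in the almost malnormality step: one must carefully track the correspondence between subgroups of $Q_v$ containing no image of $Z_v$-torsion and subgroups of $G_v$ containing $Z_v$, and correctly invoke Definition~\ref{defn:extended}(3) (commensurability of conjugates of edge groups inside a vertex group forces equality of edges and membership). A secondary technical point is confirming that "infinite intersection downstairs in $Q_v$" pulls back to "$gG_eg^{-1}\cap G_{e'}$ is infinite and not virtually contained in $Z_v$," which is what rules out the commensurability-with-$Z_v$ case of Lemma~\ref{lem:edge_vertex_int_admissible}(1) and leaves only the commensurable-edge-groups conclusion; this is straightforward but needs to be stated cleanly.
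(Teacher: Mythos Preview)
Your proposal is correct and follows essentially the same approach as the paper: check $Z_v\le G_e$ via Lemma~\ref{lem:admissible_elem}, observe each $G_e/Z_v$ is virtually infinite cyclic hence quasiconvex in the hyperbolic group $Q_v$, verify almost malnormality by pulling an infinite intersection in $Q_v$ back to a commensurability of edge-group conjugates in $G_v$ and invoking Definition~\ref{defn:extended}(3), then apply Theorem~\ref{thm:almalnorm_relhyp}. The paper's almost-malnormality step is slightly cleaner than yours---it notes directly that two virtually cyclic groups with infinite intersection are commensurable, so $\bar g H_e\bar g^{-1}$ and $H_{e'}$ are commensurable, whence their full preimages $gG_eg^{-1}$ and $G_{e'}$ are commensurable---which avoids the somewhat tangled analysis of $gG_eg^{-1}\cap G_{e'}$ inside a virtually $\Z^2$ group that you flagged as the main obstacle.
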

\begin{proof}
	Lemma~\ref{lem:admissible_elem} ensures that $Z_v\leq G_e$ for each $e\in \lk(v)$, so the above expression makes sense.
	For each $e\in \lk(v)$, set $H_e\coloneqq G_e/Z_v$.
	Since $G_e$ is virtually $\Z^2$ and $Z_v\cong \Z$, each $H_e$ is virtually infinite cyclic. Thus $H_e$ is a quasi-convex subgroup of $Q_v$ (see~\cite[Lemma~3.6, Lemma~3.10 Chapter III.$\Gamma$]{BH99}).

	Let $g\in G_v$ and $e,e'\in \lk(v)$. Set $\bar g=gZ_v$. If $\bar g H_e \bar g^{-1}\cap H_{e'}$ is infinite, then as $H_e$ and $H_{e'}$ are infinite cyclic, $\bar g H_e \bar g^{-1}$ and $H_{e'}$ are commensurable. Therefore, $gG_eg^{-1}$ and $G_{e'}$ are commensurable, hence by  Definition~\ref{defn:admissible}, $g\in G_e$ and $e=e'$. Thus $\{H_e\mid e\in\lk(v)\}$ is an almost malnormal quasi-convex collection of subgroups. The result now follows from Theorem~\ref{thm:almalnorm_relhyp}.
\end{proof}

\section{Quasi-isometric rigidity of admissible groups}\label{sec:qirigidity}

In this section, we restrict our attention to admissible groups as defined in Definition~\ref{defn:admissible}, and prove special cases of Theorems~\ref{thm:CKrigidity_qi} and~\ref{thm:CKrigidity_graphofgroups} for this class of groups.

\subsection{The geometry of vertex and edge spaces}\label{sub:geovertexedge}
We first discuss some properties concerning the geometry of vertex and edge spaces of admissible groups.
For the remainder of this subsection, we fix an admissible group $G$, with associated graph of groups $\cG$ and tree of spaces $(X,T)$.

We now define auxiliary data associated to each vertex space of $X$.
Recall from the construction in Section~\ref{sec:TofS} that each vertex space $X_v$ of $X$ is identified with the Cayley graph of a vertex group $G_{\hat v}$ of $\cG$ with respect to some generating set $S_{\hat v}$. Furthermore, as each vertex group  $G_{\hat v}$ is of type $\cS$, it has an infinite cyclic kernel $Z_{\hat v}\vartriangleleft G_{\hat v}$.
Let $q_{\hat v}:G_{\hat v}\to Q_{\hat v}$ be the quotient map.

\begin{defn}\label{defn:quotient}
	Let $X_v$ be a vertex space of $X$, which we  identify with the Cayley graph of some vertex group $G_{\hat v}$. With $Q_{\hat v}$, $S_{\hat v}$, $Z_{\hat v}$ as above, we have the following:
	\begin{enumerate}
		\item The \emph{quotient space} of $X_v$ is a copy $Y_v$ of the Cayley graph of $Q_{\hat v}$ with respect to the generating set $\{q_{\hat v}(s)\mid s\in S_{\hat v}\}$.
		\item The \emph{quotient map} $\pi_v:X_v\to Y_v$ is the graph morphism taking the edge $(g,gs)$ in $X_v$ to the edge $(q_{\hat v}(g),q_{\hat v}(g)q_{\hat v}(s))$ in $Y_v$.
		\item For each $e\in ET$ with $v=e_-$, we define $\ell_e\coloneqq \pi_v(X_e)\subseteq Y_v$.
	\end{enumerate}
\end{defn}

We now discuss some properties of the spaces and maps defined in Definition~\ref{defn:quotient}. The following is evident from the definitions.
\begin{lem}\label{lem:quotient map}
	For each $v\in VT$, the following hold.
	\begin{enumerate}
		\item The map $\pi_v:X_v\to Y_v$ is $1$-Lipschitz.
		\item For each $x\in X_v$ and $y\in Y_v$ with $d_{Y_v}(\pi_v(x),y)=R$, there is some $\tilde{y}\in\pi_v^{-1}(y)$ with $d_{X_v}(x,\tilde{y})=R$.
	\end{enumerate}
\end{lem}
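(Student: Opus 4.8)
The plan is to prove both parts directly from Definition~\ref{defn:quotient}, recalling that $X_v$ is the metric realization of $\Cay(G_{\hat v},S_{\hat v})$, that $Y_v$ is that of $\Cay(Q_{\hat v},q_{\hat v}(S_{\hat v}))$, and that $\pi_v$ is the morphism induced by $g\mapsto q_{\hat v}(g)$ on vertices and $(g,gs)\mapsto (q_{\hat v}(g),q_{\hat v}(g)q_{\hat v}(s))$ on edges. Both $X_v$ and $Y_v$ are geodesic metric spaces, and distances between vertices are realized by combinatorial geodesics.

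For part~(1), I would first observe that the restriction of $\pi_v$ to any closed edge of $X_v$ is either an isometry onto an edge of $Y_v$ (when the corresponding generator has nontrivial image) or a constant map onto a vertex (when its image is trivial); in either case it is $1$-Lipschitz on that edge. Since $X_v$ is a length space, concatenating along edges shows $\pi_v$ is $1$-Lipschitz. Equivalently, a combinatorial path of length $n$ in $X_v$ maps to a path of length at most $n$ in $Y_v$, so $d_{Y_v}(\pi_v(x),\pi_v(y))\le d_{X_v}(x,y)$ for vertices, and the general case follows because $\pi_v$ is affine on each edge.

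For part~(2), the key point is a geodesic-lifting argument. I would first do the case where $x=g$ and $y$ are vertices: pick a combinatorial geodesic $q_{\hat v}(g)=y_0,y_1,\dots,y_R=y$ in $Y_v$, choose $s_i\in S_{\hat v}$ with $q_{\hat v}(s_i)=y_{i-1}^{-1}y_i$ (possible since consecutive vertices of the geodesic are joined by an edge of $Y_v$, whose label lies in $q_{\hat v}(S_{\hat v})$), and set $\tilde y:=gs_1\cdots s_R\in G_{\hat v}$. Then $q_{\hat v}(\tilde y)=y$, so $\tilde y\in\pi_v^{-1}(y)$, and $d_{X_v}(x,\tilde y)\le R$ since $\tilde y$ is obtained from $g$ by right-multiplying by $R$ elements of $S_{\hat v}$. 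Combining with part~(1) yields $R=d_{Y_v}(\pi_v(x),\pi_v(\tilde y))\le d_{X_v}(x,\tilde y)\le R$, hence equality. The case of arbitrary points $x\in X_v$ and $y\in Y_v$ then follows by the same argument, lifting a combinatorial geodesic together with the (at most two) fractional end-segments and using the affine structure on edges.

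I do not expect a serious obstacle: the lemma is genuinely a bookkeeping exercise, as indicated in the text. The only point that requires care is that part~(2) asks for the distance upstairs to equal $R$ exactly, not merely to be bounded by $R$; this forces the two parts to be proved in the given order, since part~(1) is exactly what supplies the matching lower bound $d_{X_v}(x,\tilde y)\ge d_{Y_v}(\pi_v(x),y)=R$. Keeping track of non-vertex points is the only mildly fiddly ingredient, and it is routine.
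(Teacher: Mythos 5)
The paper offers no proof of this lemma, asserting only that it ``is evident from the definitions,'' so there is nothing to compare against; your argument is the natural one the authors had in mind. Both parts are correct: part~(1) follows from the edge-by-edge analysis of the graph morphism, and part~(2) via the geodesic-lifting argument, with the exact equality $d_{X_v}(x,\tilde y)=R$ forced by combining the lift (upper bound) with part~(1) (lower bound).
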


By the construction of $X$ in Section~\ref{sec:TofS}, under the identification of $X_v$ with the Cayley graph of a vertex group $G_{\hat v}$, the edge spaces $X_e$ with $e\in\lk(v)$ are identified with left cosets of edge groups $G_{\hat e}$ such that $G_{\hat e}\leq G_{\hat v}$. Therefore, we deduce:
\begin{lem}
	For each vertex $v\in VT$, the set
	\[\{\ell_e=\pi_v(X_e)\subseteq Y_v\mid e\in \lk v\}\] is identified with the set of left cosets of $\{q_{\hat v}(G_{\hat e})\mid \hat e\in \lk(\hat v)\}$ in $Q_{\hat v}$.
\end{lem}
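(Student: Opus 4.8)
The plan is to prove this purely by unwinding the definitions in Section~\ref{sec:TofS} and Definition~\ref{defn:quotient}, with the only nontrivial input being Lemma~\ref{lem:admissible_elem}(2). Recall the data we are handed: $X_v$ is identified with the Cayley graph of $G_{\hat v}$ with respect to $S_{\hat v}$; $Y_v$ is the Cayley graph of $Q_{\hat v}$ with respect to $q_{\hat v}(S_{\hat v})$; and, on vertex sets, $\pi_v\colon X_v\to Y_v$ is exactly $q_{\hat v}\colon G_{\hat v}\to Q_{\hat v}$. Moreover, by the Bass--Serre construction of Section~\ref{sec:TofS}, the edges $e\in\lk(v)$ are in bijection with the left cosets $hG_{\hat e}$ in $G_{\hat v}$ of the edge groups $G_{\hat e}$ with $\hat e\in\lk(\hat v)$, and under this bijection the vertex set of $X_e\subseteq X_v$ is precisely the coset $hG_{\hat e}$ (this is the sentence immediately preceding the lemma).

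Given that, the steps are short. First I would observe that, applying $\pi_v=q_{\hat v}$ on vertices, the vertex set of $\ell_e=\pi_v(X_e)$ is $q_{\hat v}(hG_{\hat e})=q_{\hat v}(h)\,q_{\hat v}(G_{\hat e})$, a left coset of $q_{\hat v}(G_{\hat e})$ in $Q_{\hat v}$; so $\{\ell_e\mid e\in\lk v\}$ consists of such cosets. For the reverse inclusion I would use surjectivity of $q_{\hat v}$: any left coset of $q_{\hat v}(G_{\hat e})$ in $Q_{\hat v}$ has the form $q_{\hat v}(h)\,q_{\hat v}(G_{\hat e})$ for some $h\in G_{\hat v}$, hence equals $\ell_e$ for the edge $e\in\lk(v)$ corresponding to $hG_{\hat e}$. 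Together these identify $\{\ell_e\mid e\in\lk v\}$ with the set of all left cosets of the subgroups $q_{\hat v}(G_{\hat e})$, $\hat e\in\lk(\hat v)$. I would also record that $e\mapsto\ell_e$ is injective: by Lemma~\ref{lem:admissible_elem}(2), $Z_{\hat v}\le G_{\hat e}$ for each $\hat e\in\lk(\hat v)$, so $G_{\hat e}$ is a union of cosets of $Z_{\hat v}=\ker q_{\hat v}$ and hence $q_{\hat v}^{-1}\bigl(q_{\hat v}(h)\,q_{\hat v}(G_{\hat e})\bigr)=hG_{\hat e}$; thus taking $q_{\hat v}$-preimages recovers the coset $hG_{\hat e}$, and therefore the edge $e$, from $\ell_e$.

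There is essentially no serious obstacle here — the whole argument is bookkeeping against the two constructions. The one point that needs a moment's thought, and the only place the admissibility hypotheses enter, is the injectivity of $e\mapsto\ell_e$: a priori $q_{\hat v}$ could collapse two distinct edge cosets onto the same coset of $Q_{\hat v}$, and this is ruled out exactly by the inclusion $Z_{\hat v}\le G_{\hat e}$ of Lemma~\ref{lem:admissible_elem}(2). If the statement is read as only asserting equality of the two collections of subspaces (rather than a canonical bijection with $\lk v$), then even this step is unnecessary and the lemma is immediate from the two observations in the previous paragraph.
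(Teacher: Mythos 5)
Your proof is correct and takes the same definition-unwinding route the paper intends when it introduces the lemma with "Therefore, we deduce"; your extra paragraph establishing injectivity of $e\mapsto\ell_e$ via $Z_{\hat v}\le G_{\hat e}=\ker q_{\hat v}\cdot G_{\hat e}$ is a sound and worthwhile addition that the paper leaves implicit.
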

Combined with Proposition~\ref{prop:relhyp}, we thus deduce that:
\begin{cor}\label{cor:cay_relhyp}
	Each $Y_v$ is the Cayley graph of a relatively hyperbolic group and the set $\{\ell_e\mid e\in \lk(v)\}$ is precisely the set of left cosets of the peripheral subgroups, which are all 2-ended.
\end{cor}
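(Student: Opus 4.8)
The plan is to deduce the statement by combining Proposition~\ref{prop:relhyp} with Definition~\ref{defn:quotient} and the lemma immediately preceding it; essentially no new argument is required beyond matching up notation. First, recall that by Definition~\ref{defn:quotient} the space $Y_v$ is, by construction, a copy of the Cayley graph of $Q_{\hat v}$ with respect to the finite generating set $\{q_{\hat v}(s)\mid s\in S_{\hat v}\}$, where $q_{\hat v}\colon G_{\hat v}\to Q_{\hat v}$ is the quotient by the kernel $Z_{\hat v}$. By Proposition~\ref{prop:relhyp}, $Q_{\hat v}$ is hyperbolic relative to $\{G_{\hat e}/Z_{\hat v}\mid \hat e\in \lk(\hat v)\}$, and since relative hyperbolicity does not depend on the choice of finite generating set, this already shows that $Y_v$ is a Cayley graph of a relatively hyperbolic group. (Here one uses Lemma~\ref{lem:admissible_elem}(2), which guarantees $Z_{\hat v}\leq G_{\hat e}$, so that $G_{\hat e}/Z_{\hat v}=q_{\hat v}(G_{\hat e})$ makes sense.)

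Next I would identify the collection $\{\ell_e\mid e\in \lk(v)\}$ with the left cosets of the peripheral subgroups. The lemma preceding the corollary states that $\{\ell_e=\pi_v(X_e)\mid e\in \lk(v)\}$ is identified with the set of all left cosets in $Q_{\hat v}$ of the subgroups $q_{\hat v}(G_{\hat e})$, $\hat e\in \lk(\hat v)$. Since Proposition~\ref{prop:relhyp} tells us that these subgroups $q_{\hat v}(G_{\hat e})$ are exactly the peripheral subgroups of $Q_{\hat v}$, it follows that $\{\ell_e\mid e\in \lk(v)\}$ is precisely the set of left cosets of peripheral subgroups of $Q_{\hat v}$. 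If desired, one can also note, using condition (3) of Definition~\ref{defn:extended}, that distinct edges of $\lk(v)$ give pairwise non-commensurable peripheral subgroups, hence distinct cosets $\ell_e$; but this refinement is not needed for the statement.

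Finally, the peripheral subgroups are $2$-ended: each edge group $G_{\hat e}$ is virtually $\Z^2$ by condition (1) of Definition~\ref{defn:extended}, while $Z_{\hat v}\cong \Z$, so the quotient $q_{\hat v}(G_{\hat e})=G_{\hat e}/Z_{\hat v}$ is virtually $\Z$, i.e.\ $2$-ended (this was already observed in the proof of Proposition~\ref{prop:relhyp}).

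I do not expect a genuine obstacle here, as this is purely a bookkeeping corollary. The one point that needs care is the indexing in the second step: as $e$ ranges over $\lk(v)$ in the Bass--Serre tree $T$, rather than over $\lk(\hat v)$ in the finite graph $\Gamma$, the subspaces $\ell_e=\pi_v(X_e)$ sweep out \emph{all} left cosets of \emph{all} the subgroups $q_{\hat v}(G_{\hat e})$ in $Q_{\hat v}$, not merely the subgroups themselves. This is exactly the content of the preceding lemma, which in turn follows directly from the construction of the tree of spaces in Section~\ref{sec:TofS}.
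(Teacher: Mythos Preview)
Your proposal is correct and follows exactly the paper's approach: the corollary is stated immediately after the lemma identifying $\{\ell_e\}$ with left cosets of $\{q_{\hat v}(G_{\hat e})\}$, and the paper simply says it is deduced by combining that lemma with Proposition~\ref{prop:relhyp}. Your additional remarks on the $2$-endedness and on the indexing over $\lk(v)$ versus $\lk(\hat v)$ are accurate and make explicit what the paper leaves implicit.
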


This tells us a lot about the geometry of $Y_v$ and $\{\ell_e\mid e\in \lk(v)\}$. The following is a straightforward consequence of the characterization of relative hyperbolicity given by Sisto~\cite[Definition 2.1, Lemma 2.3 and Theorem 2.14]{Sis13}, coupled with the fact there are only finitely many isometry types of $Y_v$.
\begin{cor}[{\cite{Sis13}}]\label{cor:proj_bound}
	There is a uniform constant $B$ such that following holds. For every $v\in VT$ and $e\in \lk(v)$, let  $\proj_{\ell_e}:Y_v\to \ell_e$ be a closest point projection. Then:
	\begin{enumerate}
		\item for all $e'\in \lk(v)\setminus\{e\}$, $\diam(\proj_{\ell_e}(\ell_{e'}))\leq B$.
		\item for all $R\geq 0$ and $Z\subseteq Y_v$, $N_R(\ell_e)\cap Z\subseteq N_{R+B}(\proj_{\ell_e}(Z))$.
		\item each $\proj_{\ell_e}$ is $(1,B)$-Lipschitz.
	\end{enumerate}
\end{cor}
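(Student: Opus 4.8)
The plan is to reduce the \emph{uniform} statement to a statement about a single relatively hyperbolic group and then quote Sisto's description of closest point projections onto peripheral cosets.

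First I would record a finiteness observation. Since $\cG$ is a finite graph of groups, it has only finitely many vertex groups, and hence (with the generating sets fixed in Section~\ref{sec:TofS}) only finitely many vertex spaces $X_v$ up to isometry; consequently there are only finitely many quotient spaces $Y_v$ up to isometry. Moreover, by Corollary~\ref{cor:cay_relhyp} the pair consisting of $Y_v$ and the collection $\{\ell_e \mid e\in \lk(v)\}$ is isometric to a fixed model, namely the Cayley graph of the quotient group $Q_{\hat v}$ together with the collection of \emph{all} left cosets of its peripheral subgroups. Hence it suffices to produce, for each of the finitely many relatively hyperbolic groups $Q$ arising this way (with Cayley graph $Y$ and collection $\cP$ of all peripheral cosets), a constant $B_Q$ for which (1)--(3) hold for closest point projections $\proj_P \colon Y \to P$ with $P \in \cP$; the uniform $B$ is then the maximum of the $B_Q$.

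Next, fixing such a $Q$, I would invoke~\cite[Definition~2.1, Lemma~2.3 and Theorem~2.14]{Sis13}, which provide a constant $C = C(Q)$ such that, for any closest point projections $\proj_P$, one has $\diam(\proj_P(P'))\le C$ whenever $P \neq P'$ in $\cP$ --- this is item (1) --- and such that the reverse triangle inequality
\[
  d(x,p) \ \ge\ d\bigl(x,\proj_P(x)\bigr) + d\bigl(\proj_P(x),p\bigr) - C
\]
holds for all $x \in Y$, $P \in \cP$ and $p \in P$. Item (2) is then essentially immediate: if $z \in N_R(\ell_e)\cap Z$ then $d(z,\proj_{\ell_e}(z)) = d(z,\ell_e) \le R$ while $\proj_{\ell_e}(z) \in \proj_{\ell_e}(Z)$, so $z \in N_{R}(\proj_{\ell_e}(Z)) \subseteq N_{R+C}(\proj_{\ell_e}(Z))$. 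For item (3), I would apply the reverse triangle inequality at $x$ with $p = \proj_P(y)$ and at $y$ with $p = \proj_P(x)$, add the two resulting inequalities, and bound $d(x,\proj_P(y)) \le d(x,y) + d(y,\proj_P(y))$ and $d(y,\proj_P(x)) \le d(x,y) + d(x,\proj_P(x))$; the terms $d(x,\proj_P(x))$ and $d(y,\proj_P(y))$ cancel, leaving $2\,d(\proj_P(x),\proj_P(y)) \le 2\,d(x,y) + 2C$, i.e.\ $\proj_P$ is $(1,C)$-Lipschitz. Taking $B_Q = C$ finishes this case, and with it the corollary.

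I expect the only genuinely geometric input --- and hence the main obstacle if one wanted to avoid quoting it --- to be item (1): that distinct peripheral cosets have bounded mutual closest point projection. This is a manifestation of the bounded coset penetration / bounded geodesic image phenomenon in relatively hyperbolic groups, and it is exactly what is extracted from~\cite{Sis13}; every other step is either a formal consequence of the reverse triangle inequality or part of the finiteness reduction.
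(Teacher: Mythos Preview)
Your proposal is correct and matches the paper's approach exactly: the paper simply asserts that the corollary is a ``straightforward consequence'' of~\cite[Definition 2.1, Lemma 2.3 and Theorem 2.14]{Sis13} together with the fact that there are only finitely many isometry types of $Y_v$, and you have faithfully filled in those details. Your observation that item (2) actually follows with $B=0$ directly from the definition of closest point projection (without needing the reverse triangle inequality) is a nice bonus.
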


We now describe the structure of vertex spaces of $X$. We recall the following result of Gersten~\cite{gersten1992bounded}; see also~\cite[\S 11.19]{DK18}.

\begin{thm}\label{thm:qi_to_product}
	Let $G$ be a group fitting into the short exact sequence
	\[
		1\to \Z\to G\to Q\to 1,
	\]
	where $Q$ is a non-elementary hyperbolic group. Then there is a quasi-isometry $f:G\to \E\times Q$ such that the composition of $f$ with the projection $\E\times Q\to Q$ agrees with the quotient map $G\to Q$.
\end{thm}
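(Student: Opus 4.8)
The plan is to build the quasi‑isometry $f$ explicitly out of a bounded $2$‑cocycle, which is Gersten's approach to bounded cocycles and combings \cite{gersten1992bounded}. The conjugation action of $G$ on its normal subgroup $\Z$ is given by a homomorphism $\epsilon\colon Q\to\Aut(\Z)=\{\pm 1\}$; let $\Z_\epsilon$ and $\R_\epsilon$ denote $\Z$ and $\R$ with the $Q$‑action via $\epsilon$ (these are the trivial modules precisely when the extension is central). The extension is classified by a class $e\in H^2(Q;\Z_\epsilon)$. Fix a generator $z$ of $\Z$ and a set‑theoretic section $s\colon Q\to G$ with $s(1)=1$, so every $g\in G$ has a unique expression $g=z^{\nu(g)}s(\bar g)$ with $\nu(g)\in\Z$ and $\bar g\in Q$ the image of $g$, and the cocycle $c\colon Q\times Q\to\Z$ defined by $s(p)s(q)=z^{c(p,q)}s(pq)$ represents $e$.

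The key input is that, since $Q$ is hyperbolic, the degree‑two comparison map $H^2_b(Q;\R_\epsilon)\to H^2(Q;\R_\epsilon)$ is surjective, so the image of $e$ in $H^2(Q;\R_\epsilon)$ is a bounded class. Concretely this yields a function $\phi\colon Q\to\R$ such that $c-\delta\phi$ is a bounded function on $Q\times Q$, where $\delta$ is the (twisted) coboundary operator. I would then define
\[
f\colon G\to\E\times Q,\qquad f\bigl(z^{n}s(q)\bigr)=\bigl(n+\phi(q),\,q\bigr).
\]
By construction the composite of $f$ with the projection $\E\times Q\to Q$ is exactly the quotient map $g\mapsto\bar g$, and coarse surjectivity is immediate since $(r,q)$ lies within distance $1$ of $f\bigl(z^{\lfloor r-\phi(q)\rfloor}s(q)\bigr)$. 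For the coarse Lipschitz bound one fixes a generating set of $G$ consisting of $z^{\pm1}$ together with finitely many section elements $s(t)$, and checks that right multiplication by each of these moves $f$ a bounded amount; the $\E$‑coordinate estimate is exactly where the boundedness of $c-\delta\phi$ enters, together with the fact that $\phi$ takes only finitely many values on a generating set of $Q$.

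The main obstacle is the lower quasi‑isometry bound $d_G(g,g')\le K\,d_{\E\times Q}(f(g),f(g'))+A$. Writing $g^{-1}g'=z^{\nu}s(q)$ with $q=\bar g^{-1}\bar{g'}$, I would expand $\nu$ using the cocycle identity for $c$, and expand $s(q)$ by telescoping $c$ along a $Q$‑geodesic word $q=t_1\cdots t_k$, so that $s(t_1)\cdots s(t_k)=z^{\Gamma}s(q)$. Using $c-\delta\phi$ bounded one finds $\nu=-\phi(q)+O\bigl(d_{\E\times Q}(f(g),f(g'))\bigr)$ and $\Gamma=-\phi(q)+O(k)$, so the a priori unbounded contributions of $\phi(q)$ to the exponent $\nu$ and to $z^{-\Gamma}$ cancel in $g^{-1}g'=z^{\nu-\Gamma}\,s(t_1)\cdots s(t_k)$. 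This leaves $g^{-1}g'$ equal to a bounded power of $z$ times a product of $k=\lvert q\rvert_Q$ section elements, whence $\lvert g^{-1}g'\rvert_G\le K\lvert q\rvert_Q+A\le K'\,d_{\E\times Q}(f(g),f(g'))+A'$. Tracking this cancellation carefully is the heart of the argument. Finally, when $\epsilon$ is nontrivial the same computation goes through verbatim with the twisted coboundary, since the degree‑two comparison map is still surjective for hyperbolic $Q$ with $\R_\epsilon$‑coefficients (alternatively one may pass to the index‑two subgroup of $G$ lying over $\ker\epsilon$, apply the central case, and promote the resulting quasi‑isometry to $G$); keeping the twisted coefficients makes the compatibility with the quotient map $G\to Q$ manifest throughout.
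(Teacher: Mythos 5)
Your proposal correctly reconstructs the bounded-cocycle argument of Gersten~\cite{gersten1992bounded}, which the paper cites rather than reproving, and the cancellation you sketch for the lower quasi-isometry bound (expanding $\nu$ via the cocycle identity, telescoping $\Gamma$ along a $Q$-geodesic, and watching the $-\phi(q)$ terms cancel) is sound. The one ingredient you invoke without a reference---surjectivity of the comparison map $H^2_b(Q;\R_\epsilon)\to H^2(Q;\R_\epsilon)$ for hyperbolic $Q$---is precisely Gersten's combing lemma in the central case and follows from Mineyev's theorem on bounded cohomology of hyperbolic groups with normed coefficients in the twisted case, so this is essentially the same approach as the cited source.
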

Since each vertex space $X_v$ of $X$ is isomorphic to the Cayley graph of a vertex group of $\cG$, and the quotient map $X_v\to Y_v$ is the projection to a Cayley graph of the quotient we conclude:
\begin{cor}\label{cor:vspace_qitoprod}
	There exist $K\geq 1$ and $A\geq 0$ such that for each vertex $v\in VT$, there is a $(K,A)$-quasi-isometry $f_v:X_v\to \E\times Y_v$, such that the composition of $f_v$ with the projection to $Y_v$ coincides with the quotient map $\pi_v$.
\end{cor}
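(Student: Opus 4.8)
The plan is to deduce this directly from Theorem~\ref{thm:qi_to_product} together with the fact that there are only finitely many isomorphism types of vertex space. First, recall from the construction in Section~\ref{sec:TofS} that each vertex space $X_v$ is isometric to the Cayley graph of a vertex group $G_{\hat v}$ of $\cG$ with respect to the fixed finite generating set $S_{\hat v}$, and that $\hat v$ ranges over the finite vertex set $V\Gamma$. Since every vertex group is of type $\cS$, we have a short exact sequence $1\to Z_{\hat v}\to G_{\hat v}\xrightarrow{q_{\hat v}} Q_{\hat v}\to 1$ with $Q_{\hat v}$ non-elementary hyperbolic, so Theorem~\ref{thm:qi_to_product} applies: there is a quasi-isometry $g_{\hat v}\colon G_{\hat v}\to \E\times Q_{\hat v}$ whose composition with the projection $\E\times Q_{\hat v}\to Q_{\hat v}$ equals $q_{\hat v}$.

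Next I would transport this to the level of Cayley graphs. The identification of $X_v$ with $\Cay(G_{\hat v},S_{\hat v})$ is an isometry, and likewise $Y_v$ is by definition $\Cay(Q_{\hat v},\{q_{\hat v}(s)\mid s\in S_{\hat v}\})$, which is quasi-isometric to $Q_{\hat v}$ with its word metric via a quasi-isometry that is the identity on the underlying group and intertwines the two quotient maps. Composing $g_{\hat v}$ with these quasi-isometries on either side produces a quasi-isometry $\hat f_{\hat v}\colon X_v\to \E\times Y_v$ whose composition with projection to $Y_v$ coincides with $\pi_v$ (one must check the vertical factor $\E$ is unaffected and the quotient-map compatibility is preserved under post-composition, which is immediate from how these quasi-isometries are built). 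Since $\pi_v$ is the graph morphism induced by $q_{\hat v}$, the compatibility is exact, not just coarse.

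Finally, to get \emph{uniform} constants $K\geq 1$ and $A\geq 0$ independent of $v$, observe that $X_v$ is determined up to isometry by $\hat v\in V\Gamma$, of which there are only finitely many; hence the quasi-isometry $\hat f_{\hat v}$ exists with constants $(K_{\hat v},A_{\hat v})$, and we take $K=\max_{\hat v\in V\Gamma}K_{\hat v}$ and $A=\max_{\hat v\in V\Gamma}A_{\hat v}$. For a general vertex $v\in VT$ with $X_v$ isometric to $\Cay(G_{\hat v},S_{\hat v})$, precompose $\hat f_{\hat v}$ with the isometry $X_v\to \Cay(G_{\hat v},S_{\hat v})$; this does not change the constants and, since the isometry respects the quotient maps by construction of the tree of spaces, the resulting map $f_v\colon X_v\to \E\times Y_v$ still satisfies the required compatibility with $\pi_v$. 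I do not anticipate a genuine obstacle here; the only mild care needed is in verifying that the chain of quasi-isometries on the target side can be arranged to preserve the splitting $\E\times Y_v$ and the quotient map exactly, which follows because the passage from $Q_{\hat v}$ to $\Cay(Q_{\hat v},q_{\hat v}(S_{\hat v}))$ is via a bi-Lipschitz map fixing the underlying set, so it acts only on the $Y_v$ factor.
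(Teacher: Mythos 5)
Your proof is correct and takes exactly the same approach as the paper: apply Theorem~\ref{thm:qi_to_product} to each of the finitely many vertex groups $G_{\hat v}$ of $\cG$, observe that the resulting quasi-isometry is compatible with the quotient map $q_{\hat v}$ and hence with $\pi_v$ after identifying $X_v$ and $Y_v$ with the relevant Cayley graphs, and use the finiteness of $V\Gamma$ to extract uniform constants. The paper's proof is a single sentence pointing at Theorem~\ref{thm:qi_to_product} and the definitions, whereas you spell out the bookkeeping of passing between groups with word metrics and their Cayley graphs; that added care is appropriate but does not change the underlying argument.
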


Let us also recall that every 2-ended subgroup of a hyperbolic group is quasi-isometrically embedded. Thus every inclusion $\ell_e\to Y_v$ is a quasi-isometric embedding, hence so is every inclusion $\E\times \ell_e\to \E\times Y_v$. Since the map $f_v$ in Corollary~\ref{cor:vspace_qitoprod} maps $X_e$ to $\E\times \ell_e$ up to uniform Hausdorff distance, we conclude:
\begin{cor}\label{cor:espace_qi_embedded_in_vspace}
	There exist $K\geq 1$ and $A\geq 0$ such that for each vertex $e\in VT$ with $e_-=v$, the inclusion $X_e\to X_v$ is a $(K,A)$-quasi-isometric embedding.
\end{cor}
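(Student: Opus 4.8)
The plan is to combine Corollary~\ref{cor:vspace_qitoprod} with the standard fact that $2$-ended subgroups of hyperbolic groups are quasi-isometrically embedded, exactly as sketched in the paragraph preceding the statement. First I would recall that for each $e\in\lk(v)$, the set $\ell_e=\pi_v(X_e)$ is a left coset of a peripheral subgroup of the relatively hyperbolic group presented by $Y_v$ (Corollary~\ref{cor:cay_relhyp}), and these peripherals are $2$-ended. Since there are only finitely many isometry types of the Cayley graphs $Y_v$ (there are finitely many vertex groups in $\cG$, and each $T$-vertex space is isomorphic to one of them), and since a $2$-ended subgroup of a hyperbolic group is quasi-isometrically embedded with constants depending only on the ambient group and the subgroup, we may fix uniform constants $(K_1,A_1)$ so that every inclusion $\ell_e\hookrightarrow Y_v$ is a $(K_1,A_1)$-quasi-isometric embedding.

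Next I would upgrade this to a statement about the products $\E\times Y_v$. The inclusion $\E\times \ell_e\hookrightarrow \E\times Y_v$ is the product of the identity on $\E$ with the quasi-isometric embedding $\ell_e\hookrightarrow Y_v$, so (using the sup or $\ell^1$ metric on the product, up to a uniform multiplicative constant these are all bi-Lipschitz to the path metric) it is a $(K_2,A_2)$-quasi-isometric embedding for uniform $(K_2,A_2)$ depending only on $(K_1,A_1)$.

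Finally I would transfer this back to the vertex spaces via the quasi-isometries $f_v\colon X_v\to \E\times Y_v$ of Corollary~\ref{cor:vspace_qitoprod}, whose constants $(K,A)$ are uniform in $v$. The key geometric input is that $f_v$ sends $X_e$ to within uniform Hausdorff distance of $\E\times \ell_e$: this is because $f_v$ covers the quotient map $\pi_v$ on the $Y_v$-factor, so the $Y_v$-coordinate of $f_v(X_e)$ is $\pi_v(X_e)=\ell_e$, while the $\E$-coordinate is controlled because $X_e$, being a coset of the edge group, is Hausdorff-close in $X_v$ to the full preimage $\pi_v^{-1}(\ell_e)$ up to the kernel $Z_{\hat v}$ direction (here one uses Lemma~\ref{lem:admissible_elem}(2), that $Z_{\hat v}\le G_{\hat e}$, so the fibre over each point of $\ell_e$ lies in $X_e$ up to bounded distance). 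Composing the quasi-isometric embedding $\E\times\ell_e\hookrightarrow \E\times Y_v$ with $f_v$ and a quasi-inverse of $f_v$, and absorbing the bounded Hausdorff error, yields that $X_e\hookrightarrow X_v$ is a $(K,A)$-quasi-isometric embedding with $(K,A)$ independent of $e$ and $v$, as claimed. (Note the statement writes ``vertex $e\in VT$'' where it means an oriented edge $e\in ET$ with $e_-=v$.)

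I expect the main obstacle to be the bookkeeping of constants and, more substantively, making precise the claim that $f_v(X_e)$ is uniformly Hausdorff-close to $\E\times\ell_e$ rather than merely projecting onto $\ell_e$; one must rule out $f_v(X_e)$ spreading out unboundedly in the $\E$-direction over a fixed point of $\ell_e$, which is where the normality and finite-index properties of the kernel $Z_{\hat v}$ inside the edge group enter. Everything else — finiteness of isometry types, uniformity of the Gersten splitting, quasi-convexity of $2$-ended subgroups — is routine or already recorded in the excerpt.
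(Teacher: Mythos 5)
Your proof follows the same route the paper sketches in the paragraph immediately before the corollary: quasi-isometric embeddedness of the 2-ended subgroups $\ell_e\hookrightarrow Y_v$, take products with $\E$, and transfer back through the Gersten splitting $f_v$ of Corollary~\ref{cor:vspace_qitoprod} using the fact that $f_v$ sends $X_e$ uniformly Hausdorff-close to $\E\times\ell_e$. The one place you hedge --- whether $X_e$ is merely close to $\pi_v^{-1}(\ell_e)$ or spreads out in the $\E$-direction --- is actually clean: since $\ker q_{\hat v}=Z_{\hat v}\le G_{\hat e}$ by Lemma~\ref{lem:admissible_elem}(2), one has the equality $\pi_v^{-1}(\ell_e)=X_e$, and coarse density of $f_v(X_e)$ in $\E\times\ell_e$ then follows from $f_v$ being a quasi-isometry together with Lemma~\ref{lem:quotient map}(2); so this is not a gap, just a detail the paper elides and you correctly identified as the only point needing care.
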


We also have the following useful formula for the distance between elements of an edge group in terms of projections to the hyperbolic quotients of adjacent vertex groups.

\begin{lem}\label{lem:dist_projs}
	There exist $K\geq 1$ and $A\geq 0$ such that for every $e\in ET$, setting $v=e_-$ and $w=e_+$, we have  \begin{align*}
		\frac{1}{K}d_{X_v}(x,y)-A \leq d_{Y_v}(\pi_v(x),\pi_v(y))+d_{Y_{w}}(\pi_w(\alpha_e(x)),\pi_w(\alpha_e(y)))\leq Kd_{X_v}(x,y)+A
	\end{align*}
	for all $x,y\in X_e$.
\end{lem}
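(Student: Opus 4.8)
The plan is to reduce the stated two-sided inequality to the structure of edge groups of an admissible group, using Corollary~\ref{cor:vspace_qitoprod} to pass between $X_v$ and $\E\times Y_v$, and then to analyze what the edge group looks like inside the product. Recall that $e$ joins type $\cS$ vertices $v=e_-$ and $w=e_+$, and by Lemma~\ref{lem:admissible_elem} both kernels $Z_v$ and $Z_w$ lie in $G_e$; by condition (4) of Definition~\ref{defn:admissible}, $Z_v$ and $\tau_{\bar e}(Z_w)$ together span a finite-index subgroup of $G_e$, while by Lemma~\ref{lem:admissible_elem}(3) they intersect trivially. Hence $G_e$ is virtually $\Z^2$, with one $\Z$-factor mapping onto a finite-index subgroup of $\ell_e=\pi_v(X_e)$ and the complementary factor lying in $Z_v=\kernel(\pi_v)$; symmetrically on the $w$ side. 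This is exactly the setup in which the map
\[
X_e \;\longrightarrow\; \ell_e \times \ell_{\bar e}, \qquad x \longmapsto \bigl(\pi_v(x),\, \pi_w(\alpha_e(x))\bigr)
\]
should be a quasi-isometry with uniform constants (uniformity because there are only finitely many isometry types of edge spaces and adjacent vertex spaces up to the $G$-action).

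The key steps, in order, would be: (i) Using Corollary~\ref{cor:vspace_qitoprod}, replace $X_v$ by $\E\times Y_v$ via the uniform quasi-isometry $f_v$, so that $d_{X_v}(x,y)$ is comparably the $\ell^1$-sum of the $\E$-coordinate distance and $d_{Y_v}(\pi_v(x),\pi_v(y))$. The right-hand inequality $d_{Y_v}(\pi_v x,\pi_v y)+d_{Y_w}(\pi_w\alpha_e x,\pi_w\alpha_e y)\le K d_{X_v}(x,y)+A$ is then essentially immediate: $\pi_v$ is $1$-Lipschitz by Lemma~\ref{lem:quotient map}(1), and $\pi_w\circ\alpha_e$ is coarsely Lipschitz because $\alpha_e\colon X_e\to X_{\bar e}$ is an isometry (with respect to the $S_e$, $S_{\bar e}$ metrics, which by Corollary~\ref{cor:espace_qi_embedded_in_vspace} are uniformly quasi-isometrically embedded in $X_v$, $X_w$) composed with the $1$-Lipschitz map $\pi_w$. (ii) For the left-hand inequality, use the algebraic description above: pick a finite-index $\Z^2\le G_e$ generated by $a\in Z_v$ and $b$ with $\pi_v(b)$ generating a finite-index subgroup of $\ell_e$. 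Writing $x^{-1}y = a^m b^n$ up to bounded error, observe $d_{X_v}(x,y)$ is comparable to $|m|+|n|$ (since $G_e$ is quasi-isometrically embedded in $X_v$ by Corollary~\ref{cor:espace_qi_embedded_in_vspace} and $\Z^2$ has $\ell^1$-type metric), that $d_{Y_v}(\pi_v x,\pi_v y)$ is comparable to $|n|$, and that $d_{Y_w}(\pi_w\alpha_e x,\pi_w\alpha_e y)$ is comparable to $|m|$ — the latter because $\alpha_e(a)=\tau_e(a)$ lies in (a conjugate of) $Z_w$-complement, i.e. projects nontrivially and quasi-isometrically into $Y_w$, whereas $\alpha_e(b)$ may or may not, but in any case the pair $(|n|,|m|)$ controls the sum from below. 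Summing, $|m|+|n|\lesssim d_{Y_v}(\pi_v x,\pi_v y)+d_{Y_w}(\pi_w\alpha_e x,\pi_w\alpha_e y)$, which gives the claim after absorbing the finite-index and bounded-error terms into $K$ and $A$.

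The main obstacle I anticipate is making the constants genuinely uniform over all edges $e\in ET$, and carefully handling the "virtually $\Z^2$" slack: the decomposition $x^{-1}y = a^m b^n$ only holds up to a bounded-index subgroup, and the generators $a,b$ depend on $e$ — though only up to the finitely many $G$-orbits of edges, so one can fix finitely many choices and transport by the isometric $G$-action. A secondary subtlety is confirming that $\pi_w\circ\alpha_e$ restricted to $X_e$ really does see the $Z_v$-direction with a definite lower bound: this is precisely where condition (4) of Definition~\ref{defn:admissible} is used, ensuring $\tau_e(Z_v\cap G_e)$ is not contained in $Z_w$, so it projects to an infinite (hence, being inside a $2$-ended $\ell_{\bar e}$, quasigeodesic) subset of $Y_w$. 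Once these uniformity and nondegeneracy points are pinned down, the rest is a routine comparison of $\ell^1$-metrics on $\Z^2$ with the distances in $Y_v$ and $Y_w$.
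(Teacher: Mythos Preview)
Your approach is correct and is essentially the paper's argument: reduce to the edge group $G_e$, pick a finite-index $\Z^2$ generated by $a\in Z_v$ and some $b$, and compare the $\ell^1$-norm $|m|+|n|$ of $a^mb^n$ with the two projections. The one place where the paper is cleaner is the choice of $b$: rather than an arbitrary $b$ with $\pi_v(b)$ infinite-order, the paper takes $b$ to be a generator of $\tau_{\bar e}(Z_w)$. With this choice $q_v$ kills $a$ and $q_w\circ\tau_e$ kills $b$, so $d_{Y_v}(\pi_v x,\pi_v y)=d_{Q_v}(1,\bar b^{\,n})$ and $d_{Y_w}(\pi_w\alpha_e x,\pi_w\alpha_e y)=d_{Q_w}(1,\bar a^{\,m})$ exactly, and each side is a quasi-isometric embedding of $\Z$ into a hyperbolic group. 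This eliminates your hedge ``$\alpha_e(b)$ may or may not, but in any case the pair $(|n|,|m|)$ controls the sum from below'': with your choice of $b$ the $Y_w$-projection measures $|c_1 m + c_2 n|$ for some $c_1\neq 0$, and you would still need the (easy but unstated) observation that $(m,n)\mapsto(n,\,c_1m+c_2n)$ is invertible on $\R^2$ to recover the lower bound. Also, your step~(i) invoking Corollary~\ref{cor:vspace_qitoprod} is unnecessary: the paper never passes to $\E\times Y_v$, working directly with the word metrics on $H\cong\Z^2$, $G_e$, $G_v$ and using only Corollary~\ref{cor:espace_qi_embedded_in_vspace} and the fact that infinite-order elements of hyperbolic groups are undistorted.
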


\begin{proof}
	Via the construction of edge and vertex spaces in Section~\ref{sec:TofS}, it is enough to show the corresponding result for edge groups of $\cG$. More precisely, we will show that if $G_e$ is an edge group of $\cG$ with $e_-=v$ and $e_+=w$, there exist constants $K\geq 1$ and $A\geq 0$ such that for all $g,k\in G_e$ \[\frac{1}{K}d_{G_v}(g,k)-A \leq d_{Q_v}(q_v(g),q_v(k))+d_{Q_w}(q_w(\tau_e(g)),q_w(\tau_e(k)))\leq Kd_{G_v}(g,k)+A,\] where $q_v:G_v\to Q_v$ and $q_w:G_w\to Q_w$ are quotient maps and $\tau_e:G_e\to G_w$ is the edge map.

	We pick generators $a$ and $b$ of the infinite cyclic subgroups $Z_v\leq G_e$ and $\tau_{\bar e}(Z_w)$.
	By Lemma~\ref{lem:admissible_elem}, $Z_v\cap\tau_{\bar e}(Z_w)=\{1\}$.
	Since $\ker(q_v)=\langle a\rangle$ and $\ker (q_w\circ \tau_e)=\langle b\rangle$, it follows that $\bar b\coloneqq q_v(b)$ and $\bar a\coloneqq q_w(\tau_e(a))$ are infinite-order elements of $Q_v$ and $Q_w$ respectively.
	By Definition~\ref{defn:admissible}, $a$ and $b$ generate a finite index subgroup of the edge group $G_e$.  As $G_e$ is virtually $\Z^2$, after replacing $a$ and $b$ with powers if needed, we deduce $a,b$ generate a finite index subgroup $H$ of $G_e$ isomorphic to $\Z^2$.

	Let $d_H$ be the word metric on   $H=\langle a,b\rangle\cong \Z^2$ with respect to $\{a,b\}$.
	Let $g,k\in H$ and suppose $g^{-1}k=a^ib^j$ Then $d_H(g,k)=|i|+ |j|$. Now we have \[
		d_{Q_v}(q_v(g),q_v(k))=d_{Q_v}(1,q_v(a^ib^j))=d_{Q_v}(1,\bar b^j)
	\]
	and similarly
	\[
		d_{Q_w}(q_w(\tau_e(g)),q_w(\tau_e(k)))=d_{Q_w}(1,q_w(\tau_e(a^ib^j)))=d_{Q_w}(1,\bar a^i).
	\]
	As $\bar b$ and $\bar a$ are infinite order elements  of $Q_v$ and $Q_w$,  the maps $j\mapsto \bar b^j$ and $i\mapsto \bar b^i$ are quasi-isometric embeddings. Since $H$ is finite index in $G_e$ and by Corollary~\ref{cor:espace_qi_embedded_in_vspace},  the inclusion $H\to G_e\to G_v$ is a quasi-isometric embedding, there exist constants $K$ and $A$ such that
	\[\frac{1}{K}d_{G_v}(g,k)-A\leq d_{Q_v}(q_v(g),q_v(k))+d_{Q_w}(q_w(\tau_e(g)),q_w(\tau_e(k)))\leq Kd_{G_v}(g,k)+A\] for all $g,k\in H$. As $H$ is a finite index subgroup of $G_e$ and $q_v$ and $q_w\circ \tau_e$ are coarse Lipschitz, the above inequality holds for all $g,k\in G_e$ after increasing $K$ and $A$.
\end{proof}

\subsection{Vertex and edge spaces are quasi-isometrically embedded}
The main result of this subsection is the following:

\begin{thm}\label{thm:edge_qi_embedded}
	Let $X$ be the tree of spaces associated to an admissible group. Then edge spaces of $X$ are quasi-isometrically embedded in $X$.
\end{thm}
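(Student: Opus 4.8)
The plan is to show that for every edge space $X_e$ in the tree of spaces $(X,T)$ of an admissible group, the inclusion $X_e \hookrightarrow X$ is a quasi-isometric embedding with uniform constants. Since $T$ is a tree and paths in $X$ decompose according to how they cross edge spaces, the strategy is to fix $x, y \in X_e$, take a geodesic $\gamma$ in $X$ from $x$ to $y$, and bound $d_{X_e}(x,y)$ from above by a uniform linear function of $\length(\gamma) = d_X(x,y)$ (the other inequality being Corollary~\ref{cor:espace_qi_embedded_in_vspace} composed with the $1$-Lipschitz inclusion of a vertex space, or more directly the fact that the inclusion is $1$-Lipschitz). Set $v = e_-$, $w = e_+$. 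The key observation is that $\gamma$ enters and leaves various vertex spaces $X_u$, and each ``excursion'' of $\gamma$ into a vertex space $X_u$ with $u \neq v, w$ must enter and leave through a single edge space $X_{e'}$ with $e'$ on the segment $[v,w]$ — in fact through either $X_e$ itself or through one of the two edges incident to $v$ or $w$ — so by the Morse-type control coming from Corollary~\ref{cor:proj_bound} (bounded projections between distinct peripheral cosets), these excursions are coarsely trivial after projection.

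The main technical step is to transport the problem into the quotient spaces $Y_v$ and $Y_w$, which by Corollary~\ref{cor:cay_relhyp} are Cayley graphs of relatively hyperbolic groups with $\{\ell_{e'}\}$ the peripheral cosets. Concretely, I would argue as follows. First reduce to the case where $\gamma$ only meets the two vertex spaces $X_v$ and $X_w$: any maximal subpath of $\gamma$ lying in a third vertex space $X_u$ has both endpoints on a common edge space $X_{e'}$; using Corollary~\ref{cor:espace_qi_embedded_in_vspace} (edge spaces quasi-isometrically embedded in adjacent vertex spaces) and the fact that there are finitely many isometry types, one can replace this subpath by one of comparable length lying inside $X_{e'} \subseteq X_v \cup X_w$, at the cost of uniformly controlled additive and multiplicative error; iterating (the tree structure guarantees the process terminates and the edges $e'$ are all adjacent to $v$ or $w$) we may assume $\gamma \subseteq X_v \cup X_w$, crossing between them only along $X_e$. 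Now $\gamma$ is a concatenation $\gamma_1 \cdot \gamma_2 \cdots \gamma_n$ with each $\gamma_i$ in $X_v$ or $X_w$, consecutive pieces meeting at points of $X_e$. Apply the quotient maps $\pi_v$ and $\pi_w$ (which are $1$-Lipschitz, Lemma~\ref{lem:quotient map}): the images give paths in $Y_v$ and $Y_w$ whose endpoints lie on the peripheral coset $\ell_e$ (resp.\ $\pi_w(X_e)$), and whose total length is at most $\length(\gamma)$.

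The crucial point is then that in a relatively hyperbolic Cayley graph $Y_v$, a path that starts and ends on a peripheral coset $\ell_e$ has its endpoints at $Y_v$-distance bounded above by (a linear function of) its length — trivially — but more importantly, by Corollary~\ref{cor:proj_bound}(2)-(3) the closest-point projection $\proj_{\ell_e}$ is $(1,B)$-coarse Lipschitz, so $d_{\ell_e}(\pi_v(x), \pi_v(y)) \leq d_{Y_v}(\pi_v(x),\pi_v(y)) + 2B \leq \length(\pi_v(\gamma \cap X_v)) + 2B$. Summing the $Y_v$-displacement along the pieces in $X_v$ and the $Y_w$-displacement along the pieces in $X_w$, we get that $d_{Y_v}(\pi_v(x),\pi_v(y)) + d_{Y_w}(\pi_w(\alpha_e(x)),\pi_w(\alpha_e(y)))$ is bounded above by a uniform linear function of $\length(\gamma) = d_X(x,y)$. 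But Lemma~\ref{lem:dist_projs} says exactly that the left-hand side is comparable, up to uniform multiplicative and additive constants, to $d_{X_v}(x,y) \geq d_{X_e}(x,y)/(\text{const})$ — using Corollary~\ref{cor:espace_qi_embedded_in_vspace} once more to pass from $d_{X_v}$ to $d_{X_e}$. Combining the inequalities yields $d_{X_e}(x,y) \leq K' d_X(x,y) + A'$ with uniform $K', A'$, which is the desired quasi-isometric embedding.

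I expect the main obstacle to be the first reduction step — rigorously controlling the excursions of $\gamma$ into vertex spaces other than $X_v$ and $X_w$, and showing these can be ``pushed'' onto edge spaces incident to $v$ or $w$ with only uniform loss. This requires carefully using the tree structure of $T$ (an excursion into $X_u$ returning to $X_v \cup X_w$ forces the excursion's entry and exit edge spaces to coincide, since $T$ has no cycles), and then invoking uniform quasiconvexity / bounded-projection estimates to replace the excursion; one must also ensure the number of pieces $n$ does not blow up in a way that ruins the linear bound, which again follows because each piece contributes at least unit length to $\gamma$ between successive crossings of $X_e$, keeping $n \leq \length(\gamma)+1$. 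The remaining steps are essentially bookkeeping with the already-established Corollaries~\ref{cor:proj_bound}, \ref{cor:espace_qi_embedded_in_vspace} and Lemma~\ref{lem:dist_projs}.
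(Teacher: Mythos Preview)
Your overall strategy---decompose a geodesic $\gamma$ from $x$ to $y$ according to the tree $T$, control the pieces using the projections $\pi_v,\pi_w$ and Lemma~\ref{lem:dist_projs}, and finish with Corollary~\ref{cor:espace_qi_embedded_in_vspace}---is the right shape, and once $\gamma$ lies in $X_v\cup X_w$ the argument you sketch does go through. The genuine gap is exactly where you flagged it: the reduction step cannot be made to work as written.

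The problem is that replacing a maximal subpath in a vertex space $X_u$ by a path in the incident edge space $X_{e'}$, using Corollary~\ref{cor:espace_qi_embedded_in_vspace}, introduces a multiplicative factor $K$ and an additive $A$. To clear the whole branch behind $e'$ you must iterate this from the leaves inward, and the depth of this iteration can be of order $\length(\gamma)$. The constants therefore compound exponentially in the depth, and you end up with a bound of the form $d_{X_e}(x,y)\leq K^{\length(\gamma)}\cdot\length(\gamma)+\cdots$, which is useless. Phrased differently: bounding the $X_{e'}$-distance between the entry and exit points of an excursion by the length of that excursion is precisely the statement that $X_{e'}$ is quasi-isometrically embedded in the branch behind it, so invoking it is circular unless the excursion stays in a single vertex space.

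The paper avoids this by \emph{not} reducing the path at all. Instead it proves an auxiliary statement (Lemma~\ref{lem:path_upperbound}): for any path $\gamma$ with endpoints in $X_e$ and disjoint from $X_{\bar e}$, one has $d_{Y_v}(\pi_v(x),\pi_v(y))\leq E\length(\gamma)+E$ and $d_{X_v}(x,y)\leq E\length(\gamma)+E$. The first bound follows directly from bounded projections (Corollary~\ref{cor:proj_bound}) and needs no recursion, since every excursion through some $X_{e_i}$ projects under $\proj_{\ell_e}\circ\pi_v$ to a set of diameter at most $B$. For the second bound the paper performs a three-case split on the dominant contribution to $d_{X_v}(x,y)$; the substantive cases use (i) a \emph{single} application of the first bound to each excursion $\delta_i$ one level down, via Lemma~\ref{lem:dist_projs}, and (ii) Osin's estimate on isolated components in a relatively hyperbolic group (Lemma~\ref{lem:dist_relhyp_bound}), which bounds $\sum d_{Y_v}(\epsilon_{i-1}^+,\epsilon_i^-)$ by $M\sum d_{Y_v}(\epsilon_i^-,\epsilon_i^+)$ once the edges $e_1,\dots,e_n$ are distinct (achieved by a separate abelian-translation trick). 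This Osin lemma is the ingredient your proposal is missing entirely; without it, there is no way to control the ``staircase'' contributions inside $Y_v$ without the recursive blowup described above. The actual proof of the theorem then splits a genuine geodesic into pieces alternately disjoint from $X_e$ and $X_{\bar e}$ and applies Lemma~\ref{lem:path_upperbound} to each piece.
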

Suppose $X$ is a tree of spaces associated to a finite graph of finitely generated groups $\cG$, with $G=\pi_1(\cG)$. Proposition~\ref{prop:treeofspacesBST} easily implies that vertex (resp.\ edge) spaces of $X$ are quasi-isometrically embedded in $X$ if and only if vertex (resp.\ edge) groups of $\cG$ are quasi-isometrically embedded in $G$. Since there are only finitely many $G$-orbits of vertex and edge spaces of $X$, if all vertex (resp.\ edge) spaces of $X$ are quasi-isometrically embedded, there exist $K\geq 1$ and $A\geq 0$ such that every vertex (resp.\ edge) space is  $(K,A)$-quasi-isometrically embedded.
Combining Theorem~\ref{thm:edge_qi_embedded} with these observations  and  Lemma~\ref{lem:vertex_edge_QIembeddings}, we deduce:
\begin{cor}\label{cor:ve_spaces_qi_embedded}
	Let $X$ be a tree of spaces associated to an admissible group. Then there exist constants  $K\geq 1$ and $A\geq 0$ such that every vertex and edge space of $X$ is  $(K,A)$-quasi-isometrically embedded in $X$.
\end{cor}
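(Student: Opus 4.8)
The plan is to obtain this from Theorem~\ref{thm:edge_qi_embedded} purely formally, using the dictionary between tree of spaces and graph of groups. First I would record the elementary equivalence, sketched in the paragraph preceding the statement: if $X$ is the tree of spaces associated to a finite graph of finitely generated groups $\cG$ with $G=\pi_1(\cG)$, then a vertex (resp.\ edge) space of $X$ is quasi-isometrically embedded in $X$ if and only if the corresponding vertex (resp.\ edge) group of $\cG$ is quasi-isometrically embedded in $G$. This is immediate from Proposition~\ref{prop:treeofspacesBST}, which provides a quasi-isometry $f\colon G\to X$ carrying each coset $gG_x$ to within uniformly bounded Hausdorff distance of the vertex or edge space $X_{\tilde x}$ indexed by that coset; conjugating a quasi-isometric embedding by $f$ and adjusting for the Hausdorff bound proves both directions.

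Next, Theorem~\ref{thm:edge_qi_embedded} says the edge spaces of $X$ are quasi-isometrically embedded in $X$, so by the equivalence above every edge group of $\cG$ is quasi-isometrically embedded in $G$. Now I would invoke Lemma~\ref{lem:vertex_edge_QIembeddings}, whose hypothesis is precisely that every edge group of a finite graph of finitely generated groups be quasi-isometrically embedded; its conclusion is that every vertex group is then quasi-isometrically embedded as well. Applying the equivalence of the first paragraph in the other direction, every vertex space of $X$ is quasi-isometrically embedded in $X$. At this stage each vertex and edge space is individually quasi-isometrically embedded, but possibly with constants varying from space to space.

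Finally, to extract a single pair $(K,A)$ working for all of them simultaneously, I would use that the underlying graph $\Gamma$ of $\cG$ is finite, so $\cG$ has finitely many vertex and edge groups, and that $G$ acts on $X$ by isometries, acting transitively on the collection of vertex (resp.\ edge) spaces lying over a fixed vertex (resp.\ edge) of $\Gamma$. Hence there are only finitely many $G$-orbits of vertex and edge spaces; since isometries preserve quasi-isometric embedding constants, taking the worst constants among one representative from each orbit yields a uniform $(K,A)$. I do not expect any genuine obstacle here: all the real work is in Theorem~\ref{thm:edge_qi_embedded}, and the only mild care needed is to track how the constants from Proposition~\ref{prop:treeofspacesBST} and Lemma~\ref{lem:vertex_edge_QIembeddings} compose, which is routine bookkeeping.
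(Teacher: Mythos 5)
Your proposal is correct and follows exactly the route the paper takes: translating between spaces and cosets via Proposition~\ref{prop:treeofspacesBST}, feeding Theorem~\ref{thm:edge_qi_embedded} into Lemma~\ref{lem:vertex_edge_QIembeddings} to handle vertex groups, and using the finiteness of $G$-orbits of vertex and edge spaces to obtain uniform constants $(K,A)$. No gaps.
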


It remains to prove Theorem~\ref{thm:edge_qi_embedded}, which we do using an argument similar to that used in~\cite[\S 7]{frigeriolafontsistosro2015rigidity}.

\begin{rem}
	When restricting to admissible groups in the sense of Croke--Kleiner~\cite{CK02}, Theorem~\ref{thm:edge_qi_embedded} can be deduced by combining the main results of~\cite{HRSS22} and~\cite{HHP20} with  the fact that finitely generated abelian subgroups of semi-hyperbolic groups are quasi-isometrically embedded. It is likely that the proof in~\cite{HRSS22} holds verbatim for the  more general class of admissible groups under consideration here. However, we present a more elementary and self-contained proof of Theorem~\ref{thm:edge_qi_embedded} instead.
\end{rem}

We make use of the following lemma, which is a variation of a result of Osin; see also~\cite[Proposition 7.4]{frigeriolafontsistosro2015rigidity}.
\begin{lem}[{\cite[Lemma 3.2]{osin2006Relatively}}]\label{lem:dist_relhyp_bound}
	Let $G$ be a finitely generated group that is hyperbolic relative to $\mathcal{H} = \{H_1,\dots, H_n\}$, equipped with the word metric $d$ with respect to a finite generating set $S$.  Let $\cP$ be the set of left cosets of elements of $\cH$. There is a constant $M$ such that the following holds.

	Suppose there exist $\gamma_0^\pm,\dots, \gamma_n^\pm\in G$ and distinct $P_0,\dots, P_n\in \cP$ such that for all $i$,
	$\gamma_i^-\in P_i$ and $\gamma_i^+\in P_{i+1}$ (with $\gamma_n^+\in P_0$). Then \[\sum_{i=1}^n d_S(\gamma_{i-1}^+,\gamma_{i}^-)\leq M \sum_{i=0}^nd_S(\gamma_i^-,\gamma_i^+).\]
\end{lem}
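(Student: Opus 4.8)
The plan is to deduce this from the statement of Osin's Lemma 3.2 in \cite{osin2006Relatively}, which concerns a path in the Cayley graph whose vertices alternate between lying in successive peripheral cosets and controls the length of the ``gap'' segments between them. The point of the reformulation here is essentially bookkeeping: Osin's version is stated for a geodesic (or quasi-geodesic) path that enters and exits a sequence of peripheral cosets, and I want to recast it as the clean inequality $\sum_{i=1}^n d_S(\gamma_{i-1}^+,\gamma_i^-)\leq M\sum_{i=0}^n d_S(\gamma_i^-,\gamma_i^+)$ for an arbitrary such configuration of points.

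First I would fix the constant $M$ coming from Osin's lemma (which depends only on $G$, $\cH$ and $S$), and then, given the data $\gamma_0^\pm,\dots,\gamma_n^\pm$ and the distinct cosets $P_0,\dots,P_n$, form the cyclic concatenation of geodesic segments: a geodesic from $\gamma_i^-$ to $\gamma_i^+$ inside $G$ (thought of as running ``through'' the coset $P_{i+1}$, using that $\gamma_i^+\in P_{i+1}$ and $\gamma_{i+1}^-\in P_{i+1}$), interleaved with geodesics from $\gamma_{i-1}^+$ to $\gamma_i^-$. This produces a closed path (a cycle) in the Cayley graph which visits the distinct peripheral cosets $P_0,\dots,P_n$ in order. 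Osin's result applied to this cycle bounds the total length of the ``connecting'' geodesics (those from $\gamma_{i-1}^+$ to $\gamma_i^-$, which are the parts lying outside / between the cosets) by $M$ times the total length of the ``coset'' geodesics (those from $\gamma_i^-$ to $\gamma_i^+$), which is exactly the claimed inequality. The key hypothesis to check before invoking Osin's lemma is that the $P_i$ are genuinely distinct and traversed, which is given.

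The main obstacle — really the only subtlety — is matching the precise hypotheses of the cited form of Osin's lemma: one must ensure the cyclic word obtained is of the type his lemma applies to (e.g.\ that consecutive cosets in the list are distinct, so that the path does not backtrack within a single coset, and that the geodesic segments realizing $d_S(\gamma_i^-,\gamma_i^+)$ can legitimately be regarded as the $P_{i+1}$-components), and to absorb any discrepancy in additive constants into $M$ by enlarging it. There is a slight circularity of indices (the ``$+1$'' is mod $n+1$, with $\gamma_n^+\in P_0$), so I would be careful that the cyclic structure is consistent: the connecting geodesics are from $\gamma_{i-1}^+\in P_i$ to $\gamma_i^-\in P_i$, both in the \emph{same} coset $P_i$, which is what makes them the in-coset portions, while the segments from $\gamma_i^-\in P_i$ to $\gamma_i^+\in P_{i+1}$ cross from $P_i$ to $P_{i+1}$. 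Once this indexing is pinned down, the inequality is immediate from the cited lemma, after possibly increasing $M$ to swallow bounded error terms arising from the fact that our $\gamma$'s are arbitrary group elements rather than the vertices Osin's formulation produces.
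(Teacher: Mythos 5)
Your strategy mirrors the paper's: form a closed loop from the data and invoke Osin's Lemma~3.2. But there is a genuine gap at the decisive quantitative step, and it is exactly where the proposal hand-waves.

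Osin's lemma bounds the total $S$-length of the isolated in-coset components by a constant times the length of the \emph{entire} cycle in the Cayley graph with respect to $S\cup\cH$, not by a constant times only the part coming from the between-coset words. If one writes the loop as $h_0w_0h_1w_1\cdots h_nw_n$, with each $h_i$ the single $\cH$-letter representing the in-coset jump in $P_i$ and each $w_i$ a geodesic word for $(\gamma_i^-)^{-1}\gamma_i^+$, then the loop has $S\cup\cH$-length $n+1+\sum_i|w_i|_S$, and Osin gives only $\sum_i|h_i|_S\leq N\bigl(n+1+\sum_i|w_i|_S\bigr)$. To reach the inequality in the statement one must still control the extra $n+1$; the paper does this by observing that since $P_i\neq P_{i+1}$ the word $w_i$ is nontrivial, hence $n+1\leq\sum_i|w_i|_S$, giving a factor of $2N$. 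Your proposal describes this discrepancy as a ``bounded error term arising from the fact that our $\gamma$'s are arbitrary group elements'' to be absorbed by enlarging $M$, but $n+1$ is not a bounded error: it grows with the number of cosets, and without the pairwise-distinctness of the $P_i$ (used here in a quantitative way, not merely to guarantee isolation) the conclusion is false. This step needs to be spelled out, not absorbed.

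Two smaller points. First, the middle paragraph describes the segments from $\gamma_{i-1}^+$ to $\gamma_i^-$ as the ``parts lying outside / between the cosets'' and the segments from $\gamma_i^-$ to $\gamma_i^+$ as the ``coset geodesics,'' which is exactly backwards (the former lie inside a single coset $P_i$, the latter cross from $P_i$ to $P_{i+1}$); you correct this in the final paragraph, but the inconsistency should be cleaned up. Second, the paper additionally rewrites each $w_i=u_iw'_iv_i$, peeling off maximal prefixes and suffixes lying in the relevant peripheral subgroups and folding them into the $h_i$, so as to guarantee the rewritten pieces $h'_i$ are honest isolated $\cH$-components before invoking Osin's lemma. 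Your proposal does not address whether the raw cyclic word meets the hypotheses of the lemma being cited; at minimum you should verify that the $h_i$ are genuinely isolated components of your loop, or perform the same normalization.
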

\begin{proof}
	Up to modifying $M$, the conclusion is independent of the generating set chosen. We can therefore choose a finite generating set $S$ of $G$ and constant $N>1$ such that~\cite[Lemma 3.2]{osin2006Relatively} holds, where $N=ML$ in the notation of the cited lemma. Throughout the subsequent argument, we assume indices are taken modulo $n+1$.

	Pick $j_i$ such that $P_i$ is a left coset of $H_{j_i}$.
	Let $w_i$ be a minimal length word in $S$ representing $(\gamma_{i}^-)^{-1}\gamma_{i}^+$, and let  $h_i=(\gamma_{i-1}^+)^{-1}\gamma_i^-$. Since  $\gamma_{i-1}^+$ and $\gamma_i^{-}$  both lie in $P_i$, we see that $h_i\in H_{j_i}$. Therefore $h_0w_0h_1w_1\dots h_n w_n$ represents the identity element of $G$.

	We write the word $w_i$ as $u_iw'_iv_i$, where $u_i$ and $v_i$ are the initial and terminal subwords of $w_i$ of maximal length such that $u_i\in H_{j_i}$ and $v_i\in H_{j_{i+1}}$. We set $h'_i=v_{i-1}h_iu_i\in H_{j_i}$. Therefore,
	\[
		\ell=h'_0 w'_0 h'_1 \dots h_n'w'_n
	\]
	is a word in $S\cup \cH\coloneqq S\cup \bigcup_{i=1}^nH_i$ representing the identity in $G$. In other words, $\ell$ corresponds to a loop of length $n+1+\sum_{i=0}^n|w'_i|_S$  in the Cayley graph of $G$ with respect to $S\cup \cH$.

	By construction, the segments of the loop labeled by $h'_i$ are $\cH$-isolated components in the sense of~\cite{osin2006Relatively}. Thus~\cite[Lemma 3.2]{osin2006Relatively} implies \[
		\sum_{i=0}^n|h'_i|_S\leq N\left(n+1+\sum_{i=0}^n|w'_i|_S\right)
	\] Since $P_i\neq P_{i+1}$, each $w_i$ is not the trivial word and so $n+1\leq\sum_{i=0}^n|w_i|_S$.  Putting everything together, we have
	\begin{align*}
		\sum_{i=1}^n d_S(\gamma_{i-1}^+,\gamma_{i}^-)
		 & =\sum_{i=1}^n |h_i|_S\leq \sum_{i=0}^n \left(|h'_i|_S+|u_i|_S+|v_i|_S\right)           \\
		 & \leq N\left(n+1+\sum_{i=0}^n|w'_i|_S\right)+N\sum_{i=0}^n \left(|u_i|_S+|v_i|_S\right) \\
		 & \leq N\left(n+1+\sum_{i=0}^n|w_i|_S\right)\leq 2N\sum_{i=0}^n|w_i|_S.\qedhere
	\end{align*}
\end{proof}

The following argument is similar to that used in~\cite[\S 7]{frigeriolafontsistosro2015rigidity}, and comprises the key step in proving Theorem~\ref{thm:edge_qi_embedded}.

\begin{lem}\label{lem:path_upperbound}
	There is a constant $E$ such that for all $e$ and all $x,y\in X_e$, if $e_-=v$ and $\gamma$ is a continuous path in $X$ connecting $x$ to $y$ and disjoint from $X_{\bar e}$, then:
	\begin{enumerate}
		\item  $d_{Y_v}(\pi_v(x),\pi_v(y))\leq E\length (\gamma)+E$.\label{item:path_upperbound_proj}
		\item $d_{X_v}(x,y)\leq E\length (\gamma)+E$.\label{item:path_upperbound_total}
	\end{enumerate}
\end{lem}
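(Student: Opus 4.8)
The plan is to prove the two inequalities essentially simultaneously, deducing (\ref{item:path_upperbound_total}) from (\ref{item:path_upperbound_proj}) together with Lemma~\ref{lem:dist_projs} and Lemma~\ref{lem:dist_relhyp_bound}. First I would set up the combinatorial structure of the path $\gamma$. Since $X$ is a tree of spaces and $\gamma$ is disjoint from $X_{\bar e}$, the path $\gamma$ stays in the subtree of spaces $X^+$ lying on the $e_- = v$ side of the edge $e$; in particular its endpoints both lie in the vertex space $X_v$. Subdividing, write $\gamma$ as a concatenation $\gamma = \gamma_0 * \gamma_1 * \cdots * \gamma_k$ where each $\gamma_j$ either lies entirely in a single vertex space, or crosses a single edge of $T$ (traversing one of the unit intervals attached along an edge space). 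By collapsing consecutive pieces in the same vertex space, one may assume the pieces in vertex spaces and the ``edge-crossing'' pieces alternate. The edge-crossing pieces pick out a sequence of edges $e^{(1)}, e^{(2)}, \dots$ of $T$ incident to vertices visited by $\gamma$, and consecutive pieces $\gamma_{2j-1}$ in a common vertex space $X_{v_j}$ enter through edge space $X_{e^{(2j-1)}}$ and leave through edge space $X_{e^{(2j)}}$. Because $\gamma$ avoids $X_{\bar e}$, the very first vertex space visited is $X_v$ and the path never re-enters $X_v$ through $X_e$; this is what makes the argument work.

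The heart of the argument is to control the projection $d_{Y_v}(\pi_v(x),\pi_v(y))$ by ``pushing'' everything down into the quotient spaces $Y_{v_j}$ of Corollary~\ref{cor:cay_relhyp}, each of which is a Cayley graph of a relatively hyperbolic group with the cosets $\ell_e$ of Corollary~\ref{cor:proj_bound} as peripherals. For a vertex-space piece $\gamma_{2j-1}$ in $X_{v_j}$ running from a point of $X_{e^{(2j-1)}}$ to a point of $X_{e^{(2j)}}$, the image under $\pi_{v_j}$ is a path in $Y_{v_j}$ of length at most $\length(\gamma_{2j-1})$ (Lemma~\ref{lem:quotient map}) running between the peripheral cosets $\ell_{e^{(2j-1)}}$ and $\ell_{e^{(2j)}}$. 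When I transport a point across the unit interval attached along $X_{e^{(2j)}}$ into the next vertex space $X_{v_{j+1}}$, the two relevant quotient projections are related by Lemma~\ref{lem:dist_projs}, which says precisely that $d_{X}$-distance within a single edge space is comparable to the sum of the two adjacent quotient distances — so crossing an edge contributes boundedly (bounded by a constant plus $E$ times the length of the edge-crossing piece, which is at least $1$) to the total ``quotient budget.'' Finally, to relate $d_{Y_v}(\pi_v(x),\pi_v(y))$ to the sum of the quotient-path-lengths in all the $Y_{v_j}$'s, I apply Lemma~\ref{lem:dist_relhyp_bound} in $Y_v$: the peripheral cosets $\ell_{e^{(i)}}$ appearing along the way are pairwise distinct (this uses Definition~\ref{defn:admissible}(3), i.e.\ the non-commensurability of distinct incident edge groups, which is exactly condition (1) in Corollary~\ref{cor:proj_bound} guaranteeing the projections between distinct $\ell_e$'s are bounded), and Osin's lemma then bounds the ``backtracking'' between consecutive coset entry/exit points by a constant times the sum of the within-coset displacements, which are in turn controlled by the quotient-path-lengths. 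Summing, $d_{Y_v}(\pi_v(x),\pi_v(y)) \leq E\length(\gamma) + E$ for a uniform $E$, using that there are only finitely many isometry types of $Y_v$ and edge spaces. This gives (\ref{item:path_upperbound_proj}).

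For (\ref{item:path_upperbound_total}), I would observe that $x,y \in X_e$ lie in an edge space, so by Lemma~\ref{lem:dist_projs} applied to the edge $e$ (with $v = e_-$, $w = e_+$) we have
\[
d_{X_v}(x,y) \leq K\bigl(d_{Y_v}(\pi_v(x),\pi_v(y)) + d_{Y_w}(\pi_w(\alpha_e(x)),\pi_w(\alpha_e(y)))\bigr) + KA.
\]
The first term is controlled by part (\ref{item:path_upperbound_proj}). For the second term, I need to bound $d_{Y_w}(\pi_w(\alpha_e(x)),\pi_w(\alpha_e(y)))$; here I would run the same machinery from the $w = e_+$ side. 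Since $\gamma$ is disjoint from $X_{\bar e}$ it does not help directly, but I can instead concatenate $\gamma$ with a short detour: the points $\alpha_e(x),\alpha_e(y) \in X_{\bar e} \subseteq X_w$, and a path in $X_w$ from $\alpha_e(x)$ to $\alpha_e(y)$ has image in $Y_w$ of bounded length only if $d_{X_w}(\alpha_e(x),\alpha_e(y))$ is bounded, which is not given. A cleaner route: apply part (\ref{item:path_upperbound_proj}) symmetrically — the path $\gamma$ together with its endpoints, pushed via $\alpha_e$, can be compared using that $\alpha_e$ is an isometry of edge spaces, so the projection of $X_e$ into $Y_w$ is controlled by the same constant and the same loop argument on the $w$-side subtree after removing $X_e$; since $\gamma$ avoids $X_{\bar e} = \alpha_e(X_e)$, it is a path in the complementary subtree, so the identical argument yields $d_{Y_w}(\pi_w(\alpha_e(x)),\pi_w(\alpha_e(y))) \leq E\length(\gamma)+E$ as well. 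Combining the two bounds with the displayed inequality and enlarging $E$ gives (\ref{item:path_upperbound_total}).

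The main obstacle I anticipate is the bookkeeping in the first paragraph — carefully justifying that the sequence of edge spaces encountered by $\gamma$, when projected into $Y_v$, gives a sequence of \emph{distinct} peripheral cosets to which Lemma~\ref{lem:dist_relhyp_bound} applies. A priori $\gamma$ may visit the same vertex space of $X$ many times and cross the same edge $e^{(i)}$ repeatedly; the distinctness needed for Osin's lemma is distinctness of the \emph{cosets} $\ell_{e^{(i)}} \subseteq Y_v$, so I will need to first collapse repeated coset visits (replacing a subpath that leaves and re-enters the same coset $\ell_{e^{(i)}}$ by a subpath within that coset, which only decreases the relevant lengths up to the projection constant $B$ of Corollary~\ref{cor:proj_bound}), and only then apply Lemma~\ref{lem:dist_relhyp_bound}. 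Getting this reduction right, while keeping all constants uniform over the finitely many isometry types of vertex and edge spaces, is the technical crux.
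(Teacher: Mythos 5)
Your plan for part~(\ref{item:path_upperbound_total}) has a genuine gap. You observe (correctly) that Lemma~\ref{lem:dist_projs} applied to $e$ itself reduces the bound on $d_{X_v}(x,y)$ to bounds on the two quotient distances $d_{Y_v}(\pi_v(x),\pi_v(y))$ and $d_{Y_w}(\pi_w(\alpha_e(x)),\pi_w(\alpha_e(y)))$. The first is part~(\ref{item:path_upperbound_proj}). But your claim that ``the identical argument'' bounds the second is not correct: part~(\ref{item:path_upperbound_proj}) with $\bar e$ in the role of $e$ (so that $w=\bar e_-$) requires a path from $\alpha_e(x)$ to $\alpha_e(y)$ disjoint from $X_e$, and $\gamma$ is not such a path --- $\gamma$ has its endpoints in $X_e$, and being disjoint from $X_{\bar e}$ it lives entirely in the $v$-side of the tree, so it never meets $X_w$ at all. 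There is no way to project $\gamma$ into $Y_w$, and the quantity $d_{Y_w}(\pi_w(\alpha_e(x)),\pi_w(\alpha_e(y)))$ is, up to a multiplicative constant, essentially equivalent to $d_{X_e}(x,y)$, which is exactly what the lemma is being used to control (cf.\ the proof of Theorem~\ref{thm:edge_qi_embedded}). So the ``symmetric'' step is circular. The paper avoids this by not invoking Lemma~\ref{lem:dist_projs} for $e$ itself: it instead applies Lemma~\ref{lem:dist_projs} to the edges $e_i$ through which $\gamma$ temporarily leaves $X_v$, combining part~(\ref{item:path_upperbound_proj}) \emph{applied to the excursion $\delta_i$} with a three-way case analysis in which the Osin-type bound (Lemma~\ref{lem:dist_relhyp_bound}) only enters in one case.

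A second, smaller issue is the reduction to distinct peripheral cosets. You propose doing the collapse inside $Y_v$ by ``replacing a subpath that leaves and re-enters $\ell_{e^{(i)}}$ by a subpath within that coset,'' but the length of such a subpath within $\ell_{e^{(i)}}$ is precisely the kind of quantity you do not yet control; the projected path has uncontrolled jumps exactly where $\gamma$ leaves $X_v$. The paper performs the collapse upstairs in $X$, and crucially uses the virtually $\Z^2$ edge stabilizers: one translates a portion of $\gamma$ by elements $a,b$ of the (commuting) edge group and exploits $ab=ba$ to build a shortcut through $X_{\bar e_i}$ of length at most $\length(\gamma)+4C$. This abelian trick is the mechanism that makes the reduction work, and a quotient-level collapse does not reproduce it. Finally, note that the paper proves part~(\ref{item:path_upperbound_proj}) without Osin's lemma at all: it suffices to project onto $\ell_e$ and use the bounded-projection estimate of Corollary~\ref{cor:proj_bound}(1), since only $\ell_{e_i}\ne\ell_e$ (not pairwise distinctness of the $\ell_{e_i}$) is needed there; the collapsing and Osin's lemma are reserved for part~(\ref{item:path_upperbound_total}).
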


\begin{proof}
	Throughout the proof, we use the notation that if $p$ is a path from $a$ and $b$, then $a$ and $b$ are denoted by $p^-$ and $p^+$ respectively. We fix constants $K$, $A$, $B$ and $M$ such that Corollary~\ref{cor:proj_bound}, Lemma~\ref{lem:dist_projs} and Lemma~\ref{lem:dist_relhyp_bound} hold.  Since each edge stabilizer $G_e$ is virtually $\Z^2$ and acts co-compactly on the corresponding edge space $X_e$, we can choose a finite index abelian subgroup  $H_e\leq G_e$ and a constant $C$ such that $N_C^{X_{e}}(H_e\cdot x)=X_e$ for all $x\in X_e$. Moreover, since there are only finitely many $G$ orbits of edge spaces, $C$ can be chosen independently of $e$.

	It is sufficient to prove the statement in the case $x$ and $y$ are vertices and  $\gamma$ is a  combinatorial path satisfying the hypothesis of the lemma.
	We can write $\gamma$ as a  concatenation
	\[\gamma=\gamma_0\cdot \alpha_1\cdot \delta_1\cdot \beta_1\cdot \gamma_1\cdot \alpha_2\cdot \delta_2\cdot \beta_2\cdot \gamma_2 \dots \gamma_n,\] where:
	\begin{itemize}
		\item each $\gamma_i$ is a path in $X_{v}$ from a vertex of $X_{e_{i}}$ to a vertex of $X_{e_{i+1}}$ (we assume $e_0=e_{n+1}=e$).
		\item each $\delta_i$ is a path  with endpoints in $X_{\bar{e_i}}$ and disjoint from $X_{e_i}$.
		\item each $\alpha_i$ (resp $\beta_i$) is a unit length interval from $X_{e_i}$ to $X_{\bar{e_i}}$ (resp $X_{\bar{e_i}}$ to $X_{e_i}$). This implies $\alpha_{e_i}(\alpha_i^-)=\alpha_i^+$ and $\alpha_{\bar e_i}(\beta_i^-)=\beta_i^+$.
	\end{itemize} See Figure~\ref{fig:path} for a  diagram of such a  path $\gamma$.
	\begin{figure}[ht]
		\centering
		\incfig{path}
		\caption{A  diagram of the  path $\gamma$.}\label{fig:path}
	\end{figure}

	We remark that $n\leq \length(\gamma)$ since each $\alpha_i$ has unit length.
	Note also that $e_i\neq e$ for $1\leq i\leq n$, by the assumption $\gamma$ is disjoint from $X_{\bar e}$.

	We claim that, after possibly replacing $\gamma$ with a path of length at most $\ell(\gamma)(4C+1)$ if necessary, we may assume that the sequence of edges $e_1,\dots, e_n$ contains no repetitions.  Indeed, suppose that  $e_i=e_j$ for $1\leq i<j\leq n$.  By the choice of $C$, we can pick $a,b\in H_{\bar e_i}$ such that $d_{X_{\bar e_i}}(a\delta_j^-,\delta_i^+)\leq C$ and $d_{X_{\bar e_i}}(b\delta_i^+,a\delta_j^+)\leq C$. Note that
	\begin{align*}
		d_{X_{\bar e_i}}(b\delta_j^-,\delta_j^+)=d_{X_{\bar e_i}}(ab\delta_j^-,a\delta_j^+)=d_X(ba\delta_j^-,a\delta_j^+)\leq d_{X_{\bar e_i}}(ba\delta_j^-,b\delta_i^+)+d_{X_{\bar e_i}}(b\delta_i^+,a\delta_j^+)\leq 2C
	\end{align*} since $a,b\in H_{\bar e_i}\cong \Z^2$ commute.
	We now consider the concatenation
	\[\gamma'=[\gamma^-,\delta_i^+]_\gamma\cdot[\delta_i^+,a\delta_j^-]_{X_{\bar e_i}}\cdot  [a\delta_j^-,a\delta_j^+]_{a\gamma}\cdot [a\delta_j^+,b\delta_i^+]_{X_{\bar e_i}}\cdot [b\delta_i^+,b\delta_{j}^-]_{b\gamma}\cdot [b\delta_{j}^-,\delta_j^+]_{X_{\bar e_i}}\cdot [\delta_j^+,\gamma^+]_\gamma\]
	which satisfies $\ell(\gamma')\leq  \ell(\gamma)+4C$. Replacing $\gamma$ with $\gamma'$ thus has the effect of removing a repeated edge from the sequence  $e_1,\dots, e_n$ whilst increasing the length of $\gamma$ by at most $4C$.  Repeating this procedure at most $n\leq \ell(\gamma)$ times proves the claim.

	For $0\leq i\leq n$, set  $\epsilon_i\coloneqq   \pi_v\circ \gamma_i$.

	We now prove (\ref{item:path_upperbound_proj}). Set $p_i^\pm=\proj_{\ell_e}(\epsilon^\pm_i)$. For each $i$, we  have $d_{Y_v}(p_i^-,p_i^+)\leq \length(\gamma_i)+B$   via the coarse Lipschitz constants of $\proj_{\ell_e}$ and $\pi_v$ in Lemmas~\ref{lem:quotient map} and Corollary~\ref{cor:proj_bound}.
	We also have $d_{Y_v}(p_{i-1}^+,p_i^-)\leq B$ since  $\proj_{\ell_e}(\ell_{e_i})$ has diameter at most $B$.
	Thus \[d_{Y_v}(p_0^-,p_n^+)\leq \sum_{i=0}^nd_{Y_v}(p_i^-,p_i^+)+nB\leq  \sum_{i=0}^n\length (\gamma_i) +(2n+1)B.\]
	Since $n\leq \length (\gamma)$, we have
	\[
		d_{Y_v}(\pi_v(x),\pi_v(y))=d_{Y_v}(p_0^-,p_n^+)\leq (2B+1)\length(\ell)+B
	\] completing the proof of (\ref{item:path_upperbound_proj}).

	We now fix $E$ satisfying the conclusion of  (\ref{item:path_upperbound_proj}), and  prove (\ref{item:path_upperbound_total}).  Set $I=\{1,\dots,n\}$ and \[J=\{i\in I \mid 2K d_{Y_v}(\epsilon_{i-1}^+,\epsilon_i^-) \leq d_{X_v}(\gamma_{i-1}^+,\gamma_i^-)\}.\]
	The triangle inequality yields the expression
	\[d_{X_v}(x,y)=d_{X_v}(\gamma_0^-,\gamma_n^+)\leq \sum_{i=0}^nd_{X_v}(\gamma_i^-,\gamma_i^+)+\sum_{i\in J}d_{X_v}(\gamma_{i-1}^+,\gamma_i^-) +\sum_{i\in I\setminus J}d_{X_v}(\gamma_{i-1}^+,\gamma_i^-). \] Therefore, at least one of the three terms in the above inequality is at least $\frac{d_{X_v}(x,y)}{3}$. We consider the three cases separately and prove (\ref{item:path_upperbound_total}) holds in each case.

	{Case 1: $d_{X_v}(x,y) / 3 \leq \sum_{i=0}^nd_{X_v}(\gamma_i^-,\gamma_i^+)$.}

	This is immediate, since \[d_{X_v}(x,y)\leq 3\sum_{i=0}^nd_{X_v}(\gamma_i^-,\gamma_i^+)\leq 3\sum_{i=0}^n\length(\gamma_i)\leq 3\length(\gamma).\]

	{Case 2: $d_{X_v}(x,y) / 3 \leq \sum_{i\in J}d_{X_v}(\gamma_{i-1}^+,\gamma_i^-)$.}

	Set $\hat \epsilon_i=\pi_{v_i}\circ \delta_i$.
	Recall $\delta_i$ is an edge path with endpoints in $X_{\bar e_i}$ disjoint from $X_{e_i}$. Hence, we can apply (\ref{item:path_upperbound_proj}) to $\delta_i$ to give
	\[d_{Y_{v_i}}(\hat \epsilon_{i}^-,\hat \epsilon_i^+)\leq E\length(\delta_i)+E\]

	If $i\in J$, then Lemma~\ref{lem:dist_projs} and the definition of $J$ yields
	\begin{align*}
		\frac{1}{K}d_{X_v}(\gamma_{i-1}^+,\gamma_{i}^-)-A & \leq d_{Y_v}(\epsilon_{i-1}^+,\epsilon_i^-)+d_{Y_{v_i}}(\hat \epsilon_{i}^-,\hat \epsilon_i^+)
		\\~\\
		                                                  & \leq \frac{1}{2K}d_{X_v}(\gamma_{i-1}^+,\gamma_i^-)+ d_{Y_{v_i}}(\hat \epsilon_{i}^-,\hat \epsilon_i^+)
	\end{align*}
	since $\pi_{v_i}(\alpha_e(\gamma_{i-1}^+))=\hat \epsilon_{i}^-$ and $\pi_{v_i}(\alpha_e(\gamma_{i}^-))=\hat \epsilon_i^+$.

	Consequently,  $d_{X_v}(\gamma_{i-1}^+,\gamma_{i}^-)\leq 2Kd_{Y_{v_i}}(\hat \epsilon_{i}^-,\hat \epsilon_i^+)+2KA$ for $i\in J$.
	Thus
	\begin{align*}
		d_{X_v}(x,y) & \leq 3\sum_{i\in J} d_{X_v}(\gamma_{i-1}^+,\gamma_i^-)\leq\sum_{i\in J} \left(6Kd_{Y_{v_i}}(\hat \epsilon_{i}^-,\hat \epsilon_i^+)+6KA\right) \\
		             & \leq \sum_{i =1}^n 6K E\length(\delta_i)+6Kn(E+A)                                                                                             \\
		             & \leq   6K(2E+A)\length(\gamma)
	\end{align*}
	and we are done.

	{Case 3: $d_{X_v}(x,y) / 3 \leq\sum_{i\in I\setminus J}d_{X_v}(\gamma_{i-1}^+,\gamma_i^-)$.}

	Combining Lemma~\ref{lem:dist_relhyp_bound}, the definition of $J$, and the fact that  $\pi_v$ is $1$-Lipschitz, we deduce
	\begin{align*}
		d_{X_v}(x,y) & \leq 3\sum_{i\in I\setminus J}d_{X_v}(\gamma_{i-1}^+,\gamma_i^-)< 6K\sum_{i\in I\setminus J}d_{Y_v}(\epsilon_{i-1}^+,\epsilon_i^-)\leq 6K\sum_{i=1 }^nd_{Y_v}(\epsilon_{i-1}^+,\epsilon_i^-) \\
		             & \leq 6KM\sum_{i=0 }^nd_{Y_v}(\epsilon_{i}^-,\epsilon_i^+)\leq 6KM\sum_{i=0 }^n\length(\gamma_i)\leq 6KM\length(\gamma).
	\end{align*}
	This concludes the proof in all cases.
\end{proof}

\begin{proof}[Proof of Theorem~\ref{thm:edge_qi_embedded}]
	Let $E$ be the constant as in Lemma~\ref{lem:path_upperbound}. Let $e_-=v$ and $e_+=w$. By Corollary~\ref{cor:espace_qi_embedded_in_vspace}, we can pick $K$ and $A$ such that $X_e\to X_v$ and $X_{\bar e}\to X_w$ are $(K,A)$-quasi-isometric embeddings.

	Let $x,y\in X_e$ and let $\gamma$  be a geodesic path from $x$ to $y$ in $X$, which is necessarily an edge path.
	We can write $\gamma$ as a  concatenation
	\[\gamma=\gamma_0\cdot \alpha_1\cdot \gamma_1\cdot \alpha_2\cdot \dots \alpha_n\cdot \gamma_n\]
	such that the following hold:
	\begin{itemize}
		\item If $i$ is even (resp.\ odd),  $\gamma_i$ is  disjoint from $X_{\bar e}$ (resp.\ $X_e$) with endpoints on  $X_e$ (resp.\ $X_{\bar e}$).
		\item If $i$ is odd (resp.\ even), $\alpha_i$ is  a unit  interval from  $X_{e}$ (resp.\ $X_{\bar{e}}$) to $X_{\bar{e}}$ (resp.\ $X_e$).
	\end{itemize}
	In particular, we note that as the endpoints of $\gamma$ are in $X_e$, $n$ must be even. (However, $\gamma_0$ or $\gamma_n$ might have length zero.) Since $\alpha_i$ has length $1$, we see $n\leq \length (\gamma)$. Moreover, Lemma~\ref{lem:path_upperbound} applies to each $\gamma_i$.

	For odd $i$,  Lemma~\ref{lem:path_upperbound} implies $d_{X_w}(\gamma_i^-,\gamma_i^+)\leq E\length(\gamma_i)+E$. Thus $d_{X_{\bar e}}(\gamma_i^-,\gamma_i^+)\leq KE\length(\gamma_i)+KE+KA$.
	As $\alpha_{\bar e}:X_{\bar e}\to X_e$ is an isometric embedding and $\alpha_{\bar e}(\gamma_i^-)=\gamma_{i-1}^+$ and $\alpha_{\bar e}(\gamma_i^+)=\gamma_{i+1}^-$, we deduce that $d_{X_e}(\gamma_{i-1}^+,\gamma_{i+1}^-)\leq KE\length(\gamma_i)+KE+KA$.
	For even $i$, an argument identical to that given above implies  $d_{X_e}(\gamma_i^-,\gamma_i^+)\leq KE\length(\gamma_i)+KE+KA$.

	Putting everything together and applying the triangle inequality we conclude that
	\begin{align*}d_{X_e}(x,y) & =d_{X_e}(\gamma_0^-,\gamma_n^+)\leq\sum_{i=0}^{\frac{n}{2}}d_{X_e}(\gamma_{2i}^-,\gamma_{2i}^+)+\sum_{i=1}^{\frac{n}{2}}d_{X_e}(\gamma_{2i-2}^+,\gamma_{2i}^-) \\
                           & \leq KE\sum_{i=0}^n\length(\gamma_i)+(n+1)(KE+KA)                                                                                                              \\
                           & \leq (2KE+KA)\length(\gamma)+KE                                                                                                                                \\
                           & =(2KE+KA)d_X(x,y)+KE.
	\end{align*}
	Clearly $d_X(x,y)\leq d_{X_e}(x,y)$, since $X_e$ and $X$ are geodesic metric spaces and $X_e$ is a subspace of $X$. Thus the inclusion $X_e\to X$ is a quasi-isometric embedding.
\end{proof}

\subsection{bi-Lipschitz maps of the asymptotic cone}

We now begin our study of the asymptotic cone of the tree of spaces of an admissible group. We fix a tree of spaces $(X,T)$ associated with an admissible group, with associated Bass--Serre tree $T$, and fix an asymptotic cone $X_\omega=X_\omega((b_i),(\lambda_i))$ of $X$.

\begin{defn}\label{defn:omegavertexedgespaces}
	Let $\cV$ and $\cE$  be the set of all vertex spaces and edge spaces of $X$ respectively.
	We define an  \emph{$\omega$-vertex space} and \emph{$\omega$-edge space} to be an element of $\cV_\omega$ and $\cE_\omega$ respectively, as in Definition~\ref{defn:ultralimit}.
\end{defn}

The following lemma describes the structure of $\omega$-vertex spaces.
\begin{prop}\label{prop:ultralimit_vertex_space}
	Let $\lim_\omega X_{v_i}\in \cV_\omega$ and consider \[\cA_\omega\coloneqq \{\lim_\omega X_{e_i}\in \cE_\omega\mid e_i\in \lk(v_i) \text{ for all $i$}\}.\] Fix $[(a_i)]\in \lim_\omega X_{v_i}$ with $a_i\in X_{v_i}$ for all $i$, and set
	\[Y_\omega\coloneqq \lim_\omega\left(Y_{v_i},\pi_{v_i}(a_i),\frac{d_{Y_{v_i}}}{\lambda_i}\right).\] Let $\pi_\omega:\lim_\omega X_{v_i}\to Y_\omega$ be the map $\lim_\omega (\pi_{v_i})$ and set
	\[\cL_\omega\coloneqq \{\pi_\omega(\lim_\omega X_{e_i})=\lim_\omega \ell_{e_i}\mid \lim_\omega X_{e_i}\in \cA_\omega \}.\]
	Then there is a bi-Lipschitz equivalence $f_\omega:\lim_\omega X_{v_i}\to \E\times Y_\omega$ such that the following hold:
	\begin{enumerate}
		\item\label{prop:ultralimit_vertex_space1} $Y_\omega$ is  a  geodesically complete $\R$-tree that branches everywhere.
		\item\label{prop:ultralimit_vertex_space2} Every  $\ell_\omega=\lim_\omega( \ell_{e_i})\in \cL_\omega$  is a bi-infinite geodesic line  in $Y_\omega$ and $f_\omega(\lim_{\omega}X_{e_i})=\E\times \ell_{\omega}$.
		\item\label{prop:ultralimit_vertex_space3}$Y_\omega$ is tree-graded with respect to $\cL_\omega$.
	\end{enumerate}
\end{prop}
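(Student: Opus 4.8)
The plan is to assemble the three conclusions from results already established in the excerpt, combining Corollary~\ref{cor:vspace_qitoprod}, Corollary~\ref{cor:cay_relhyp}, and the general facts about asymptotic cones.

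\textbf{Constructing $f_\omega$.} By Corollary~\ref{cor:vspace_qitoprod}, there are uniform constants $K,A$ and, for each $i$, a $(K,A)$-quasi-isometry $f_{v_i}:X_{v_i}\to \E\times Y_{v_i}$ whose composition with the projection to $Y_{v_i}$ is the quotient map $\pi_{v_i}$. Choosing basepoints $a_i\in X_{v_i}$ as in the statement and using $f_{v_i}(a_i)$ (or rather its components) as basepoints in $\E\times Y_{v_i}$, Lemma~\ref{lem:induced_bilip} produces a $K$-bi-Lipschitz equivalence $f_\omega:\lim_\omega X_{v_i}\to \lim_\omega(\E\times Y_{v_i})$. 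By Proposition~\ref{prop:wellknownfacts}(2), the target is isometric to $\E\times Y_\omega$, where $Y_\omega$ is the asymptotic cone of $Y_{v_i}$ with the prescribed basepoints (the $\E$-factor is unchanged since $\lim_\omega(\E,0,d/\lambda_i)=\E$). Tracking through the identification, the compatibility $f_{v_i}$-with-$\pi_{v_i}$ passes to the limit, so $f_\omega$ followed by projection to $Y_\omega$ is $\pi_\omega=\lim_\omega(\pi_{v_i})$; in particular $f_\omega(\lim_\omega X_{e_i})=\E\times\pi_\omega(\lim_\omega X_{e_i})=\E\times\ell_\omega$, giving the second half of~(\ref{prop:ultralimit_vertex_space2}) once we know $\ell_\omega=\lim_\omega\ell_{e_i}$ is a geodesic line. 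Here I would invoke that $f_{v_i}$ maps $X_{e_i}$ to $\E\times\ell_{e_i}$ up to uniform Hausdorff distance (as noted before Corollary~\ref{cor:espace_qi_embedded_in_vspace}), so the ultralimit of $X_{e_i}$ matches $\E\times\ell_\omega$ on the nose.

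\textbf{Properties of $Y_\omega$ and $\cL_\omega$.} By Corollary~\ref{cor:cay_relhyp}, each $Y_{v_i}$ is the Cayley graph of a relatively hyperbolic group drawn from a finite list, with $\{\ell_{e}\mid e\in\lk(v_i)\}$ exactly the collection of left cosets of the peripheral subgroups, all two-ended. Since $Q_{\hat v}$ is non-elementary hyperbolic relative to a collection of infinite subgroups, it is in particular non-elementary hyperbolic (peripherals being quasi-convex and proper), so Proposition~\ref{prop:wellknownfacts}(1) gives that $Y_\omega$ is a geodesically complete $\R$-tree that branches everywhere, which is~(\ref{prop:ultralimit_vertex_space1}). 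For~(\ref{prop:ultralimit_vertex_space3}), Theorem~\ref{thm:drutusapir_treegraded} says $Q_{\hat v}$ asymptotically tree-graded with respect to its peripherals means precisely that $Y_\omega$ is tree-graded with respect to $(\{\ell_e\})_\omega$, and $(\{\ell_e\})_\omega$ is exactly $\cL_\omega$ by Definition~\ref{defn:ultralimit} — using the remark after Lemma~\ref{lem:as-cone_subsets} to see the hypotheses (finitely many quasi-isometrically embedded subgroups, uniformly) are met across all $v_i$. Finally, for the first half of~(\ref{prop:ultralimit_vertex_space2}): each $\ell_{e_i}$ is a two-ended subset, uniformly quasi-isometric to $\Z$, so by Lemma~\ref{lem:as-cone_subsets} each nonempty $\lim_\omega\ell_{e_i}$ is bi-Lipschitz to an asymptotic cone of $\Z$, i.e.\ to $\E$; since $\ell_\omega\subseteq Y_\omega$ is a convex subset of an $\R$-tree (being a limit of uniformly quasi-convex sets — here I would use the projection estimates in Corollary~\ref{cor:proj_bound} to argue convexity of the limit, or alternatively that it is a piece of a tree-graded space and pieces are convex) that is bi-Lipschitz to $\E$, it must be an isometrically embedded bi-infinite geodesic line.

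\textbf{Main obstacle.} The routine parts are the invocations of Proposition~\ref{prop:wellknownfacts} and Theorem~\ref{thm:drutusapir_treegraded}. The step requiring the most care is verifying that $\lim_\omega X_{e_i}$ really corresponds under $f_\omega$ to $\E\times\ell_\omega$ with $\ell_\omega$ an honest bi-infinite geodesic: one must check that the uniform Hausdorff-distance bound between $f_{v_i}(X_{e_i})$ and $\E\times\ell_{e_i}$ survives rescaling (it does, since the bound is additive and the scales $\lambda_i\to\infty$), and one must rule out $\ell_\omega$ being a ray or a point. That $\ell_\omega$ is unbounded in both directions follows because the $\ell_{e_i}$ are two-ended with both ends going to infinity at linear rate, uniformly, so for any $R$ one can find points in $\ell_{e_i}$ at distance $R\lambda_i$ on either side of $\pi_{v_i}(a_i)$'s projection — I would phrase this using that two-ended groups are bi-Lipschitz to $\Z$ with uniform constants over the finite list of $Q_{\hat v}$. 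Together with being a bi-Lipschitz copy of $\E$ sitting convexly in an $\R$-tree, this forces it to be a geodesic line, completing~(\ref{prop:ultralimit_vertex_space2}).
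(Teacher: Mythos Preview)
Your approach is essentially the same as the paper's, but you have glossed over one genuine subtlety that the paper handles explicitly. The object $\lim_\omega X_{v_i}\in\cV_\omega$ in the statement is, by Definition~\ref{defn:ultralimit}, a subset of $X_\omega$ carrying the metric induced from $X$; on the other hand, the quasi-isometries $f_{v_i}:X_{v_i}\to \E\times Y_{v_i}$ from Corollary~\ref{cor:vspace_qitoprod} are with respect to the \emph{intrinsic} Cayley-graph metric $d_{X_{v_i}}$. Applying Lemma~\ref{lem:induced_bilip} to the $f_{v_i}$ as you do therefore only yields a bi-Lipschitz equivalence from the ultralimit $\lim_\omega(X_{v_i},a_i,d_{X_{v_i}}/\lambda_i)$, which is a priori a different metric space from $\lim_\omega X_{v_i}\subseteq X_\omega$. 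The paper bridges this by invoking Corollary~\ref{cor:ve_spaces_qi_embedded}: since the inclusions $X_{v_i}\hookrightarrow X$ are uniform quasi-isometric embeddings, one may replace the intrinsic metric on $X_{v_i}$ by the subspace metric without disturbing the uniform quasi-isometry constants of $f_{v_i}$, and then the ultralimit really is $\lim_\omega X_{v_i}\subseteq X_\omega$. You should insert this step; otherwise the map $f_\omega$ you produce is not defined on the space the proposition is about.

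Apart from this, your argument matches the paper's: the commutative square of $f_{v_i}$, $\pi_{v_i}$, and the projection passes to ultralimits, part~(\ref{prop:ultralimit_vertex_space1}) comes from Proposition~\ref{prop:wellknownfacts}(1), and parts~(\ref{prop:ultralimit_vertex_space2}) and~(\ref{prop:ultralimit_vertex_space3}) from Corollary~\ref{cor:cay_relhyp} together with Theorem~\ref{thm:drutusapir_treegraded}. Your extended discussion of why $\ell_\omega$ is a bi-infinite geodesic is more detailed than necessary: once~(\ref{prop:ultralimit_vertex_space3}) is established, each $\ell_\omega$ is a piece of a tree-graded $\R$-tree, hence a closed geodesic subset, and since it is bi-Lipschitz to $\E$ it must be a bi-infinite geodesic line.
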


\begin{proof}
	For each $i$, we consider the  commutative diagram
	\[\begin{tikzcd}
			{X_{v_i}} & {\mathbb E\times Y_{v_i}} \\
			{Y_{v_i}} & {Y_{v_i}}
			\arrow["f_{v_i}", from=1-1, to=1-2]
			\arrow["\pi_{v_i}"', from=1-1, to=2-1]
			\arrow["\text{Id}"', from=2-1, to=2-2]
			\arrow["q_{v_i}", from=1-2, to=2-2]
		\end{tikzcd}\]
	where  $q_{v_i}$ is a projection and $f_{v_i}$ is a quasi-isometry as in Corollary~\ref{cor:vspace_qitoprod}.

	Note the $f_{v_i}$ are uniform quasi-isometries, the $\pi_{v_i}$ and $q_{v_i}$ are uniform coarse-Lipschitz and for each $e_i\in \lk(v_i)$, $f_{v_i}(X_{e_i})$ has uniform Hausdorff distance from $\E\times \ell_{e_i}$. Since the inclusions $X_{v_i}\to X$, are uniform quasi-isometric embeddings by Corollary~\ref{cor:ve_spaces_qi_embedded}, we can equip  $X_{v_i}\subseteq X$  with the subspace metric without affecting any of the properties stated above; for instance, the $f_{v_i}$ are still quasi-isometries with uniform constants. Passing to ultralimits and applying Lemma~\ref{lem:induced_bilip}, we obtain the commutative diagram
	\[\begin{tikzcd}
			{\lim_\omega X_{v_i}} & {\mathbb E\times Y_\omega} \\
			{Y_\omega} & {Y_\omega}
			\arrow["f_\omega", from=1-1, to=1-2]
			\arrow["\pi_\omega"', from=1-1, to=2-1]
			\arrow["\text{Id}"', from=2-1, to=2-2]
			\arrow["q_\omega", from=1-2, to=2-2]
		\end{tikzcd}\] where $f_\omega$ is the required bi-Lipschitz equivalence, $\pi_\omega$ is Lipschitz, and $q_\omega$ is the projection.

	By construction, each  $Y_{v_i}$ is the Cayley graph of one of the finitely many non-elementary hyperbolic groups, thus giving (\ref{prop:ultralimit_vertex_space1}).
	Properties (\ref{prop:ultralimit_vertex_space2}) and (\ref{prop:ultralimit_vertex_space3}) follow from the commutativity of the above diagram,
	Theorem~\ref{thm:drutusapir_treegraded} and Corollary~\ref{cor:cay_relhyp}.
\end{proof}

\begin{defn}
	Given $\lim_\omega X_{v_i}\in \cV_\omega$ and $\pi_\omega:\lim_\omega X_{v_i}\to Y_\omega$ as in Proposition~\ref{prop:ultralimit_vertex_space}, we  define an \emph{$\omega$-fiber of $\lim_\omega X_{v_i}$} to be  $\pi_\omega^{-1}(y_\omega)$ for some $y_\omega\in Y_\omega$.
\end{defn}

We prove the following fundamental properties regarding these subsets of $X_\omega$.
\begin{prop}\label{prop:omega-spaces}
	Assume  $\lim_\omega X_{v_i},\lim_\omega X_{v'_i}\in \cV_\omega$  and $\lim_\omega X_{e_i},\lim_\omega X_{e'_i}\in \cE_\omega$.
	\begin{enumerate}
		\item  No element of $\cV_\omega$ is contained in an element of $\cE_\omega$.\label{item:omega-spaces1}
		\item\label{item:omega-spaces2} $\lim_\omega X_{v_i} =\lim_\omega X_{v'_i}$ if and only if $v_i=v'_i$ $\omega$-almost surely.
		\item\label{item:omega-spaces5} If $e_i$ and $e_i'$ are distinct elements of $\lk(v_i)$ $\omega$-almost surely, then $\lim_\omega X_{e_i}\cap \lim_\omega X_{e'_i}$ is either empty or an $\omega$-fiber of $\lim_\omega X_{v_i}$.
		\item $\lim_\omega X_{e_i}=\lim_\omega X_{e'_i}$ if and only if $e_i=e'_i$ or  $e_i=\bar e'_i$ $\omega$-almost surely.\label{item:omega-spaces3}

		\item $\lim_\omega X_{e_i}\subseteq \lim_\omega X_{v_i}$ if and only if  $v_i\in \{({e_i})_-,(e_i)_+\}$ $\omega$-almost surely.\label{item:omega-spaces4} In particular, each element of  $\cE_\omega$ is contained in precisely two elements of $\cV_\omega$.
	\end{enumerate}

\end{prop}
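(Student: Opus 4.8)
The plan is to deduce all five statements from the quantitative results already established about coarse intersections of vertex and edge spaces, principally Proposition~\ref{prop:coarse_int_vertex_edge} (and the commensurability data in Lemma~\ref{lem:edge_vertex_int_admissible}), together with the structure of $\omega$-vertex spaces from Proposition~\ref{prop:ultralimit_vertex_space} and the general fact Lemma~\ref{lem:ultralimitsvscosets} about ultralimits of cosets in relatively hyperbolic groups. The recurring technique is: given a configuration holding along a subsequence in $\omega$, pull it back to finite-stage statements about $X_{v_i}$, $X_{e_i}$, apply the uniform coarse-geometric facts, and then push forward again. I would prove the items roughly in the order (2), (3), (1), (4), (5), since the later ones reuse the earlier ones.

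First I would handle (2). If $v_i = v_i'$ $\omega$-almost surely the equality is immediate. Conversely, suppose $\lim_\omega X_{v_i} = \lim_\omega X_{v_i'} \neq \emptyset$ but $v_i \neq v_i'$ along a set in $\omega$; I want a contradiction. Since $X$ is a tree of spaces over $T$ and $v_i \neq v_i'$, there is an edge $f_i$ on the geodesic $[v_i, v_i']$ in $T$ incident to $v_i$, so that $X_{f_i}$ separates $X_{v_i}$ from $X_{v_i'}$ in $X$ and $X_{v_i} \cap X_{v_i'} \subseteq X_{f_i}$. Now $X_{f_i}$ is the image of a uniform $(K,A)$-quasi-isometric embedding of a virtually $\Z^2$ group (Corollary~\ref{cor:ve_spaces_qi_embedded}), and $X_{v_i}$ is a uniform quasi-isometric embedding of a $\Z$-by-hyperbolic group; taking ultralimits, the first is bi-Lipschitz to $\E^2$ (or a bounded set, but it is unbounded in the relevant scaling since cosets of $Z_{v_i}$ escape) while $\lim_\omega X_{v_i}$ has, by Proposition~\ref{prop:ultralimit_vertex_space}, a product structure $\E \times Y_\omega$ with $Y_\omega$ an $\R$-tree branching everywhere. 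A point of $\lim_\omega X_{v_i} = \lim_\omega X_{v_i'}$ would then have to lie in $\lim_\omega X_{f_i}$, but one can choose $[(a_i)] \in \lim_\omega X_{v_i}$ whose $\pi_\omega$-image lies off every line $\ell_\omega \in \cL_\omega$ (the tree branches everywhere, so such points exist), contradicting $\lim_\omega X_{v_i} \subseteq \lim_\omega X_{f_i}$; more simply, pick $a_i \in X_{v_i}$ with $d(a_i, X_{f_i}) \to \infty$ faster than is permitted, using that $X_{f_i}$ has infinite codimension-type complement in $X_{v_i}$. Item (3) is the analogous statement for edges: one direction is trivial ($\lim_\omega X_{e_i} = \lim_\omega X_{\bar e_i}$ always, by the moreover-clause of Proposition~\ref{prop:coarse_int_vertex_edge}); for the converse, if $e_i \notin \{e_i', \bar e_i'\}$ $\omega$-often then $X_{e_i}$ and $X_{e_i'}$ are at infinite Hausdorff distance with uniform control — by Proposition~\ref{prop:coarse_int_vertex_edge} they are not even coarsely contained in one another up to bounded neighborhoods in a scale-invariant way — and one extracts a sequence of points in one escaping the other at linear rate, so the ultralimits differ.

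For (1), suppose $\lim_\omega X_{v_i} \subseteq \lim_\omega X_{e_i}$. The bi-Lipschitz model of $\lim_\omega X_{v_i}$ is $\E \times Y_\omega$ with $Y_\omega$ a branching $\R$-tree, whereas $\lim_\omega X_{e_i}$ is bi-Lipschitz to an asymptotic cone of a virtually $\Z^2$ group, hence to $\E^2$ by Proposition~\ref{prop:wellknownfacts}(1). There is no topological embedding of $\E \times (\text{branching } \R\text{-tree})$ into $\E^2$ — e.g., restrict to $\{0\} \times Y_\omega$ and use that a branch point has a neighborhood not embeddable in $\E^2$, or invoke invariance of domain as in Lemma~\ref{lem:qi_halfspace} — contradiction. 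For (4): if $v_i \in \{(e_i)_-, (e_i)_+\}$ $\omega$-a.s.\ then $X_{e_i} \subseteq X_{v_i}$ so the inclusion of ultralimits is clear. Conversely assume $\lim_\omega X_{e_i} \subseteq \lim_\omega X_{v_i}$. If $v_i \notin \{(e_i)_-, (e_i)_+\}$ along a set in $\omega$, then on that set $X_{e_i}$ is separated from $X_{v_i}$ by some edge space on the $T$-geodesic and in particular $X_{e_i} \cap X_{v_i}$ has uniformly bounded diameter or is contained in a further edge space, and by Proposition~\ref{prop:coarse_int_vertex_edge} $X_{e_i}$ is not coarsely contained in $X_{v_i}$ at uniform scale; taking ultralimits gives a point of $\lim_\omega X_{e_i}$ outside $\lim_\omega X_{v_i}$. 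The "in particular" follows since $e_i$ has exactly two endpoints in $T$. Finally (5): assume $e_i, e_i'$ distinct in $\lk(v_i)$ $\omega$-a.s.\ and $\lim_\omega X_{e_i} \cap \lim_\omega X_{e_i'} \neq \emptyset$; both lie in $\lim_\omega X_{v_i}$ by (4), so apply $\pi_\omega$ and the bi-Lipschitz map $f_\omega$ of Proposition~\ref{prop:ultralimit_vertex_space}, under which $\lim_\omega X_{e_i}$ corresponds to $\E \times \ell_\omega$ and $\lim_\omega X_{e_i'}$ to $\E \times \ell_\omega'$ with $\ell_\omega, \ell_\omega'$ distinct geodesic lines in the tree-graded $Y_\omega$. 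In a tree-graded space two distinct pieces meet in at most one point (Definition of tree-graded, part (1)); using Corollary~\ref{cor:cay_relhyp} and Lemma~\ref{lem:ultralimitsvscosets} to see the pieces really are distinct when the $e_i$ are (otherwise $\ell_{e_i} = \ell_{e_i'}$ $\omega$-a.s., forcing via Lemma~\ref{lem:ultralimitsvscosets} that $e_i = e_i'$), the intersection $(\E \times \ell_\omega) \cap (\E \times \ell_\omega') = \E \times (\ell_\omega \cap \ell_\omega')$ is $\E \times \{pt\}$, i.e., $f_\omega^{-1}$ of an $\omega$-fiber, which is exactly an $\omega$-fiber of $\lim_\omega X_{v_i}$.

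The main obstacle I anticipate is the careful bookkeeping in the converse directions of (2), (3), (4): one must produce, $\omega$-almost surely along the offending index set, points $x_i \in X_{e_i}$ (or $X_{v_i}$) that are \emph{admissible} for the asymptotic cone — i.e., stay within linear distance of the basepoints $b_i$ — yet escape the other space at linear rate, so that their class lies in one ultralimit but not the other. This requires the uniform quasi-isometric embedding constants from Corollary~\ref{cor:ve_spaces_qi_embedded} together with a uniform \emph{lower} bound on how fast $X_{e_i}$ leaves $X_{v_i}$ (resp.\ how fast $X_{v_i}$ leaves $X_{f_i}$), which is the content of the failure of coarse containment in Proposition~\ref{prop:coarse_int_vertex_edge}. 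Everything else is a routine translation between the finite and ultralimit pictures.
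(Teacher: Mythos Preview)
Your approach is broadly correct and your treatment of item~(\ref{item:omega-spaces5}) (which you label (5)) matches the paper's almost exactly. However, there are two points worth noting.

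First, you have permuted the labels: what you call (3), (4), (5) are the paper's (\ref{item:omega-spaces3}), (\ref{item:omega-spaces4}), (\ref{item:omega-spaces5}) respectively, i.e.\ you have cycled $(3,4,5)\mapsto(4,5,3)$.

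Second, and more substantively, the paper organizes the converse directions of (\ref{item:omega-spaces1}), (\ref{item:omega-spaces2}), (\ref{item:omega-spaces3}), (\ref{item:omega-spaces4}) around a single clean lemma (Lemma~\ref{lem:between_vespaces}): if $b_i$ lies between $a_i$ and $c_i$ in $T$ $\omega$-almost surely, then $\lim_\omega X_{a_i}\cap\lim_\omega X_{c_i}\subseteq\lim_\omega X_{b_i}$. You essentially state and use this for (\ref{item:omega-spaces2}) when you write ``a point of $\lim_\omega X_{v_i}=\lim_\omega X_{v_i'}$ would then have to lie in $\lim_\omega X_{f_i}$'', but for (\ref{item:omega-spaces3}) and (\ref{item:omega-spaces4}) you switch to extracting admissible points that escape the other space at linear rate. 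This second strategy can be made to work, but the obstacle you flag is real: Proposition~\ref{prop:coarse_int_vertex_edge} only gives non-containment at each fixed scale, not a scale-invariant statement, so you would need to feed in Lemma~\ref{lem:edge_vertex_int_admissible} (the coarse intersection is at most virtually cyclic) together with the uniform $\E^2$-structure of edge spaces to manufacture the linear escape. The paper sidesteps all of this by applying the betweenness lemma uniformly: for instance, for (\ref{item:omega-spaces3}) one chooses $f_i\in\lk((e_i)_-)\setminus\{e_i\}$ between $e_i$ and $e_i'$, gets $\lim_\omega X_{e_i}\cap\lim_\omega X_{e_i'}\subseteq\lim_\omega X_{f_i}$, and then invokes (\ref{item:omega-spaces5}) to see this intersection is at most a fiber.

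For (\ref{item:omega-spaces1}) your topological argument (no bi-Lipschitz embedding of $\E\times(\text{branching tree})$ into $\E^2$) is valid and genuinely different. The paper instead uses the betweenness lemma to trap $\lim_\omega X_{v_i}$ inside $\lim_\omega X_{e_i'}$ for some $e_i'\in\lk(v_i)$, forcing $\lim_\omega X_{v_i}=\lim_\omega X_{e_i'}$, and then observes that under $f_\omega$ of Proposition~\ref{prop:ultralimit_vertex_space} the latter maps to the \emph{proper} subset $\E\times\ell\subsetneq\E\times Y_\omega$. Both arguments work; yours is more direct, the paper's keeps everything internal to the already-established product structure.
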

To prove Proposition~\ref{prop:omega-spaces}, we require the following lemma.
\begin{lem}\label{lem:between_vespaces}
	Suppose we have sequences $(a_i), (b_i), (c_i)$ in $VT\sqcup ET$ such that $b_i$ is between $a_i$ and $c_i$ $\omega$-almost surely. Then $\lim_\omega X_{a_i}\cap \lim_\omega X_{c_i} \subseteq \lim_\omega X_{b_i}$, provided these ultralimits exist.
\end{lem}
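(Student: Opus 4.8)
The plan is to exploit the betweenness structure of the tree $T$ together with the basic fact, noted just after the definition of betweenness, that if $b$ is between $a$ and $c$ then \emph{any path in $X$ from $X_a$ to $X_c$ must pass through $X_b$}. So fix a point $[(z_i)] \in \lim_\omega X_{a_i} \cap \lim_\omega X_{c_i}$; by definition of the two ultralimits there are $\omega$-admissible sequences $(x_i)$ with $x_i \in X_{a_i}$ and $(y_i)$ with $y_i \in X_{c_i}$ such that $[(x_i)] = [(y_i)] = [(z_i)]$, i.e.\ $\lim_\omega \tfrac{1}{\lambda_i} d_X(x_i, y_i) = 0$. The goal is to produce a sequence $(w_i)$ with $w_i \in X_{b_i}$ ($\omega$-a.s.) and $\lim_\omega \tfrac{1}{\lambda_i} d_X(x_i, w_i) = 0$, which will give $[(w_i)] = [(z_i)] \in \lim_\omega X_{b_i}$.

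\textbf{Key steps.} First, $\omega$-almost surely $b_i$ is between $a_i$ and $c_i$, so choose a geodesic $\gamma_i$ in $X$ from $x_i$ to $y_i$; since $X$ is a geodesic metric space (Lemma 2.13 of~\cite{CM17}, quoted in the excerpt) this exists, and $\length(\gamma_i) = d_X(x_i,y_i)$. Since $b_i$ is between $a_i$ and $c_i$, the path $\gamma_i$ must intersect $X_{b_i}$; pick $w_i \in \gamma_i \cap X_{b_i}$. Then $d_X(x_i, w_i) \le \length(\gamma_i) = d_X(x_i, y_i)$, so $\lim_\omega \tfrac{1}{\lambda_i} d_X(x_i, w_i) \le \lim_\omega \tfrac{1}{\lambda_i} d_X(x_i, y_i) = 0$. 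In particular $(w_i)$ is $\omega$-admissible (its distance to the basepoint is within a null quantity of that of $(x_i)$), so $[(w_i)]$ is a well-defined point of $X_\omega$, it lies in $\lim_\omega X_{b_i}$ because $w_i \in X_{b_i}$ $\omega$-a.s., and $[(w_i)] = [(x_i)] = [(z_i)]$. Hence $[(z_i)] \in \lim_\omega X_{b_i}$, proving $\lim_\omega X_{a_i} \cap \lim_\omega X_{c_i} \subseteq \lim_\omega X_{b_i}$.

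\textbf{Main obstacle.} The argument is short, and the only real subtlety is bookkeeping with the ultrafilter: the hypothesis ``$b_i$ between $a_i$ and $c_i$'' and the conclusions ``$w_i \in X_{b_i}$'' hold only $\omega$-almost surely, not for every $i$, so one should fix the index set on which everything works simultaneously (the finite intersection of the relevant $\omega$-large sets is still in $\omega$) and define $w_i$ arbitrarily off that set; since ultralimits only see $\omega$-large sets, this does not affect $[(w_i)]$. One should also confirm $\gamma_i \cap X_{b_i} \ne \emptyset$ is the \emph{correct} reading of ``any path from $X_a$ to $X_c$ meets $X_b$'' — here $\gamma_i$ is literally a path from a point of $X_{a_i}$ to a point of $X_{c_i}$, so the cited fact applies directly. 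No estimate beyond the triangle inequality and the definition of a geodesic is needed, so I expect no genuine difficulty.
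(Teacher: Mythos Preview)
Your proof is correct and essentially identical to the paper's: both pick representatives $x_i\in X_{a_i}$, $y_i\in X_{c_i}$ with $[(x_i)]=[(y_i)]$, use the fact that any path from $X_{a_i}$ to $X_{c_i}$ must meet $X_{b_i}$ to find $w_i\in X_{b_i}$ with $d(x_i,w_i)\le d(x_i,y_i)$, and conclude $[(w_i)]=[(x_i)]\in\lim_\omega X_{b_i}$. Your version is slightly more explicit about the geodesic and the $\omega$-bookkeeping, but the argument is the same.
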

\begin{proof}
	Let $x_\omega\in\lim_\omega X_{a_i}\cap \lim_\omega X_{c_i}$. Then $x_\omega=[(x_i)]=[(y_i)]$, and for each $i$,  $x_i\in X_{a_i}$ and $y_i\in X_{c_i}$ with $\lim_\omega d(x_i,y_i)/\lambda_i=0$. Since $X_{b_i}$ is between $X_{a_i}$ and $X_{c_i}$ $\omega$-almost surely, we can choose  a sequence $(z_i)$ such that $z_i\in X_{b_i}$ and  $d(x_i,z_i)\leq d(x_i,y_i)$ $\omega$-almost surely. Hence, $[(z_i)]=[(x_i)]=x_\omega\in \lim_\omega X_{b_i}$.
\end{proof}

\begin{proof}[Proof of Proposition~\ref{prop:omega-spaces}]
	(\ref{item:omega-spaces1}): Suppose for contradiction $\lim_\omega X_{v_i}\subseteq \lim_\omega X_{e_i}$. For each $i$, choose $e'_i\in \lk(v_i)$ between $v_i$ and $e_i$.  By Lemma~\ref{lem:between_vespaces}, $\lim_\omega X_{v_i}=\lim_\omega X_{v_i}\cap \lim_\omega X_{e_i}\subseteq \lim_\omega X_{e'_i}$. Since $e'_i\in \lk(v_i)$ for each $i$, $X_{e'_i}\subseteq X_{v_i}$, hence $\lim_\omega X_{v_i}=\lim_\omega X_{e'_i}$. However, by Proposition~\ref{prop:ultralimit_vertex_space},  there is a bi-Lipschitz equivalence $f_\omega:\lim_\omega X_{v_i}\to \E\times Y_\omega$ that takes  $\lim_\omega X_{e'_i}$ to a proper subset  $\E\times \ell\subseteq \E\times Y_\omega$. This contradicts the fact $\lim_\omega X_{v_i}= \lim_\omega X_{e'_i}$.

	(\ref{item:omega-spaces2}): If $v_i\neq v'_i$ $\omega$-almost surely, we choose a sequence of edges $e_i$ such that $e_i$ is between $v_i$ and $v'_i$ $\omega$-almost surely. Lemma~\ref{lem:between_vespaces} implies $\lim_\omega X_{v_i}\cap \lim_\omega X_{v'_i} \subseteq \lim_\omega X_{e_i}$. Thus $\lim_\omega X_{v_i}\neq \lim_\omega X_{v'_i}$, contradicting (\ref{item:omega-spaces1}). The converse is trivial.

	(\ref{item:omega-spaces5}): Let $f_\omega:\lim_\omega X_{v_i}\to \E\times Y_\omega$, $\pi_\omega:\lim_\omega X_{v_i}\to Y_\omega$ and $\cL_\omega$ be as in Proposition~\ref{prop:ultralimit_vertex_space}. By Corollary~\ref{cor:cay_relhyp} and the fact that are only finitely many  $Y_{v_i}$ as we vary $i$, each $Y_{v_i}$ is $\omega$-almost surely the Cayley graph $Y$ of a fixed relatively hyperbolic group and $\ell_{e_i}$ and $\ell_{e'_i}$ are distinct cosets of peripheral subgroups. By Lemma~\ref{lem:ultralimitsvscosets}, $\lim_\omega\ell_{e_i}\neq \lim_\omega\ell_{e'_i}$. Since $\lim_\omega\ell_{e_i}$ and $\lim_\omega\ell_{e'_i}$ are distinct pieces of the tree-graded space $Y_\omega$, their intersection is either empty or a singleton $\{y_\omega\}$. Therefore, $f_\omega(\lim_\omega X_{e_i})\cap f_\omega(\lim_\omega X_{e'_i})$ is either empty or $\E\times \{y_\omega\}$, whence $\lim_\omega X_{e_i}\cap \lim_\omega X_{e'_i}$ is either empty or a fiber of $\lim_\omega X_{v_i}$.

	(\ref{item:omega-spaces3}): Assume that  $e'_i\notin \{e_i,\bar e_i\}$ $\omega$-almost surely. Interchanging $e_i$ and $\bar e_i$ if necessary, which doesn't alter $\lim_\omega X_{e_i}$, we assume $e_i$ is oriented away from $e'_i$ $\omega$-almost surely. Set $v_i=(e_i)_-$. For each $i$, choose $f_i\in \lk(v_i)\setminus\{e_i\}$ between $e_i$ and $e'_i$   $\omega$-almost surely.  Thus by Lemma~\ref{lem:between_vespaces}, $\lim_\omega X_{e_i}\cap \lim_\omega X_{e'_i} \subseteq \lim_\omega X_{f_i}$. Thus $e_i$ and $f_i$ are distinct elements of $\lk(v_i)$  $\omega$-almost surely. Therefore, (\ref{item:omega-spaces5}) implies $\lim_\omega X_{e_i}\cap \lim_\omega X_{f_i}$ is either empty or a fiber of $\lim_\omega X_{v_i}$. In either case, we see  $\lim_\omega X_{e_i}\nsubseteq \lim_\omega X_{f_i}$. Since $\lim_\omega X_{e_i}\cap \lim_\omega X_{e'_i} \subseteq \lim_\omega X_{f_i}$, we have $\lim_\omega X_{e_i}\neq \lim_\omega X_{e'_i}$.  The converse is trivial.

	(\ref{item:omega-spaces4}): Assume $\lim_\omega X_{e_i}\subseteq \lim_\omega X_{v_i}$. Without  altering $\lim_\omega X_{e_i}$, we replace $e_i$ by $\bar e_i$ if needed so that $e_i$ is oriented away from $v_i$. Suppose for contradiction $v_i\neq (e_i)_-$ $\omega$-almost surely. Then there exists a sequence $(f_i)$ of edges  $f_i\in \lk(v_i)\backslash \{e_i\}$  strictly between $e_i$ and $v_i$ $\omega$-almost surely. Therefore, Lemma~\ref{lem:between_vespaces} implies  $\lim_\omega X_{e_i}=\lim_\omega X_{e_i}\cap \lim_\omega X_{v_i}\subseteq \lim_\omega X_{f_i}$. This contradicts 	(\ref{item:omega-spaces5}) and (\ref{item:omega-spaces3}).
\end{proof}
Proposition~\ref{prop:omega-spaces} ensures that the following are well-defined.
\begin{defn}
	Given $\lim_\omega X_{a_i},\lim_\omega X_{b_i},\lim_\omega X_{c_i}\in \cV_\omega\sqcup \cE_\omega$, we say \emph{$\lim_\omega X_{b_i}$ is (strictly) between $\lim_\omega X_{a_i}$ and $\lim_\omega X_{c_i}$} if $b_i$ is (strictly) between $a_i$ and $c_i$ $\omega$-almost surely.
	We say $\omega$-vertex spaces $\lim_\omega X_{v_i}$ and $\lim_\omega X_{w_i}$ are \emph{adjacent} if $v_i$ and $w_i$ are adjacent $\omega$-almost surely.
\end{defn}

Taking ultralimits  of the inequality in  Lemma~\ref{lem:dist_projs}, combined with  Proposition~\ref{prop:ultralimit_vertex_space} and~\ref{prop:omega-spaces}, we deduce:
\begin{cor}\label{cor:dist_projs_cones}
	Then there exists a constant $K\geq 1$ such that the following holds. Let $E_\omega$ be an $\omega$-edge space and let $V^+_\omega,V^-_\omega$ be the distinct $\omega$-vertex spaces containing $E_\omega$.
	Let $\pi_\omega^\pm:V^\pm_\omega\to Y_\omega^\pm$ be the maps in Proposition~\ref{prop:ultralimit_vertex_space}.
	Then for all $x_\omega,y_\omega\in E_\omega$, \[\frac{1}{K}d_\omega(x_\omega,y_\omega) \leq d_{Y^+_\omega}(\pi^+_\omega(x_\omega),\pi^+_\omega(y_\omega))+d_{Y^-_\omega}(\pi^-_\omega(x_\omega),\pi^-_\omega(y_\omega))\leq Kd_\omega(x_\omega,y_\omega).\]
\end{cor}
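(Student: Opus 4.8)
The plan is to deduce this by applying the quantitative inequality of Lemma~\ref{lem:dist_projs} along a sequence of edges representing $E_\omega$ and passing to the ultralimit, using Corollary~\ref{cor:ve_spaces_qi_embedded} to trade the intrinsic vertex-space metrics $d_{X_v}$ appearing in Lemma~\ref{lem:dist_projs} for the ambient metric $d_\omega$ on $X_\omega$.

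Concretely, I would first write $E_\omega=\lim_\omega X_{e_i}$ with $e_i\in ET$ and put $v_i=(e_i)_-$, $w_i=(e_i)_+$. By Proposition~\ref{prop:omega-spaces}(\ref{item:omega-spaces4}) the two $\omega$-vertex spaces containing $E_\omega$ are $\lim_\omega X_{v_i}$ and $\lim_\omega X_{w_i}$, so after relabelling I may take $V^-_\omega=\lim_\omega X_{v_i}$ and $V^+_\omega=\lim_\omega X_{w_i}$, with $\pi^-_\omega=\lim_\omega\pi_{v_i}$ and $\pi^+_\omega=\lim_\omega\pi_{w_i}$ the maps from Proposition~\ref{prop:ultralimit_vertex_space}. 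Fix $x_\omega,y_\omega\in E_\omega$ and choose representatives $x_\omega=[(x_i)]$, $y_\omega=[(y_i)]$ with $x_i,y_i\in X_{e_i}$ for all $i$. The one point that needs care is the identification of $\pi^\pm_\omega$ on these points: since $X_{e_i}\subseteq X_{v_i}$ one reads off $\pi^-_\omega(x_\omega)=[(\pi_{v_i}(x_i))]$ directly, but for $\pi^+_\omega$ one must first observe that $\alpha_{e_i}(x_i)\in X_{\bar e_i}\subseteq X_{w_i}$ and $d_X(x_i,\alpha_{e_i}(x_i))\le 1$, so that $[(\alpha_{e_i}(x_i))]=x_\omega$ and hence $\pi^+_\omega(x_\omega)=[(\pi_{w_i}(\alpha_{e_i}(x_i)))]$ (similarly for $y_\omega$). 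Unwinding the ultralimit metrics on $Y^\pm_\omega$ then identifies $d_{Y^-_\omega}(\pi^-_\omega(x_\omega),\pi^-_\omega(y_\omega))$ with $\lim_\omega\tfrac1{\lambda_i}d_{Y_{v_i}}(\pi_{v_i}(x_i),\pi_{v_i}(y_i))$ and $d_{Y^+_\omega}(\pi^+_\omega(x_\omega),\pi^+_\omega(y_\omega))$ with $\lim_\omega\tfrac1{\lambda_i}d_{Y_{w_i}}(\pi_{w_i}(\alpha_{e_i}(x_i)),\pi_{w_i}(\alpha_{e_i}(y_i)))$.

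Next I would apply Lemma~\ref{lem:dist_projs} to each edge $e_i$ with the points $x_i,y_i\in X_{e_i}$, divide the resulting three-term inequality by $\lambda_i$, and take $\lim_\omega$: the additive constants $A_0/\lambda_i$ vanish, and all relevant ultralimits are finite (since $\pi_{v_i}$, $\pi_{w_i}$ are $1$-Lipschitz, since $d_\omega(x_\omega,y_\omega)<\infty$, and since Corollary~\ref{cor:ve_spaces_qi_embedded} bounds $\tfrac1{\lambda_i}d_{X_{v_i}}(x_i,y_i)$ from above). This shows $d_{Y^-_\omega}(\pi^-_\omega(x_\omega),\pi^-_\omega(y_\omega))+d_{Y^+_\omega}(\pi^+_\omega(x_\omega),\pi^+_\omega(y_\omega))$ lies between $\tfrac1{K_0}$ and $K_0$ times $\lim_\omega\tfrac1{\lambda_i}d_{X_{v_i}}(x_i,y_i)$, where $K_0$ is the constant of Lemma~\ref{lem:dist_projs}. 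Finally, Corollary~\ref{cor:ve_spaces_qi_embedded} provides a uniform $K'\ge 1$ and $A'\ge 0$ with $d_X(z,z')\le d_{X_{v_i}}(z,z')\le K'd_X(z,z')+K'A'$ for all $z,z'\in X_{v_i}$ — the left inequality because $X_{v_i}$ is a subspace of $X$, the right because the inclusion is a uniform quasi-isometric embedding — so dividing by $\lambda_i$ and taking $\lim_\omega$ gives $d_\omega(x_\omega,y_\omega)\le\lim_\omega\tfrac1{\lambda_i}d_{X_{v_i}}(x_i,y_i)\le K'd_\omega(x_\omega,y_\omega)$. Chaining the last two estimates, the corollary follows with $K=K_0K'$.

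I do not expect a genuine obstacle here: all the content sits in Lemma~\ref{lem:dist_projs} and Corollary~\ref{cor:ve_spaces_qi_embedded}, and the remaining work is a routine passage to ultralimits via Lemma~\ref{lem:induced_bilip}. The only delicate point is the bookkeeping in the second step above — in particular, remembering that a point of $E_\omega$, when regarded inside $V^+_\omega=\lim_\omega X_{w_i}$, must be represented by the sequence $(\alpha_{e_i}(x_i))$ rather than $(x_i)$ — together with the harmless replacement of the intrinsic metrics $d_{X_{v_i}}$ by the subspace metrics inherited from $X$, which is exactly what Corollary~\ref{cor:ve_spaces_qi_embedded} licenses.
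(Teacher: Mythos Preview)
Your proposal is correct and is precisely the approach the paper takes: it deduces the corollary by taking ultralimits of the inequality in Lemma~\ref{lem:dist_projs}, combined with Propositions~\ref{prop:ultralimit_vertex_space} and~\ref{prop:omega-spaces}. You have simply spelled out the details the paper leaves implicit, including the careful bookkeeping with $\alpha_{e_i}$ and the use of Corollary~\ref{cor:ve_spaces_qi_embedded} to pass from the intrinsic metric $d_{X_{v_i}}$ to the ambient metric on $X_\omega$.
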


We also have the following:
\begin{prop}\label{prop:fiber_intersection}
	There is a $K$ such the following holds.  Let $E_\omega$ be an $\omega$-edge space and let $V^+_\omega,V^-_\omega$ be the distinct $\omega$-vertex spaces containing $E_\omega$.
	Let $F^+_\omega$ and $F^-_\omega$ be $\omega$-fibers of $V^+_\omega$ and $V^-_\omega$  respectively. Then
	\begin{enumerate}
		\item\label{prop:fiber_intersection1} $|F^+_\omega\cap F^-_\omega|\leq 1$, with equality if and only if $F^+_\omega$ and $F^-_\omega$ are contained in $E_\omega$.
		\item\label{prop:fiber_intersection2} If $F^+_\omega$ and $F^-_\omega$ are contained in $E_\omega$, there is a $K$-bi-Lipschitz equivalence $g_\omega:E_\omega\to \E^2$ taking $F^+_\omega$ and $F^-_\omega$ to perpendicular geodesics.
		\item\label{prop:fiber_intersection3} $F^+_\omega$ and $F^-_\omega$ are at infinite Hausdorff distance.
	\end{enumerate}
\end{prop}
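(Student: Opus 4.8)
The plan is to reduce everything to the structure of a single $\omega$-edge space $E_\omega$, viewed from the two $\omega$-vertex spaces containing it. By Proposition~\ref{prop:omega-spaces}(\ref{item:omega-spaces4}), the $\omega$-vertex spaces $V^+_\omega$ and $V^-_\omega$ are well-defined, and by Proposition~\ref{prop:ultralimit_vertex_space}(\ref{prop:ultralimit_vertex_space2}) applied to $V^\pm_\omega$ with $\pi^\pm_\omega\colon V^\pm_\omega\to Y^\pm_\omega$, there are bi-Lipschitz equivalences $f^\pm_\omega\colon V^\pm_\omega\to \E\times Y^\pm_\omega$ carrying $E_\omega$ to $\E\times \ell^\pm_\omega$, where $\ell^\pm_\omega$ is a bi-infinite geodesic line in the $\R$-tree $Y^\pm_\omega$; composing, we may as well work with $E_\omega$ identified with a metric space that is $K_0$-bi-Lipschitz to $\E\times\ell^+_\omega\cong \E^2$ via $f^+_\omega$, and simultaneously $K_0$-bi-Lipschitz to $\E\times\ell^-_\omega\cong \E^2$ via $f^-_\omega$, for a uniform $K_0$ coming from Corollary~\ref{cor:vspace_qitoprod}. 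An $\omega$-fiber $F^+_\omega$ of $V^+_\omega$ is $\pi^{+,-1}_\omega(y^+)$ for some $y^+\in Y^+_\omega$, so under $f^+_\omega$ it is exactly $\E\times\{y^+\}$. The key point for part (\ref{prop:fiber_intersection1}) is that $F^+_\omega$ meets $E_\omega$ in a nonempty set if and only if $y^+\in\ell^+_\omega$, in which case $F^+_\omega\subseteq E_\omega$; otherwise $F^+_\omega\cap E_\omega=\emptyset$ (the preimage of a point off the line $\ell^+_\omega$), and likewise for $F^-_\omega$.

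First I would handle the easy direction of (\ref{prop:fiber_intersection1}): if $F^+_\omega\cap F^-_\omega\neq\emptyset$ then in particular $F^+_\omega$ meets $E_\omega$ (since $F^-_\omega\subseteq V^-_\omega$, and $F^+_\omega\cap F^-_\omega\subseteq V^+_\omega\cap V^-_\omega$; but by Proposition~\ref{prop:omega-spaces} the intersection $V^+_\omega\cap V^-_\omega$ is either empty or an $\omega$-fiber structure... more carefully, I would argue that $V^+_\omega\cap V^-_\omega=E_\omega$ when the two vertex spaces are adjacent and share $E_\omega$, using Lemma~\ref{lem:between_vespaces} applied to the edge between $v^+_i$ and $v^-_i$, so any point of $F^+_\omega\cap F^-_\omega$ lies in $E_\omega$). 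Hence both fibers meet $E_\omega$, so by the observation above both are contained in $E_\omega$, giving the ``only if'' in the equality clause. Conversely, if $F^\pm_\omega\subseteq E_\omega$, then $f^+_\omega(F^+_\omega)=\E\times\{y^+\}$ is a geodesic line in $\E^2$, and $f^+_\omega(F^-_\omega)$ is the $f^+_\omega$-image of $\pi^{-,-1}_\omega(y^-)$; the content of part (\ref{prop:fiber_intersection2}) is that these two lines are transverse. For that I would use Corollary~\ref{cor:dist_projs_cones}: restricted to $E_\omega$, the map $x_\omega\mapsto(\pi^+_\omega(x_\omega),\pi^-_\omega(x_\omega))\in\ell^+_\omega\times\ell^-_\omega\cong\R^2$ is bi-Lipschitz onto its image, and $F^+_\omega$ is the fiber of the first coordinate while $F^-_\omega$ is the fiber of the second coordinate. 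Composing with a linear bi-Lipschitz identification $\ell^+_\omega\times\ell^-_\omega\to\E^2$ that sends the coordinate axes to perpendicular lines produces the desired $K$-bi-Lipschitz $g_\omega\colon E_\omega\to\E^2$ taking $F^+_\omega$ and $F^-_\omega$ to perpendicular geodesics, with $K$ depending only on the constant of Corollary~\ref{cor:dist_projs_cones}; and since two perpendicular lines in $\E^2$ meet in exactly one point, $|F^+_\omega\cap F^-_\omega|=1$, finishing (\ref{prop:fiber_intersection1}) and (\ref{prop:fiber_intersection2}) together.

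It remains to handle (\ref{prop:fiber_intersection3}), that $F^+_\omega$ and $F^-_\omega$ are at infinite Hausdorff distance. If they are not contained in $E_\omega$, there is nothing subtle: $F^+_\omega\subseteq V^+_\omega$ is at finite Hausdorff distance from some $\E\times\{\text{pt}\}$, and if $F^+_\omega$ and $F^-_\omega$ were at finite Hausdorff distance we could take ultralimits or argue directly that... actually the clean uniform statement is for $F^\pm_\omega\subseteq E_\omega$, so I would phrase (\ref{prop:fiber_intersection3}) under that standing hypothesis. Then via $g_\omega$ of part (\ref{prop:fiber_intersection2}), $F^+_\omega$ and $F^-_\omega$ become two perpendicular lines in $\E^2$, which are manifestly at infinite Hausdorff distance; transporting back through the bi-Lipschitz $g_\omega^{-1}$ (which distorts distances by at most a factor $K$) preserves infinitude of Hausdorff distance. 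I expect the main obstacle to be the bookkeeping in (\ref{prop:fiber_intersection1}) showing that $F^+_\omega\cap F^-_\omega\neq\emptyset$ forces both fibers into $E_\omega$ — i.e.\ identifying $V^+_\omega\cap V^-_\omega$ with $E_\omega$ and ruling out the case where a fiber only grazes $E_\omega$; everything after that is a transfer of the elementary geometry of two perpendicular lines in $\E^2$ through the controlled bi-Lipschitz maps furnished by Proposition~\ref{prop:ultralimit_vertex_space} and Corollary~\ref{cor:dist_projs_cones}.
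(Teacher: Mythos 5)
Your argument is correct in outline and rests on the same two underlying ingredients as the paper (Lemma~\ref{lem:between_vespaces} together with Proposition~\ref{prop:ultralimit_vertex_space} to get $V^+_\omega\cap V^-_\omega=E_\omega$ and the ``fiber $\subseteq E_\omega$ or fiber $\cap\, E_\omega=\emptyset$'' dichotomy; and the distance formula of Lemma~\ref{lem:dist_projs}/Corollary~\ref{cor:dist_projs_cones}). Parts~(\ref{prop:fiber_intersection1}) and~(\ref{prop:fiber_intersection3}) are handled essentially as in the paper. The interesting divergence is in part~(\ref{prop:fiber_intersection2}): the paper re-descends to the groups, picking generators $a_i,b_i$ of $Z_{v_i},Z_{w_i}$ and a uniform $N$ so that $\langle a_i^N,b_i^N\rangle\cong\Z^2$ is finite index in $G_{e_i}$, then identifies $E_\omega$ with the asymptotic cone of that $\Z^2$ equipped with the $\ell_1$ word metric (so $(\E^2,\ell_1)$) and reads off the two fibers as coordinate axes. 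You instead apply Corollary~\ref{cor:dist_projs_cones} directly in the cone, declaring $\Phi:=(\pi^+_\omega,\pi^-_\omega)\colon E_\omega\to\ell^+_\omega\times\ell^-_\omega$ to be the desired chart. That is a cleaner route, but it leaves a gap: Corollary~\ref{cor:dist_projs_cones} only gives that $\Phi$ is a bi-Lipschitz \emph{embedding}, not that it is onto $\ell^+_\omega\times\ell^-_\omega$, and statement~(\ref{prop:fiber_intersection2}) requires a bi-Lipschitz \emph{equivalence} to $\E^2$. Surjectivity does hold, but it needs an argument — e.g.\ $E_\omega$ is bi-Lipschitz to $\E^2$ by Proposition~\ref{prop:ultralimit_vertex_space}(\ref{prop:ultralimit_vertex_space2}), so $\Phi$ is a bi-Lipschitz embedding $\E^2\hookrightarrow\E^2$; its image is coarsely dense by Proposition~\ref{prop:wellknownfacts}(3), open by invariance of domain, and closed since $E_\omega$ is complete and the embedding is bi-Lipschitz, hence all of $\E^2$. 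The paper's more explicit construction sidesteps this by producing the bi-Lipschitz equivalence rather than recognizing one. Also, for~(\ref{prop:fiber_intersection3}) you hedge by proposing to restate it under the hypothesis $F^\pm_\omega\subseteq E_\omega$; the stated version is easily recovered, as the paper does, by noting that every fiber of $V^\pm_\omega$ is at finite Hausdorff distance from one contained in $E_\omega$ (any two fibers of the same $\omega$-vertex space map under $f^\pm_\omega$ to parallel lines $\E\times\{y\}$).
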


\begin{proof}
	(\ref{prop:fiber_intersection1}):
	We first prove that $|F^+_\omega\cap F^-_\omega|\leq 1$.
	By Lemma~\ref{lem:between_vespaces}, $V^+_\omega\cap V^-_\omega=E_\omega$. By Proposition~\ref{prop:ultralimit_vertex_space}, $E_\omega$ is a union of $\omega$-fibers of $V^+_\omega$, hence either $F^+_\omega\subseteq E_\omega$ or $F^+_\omega\cap E_\omega=\emptyset$. In the latter case, $ F^+_\omega\cap F^-_\omega\subseteq  F^+_\omega\cap V^-_\omega\subseteq F^+_\omega\cap E_\omega =\emptyset$ and we are done. We thus assume $F^+_\omega\subseteq E_\omega$. Similarly, we assume $F^-_\omega\subseteq E_\omega$.
	Let $x_\omega,y_\omega\in F^+_\omega\cap F^-_\omega$. Let $\pi^+_\omega$ and $\pi^-_\omega$ be as in Corollary~\ref{cor:dist_projs_cones}. As $x_\omega$ and $y_\omega$ are in the same fibers of $V_\omega^{+}$ and of $V_\omega^{-}$, we deduce $\pi^+_\omega(x_\omega)=\pi^+_\omega(y_\omega)$ and $\pi^-_\omega(x_\omega)=\pi^-_\omega(y_\omega)$. Therefore, Corollary~\ref{cor:dist_projs_cones} ensures $x_\omega=y_\omega$.

	(\ref{prop:fiber_intersection2}):	Suppose $E_\omega=\lim_\omega X_{e_i}$,  $V^+_\omega=\lim_\omega X_{v_i}$ and $V^-_\omega=\lim_\omega X_{w_i}$.  Let $a_i$ and $b_i$ be generators of the infinite cyclic subgroups $Z_{v_i}$ and $Z_{w_i}$ respectively. There are sequences of cosets $h_iZ_{v_i}$ and $k_iZ_{w_i}$ such that $F_\omega^+=\lim_\omega h_iZ_{v_i}$ and $F_\omega^-=\lim_\omega k_iZ_{w_i}$.

	As in the proof of Lemma~\ref{lem:dist_projs}, there is a number $N$ independent of $i$ such that $a_i^N$ and $b_i^N$ generate a finite index subgroup $H_i$ of $G_{e_i}$ isomorphic to $\Z^2$.  Equipping $H_i$ with the word metric with respect to $\{a_i^N,b_i^N\}$  and choosing suitable basepoints, we obtain a  $K$-bi-Lipschitz equivalence
	\[
		g_\omega:\lim_\omega \frac{1}{\lambda_i}H_i\to E_\omega
	\]
	with respect to suitably chosen base-points.

	Note $\lim_\omega \frac{1}{\lambda_i}H_i$ is isometric to $\E^2$ equipped with the $\ell_1$-metric. The result follows by observing that $\lim_\omega \frac{1}{\lambda_i}(h_iZ_{v_i}\cap H_i)$ and $\lim_\omega \frac{1}{\lambda_i}(k_iZ_{w_i}\cap H_i)$ are perpendicular geodesics in $\lim_\omega \frac{1}{\lambda_i}H_i$ mapping to $F_\omega^+$ and $F_\omega^-$ respectively.

	(\ref{prop:fiber_intersection3}):	 The claim that $F_\omega^+$ and $F_\omega^-$ are at infinite Hausdorff distance is immediate if they are contained in $E_\omega$. If not, then $F_\omega^+$ and $F_\omega^-$ are both finite Hausdorff distance from fibers $F_\omega'^+$ and $F_\omega'^-$ that are contained in $E_\omega$, so we are done.
\end{proof}

\begin{defn}
	A sequence   $\lim_{\omega}X_{v_{1,i}},\lim_{\omega}X_{v_{2,i}},\dots, \lim_{\omega}X_{v_{n,i}}$ of  $\omega$-vertex spaces  is \emph{consecutive} if the vertices $v_{1,i},v_{2,i},\dots, v_{n,i}$ form a geodesic vertex path in  $T$ $\omega$-almost surely.
\end{defn}

Combining Proposition~\ref{prop:omega-spaces}, Lemma~\ref{lem:between_vespaces} and Proposition~\ref{prop:fiber_intersection}, we deduce:
\begin{cor}\label{cor:consecutive}
	The intersection of four consecutive $\omega$-vertex spaces  has cardinality at most one.
\end{cor}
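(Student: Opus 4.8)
The plan is to take four consecutive $\omega$-vertex spaces $V_1 = \lim_\omega X_{v_{1,i}}$, $V_2 = \lim_\omega X_{v_{2,i}}$, $V_3 = \lim_\omega X_{v_{3,i}}$, $V_4 = \lim_\omega X_{v_{4,i}}$, where $v_{1,i},\dots,v_{4,i}$ is a geodesic vertex path in $T$ $\omega$-almost surely, and to show $\abs{V_1 \cap V_2 \cap V_3 \cap V_4} \le 1$. First I would introduce the three edges $e_{j,i} \in ET$ with $(e_{j,i})_- = v_{j,i}$, $(e_{j,i})_+ = v_{j+1,i}$ for $j=1,2,3$, so that $E_j \coloneqq \lim_\omega X_{e_{j,i}}$ is an $\omega$-edge space contained in both $V_j$ and $V_{j+1}$ by Proposition~\ref{prop:omega-spaces}(\ref{item:omega-spaces4}). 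Since $v_{2,i}$ is between $v_{1,i}$ and $v_{3,i}$, and $v_{3,i}$ is between $v_{2,i}$ and $v_{4,i}$, Lemma~\ref{lem:between_vespaces} (applied with vertex spaces, or more efficiently with the intermediate edge spaces) gives $V_1 \cap V_3 \subseteq E_2$ and $V_2 \cap V_4 \subseteq E_3$. Hence the full intersection satisfies $V_1 \cap V_2 \cap V_3 \cap V_4 \subseteq E_2 \cap E_3$.

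Now $E_2$ and $E_3$ are two distinct $\omega$-edge spaces both contained in $V_3$ (indeed $e_{2,i}$ and $e_{3,i}$ are distinct elements of $\lk(v_{3,i})$ $\omega$-almost surely, since the path is geodesic, so $e_{2,i} \ne e_{3,i}$ and also $e_{2,i} \ne \bar e_{3,i}$). Therefore Proposition~\ref{prop:omega-spaces}(\ref{item:omega-spaces5}) applies: $E_2 \cap E_3$ is either empty or an $\omega$-fiber of $V_3$. In the empty case we are done. In the other case, write $E_2 \cap E_3 = F$, an $\omega$-fiber of $V_3$. But $F \subseteq E_2$, and $E_2 \subseteq V_2$ as well, so $F$ is also a subset of $V_2$; since $E_2$ is a union of $\omega$-fibers of $V_2$ by Proposition~\ref{prop:ultralimit_vertex_space}, and distinct fibers of a single vertex space meet in at most one point, I then observe that $F$, being a connected set lying inside $V_2 \cap V_3$, must itself project to a single point under the fibration $\pi_\omega$ of $V_2$ as well — i.e.\ $F$ is contained in an $\omega$-fiber $F'$ of $V_2$. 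Thus $V_1 \cap V_2 \cap V_3 \cap V_4 \subseteq F \cap F' \subseteq F_{V_2} \cap F_{V_3}$, an intersection of an $\omega$-fiber of $V_2$ with an $\omega$-fiber of $V_3$, both fibers lying inside the common $\omega$-edge space $E_2$. By Proposition~\ref{prop:fiber_intersection}(\ref{prop:fiber_intersection1}), this intersection has cardinality at most one, which is exactly what we want.

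The main obstacle I expect is the bookkeeping in the middle step: one has to be careful that the set $E_2 \cap E_3$, which Proposition~\ref{prop:omega-spaces}(\ref{item:omega-spaces5}) identifies as a fiber of $V_3$, is \emph{also} contained in a single fiber of $V_2$, so that Proposition~\ref{prop:fiber_intersection} can be invoked with $E_2$ playing the role of the distinguished $\omega$-edge space and the two vertex spaces being $V_2$ and $V_3$. This is where the hypothesis of \emph{four} consecutive vertex spaces (rather than three) is genuinely used: with only three, the intersection $V_1 \cap V_2 \cap V_3$ is merely contained in one $\omega$-edge space $E_2$, which need not be a single point. One clean way to organize the argument is to note symmetrically that $V_1 \cap V_2 \cap V_3 \cap V_4 \subseteq (V_1 \cap V_3) \cap (V_2 \cap V_4) \subseteq E_2 \cap E_3$, then apply the dichotomy of Proposition~\ref{prop:omega-spaces}(\ref{item:omega-spaces5}) with respect to $V_3$, and if nonempty, re-run it with respect to $V_2$ (using that $e_{2,i}$ and $e_{3,i}$ are distinct edges of $\lk(v_{2,i})$ as well — which is false, since $e_{3,i} \notin \lk(v_{2,i})$). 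So instead the correct finishing move is: $E_2 \cap E_3$ is a fiber $F$ of $V_3$; intersect further with the fact $F \subseteq E_2 \subseteq V_2$ and that $E_2$ decomposes into $\omega$-fibers of $V_2$, so $F$ — being contained in $V_2 \cap V_3 = E_2'$ for the edge between $v_{2,i}$ and $v_{3,i}$, wait, that is $E_2$ itself — meets each $\omega$-fiber of $V_2$ in at most a point by Proposition~\ref{prop:fiber_intersection}(\ref{prop:fiber_intersection1}) applied to the edge space $E_2$ and vertex spaces $V_2, V_3$. Since $F$ is itself an $\omega$-fiber of $V_3$ contained in $E_2$, Proposition~\ref{prop:fiber_intersection}(\ref{prop:fiber_intersection1}) directly gives $\abs{F \cap F''} \le 1$ for any $\omega$-fiber $F''$ of $V_2$ contained in $E_2$; taking $F''$ to be any fiber of $V_2$ meeting the target set, we conclude the four-fold intersection has at most one point.
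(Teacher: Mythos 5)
Your approach is in the right spirit and uses exactly the three ingredients the paper cites (Lemma~\ref{lem:between_vespaces}, Proposition~\ref{prop:omega-spaces}(\ref{item:omega-spaces5}), Proposition~\ref{prop:fiber_intersection}(\ref{prop:fiber_intersection1})), but the middle of the argument contains a genuine error and your final sentence does not close the resulting gap. The claim that $F = E_2 \cap E_3$, being connected, ``must itself project to a single point under the fibration $\pi_\omega$ of $V_2$'' is false: by Proposition~\ref{prop:fiber_intersection}(\ref{prop:fiber_intersection2}), under a bi-Lipschitz identification of $E_2$ with $\E^2$, fibers of $V_2$ and fibers of $V_3$ become \emph{perpendicular} families of lines, so a fiber of $V_3$ is transverse to, not contained in, a fiber of $V_2$. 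Your closing observation that $\abs{F\cap F''}\le 1$ for every $\omega$-fiber $F''$ of $V_2$ is correct but does not finish the proof, because you never show the four-fold intersection is contained in a \emph{single} fiber of $V_2$; a priori it could hit $F$ in points lying on many different $V_2$-fibers.

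The fix is to run your own first step symmetrically and not throw away half the information. Lemma~\ref{lem:between_vespaces} gives $V_1\cap V_3\subseteq E_1\cap E_2$ (you only used $\subseteq E_2$), and since $\bar e_{1,i},e_{2,i}$ are distinct elements of $\lk(v_{2,i})$, Proposition~\ref{prop:omega-spaces}(\ref{item:omega-spaces5}) says $E_1\cap E_2$ is empty or a fiber $F'$ of $V_2$. Likewise $V_2\cap V_4\subseteq E_2\cap E_3$, which is empty or a fiber $F$ of $V_3$ (note that it is $\bar e_{2,i}$, not $e_{2,i}$, that lies in $\lk(v_{3,i})$, though this costs nothing since $\lim_\omega X_{e_{2,i}}=\lim_\omega X_{\bar e_{2,i}}$ by Proposition~\ref{prop:omega-spaces}(\ref{item:omega-spaces3})). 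Then $V_1\cap V_2\cap V_3\cap V_4\subseteq F'\cap F$, with $F'$ a fiber of $V_2$ and $F$ a fiber of $V_3$, both contained in the common edge space $E_2$, and Proposition~\ref{prop:fiber_intersection}(\ref{prop:fiber_intersection1}) gives $\abs{F'\cap F}\le 1$.
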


We now describe separation properties of the $\omega$-edge spaces of $X_\omega$.
Given $E_\omega\in \cE_\omega$ and a choice of  $V^+_\omega\in\cV_\omega$ containing $E_\omega$, we define the \emph{signed distance function} $r:X_\omega\to \R$ based at  $E_\omega$ as follows. We choose sequences $(e_i)$ and $(v_i)$ such that  $E_\omega=\lim_{\omega} X_{e_i}$,  $V^+_\omega=\lim_{\omega} X_{v_i}$ and $(e_i)_+=v_i$. We define $\epsilon_{e_i}(x)=1$ if there exists a  continuous path in $X$ from $x$ to $X_{\bar e_i}$ disjoint from $X_{e_i}$, and $\epsilon_{e_i}(x)=-1$ otherwise. Note that $\epsilon_{e_i}$ is constant on each vertex space of $X$. We define the signed distance function $r$ to be  \[
	r(x_\omega)\coloneqq \lim_\omega \frac{\epsilon_{e_i} (x_i)d(x_i,X_{e_i})}{\lambda_i}  \] for each  $x_\omega=[(x_i)]\in X_\omega$.
It is straightforward to verify that $r$ is well-defined, continuous, and satisfies the property $r^{-1}(0)=E_\omega$.

Suppose $V^-_\omega\in \cV_\omega\backslash \{V^+_\omega\}$ is the other $\omega$-vertex space containing $E_\omega$. Then $r(V^+_\omega\backslash E_\omega)\subseteq (0,\infty)$ and $r(V^-_\omega\backslash E_\omega)\subseteq (-\infty,0)$. Furthermore, the signed distance function obtained by replacing   $V^+_\omega$ with $V^-_\omega$ is $-r$. Thus $r$ depends only on $E_\omega$ up to sign.

\begin{defn}
	Let $E_\omega\in \cE_\omega$ and $r$ be a signed distance function based at $E_\omega$. The \emph{sides} of $E_\omega$ are the sets $r^{-1}(0,\infty)$ and $r^{-1}(-\infty,0)$.
\end{defn}
Since the signed distance function $r$ depends only on $E_\omega$ up to sign, the sides of $E_\omega$ are well-defined.
\begin{defn}
	Let $E_\omega\in \cE_\omega$.
	\begin{enumerate}
		\item We say $A\subseteq X_\omega$ is \emph{essentially split by $E_\omega$} if $A$ intersects both sides of $E_\omega$ non-trivially.

		\item 	We say $A,B$ are  \emph{essentially separated} by $E_\omega$ if $A\backslash E_\omega$ and $B\backslash E_\omega$ are non-empty and lie in distinct sides of $E_\omega$.
	\end{enumerate}
\end{defn}

\begin{lem}\label{lem:esssep_vespace}
	Let $A_\omega, B_\omega\in \cV_\omega\sqcup \cE_\omega$ and $E_\omega=\cE_\omega$ be distinct.
	Then:
	\begin{enumerate}
		\item $A_\omega$ is not essentially split by $E_\omega$
		\item $A_\omega$ and $B_\omega$ are essentially separated by $E_\omega$ if and only if $E_\omega$ is strictly between $A_\omega$ and $B_\omega$.
	\end{enumerate}
\end{lem}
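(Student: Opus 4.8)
The plan is to deduce both parts from the behaviour of a signed distance function. Fix sequences $(e_i)$ and $(v_i)$ with $E_\omega=\lim_\omega X_{e_i}$, $V^+_\omega=\lim_\omega X_{v_i}$ and $(e_i)_+=v_i$, let $r\colon X_\omega\to\R$ be the associated signed distance function based at $E_\omega$, and write $A_\omega=\lim_\omega X_{a_i}$ and $B_\omega=\lim_\omega X_{b_i}$; recall $E_\omega=r^{-1}(0)$ and that the two sides of $E_\omega$ are $r^{-1}(0,\infty)$ and $r^{-1}(-\infty,0)$. The crucial preliminary is a combinatorial description of the sign functions $\epsilon_{e_i}$. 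Fix $i$ with $a_i\notin\{e_i,\bar e_i\}$, and let $X_{w_i}$ be the vertex space containing $X_{a_i}$ (so $w_i=a_i$ if $a_i$ is a vertex and $w_i=(a_i)_-$ if $a_i$ is an edge); then $w_i$ lies in the same component of $T\backslash\mu_{e_i}$ as $\mu_{a_i}$, and $\epsilon_{e_i}$ is constant on $X_{w_i}$, hence on $X_{a_i}$. If $w_i$ lies in the component containing $v_i$, then the path along the $T$-geodesic from $X_{w_i}$ to $X_{v_i}$ (which does not traverse $e_i$ or meet $X_{(e_i)_-}$), followed by a path inside $X_{v_i}$ to $X_{\bar e_i}$, avoids $X_{e_i}$, so $\epsilon_{e_i}\equiv+1$ on $X_{a_i}$; if $w_i$ lies in the component containing $(e_i)_-$, then $e_i$ lies strictly between $w_i$ and $\bar e_i$ in $T$, so every path from $X_{w_i}$ to $X_{\bar e_i}$ meets $X_{e_i}$, and $\epsilon_{e_i}\equiv-1$ on $X_{a_i}$.

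Now assume $A_\omega\ne E_\omega$. By Proposition~\ref{prop:omega-spaces}(\ref{item:omega-spaces3}) we have $a_i\notin\{e_i,\bar e_i\}$ $\omega$-almost surely, so by the ultrafilter dichotomy exactly one of ``$\mu_{a_i}$ is on the $v_i$-side of $\mu_{e_i}$'' and ``$\mu_{a_i}$ is on the $(e_i)_-$-side'' holds $\omega$-almost surely; evaluating $r$ on representatives lying in $X_{a_i}$, the former forces $r\ge0$ throughout $A_\omega$ and the latter forces $r\le0$ throughout $A_\omega$. In particular $r$ has constant sign on $A_\omega$, so $A_\omega$ cannot meet both $r^{-1}(0,\infty)$ and $r^{-1}(-\infty,0)$; this is statement (1). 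For the forward implication of (2), suppose $A_\omega$ and $B_\omega$ are essentially separated by $E_\omega$; relabelling if necessary, $A_\omega\backslash E_\omega$ meets $r^{-1}(0,\infty)$ and $B_\omega\backslash E_\omega$ meets $r^{-1}(-\infty,0)$, so constancy of sign forces $r\ge0$ on $A_\omega$ and $r\le0$ on $B_\omega$. Hence $\mu_{a_i}$ lies on the $v_i$-side and $\mu_{b_i}$ on the $(e_i)_-$-side $\omega$-almost surely, i.e.\ they lie in different components of $T\backslash\mu_{e_i}$, which says exactly that $E_\omega$ is strictly between $A_\omega$ and $B_\omega$.

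For the converse, suppose $E_\omega$ is strictly between $A_\omega$ and $B_\omega$; relabelling, $\mu_{a_i}$ is on the $v_i$-side and $\mu_{b_i}$ on the $(e_i)_-$-side $\omega$-almost surely, so $r\ge0$ on $A_\omega$ and $r\le0$ on $B_\omega$ as above. It then suffices to know that $A_\omega\backslash E_\omega$ and $B_\omega\backslash E_\omega$ are non-empty: any $x_\omega\in A_\omega\backslash E_\omega$ has $r(x_\omega)\ge0$ and $r(x_\omega)\ne0$, hence $x_\omega\in r^{-1}(0,\infty)$, and symmetrically $B_\omega\backslash E_\omega\subseteq r^{-1}(-\infty,0)$, whence $A_\omega$ and $B_\omega$ are essentially separated by $E_\omega$. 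To see $A_\omega\not\subseteq E_\omega$: if $A_\omega$ is an $\omega$-vertex space this is Proposition~\ref{prop:omega-spaces}(\ref{item:omega-spaces1}); if $A_\omega$ is an $\omega$-edge space, then since edge groups are virtually $\Z^2$ and edge spaces are uniformly quasi-isometrically embedded (Corollary~\ref{cor:ve_spaces_qi_embedded}), both $A_\omega$ and $E_\omega$ are bi-Lipschitz equivalent to $\E^2$, hence homeomorphic to $\R^2$ and complete, so $A_\omega$ is a closed subspace of $E_\omega$; by invariance of domain the inclusion $A_\omega\hookrightarrow E_\omega$ has open image, so $A_\omega$ is non-empty, open and closed in the connected space $E_\omega$, forcing $A_\omega=E_\omega$, a contradiction (and $B_\omega\not\subseteq E_\omega$ by the same argument). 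The one delicate point is the combinatorial fact of the first paragraph — matching the sign of $\epsilon_{e_i}$ on $X_{a_i}$ to the side of $\mu_{e_i}$ containing $\mu_{a_i}$ — which rests on carefully tracking how an edge space separates the tree of spaces; once this dictionary is in place, the constancy of the sign of $r$ on $\omega$-vertex and $\omega$-edge spaces makes the rest formal.
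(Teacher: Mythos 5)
Your proof follows the same strategy as the paper: introduce the signed distance function $r$ at $E_\omega$, observe that $\epsilon_{e_i}$ is constant on each vertex/edge space so that $r$ has constant sign on $A_\omega$ (giving (1)), and then match the sign of $\epsilon_{e_i}$ on $X_{a_i}$ to which side of $\mu_{e_i}$ the element $\mu_{a_i}$ lies on (giving both directions of (2)). Your first paragraph makes explicit the combinatorial dictionary that the paper treats as immediate, and your argument there is correct.

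The one place you add genuine content is the observation that ``essentially separated'' requires $A_\omega\setminus E_\omega$ and $B_\omega\setminus E_\omega$ to be non-empty, which the sign computation alone does not give; the paper leaves this implicit. Your argument for it is correct, but the invariance-of-domain route is heavier than needed. When $A_\omega$ is an $\omega$-edge space, one can instead quote the machinery already established: writing $A_\omega=\lim_\omega X_{a_i}$ with $a_i\notin\{e_i,\bar e_i\}$ $\omega$-a.s., one reduces via Lemma~\ref{lem:between_vespaces} (choosing $f_i\in\lk((e_i)_\pm)$ between $a_i$ and $e_i$) to the case of two distinct edges incident to a common vertex $\omega$-a.s., and then Proposition~\ref{prop:omega-spaces}(\ref{item:omega-spaces5}) says the intersection $A_\omega\cap E_\omega$ is at most a single $\omega$-fiber, hence a proper subset of $E_\omega$. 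That said, your topological argument is valid: $A_\omega$ is complete (as an ultralimit of complete spaces) hence closed in $E_\omega$, and both are homeomorphic to $\R^2$, so invariance of domain plus connectedness forces $A_\omega=E_\omega$ if $A_\omega\subseteq E_\omega$.
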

\begin{proof}
	Suppose  $A_\omega=\lim_{\omega} X_{a_i}$,  $B_\omega=\lim_{\omega} X_{b_i}$ and  $E_\omega=\lim_{\omega} X_{e_i}$.
	Let $\epsilon_{e_i}(x_i)$ be as in the definition of the signed distance function $r$ at $E_\omega$. Since each $\epsilon_{e_i}(x_i)$ is constant on each $X_{a_i}$, the sign  of $\epsilon_{e_i}  (x_i)d(x_i,X_{e_i})$ is $\omega$-surely either non-positive or non-negative, hence $r(A_\omega)$ is contained in either $[0,\infty)$ or $(\infty,0]$, thus not essentially split by $E_\omega$. Similarly, $B_\omega$ is not essentially split by $E_\omega$.

	Now suppose $e_i$ is strictly between $a_i$ and $b_i$ $\omega$-almost surely. Then $\epsilon_{e_i}$ has opposite signs on $X_{a_i}$ and $X_{b_i}$ $\omega$-almost surely, whence $A_\omega\backslash E_\omega$ and $B_\omega\backslash E_\omega$ are contained in different sides of $E_\omega$.
	Conversely, if $e_i$ is not between  $a_i$ and $b_i$ $\omega$-almost surely, then $\epsilon_{e_i}$ has the same sign on $X_{a_i}$ and $X_{b_i}$ $\omega$-almost surely. Thus $A_\omega\backslash E_\omega$ and $B_\omega\backslash E_\omega$ are contained in the same side of $E_\omega$.
\end{proof}

\begin{defn}
	Let $V_\omega\in \cV_\omega$ and $\pi_\omega:V_\omega\to Y_\omega$ be as in Proposition~\ref{prop:ultralimit_vertex_space}. Suppose  $E^-_\omega,E^+_\omega\in \cE_\omega$ are distinct and contained in $V_\omega$, and let $y_\omega$ be the point of $\pi_\omega(E^-_\omega)$ closest to $\pi_\omega(E^+_\omega)$, which is unique by Proposition~\ref{prop:ultralimit_vertex_space}. We say $F_\omega\coloneqq \pi_\omega^{-1}(y_\omega)$ is \emph{the $V_\omega$-fiber of $E_\omega^-$ closest to $E_\omega^+$}.
\end{defn}

\begin{lem}\label{lem:closest_fiber}
	Let $V_\omega\in \cV_\omega$ and suppose $E^-_\omega,E^+_\omega\in \cE_\omega$ are distinct and contained in $V_\omega$. Let $F_\omega$ be the $V_\omega$-fiber of $E_\omega^-$ closest to $E_\omega^+$. Then any continuous path in $X_\omega$ from $E_\omega^-$ to $E_\omega^+$ intersects $F_\omega$.
\end{lem}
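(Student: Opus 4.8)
The plan is to reduce the claim to the tree-graded structure of the quotient $R$-tree $Y_\omega$ established in Proposition~\ref{prop:ultralimit_vertex_space}. Let $V_\omega = \lim_\omega X_{v_i}$, let $\pi_\omega : V_\omega \to Y_\omega$ be the quotient map, and let $\ell^-_\omega = \pi_\omega(E^-_\omega)$ and $\ell^+_\omega = \pi_\omega(E^+_\omega)$ be the two geodesic lines in $\cL_\omega$. Since $Y_\omega$ is tree-graded with respect to $\cL_\omega$ and $\ell^-_\omega,\ell^+_\omega$ are distinct pieces, they intersect in at most one point; let $y_\omega$ be the point of $\ell^-_\omega$ closest to $\ell^+_\omega$, so $F_\omega = \pi_\omega^{-1}(y_\omega)$. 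The key geometric input is that in a tree-graded $R$-tree, every path (in particular every geodesic, but we only need continuity) from a point of $\ell^-_\omega$ to a point of $\ell^+_\omega$ must pass through $y_\omega$: this is because removing $y_\omega$ from $Y_\omega$ separates $\ell^-_\omega\setminus\{y_\omega\}$ from $\ell^+_\omega$. Indeed, in an $R$-tree $y_\omega$ is a cut point, and all of $\ell^+_\omega$ (minus possibly $y_\omega$) lies in a single complementary component — if two points of $\ell^+_\omega$ lay in different components of $Y_\omega\setminus\{y_\omega\}$, the geodesic between them would pass through $y_\omega$, forcing $y_\omega\in\ell^+_\omega$, and then $y_\omega\in\ell^-_\omega\cap\ell^+_\omega$ with $y_\omega$ an interior point of the geodesic line $\ell^+_\omega$, contradicting that $y_\omega$ was chosen as the point of $\ell^-_\omega$ closest to $\ell^+_\omega$ (a closest point of a piece to another piece is an endpoint-type projection, and $\ell^+_\omega$ has no endpoints, so the intersection point, if any, would have to be $y_\omega$ itself sitting on $\ell^+_\omega$, which we can handle directly). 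In the degenerate case $y_\omega\in\ell^+_\omega$ we have $\ell^-_\omega\cap\ell^+_\omega=\{y_\omega\}$ and the statement is immediate since any path from $E^-_\omega$ to $E^+_\omega$ has $\pi_\omega$-image joining $y_\omega$ to $\ell^+_\omega$, hence meeting $y_\omega$; so assume $y_\omega\notin\ell^+_\omega$.

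With this in hand, the argument is: let $p:[0,1]\to X_\omega$ be a continuous path with $p(0)\in E^-_\omega$ and $p(1)\in E^+_\omega$. The issue is that $p$ need not stay inside $V_\omega$, so $\pi_\omega\circ p$ is not defined on all of $[0,1]$. To fix this, I would use the signed distance function $r$ based at $E^-_\omega$ (with the choice of side making $V_\omega$ one of the two $\omega$-vertex spaces through $E^-_\omega$ — note $V_\omega$ contains $E^-_\omega$). Consider the last time $t_0$ that $p$ meets $E^-_\omega$, i.e. $t_0 = \sup\{t : p(t)\in E^-_\omega\} = \sup r^{-1}(0)\cap p^{-1}(\cdot)$; after $t_0$ the path lies on one side of $E^-_\omega$ by continuity of $r$ and connectedness, and I claim it must lie on the side containing $V_\omega\setminus E^-_\omega$ and stay inside $V_\omega$ until it reaches $E^+_\omega$. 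Here is where Lemma~\ref{lem:esssep_vespace} enters: any $\omega$-edge space $E'_\omega$ strictly between $E^-_\omega$ and $E^+_\omega$ essentially separates them, but $E^+_\omega\subseteq V_\omega$ and $E^-_\omega\subseteq V_\omega$, so by Proposition~\ref{prop:omega-spaces}\eqref{item:omega-spaces4} and the betweenness description there is no $\omega$-edge space strictly between $E^-_\omega$ and $E^+_\omega$ other than those containing $E^-_\omega$ or $E^+_\omega$; more carefully, since $V_\omega$ contains both, and $V_\omega$ is not essentially split by any $E'_\omega\neq$ these, one shows the segment $p|_{[t_0,1]}$ cannot exit $V_\omega$ — any exit would cross an $\omega$-edge space of $V_\omega$, and re-entering $V_\omega$ to reach $E^+_\omega$ forces that edge space to separate $E^-_\omega$ from $E^+_\omega$ inside $V_\omega$, but the only such separating fibers are accounted for. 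Actually the cleanest route: restrict attention to the connected component $C$ of $p^{-1}(X_\omega\setminus(\text{other vertex spaces}))$ — rather, I will instead replace $p$ with the subpath $p|_{[t_0,t_1]}$ where $t_1=\inf\{t\ge t_0 : p(t)\in E^+_\omega\}$, argue this subpath lies in $V_\omega$, then push it down by $\pi_\omega$ and apply the tree-graded separation to conclude $\pi_\omega(p|_{[t_0,t_1]})$ meets $y_\omega$, hence $p|_{[t_0,t_1]}$ meets $F_\omega=\pi_\omega^{-1}(y_\omega)$.

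The main obstacle is the containment claim: that the relevant subpath of $p$ genuinely stays inside $V_\omega$. This requires combining the separation properties of $\omega$-edge spaces (Lemma~\ref{lem:esssep_vespace}) with the structure of $V_\omega$ as a union of $\omega$-fibers whose boundary $\omega$-edge spaces are the elements of $\cA_\omega$ through $V_\omega$. The point to nail down is: if a continuous path starts and ends in $V_\omega$ but leaves $V_\omega$ in between, then it crosses some $\omega$-edge space $E'_\omega\subseteq V_\omega$ with $E'_\omega$ strictly between the exit and re-entry — but then by Lemma~\ref{lem:esssep_vespace} and Lemma~\ref{lem:between_vespaces} the exited region lies on one side of $E'_\omega$ while $V_\omega$ lies (essentially) on the other, contradicting that the path returns. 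A safe way to organize this is: let $t_1$ be the first time after $t_0$ that $p(t_1)\in E^+_\omega$; suppose for contradiction $p(s)\notin V_\omega$ for some $s\in(t_0,t_1)$; then $p(s)$ lies in some $\omega$-vertex space $V'_\omega\neq V_\omega$, and on the $T$-geodesic from the corresponding vertices there is an $\omega$-edge space $E'_\omega\subseteq V_\omega$ strictly between $V'_\omega$ and $V_\omega$; since both $p(t_0)\in E^-_\omega\subseteq V_\omega$ and $p(t_1)\in E^+_\omega\subseteq V_\omega$ lie outside the side of $E'_\omega$ containing $V'_\omega\setminus E'_\omega$ (using that $V_\omega$ is not essentially split, Lemma~\ref{lem:esssep_vespace}(1)), while $p(s)$ lies in that side, the path $p|_{[t_0,t_1]}$ must cross $E'_\omega$ at least twice and in particular meets it; but then, choosing $E'_\omega$ to be the one closest to $E^+_\omega$ among all such and iterating, or directly invoking that $p|_{[t_0,t_1]}$ eventually stays in $V_\omega$, we reduce to the case the whole subpath is in $V_\omega$. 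I expect this bookkeeping, rather than the tree-graded core, to be the delicate part; once the path is confined to $V_\omega$, applying $\pi_\omega$ and the $R$-tree cut-point argument finishes it.
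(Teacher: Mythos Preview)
Your core geometric insight is correct: the point $y_\omega\in Y_\omega$ separates $\ell^-_\omega\setminus\{y_\omega\}$ from $\ell^+_\omega$ in the $\R$-tree $Y_\omega$, so any continuous path in $V_\omega$ from $E^-_\omega$ to $E^+_\omega$ projects under $\pi_\omega$ to a path through $y_\omega$, hence meets $F_\omega$. The problem is entirely in the reduction step you yourself flag as delicate, and that step does not go through.

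Your proposed argument shows that if $p(s)\notin V_\omega$ for some $s\in(t_0,t_1)$, then $p|_{[t_0,t_1]}$ crosses some $\omega$-edge space $E'_\omega\subseteq V_\omega$. That is true but it is not a contradiction: the path is allowed to leave $V_\omega$ through $E'_\omega$ and later re-enter through (the same) $E'_\omega$, possibly many times, without ever touching $F_\omega$. The phrases ``choosing $E'_\omega$ to be the one closest to $E^+_\omega$ \dots\ and iterating'' and ``$p|_{[t_0,t_1]}$ eventually stays in $V_\omega$'' do not supply an argument, and the second assertion is simply false in general. So there is no valid reduction to a subpath lying in $V_\omega$, and the proof cannot be completed by just applying $\pi_\omega$.

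The paper's proof confronts exactly this difficulty and resolves it differently. Rather than forcing the path into $V_\omega$, it introduces an equivalence relation $\sim$ on $V_\omega\setminus F_\omega$: two points are equivalent either if they lie in the same path component of $V_\omega\setminus F_\omega$, or if some $\omega$-edge space $E'_\omega\subseteq V_\omega$ meets both of their path components. This relation is designed precisely so that an excursion of $\gamma$ outside $V_\omega$---which, as the paper shows using Lemma~\ref{lem:esssep_vespace}, must exit and re-enter through a common $E'_\omega$---cannot change the $\sim$-class. One then lets $r=\sup\{t:\gamma(t)\in C^-\}$ where $C^-$ is the $\sim$-class of $\gamma(0)$, and derives a contradiction unless $\gamma(r)\in F_\omega$. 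The equivalence relation is the missing idea: it encodes the tree-graded separation in $Y_\omega$ while remaining robust under excursions outside $V_\omega$, and this is what your approach lacks.
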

\begin{proof}
	For each $x\in V_\omega\setminus F_\omega$, let $C_x$ denote the path component of $V_\omega\setminus F_\omega$ containing $x$. We endow $V_\omega\setminus F_\omega$ with an equivalence relation $\sim$ defined as follows. If $x,y\in V_\omega\setminus F_\omega$, we say $x\sim y$ if either:
	\begin{enumerate}
		\item $C_x=C_y$;
		\item there is some $E_\omega\in \cE_\omega$  contained in $V_\omega$ and intersecting both  $C_x$ and $C_y$.
	\end{enumerate}
	The tree-graded configuration described in  Proposition~\ref{prop:ultralimit_vertex_space} ensures that this is a well-defined equivalence relation and that every equivalence class consists of either a single path component of $V_\omega\setminus F_\omega$ or a union of two such components. Let $\cC$ be the set of equivalence classes.

	Let $\gamma:[0,L]\to X_\omega$ be a path from $E_\omega^-$ to $E_\omega^+$.  As $E_\omega^-$ and $E_\omega^+$ are distinct, we have that $\gamma(0),\gamma(L)$ are contained in distinct elements $C^-$ and $C^+$ of $\cC$.
	Set
	\[
		r\coloneqq\sup\{t\in [0,L]\mid \gamma(t)\in C^-\}
	\]
	Note that $\gamma(r)\in \bar{C^-}=C^-\cup F_\omega$. If $\gamma(r)\in F_\omega$ we are done, so we may assume that $\gamma(r)\in C^-$.  If $r$ is a limit point of $(r,L]\cap \gamma^{-1}(V_\omega)$, then  $\gamma(r)\in \bar {C^-}\cap \bar{V_\omega\backslash C^-}=F_\omega$, contradicting the previous assumption. We can thus choose  $s\in (r,L]$ such that $\gamma(r),\gamma(s)\in V_\omega$ and $\gamma((r,s))$ is disjoint from $V_\omega$. We suppose also $\gamma(s)$ is not in $F_\omega$, otherwise, we are done.

	Pick $t\in (r,s)$ and choose  sequences $(v_i)$ and $(w_i)$ in $VT$ such that $V_\omega=\lim_\omega X_{v_i}$ and $\gamma(t)\in \lim_\omega X_{w_i}\in \cV_\omega$.  Since $\gamma(t)\notin V_\omega$, $v_i\neq w_i$ $\omega$-almost surely.  Pick $f_i\in \lk(v_i)$ strictly between $v_i$ and $w_i$ $\omega$-almost surely. By Lemma~\ref{lem:esssep_vespace}, $V_\omega$ and $\lim_\omega X_{w_i}$ are essentially separated by  $\lim_\omega X_{f_i}\subseteq \lim_{\omega} X_{v_i}$. Since $\gamma_{[r,t]}$ and $\gamma_{[t,s]}$ are paths between $V_\omega$ and $\lim_\omega X_{w_i}$, intersecting $V_\omega$ only at $\gamma(r)$ and $\gamma(s)$ respectively, we see $\gamma(r),\gamma(s)\in \lim_\omega X_{f_i}$.  The definition of $\cC$ ensures that $\gamma(r)$ and $\gamma(s)$ are both in $C^-$, contradicting the choice of $r$.%
\end{proof}

\begin{defn}
	A continuous path $p \coloneqq I\to X_\omega$ \emph{has no essential backtracking} if for every $A\in  \cE_\omega\sqcup \cV_\omega$, the preimage $p^{-1}(A)$ is an interval.
\end{defn}

\begin{lem}Every $x_\omega,y_\omega\in X_\omega$ can be joined by a Lipschitz path $\gamma$ with no essential backtracking.\label{lem:good_ultracone}
\end{lem}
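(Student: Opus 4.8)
The plan is to reduce to a local problem inside a single $\omega$-vertex space --- where the product structure of Proposition~\ref{prop:ultralimit_vertex_space} makes the statement transparent --- and then to assemble a global path using the separation and intersection results of this section to prevent backtracking. First observe that every point of $X_\omega$ lies in an $\omega$-vertex space: if $x_\omega = [(x_i)]$ with $x_i \in X$, then each $x_i$ lies within distance $1$ of some vertex space $X_{v_i}$, so $x_\omega = [(x_i')]$ for suitable $x_i' \in X_{v_i}$ and $x_\omega \in \lim_\omega X_{v_i} \in \cV_\omega$. Hence we may assume $x_\omega \in V_\omega$ and $y_\omega \in W_\omega$ with $V_\omega, W_\omega \in \cV_\omega$.

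The local claim I would establish first is: for any $A \in \cV_\omega$ and any $p_\omega, q_\omega \in A$ there is a Lipschitz path from $p_\omega$ to $q_\omega$ with image in $A$ and with no essential backtracking. Transport the problem through the bi-Lipschitz equivalence $f_\omega \colon A \to \E \times Y_\omega$ of Proposition~\ref{prop:ultralimit_vertex_space}, under which the $\omega$-edge spaces contained in $A$ are the sets $\E \times \ell$ ($\ell \in \cL_\omega$), the $\omega$-fibers of $A$ are the sets $\E \times \{y\}$, and $Y_\omega$ is an $\R$-tree that is tree-graded with respect to $\cL_\omega$. The geodesic of $\E \times Y_\omega$ between the images of $p_\omega$ and $q_\omega$ has $Y_\omega$-coordinate tracing the unique geodesic $[\bar p, \bar q]$ of the $\R$-tree monotonically, hence meets each $\E \times \{y\}$ in at most a point and each $\E \times \ell$ in a subsegment; its $f_\omega^{-1}$-image is the desired path. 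By Proposition~\ref{prop:omega-spaces}, Lemma~\ref{lem:between_vespaces} and Corollary~\ref{cor:consecutive}, any other element of $\cV_\omega \sqcup \cE_\omega$ meets $A$ only inside an $\omega$-edge space or $\omega$-fiber of $A$, so the path meets it in a point or subinterval as well, and therefore has no essential backtracking.

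For the general case, take a geodesic $\sigma \colon [0, L] \to X_\omega$ from $x_\omega$ to $y_\omega$, which exists because $X_\omega$, an ultralimit of the proper geodesic space $X$, is geodesic. Each $A \in \cV_\omega$ is closed in $X_\omega$ (being complete), so when $\sigma$ meets $A$ the set $\sigma^{-1}(A)$ lies in a smallest closed interval $I_A$ with endpoints in $A$. The heart of the argument is that the family $\{I_A : \sigma^{-1}(A) \neq \emptyset\}$ is laminar --- any two are nested or have disjoint interiors. To see this, a proper overlap $\inf I_A < \inf I_B < \sup I_A < \sup I_B$ would, via the signed distance function $r$ based at the $\omega$-edge space $E$ lying strictly between $A$ and $B$ together with the essential-separation criterion of Lemma~\ref{lem:esssep_vespace}, force $t \mapsto r(\sigma(t))$ to change sign three times and hence force $\sigma$ to meet $E = r^{-1}(0)$ at some time after $\sup I_A$; since $E \subseteq A$ this contradicts $\sigma^{-1}(E) \subseteq \sigma^{-1}(A) \subseteq I_A$. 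Granting laminarity, the maximal intervals $I_A$ are pairwise disjoint, and replacing $\sigma$ on each maximal $I_A$ by the local path joining its endpoint-values inside $A$ produces a Lipschitz path $\gamma$ from $x_\omega$ to $y_\omega$; a final check --- using laminarity once more to see that no $B$ is visited ``properly'' inside two distinct maximal intervals, and the local claim to see that $\gamma^{-1}(B) = \gamma^{-1}(A \cap B)$ is a subinterval inside the one maximal $I_A$ that contains all visits to $B$ --- shows that $\gamma$ has no essential backtracking. The main obstacle, I expect, is precisely this laminarity statement and the careful treatment of the degenerate configurations in which $\sigma$ touches an $\omega$-edge space or a shared $\omega$-fiber exactly at an endpoint of some $I_A$; the remaining verifications then follow routinely from Propositions~\ref{prop:ultralimit_vertex_space} and~\ref{prop:omega-spaces} and Corollary~\ref{cor:consecutive}.
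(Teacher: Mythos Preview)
Your overall strategy is natural, but the local claim has a genuine gap, and it is not the laminarity or endpoint issue you flag at the end.

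Consider $C\in\cV_\omega$ at tree-distance exactly $2$ from $A$, through an intermediate $\omega$-vertex space $U$. By Lemma~\ref{lem:between_vespaces} and Proposition~\ref{prop:omega-spaces}, the intersection $A\cap C$ is an $\omega$-fiber $F'$ of $U$, \emph{not} an $\omega$-edge space or an $\omega$-fiber of $A$ as your sentence asserts. Under $f_\omega:A\to\E\times Y_\omega$, the set $f_\omega(F')$ sits inside $\E\times\ell$ (with $\ell=\pi_\omega(E)$ for the edge space $E$ between $A$ and $U$), but it is not of the form $\E\times\{y\}$. By Corollary~\ref{cor:dist_projs_cones} and Proposition~\ref{prop:fiber_intersection} it is merely a bi-Lipschitz line transverse to the fibers $\E\times\{y\}$; nothing further is known about its embedding. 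Your product geodesic, restricted to $\E\times\ell\cong\E^2$, is a straight segment, and a straight segment can meet a transverse bi-Lipschitz line in a disconnected set. Hence $\gamma^{-1}(C)$ need not be an interval, and your local path can backtrack essentially. The same problem arises for any $C\in\cE_\omega$ contained in $U$ but not in $A$. This gap then propagates to your global step: even granting laminarity, the replaced path $\gamma|_{I_{A'}}$ for a neighbouring maximal interval can visit such a $C$ as well, so the final check that $\gamma^{-1}(C)$ lies in a single maximal interval fails.

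This is precisely why the paper does not use product geodesics. Inside each edge space $E_k$ (between consecutive $\omega$-vertex spaces $k$ and $S(k)$), the paper's segment $\gamma_k$ travels first along a fiber of $k$ and then along a fiber of $S(k)$, meeting at the unique intersection point $x_k$ furnished by Proposition~\ref{prop:fiber_intersection}. Such a staircase is monotone in \emph{both} projections $\pi_k$ and $\pi_{S(k)}$, so it meets every fiber of either adjacent vertex space in an interval; the connecting segments $\delta_k$ are chosen to be lifts of arcs in $Y_{S(k)}$, which keeps $\pi_{S(k)}$ injective there. Your product geodesic is monotone only in the $\pi_A$-direction, and that is insufficient. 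If you want to rescue your outline, the fix is to replace the product geodesic by exactly this kind of fiberwise staircase inside each edge space the local path crosses.
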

\begin{proof}
	Suppose $x_\omega\in \lim_\omega X_{v_i}$ and $y_\omega\in \lim_\omega X_{w_i}$.
	We set \[K\coloneqq \{\lim_\omega X_{a_i}\in \cV_\omega\mid\text{$a_i\in [v_i,w_i]$ $\omega$-almost surely}\}\]  and define a total order $\leq$ on $K$ as follows:
	\[
		\text{$\lim_\omega X_{a_i}\leq \lim_\omega X_{b_i}$ if $a_i\in [v_i,b_i]$ $\omega$-almost surely}.
	\] If $\lim_\omega X_{a_i}\leq \lim_\omega X_{b_i}$, then   $b_i\in[a_i, w_i]$ $\omega$-almost surely.
	For $k,k'\in K$, we write $k<k'$ if $k\leq k'$ and $k\neq k'$. The total  order  has a minimal element $m\coloneqq\lim_\omega X_{v_i}$ and a maximal element $M\coloneqq\lim_\omega X_{w_i}$.

	If $k=\lim_\omega X_{a_i}\in K\backslash \lim_\omega X_{w_i}$, we define its \emph{successor} $S(k)\coloneqq\lim_\omega X_{b_i}$, where $b_i$ is first vertex other than $a_i$ in the geodesic  $[a_i,w_i]$ $\omega$-almost surely.
	Thus, $k<S(k)$ and there is no other $k'\in K$ with $k<k'<S(k)$.

	The function $S:K\backslash\{M\}\to K\backslash\{m\}$ is a bijection with  inverse $S^{-1}$ defined on $K\backslash \{m\}$.  Furthermore, if $e_i$ is the edge with endpoints $a_i$ and $b_i$, we see that $E_k\coloneqq \lim_\omega X_{e_i}$ is the unique element of $\cE_\omega$ essentially separating $k$ and $S(k)$. By Proposition~\ref{prop:omega-spaces}, $E_k$ is contained in both $k$ and $S(k)$.

	For each $k\in K$, let $\pi_{k}:k\to Y_k$ be as in Proposition~\ref{prop:ultralimit_vertex_space}.
	\begin{itemize}
		\item
		      For $k\neq m,M$, let $F_k^-$ be the $\omega$-fiber of $k$ contained in  $E_{S^{-1}(k)}$ and  closest to $E_{k}$, and let $F_k^+$ be the $\omega$-fiber of $k$ contained in  $E_{k}$ and closest to $E_{S^{-1}(k)}$.  We note that $F_{k}^-=F_k^+$ if and only if $E_k\cap E_{S^{-1}(k)}\neq \emptyset$.
		\item
		      For $k=m$, let $F_k^+$ be the $\omega$-fiber of $k$ contained in $E_k$ and closest to $x_\omega$.

		\item Similarly, for $k=M$, let $F_k^-$ be the $\omega$-fiber of $k$ contained in $E_{S^{-1}(k)}$ and closest to $y_\omega$.
	\end{itemize}
	For each $k\neq M$, let $x_k\in E_k$ be the unique element of $F_k^+\cap F_{S(k)}^-$, which exists by Proposition~\ref{prop:fiber_intersection}.

	Let $\gamma:[0,L]\to X_\omega$ be a geodesic in $X_\omega$ from $x_\omega$ to $y_\omega$. For each $k\in K$, set
	\[t_k\coloneqq \sup\{t\in [0,L]\mid \gamma(t)\in k\}\]
	Since $k$ is closed, $\gamma(t_k)\in k$.

	Let $k\in K\backslash \{M\}$. As $\gamma((t_{k},L])$ is disjoint from $E_k\subseteq k$, it is contained in the side of $E_k$ containing $y_\omega$. Since $E_k$ essentially separates $k$ from $y_\omega$ by Lemma~\ref{lem:esssep_vespace}, we must have $\gamma(t_k)\in E_k$. If $k'\leq k$, then as $E_k$ essentially separates $k'$ from $y_\omega$  by Lemma~\ref{lem:esssep_vespace}, we must have $t_{k'}\leq t_k$.  By Lemma~\ref{lem:closest_fiber},  any path from $E_k$ to $E_{S(k)}$ must intersect $F_{S(k)}^-$. As $\gamma|_{(t_k,t_{S(k)}]}$ does not intersect $E_k$,  it follows that $\gamma(t_k)\in F_{S(k)}^-$.
	Moreover, Lemma~\ref{lem:closest_fiber} and the fact $F_{k}^+$ is closed   ensures there is a minimal $r_{S(k)}\in [t_k,t_{S(k)}]$ with $\gamma(r_{S(k)}) \in F_{S(k)}^+$. We can  also choose a minimal $r_m\in [0,t_m]$ with $\gamma(r_{m}) \in F_{m}^+$.

	\begin{figure}[htb]
		\centering
		\resizebox{0.5\textwidth}{!}{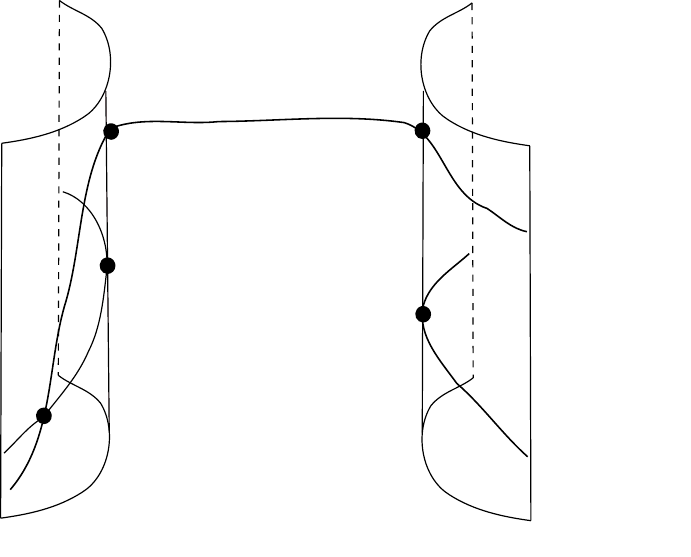}
		\caption{The picture illustrates  two fibers $F_{S(k)}^{-}$ and $F_{k}^{+}$ of $E_k$ (resp.\, $F^{+}_{S(k)}$ and $F^{-}_{S^2(k)}$ of $E_{S(k)}$) intersecting at $x_k$ (resp.\, $x_{S(k)}$). Here $F_{S(k)}^{-}$ is the $S(k)$-fiber of $E_k$ closest to $E_{S(k)}$ and $F_{S(k)}^{+}$ is the $S(k)$-fiber of $E_{S(k)}$ closest to $E_{k}$. The path $\gamma$ intersects $F_{S(k)}^{-}$ and leaves $F_{S(k)}^{-}$ at $\gamma(t_k)$.}\label{figure:thick}
	\end{figure}

	Pick a constant $L_1$ such that Corollary~\ref{cor:dist_projs_cones} holds (with $L_1$ in place of $K$) and each $\pi_k$ is $L_1$-Lipschitz. Then for each $k\in K\backslash \{M\}$, we have
	$\pi_k(\gamma(r_k))=\pi_k(x_k)$ and $\pi_{S(k)}(\gamma(t_k))=\pi_{S(k)}(x_k)$, hence
	\begin{align*}
		d_{X_\omega}(\gamma(r_k),x_{k}) & \leq L_1d_{Y_{S(k)}}(\pi_{S(k)}(x_k),\pi_{S(k)}(\gamma(r_k)))        \\
		                                & =L_1d_{Y_{S(k)}}(\pi_{S(k)}(\gamma(t_k)),\pi_{S(k)}(\gamma(r_k)))    \\
		                                & \leq L_1^2 d_{X_\omega}(\gamma(t_k),\gamma(r_k))\leq L_1^2(t_k-r_k).
	\end{align*}
	An identical argument gives $d_{X_\omega}(\gamma(t_k),x_{k})= L_1^2(t_k-r_k)$.

	It follows from Propositions~\ref{prop:ultralimit_vertex_space} and~\ref{prop:fiber_intersection}  that there is a constant $L_2\geq 1$, depending only on $X_\omega$, such that the following hold:
	\begin{enumerate}
		\item  For each $k\in K$, there is an $L_2$-bi-Lipschitz geodesic
		      \[
			      \gamma_k:[r_k,t_k]\to X_\omega
		      \]
		      given by concatenating the bi-Lipschitz geodesic segments  $[\gamma(r_k),x_k]_{F_{k}^+}\cdot [x_k,\gamma(t_k)]_{F_{S(k)}^-}$ and scaling the domain.
		\item For each $k\in K\backslash\{M\}$, there is an $L_2$-bi-Lipschitz geodesic
		      \[
			      \delta_k:[t_k,r_{S(k)}]\to X_\omega
		      \]
		      from $\gamma(t_k)$ to $\gamma(r_{S(k)})$ contained in $S(k)$, such that $\pi_{S(k)}\circ \delta_k$ is an arc in $Y_{S(k)}$ from $\pi_{S(k)}(F_{S(k)}^-)$ to $\pi_{S(k)}(F_{S(k)}^+)$.
	\end{enumerate}

	For $k=m$, we also pick an $L_2$-bi-Lipschitz geodesic $\gamma_0:[0,r_m]\to X_\omega$, contained in $m$ and intersecting each fiber of $m$ at most once.

	We can thus define a new path $\gamma':[0,L]\to X_\omega$ such that $\gamma'|_{[r_k,t_k]}=\gamma_k$, $\gamma'|_{[t_k,r_{S(k)}]}=\delta_k$ and $\gamma'|_{[0,t_m]}=\gamma_0$. Then $\gamma'$ is itself $L_2$-Lipschitz as a concatenation of $L_2$-Lipschitz paths.

	The choice of $\gamma_k$ and $\delta_k$ readily imply that  for all  $A_\omega\in \cV_\omega\sqcup \cE_\omega$,  $\gamma'^{-1}(A_\omega)$ is an interval. For instance, for  each $k\in K$, the choice of the $r_k$ and $t_{S(k)}$ ensure that $\gamma'^{-1}(S(k))=[r_k,t_{S(k)}]$ $\gamma'^{-1}(E_k)=[r_k,t_k]$.
\end{proof}

We use this to show:
\begin{prop}\label{prop:essential_split}
	If $A\subseteq X_\omega$ is not essentially split by any $\omega$-edge space, then $A$ is contained in an  $\omega$-vertex space.
\end{prop}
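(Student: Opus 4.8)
The plan is to argue by contradiction: suppose $A$ is not essentially split by any $\omega$-edge space but is not contained in any $\omega$-vertex space, and derive a contradiction using the no-essential-backtracking paths of Lemma~\ref{lem:good_ultracone}. First I would extract a basic structural consequence of the hypothesis. Fix any point $a \in A$, say $a \in V_\omega = \lim_\omega X_{v_i} \in \cV_\omega$. Since $A$ is not contained in $V_\omega$, pick $b \in A$ with $b \notin V_\omega$, say $b \in W_\omega = \lim_\omega X_{w_i}$ with $w_i \neq v_i$ $\omega$-almost surely. Choose edges $f_i \in \lk(v_i)$ strictly between $v_i$ and $w_i$ $\omega$-almost surely, so $E_\omega \coloneqq \lim_\omega X_{f_i} \in \cE_\omega$ is strictly between $V_\omega$ and $W_\omega$. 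By Lemma~\ref{lem:esssep_vespace}, $E_\omega$ essentially separates $V_\omega$ and $W_\omega$; in particular $a \in V_\omega \setminus E_\omega$ and $b \in W_\omega \setminus E_\omega$ lie in distinct sides of $E_\omega$ (note $a, b \notin E_\omega$, since $E_\omega$ is strictly between the two vertex spaces, so $E_\omega \neq V_\omega, W_\omega$, and neither $V_\omega$ nor $W_\omega$ is contained in $E_\omega$ by Proposition~\ref{prop:omega-spaces}(\ref{item:omega-spaces1}); one needs here that $V_\omega \cap E_\omega$ and $W_\omega \cap E_\omega$ are proper, which follows from Proposition~\ref{prop:fiber_intersection} since these intersections are unions of $\omega$-fibers). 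Hence $A$ intersects both sides of $E_\omega$, i.e.\ $A$ is essentially split by $E_\omega$ — contradicting the hypothesis.

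Let me restate this more carefully, since the subtlety is whether $a$ and $b$ actually lie in the \emph{open} sides $r^{-1}(0,\infty)$ and $r^{-1}(-\infty,0)$ rather than on $E_\omega$ itself. If $a \in E_\omega$, then since $E_\omega$ is strictly between $V_\omega$ and $W_\omega$ and $a \in V_\omega$, we would have $E_\omega \cap V_\omega \neq \emptyset$; this is allowed, so I cannot immediately conclude $a \notin E_\omega$. To handle this, I would instead choose $b' \in A$ realizing the "furthest" behaviour: among points of $A$, if all of $A \setminus E_\omega$ lay on one side of $E_\omega$, then $A$ would be contained in (one closed side of $E_\omega$). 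Here is the cleaner route: the hypothesis says that for \emph{every} $\omega$-edge space $E$, the set $A$ fails to meet both (open) sides of $E$; equivalently, $A$ is contained in one closed side $r^{-1}([0,\infty))$ or $r^{-1}((-\infty,0])$ of every $E$. I claim this forces $A \subseteq V_\omega$ for the vertex space $V_\omega$ containing the fixed point $a$. Indeed, take any $b \in A$ and let $\gamma$ be a path from $a$ to $b$ with no essential backtracking (Lemma~\ref{lem:good_ultracone}). If $b \notin V_\omega$, then along $\gamma$ there is a first $\omega$-edge space $E_\omega$ of $V_\omega$ (adjacent to $a$'s vertex, strictly between $V_\omega$ and $W_\omega$) that $\gamma$ must cross to leave $V_\omega$; since $\gamma$ has no essential backtracking, after crossing $E_\omega$ it never returns to the side containing $a$. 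Then $a$ is in the closed side $r^{-1}([0,\infty))$, say, and $b \in W_\omega$ lies strictly in the open side $r^{-1}((-\infty,0))$ by Lemma~\ref{lem:esssep_vespace}, while $a$ — if $a \notin E_\omega$ — lies in $r^{-1}((0,\infty))$, giving the essential split. The remaining case $a \in E_\omega$: then replace the basepoint by any $a' \in A \cap V_\omega$ with $a' \notin E_\omega$; such $a'$ exists unless $A \cap V_\omega \subseteq E_\omega$.

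So the only genuinely delicate case is $A \cap V_\omega \subseteq E_\omega$ for every $\omega$-edge space $E_\omega$ of $V_\omega$ that $A$ is forced through — i.e.\ $A \cap V_\omega$ is contained in the intersection of several $\omega$-edge spaces of $V_\omega$. By Proposition~\ref{prop:omega-spaces}(\ref{item:omega-spaces5}), the intersection of two distinct $\omega$-edge spaces of $V_\omega$ is empty or a single $\omega$-fiber; and by Proposition~\ref{prop:fiber_intersection}(\ref{prop:fiber_intersection1}) an $\omega$-fiber of $V_\omega$ together with an $\omega$-fiber of an adjacent vertex space meet in at most a point. Iterating through the consecutive $\omega$-vertex spaces encountered along paths from $A \cap V_\omega$ to other points of $A$, and invoking Corollary~\ref{cor:consecutive} (four consecutive $\omega$-vertex spaces meet in at most a point), one reduces to the situation where $A$ meets at most boundedly many vertex spaces and in fact, after finitely many such reductions, $A$ is pinned inside a single $\omega$-vertex space or inside a single $\omega$-edge space; but an $\omega$-edge space is itself contained in an $\omega$-vertex space by Proposition~\ref{prop:omega-spaces}(\ref{item:omega-spaces4}), so we are done. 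I expect the main obstacle to be precisely this bookkeeping in the degenerate case — organizing the induction on how far $A$ "spreads" through the tree of $\omega$-vertex spaces while $A \cap V_\omega$ collapses into fibers — and making sure the no-essential-backtracking property of the connecting paths is used correctly to guarantee that each crossing of an $\omega$-edge space is irreversible, so that "not essentially split by any $\omega$-edge space" genuinely propagates into "lives in one vertex space."
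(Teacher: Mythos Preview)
Your proposal assembles the right ingredients --- no-essential-backtracking paths, Corollary~\ref{cor:consecutive}, and the non-split hypothesis --- but the single-basepoint organization runs into exactly the degenerate case you flag and do not resolve. The paper avoids this entirely by a two-step structure that you are missing, and that is the key idea.

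\textbf{Step 1.} Show that any \emph{pair} of distinct points $x_\omega,y_\omega\in A$ lies in a common $\omega$-vertex space. Suppose not; take a no-essential-backtracking path $\gamma:[0,1]\to X_\omega$ from $x_\omega$ to $y_\omega$, and let $R$ be the set of $\omega$-vertex spaces between them that meet $\gamma((0,1))$. Each element of $R$ must contain $x_\omega$ or $y_\omega$ (otherwise an incident $\omega$-edge would split $A$), and by no-backtracking each therefore contains an initial or terminal segment $\gamma([0,t])$ or $\gamma([t,1])$. Corollary~\ref{cor:consecutive} then forces $R$ to be finite, yielding adjacent $V_\omega\ni x_\omega$ and $V'_\omega\ni y_\omega$. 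The $\omega$-edge between them essentially splits $\{x_\omega,y_\omega\}$, since by hypothesis neither point lies in the other's vertex space --- contradiction.

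\textbf{Step 2.} Fix distinct $x_\omega,y_\omega\in A$, now in some common $V_{0,\omega}$. If $A\not\subseteq V_{0,\omega}$, an incident edge $E_{1,\omega}$ separates $V_{0,\omega}$ from some point of $A$; non-split forces $x_\omega,y_\omega\in E_{1,\omega}\subseteq V_{1,\omega}$. Iterating produces consecutive $V_{0,\omega},V_{1,\omega},V_{2,\omega},V_{3,\omega}$ all containing the two distinct points $x_\omega,y_\omega$, contradicting Corollary~\ref{cor:consecutive}.

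The crucial point is that Corollary~\ref{cor:consecutive} constrains how many consecutive $\omega$-vertex spaces can share \emph{two} distinct points, not one. A single basepoint $a$ can lie in arbitrarily long chains of consecutive vertex spaces (indeed, in infinitely many if it sits in a nested sequence of fiber intersections), so your attempt to push $a$ through the tree cannot terminate without an extra handle. Working with a fixed pair from the outset is what makes the bookkeeping close; your degenerate-case sketch is groping toward this but does not get there.
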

\begin{proof}
	The proof is similar to that of~\cite[Lemma 3.4]{kapovichleeb1997Quasiisometries}, making use of Lemma~\ref{lem:good_ultracone} to account for the fact that  we no longer have convexity of $\omega$-vertex spaces.

	There is nothing to prove if $A$ consists of a single point, so let
	$x_\omega$ and $y_\omega$ be two distinct points of $A$.

	We claim $\{x_\omega, y_\omega\}$ is contained in an $\omega$-vertex space. Assume for contradiction this is not the case. There exist distinct edge spaces $E_\omega$ and $E'_\omega$ with $x_\omega\in E_\omega$ and $y_\omega\in E'_\omega$.
	By Lemma~\ref{lem:good_ultracone}, there is a path  $\gamma:[0,1]\to X_\omega$ connecting $x_\omega$ to $y_\omega$ with no essential backtracking.  Let $R$ be the set of $V_\omega\in \cV_\omega$ that intersect $\gamma(0,1)$ and are between $E_\omega$ and $E'_\omega$. Since $A$ is not contained in any element of $R$ and is not essentially split by any $\omega$-edge space, every element of $R$ contains at least one of $\{x_\omega, y_\omega\}$.   Since $\gamma$ has no essential backtracking, every element of $R$ contains a subset of the form $\gamma([0,t])$ or $\gamma([t,1])$ for some $0<t<1$. As four consecutive components of $R$ contain at most one common point by Corollary~\ref{cor:consecutive},  $R$ is finite, hence contains adjacent $V_\omega$ and $V'_\omega$ containing $x_\omega$ and $y_\omega$ respectively. Let $E''_\omega$ be the $\omega$-edge space between $V_\omega$ and $V'_\omega$, which essentially separates $V_\omega$ and $V'_\omega$.  Since $\{x_\omega, y_\omega\}$ is not contained in either $V_\omega$ or $V'_\omega$, we deduce $\{x_\omega, y_\omega\}$ must be essentially split by $E''_\omega$. This is the required contradiction.

	Now suppose $\{x_\omega, y_\omega\}$ is contained in $V_\omega$, which we  denote by $V_{0,\omega}$ for notational convenience. We claim that $A$ is contained in an $\omega$-vertex space. We assume for contradiction this is not the case. Then in particular,   $A\nsubseteq V_{0,\omega}$. Thus there is some $E_{1,\omega}$ contained in $V_{0,\omega}$  such that $V_{0,\omega}$ and some $a_\omega\in A$  are essentially separated by $E_{1,\omega}$. Let $V_{1,\omega}\neq V_{0,\omega}$ be the other $\omega$-vertex space containing $E_{1,\omega}$. Since $A$ is not essentially split by $E_{1,\omega}$, $\{x_\omega, y_\omega\}\subseteq V_{1,\omega}$. Since $A\nsubseteq V_{1,\omega}$, then we argue as above to find $V_{2,\omega}$ such that $\{x_\omega, y_\omega\}\subseteq V_{2,\omega}$ and $V_{0,\omega},V_{1,\omega},V_{2,\omega}$ are consecutive. We continue in this way, obtaining four consecutive components containing $\{x_\omega, y_\omega\}$, contradicting Corollary~\ref{cor:consecutive}.
\end{proof}

\begin{prop}\label{prop:component_bdry}
	Let $C$ be a path component of $X_\omega\backslash E_\omega$. Suppose $C$ is contained in the side of $E_\omega$ containing $V_\omega\backslash E_\omega$, where $E_\omega$ is contained in $V_\omega$. Then $\partial C$ is contained  in a fiber of $V_\omega$.
\end{prop}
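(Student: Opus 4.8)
The plan is to reduce the statement to a separation property of the $\omega$-fibers of $V_\omega$ and to prove that property by an argument in the style of Lemma~\ref{lem:closest_fiber} and Lemma~\ref{lem:good_ultracone}.

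\emph{Step 1: topological reductions.} The set $E_\omega$ is an ultralimit of subspaces, hence closed, so $X_\omega\backslash E_\omega$ is open; and $X_\omega$ is an ultralimit of geodesic spaces, hence a geodesic, in particular locally path connected, space, so the path component $C$ is open and $\partial C=\bar C\backslash C$. If $p\in\partial C$ did not lie in $E_\omega$, it would lie in some path component $C'\ne C$ of $X_\omega\backslash E_\omega$; but $C'$ is open and disjoint from $C$, hence from $\bar C$, a contradiction. Thus $\partial C\subseteq E_\omega$. Moreover $\partial C\ne\emptyset$, since $C$ is a non-empty proper subset of the connected space $X_\omega$.

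\emph{Step 2: geometry inside $V_\omega$.} By Proposition~\ref{prop:ultralimit_vertex_space} we identify $V_\omega$, up to bi-Lipschitz equivalence, with $\E\times Y_\omega$, where $Y_\omega$ is an $\R$-tree that branches everywhere, so that $E_\omega$ corresponds to $\E\times\ell_\omega$ for a bi-infinite geodesic $\ell_\omega\subseteq Y_\omega$ and the $\omega$-fibers of $V_\omega$ correspond to the sets $\E\times\{y\}$, $y\in Y_\omega$; those contained in $E_\omega$ are exactly the fibers over points of $\ell_\omega$. Since $Y_\omega$ is an $\R$-tree, every path component $U$ of $Y_\omega\backslash\ell_\omega$ is convex, the nearest-point projection to $\ell_\omega$ is constant on $U$, and $\bar U\cap\ell_\omega$ is a single point $y_U$. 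Hence the path components of $V_\omega\backslash E_\omega$ are exactly the sets $\E\times U$, and, as $V_\omega$ is closed in $X_\omega$, the frontier of $\E\times U$ in $X_\omega$ is the fiber $F_{y_U}:=\pi_\omega^{-1}(y_U)\subseteq E_\omega$.

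\emph{Step 3: the key separation claim (the main obstacle).} The crux is: \emph{if two path components $\E\times U$ and $\E\times U'$ of $V_\omega\backslash E_\omega$ lie in one path component of $X_\omega\backslash E_\omega$, then $y_U=y_{U'}$.} To prove it, assume $y_U\ne y_{U'}$ and pick $y^*\in\ell_\omega$ strictly between $y_U$ and $y_{U'}$; then $\E\times U$ and $\E\times U'$ lie in distinct path components of $V_\omega\backslash F_{y^*}$, while being joined by a path in $X_\omega\backslash E_\omega\subseteq X_\omega\backslash F_{y^*}$. As $V_\omega$ is not convex in $X_\omega$, one cannot conclude directly; instead one runs the equivalence-relation argument of Lemma~\ref{lem:closest_fiber}, using that an edge space of $V_\omega$ other than $E_\omega$ has $\pi_\omega$-image meeting $\ell_\omega$ in at most one point, so it joins only path components of $V_\omega\backslash E_\omega$ attached at that common point; thus leaving $V_\omega$ through such an edge space and later returning cannot change the attaching point, nor cross from one side of $F_{y^*}$ to the other without meeting $E_\omega$. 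I expect this verification — controlling how a path that tunnels out of $V_\omega$ and back interacts with $F_{y^*}$, via the ``no essential backtracking'' technology of Lemma~\ref{lem:good_ultracone} — to be the hard part.

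\emph{Step 4: conclusion.} By Step 3 all path components $\E\times U$ of $V_\omega\backslash E_\omega$ contained in $C$ are attached to $\ell_\omega$ at one common point $y_0$. In the $\R$-tree $Y_\omega$, the closure of a union of components of $Y_\omega\backslash\ell_\omega$ all attached at $y_0$ meets $\ell_\omega$ only at $y_0$, whence $\bar{C\cap V_\omega}\cap E_\omega\subseteq F_{y_0}$. Finally, given $p\in\partial C\subseteq E_\omega$, approximate $p$ by points of $C$; every $\omega$-vertex space carrying points of $C$ arbitrarily close to $p$ is glued to $V_\omega$ through a chain of edge spaces that meet $E_\omega$ only in fibers converging to $\pi_\omega^{-1}(\pi_\omega(p))$, which together with Step 3 forces $p\in\bar{C\cap V_\omega}$. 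Hence $\partial C\subseteq\bar{C\cap V_\omega}\cap E_\omega\subseteq F_{y_0}$, a single fiber of $V_\omega$, as required.
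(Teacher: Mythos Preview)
Your Steps 1--3 are on the right track and indeed capture the geometric content of the paper's argument. However, Step 4 has a genuine gap, and it is precisely where the paper introduces the idea you are missing.

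The problem is the reduction to $\overline{C\cap V_\omega}$. You assert that every $p\in\partial C$ lies in $\overline{C\cap V_\omega}$, but the justification (``every $\omega$-vertex space carrying points of $C$ arbitrarily close to $p$ is glued to $V_\omega$ through a chain of edge spaces\dots'') is not an argument. Concretely: take $p_n\in C$ with $p_n\to p$ and $p_n\notin V_\omega$. Then $p_n$ lies on the far side of some $E'_n\ne E_\omega$ incident to $V_\omega$, and a geodesic $[p_n,p]$ meets $E'_n$ at some $q_n$ with $q_n\to p$. But $q_n$ may land in $E'_n\cap E_\omega$ (a fiber), in which case $q_n\notin C$; and even if $q_n\notin E_\omega$, you have not exhibited a path from $p_n$ to $q_n$ avoiding $E_\omega$, so you do not know $q_n\in C$. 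Worse, nothing you have said rules out $C\cap V_\omega=\emptyset$, in which case your entire Step 4 is vacuous.

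The paper avoids all of this by defining a single continuous function $\vartheta:H_\omega\to\ell$ on the whole closed half-space $H_\omega=E_\omega\sqcup S_\omega$, where $S_\omega$ is the side of $E_\omega$ containing $V_\omega\backslash E_\omega$: on $V_\omega$ set $\vartheta=\proj_\ell\circ\pi_\omega$, and on the far side $\bar{S'_\omega}$ of each $E'_\omega\ne E_\omega$ incident to $V_\omega$ set $\vartheta$ to be the constant $\proj_\ell(\pi_\omega(E'_\omega))$. These agree on overlaps because $\pi_\omega(E'_\omega)$ projects to a single point of $\ell$, so $\vartheta$ is well-defined and continuous. One then shows $\vartheta$ is constant along any path in $C$ by a local $\sup$ argument (your Step 3 is essentially this). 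Since $\vartheta|_{E_\omega}=\pi_\omega|_{E_\omega}$, continuity immediately gives $\pi_\omega(\partial C)$ a single point --- no separate boundary analysis needed. This global function is the missing ingredient; once you have it, Steps 3 and 4 merge into one clean continuity argument.
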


\begin{proof}
	Let $f_\omega:V_\omega\to \E\times Y_\omega$ be a bi-Lipschitz equivalence as in Proposition~\ref{prop:ultralimit_vertex_space}. By Proposition~\ref{prop:ultralimit_vertex_space},  $f_\omega(E_\omega)=\E\times \ell$ for some bi-infinite geodesic $\ell$ in $T_\omega$. Let $\pi_\omega:V_\omega\to Y_\omega$ be the quotient map and let $\proj_\ell:Y_\omega \to \ell$ be the closest point projection. Set $S_\omega$ to be the side of $E_\omega$ containing $V_\omega\backslash E_\omega$ and set $H_\omega\coloneqq E_\omega\sqcup S_\omega$.
	To show that $\partial C$ is contained in a fiber of $V_\omega$, we are going to define a continuous function \[ \vartheta:H_\omega\to \ell\] and show that this map is constant on $C$.

	Our desired map $\vartheta$ is defined as follows. We define  $ \vartheta|_{V_\omega}$ to be the composition $\proj_\ell\circ \pi_\omega:V_\omega\to \ell$, which is clearly continuous.  For each edge $E'_\omega\neq E_\omega$ incident to $V_\omega$, let $S'_\omega$ be the side of $E'_\omega$ that doesn't contain $V_\omega\backslash E'_\omega$.   By Proposition~\ref{prop:ultralimit_vertex_space},  $f_\omega(E'_\omega)=\E\times \ell'$ for some geodesic $\ell'$ in $Y_\omega$ satisfying $|\ell\cap \ell'|\leq 1$; thus $\proj_\ell(\ell')$ is a single point.  We therefore define $\vartheta(x)$ by  $\vartheta(x)=\proj_\ell(\ell')$ for all $x\in {\bar{S'_\omega}}$. This yields a  well-defined continuous function $\vartheta:H_\omega\to \ell$.

	We will show that  $\vartheta$ is constant on $C$. Indeed, let $x_\omega,y_\omega\in C$ and let $p:[0,1]\to C$ be a path from $x_\omega$ to $y_\omega$. Each component of $p([0,1])\backslash V_\omega$ is contained in some side $S_\omega'$ as above, hence $\vartheta$ is constant on each such component. Let
	\[
		r\coloneqq \sup\{t\mid (\vartheta\circ p)[0,t]=\vartheta(x_\omega)\}
	\]
	We claim $r=1$. Assume for contradiction $r<1$. If $p(r)\notin V_\omega$, then  $\vartheta$ is constant on  $p(r-\epsilon,r+\epsilon)$ for some $\epsilon>0$, contradicting the definition of $r$ and the assumption $r<1$.

	If $p(r)\in V_\omega$, then as $Y_\omega$ is a tree and $\pi_\omega\circ p$ is continuous, we can choose  $\epsilon>0$ small enough such that the image of $p(r-\epsilon,r+\epsilon)\cap V_\omega$ under $\proj_\ell\circ \pi_\omega$ is a point of $\ell$. We conclude that $\vartheta$ is constant on  $p(r-\epsilon,r+\epsilon)\cap V_\omega$.
	As  $\vartheta$ is also constant on components of $p(r-\epsilon,r+\epsilon)\backslash V_\omega$, it follows $\vartheta$ is constant on $p(r-\epsilon,r+\epsilon)$. This also contradicts the definition of $r$. Thus $r=1$, and  so by continuity,  $\vartheta(x_\omega)=\vartheta(y_\omega)$.

	Since $x_\omega,y_\omega\in C$ were arbitrary, $\vartheta(C)=\{z\}$ for some $z\in \ell$. By continuity, we conclude  $\vartheta(\partial C)=\{z\}$. Since  $\vartheta|_{E_\omega}$ coincides with the  projection $\pi_\omega|_{E_\omega}$,  $\partial C$ is contained in the fiber $\pi_\omega^{-1}(z)$.
\end{proof}

We recall the following lemma:
\begin{lem}[{\cite[Lemma 3.3]{kapovichleeb1997Quasiisometries}}]\label{lem:sep_fiber}
	Let $Y_\omega$ be a geodesically complete tree that branches everywhere and let $C\subseteq  \E$ be a closed subset.
	Assume that $g: C \to \E\times Y_\omega$ is a bi-Lipschitz embedding whose image separates.
	Then $C = \E$ and $g(C)$ is a fiber $\E\times \{x\}$.
\end{lem}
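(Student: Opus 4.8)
The plan is as follows. Write $Z := \E\times Y_\omega$ with coordinate projections $\tau\colon Z\to\E$ and $\rho\colon Z\to Y_\omega$ (and, without loss of generality, give the product the $\ell^\infty$ metric, so that balls are products $I\times B_{Y_\omega}(y,\epsilon)$), set $F := g(C)$, and argue by contradiction: suppose $Z\setminus F$ is disconnected, say $Z\setminus F = U\sqcup V$ with $U,V$ nonempty and open. First I would record some soft facts. A bi-Lipschitz embedding is proper, so $F$ is closed in $Z$; and $F$ is nowhere dense, because any nonempty open $W\subseteq Z$ contains a subset homeomorphic to a tripod (take $\{t\}\times B_{Y_\omega}(y,\delta)$ for a branch point $y$ — every point of $Y_\omega$ is one — and three short pieces of prongs at $y$), whereas $F$ is homeomorphic to $C\subseteq\E$ and so contains no tripod. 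Hence $U\cup V$ is dense, $\bar U\cup\bar V = Z$, and since $Z$ is connected the common frontier $F' := \bar U\cap\bar V$ is nonempty, closed, contained in $F$, and every neighbourhood of every point of $F'$ meets both $U$ and $V$. Next, every ball $B_{Y_\omega}(y,\epsilon)$ has infinite $1$-dimensional Hausdorff measure: a nondegenerate geodesic segment in it carries, at each of its continuum many interior points, a ``third prong'' of $Y_\omega$ transverse to the segment, and these are pairwise disjoint and nondegenerate. Since a bi-Lipschitz image of a bounded interval has finite length and $\rho$ is $1$-Lipschitz, $\rho(F\cap B(z_0,\epsilon))$ always has finite length, so it fails to fill $B_{Y_\omega}(\rho(z_0),\epsilon)$; in particular $F\cap B(z_0,\epsilon)$ misses an entire prong of $Y_\omega$ lying in that ball.

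The technical heart is a \emph{local statement}: for $z_0\in Z$ and $\epsilon>0$, if $F\cap B(z_0,\epsilon)$ does not contain a ``spanning fibre segment'' $I'\times\{\rho(z_0)\}$, where $I'$ is the full $\tau$-extent of the ball, then $B(z_0,\epsilon)\setminus F$ is connected. The idea is that $F\cap B(z_0,\epsilon)$ then behaves like a thin finite-length arc which, missing an entire prong as above, can be circumvented: any two points of the complement are joined by a path that detours through the unused prong — modelled on the nearest-point-retraction manoeuvre in the proof of Proposition~\ref{prop:component_bdry}. Granting this, put $P := g^{-1}(F')\subseteq C$; then $P$ is nonempty (as $\varnothing\neq F'\subseteq F$) and closed in $\E$ (as $g$ is a homeomorphism onto $F$ and $C$ is closed), and I claim $P$ is open. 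Fix $c_0\in P$ and $z_0 := g(c_0)\in F'$. If the local statement applied at $z_0$, then $B(z_0,\epsilon)\setminus F$ would be connected and disjoint from $F$, hence lie entirely in $U$ or in $V$ — impossible, since $z_0\in\bar U\cap\bar V$ forces $B(z_0,\epsilon)$ to meet both. So for every small $\epsilon$, $F\cap B(z_0,\epsilon)$ contains a spanning fibre segment through $z_0$; this forces $C$ to contain an interval about $c_0$, and a second application of the local statement (ruling out that $\rho$ is non-constant on the arc $F\cap B(z_0,\epsilon)$, using the bi-Lipschitz bound to control the $\tau$-extent) shows that in fact $F\cap B(z_0,\epsilon) = I'\times\{y_0\}$ exactly, with $y_0 := \rho(z_0)$.

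I would then finish by a propagation argument. Writing $B(z_0,\epsilon) = I'\times B_{Y_\omega}(y_0,\epsilon)$, the complement $B(z_0,\epsilon)\setminus F$ is the disjoint union of the slabs $I'\times(T_i\setminus\{y_0\})$, one for each of the (at least three) prongs $T_i$ of $Y_\omega$ at $y_0$. Each slab is connected and avoids $F$, hence lies entirely in $U$ or entirely in $V$; since $z_0\in\bar U\cap\bar V$, at least one slab lies in $U$ and at least one in $V$. As $y_0$ is in the closure of every prong, for each $t\in I'$ the point $(t,y_0)$ is a limit both of points of a $U$-slab and of points of a $V$-slab, so $(t,y_0)\in\bar U\cap\bar V = F'$. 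Therefore $I'\times\{y_0\}\subseteq F'$, so $g^{-1}(I'\times\{y_0\})$ — an open interval about $c_0$ — is contained in $P$; hence $c_0$ is interior to $P$. This proves $P$ open, hence $P$ is a nonempty clopen subset of $\E$, so $P = \E$, whence $C = \E$ and $F = F' = g(\E)$. Moreover the argument shows $\rho\circ g$ is constant near every point, hence constant, say $\rho\circ g\equiv x$; thus $F\subseteq\E\times\{x\}$, and as $g$ is a bi-Lipschitz embedding of $\E$ into the line $\E\times\{x\}$ it is onto. Hence $g(C) = \E\times\{x\}$, as required.

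The step I expect to be the main obstacle is the local statement — converting the soft hypothesis that $F$ separates into an explicit ``detour past an unused prong'' in the branched, non-manifold space $Z = \E\times Y_\omega$. This is exactly where the everywhere-branching of $Y_\omega$ is used essentially, and the cleanest implementation is to project onto the finite-length subtree $\rho(F\cap B(z_0,\epsilon))$ and an omitted prong via the nearest-point retraction of $Y_\omega$, as in the proof of Proposition~\ref{prop:component_bdry} and in~\cite[Lemma 3.3]{kapovichleeb1997Quasiisometries}.
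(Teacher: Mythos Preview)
The paper does not give its own proof of this lemma: it is stated with a citation to \cite[Lemma~3.3]{kapovichleeb1997Quasiisometries} and used as a black box in the proof of Proposition~\ref{prop:biLip-TxE}. So there is no in-paper argument to compare against.

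As for your proposal on its own terms: the overall architecture is sound and is in the spirit of the original Kapovich--Leeb argument. The reductions (properness, nowhere-density via tripods, the Hausdorff-measure observation that $\rho(F\cap B(z_0,\epsilon))$ cannot fill a ball in $Y_\omega$) are correct, and the clopen propagation along $P=g^{-1}(F')$ is a clean way to globalize. You have correctly identified the one substantive step that is not yet an argument: your ``local statement'' that $B(z_0,\epsilon)\setminus F$ is connected whenever $F\cap B(z_0,\epsilon)$ fails to contain a full spanning fibre segment. As written this is a plan, not a proof --- you invoke a ``detour through an unused prong'' and a nearest-point retraction, but you do not actually construct the path, and the case analysis (e.g.\ when $F\cap B(z_0,\epsilon)$ is a disconnected subset of the fibre, or when it wanders slightly off the fibre) needs to be spelled out. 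This is exactly the place where Kapovich--Leeb do the real work, so if you want a self-contained proof you should either carry out that local connectivity argument in full or cite \cite[Lemma~3.3]{kapovichleeb1997Quasiisometries} as the paper does.
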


The statement of the following proposition is similar to {\cite[Lemma 3.10]{kapovichleeb1997Quasiisometries}}.
\begin{prop}\label{prop:biLip-TxE}
	Let $Y_\omega$ be a geodesically complete tree that branches everywhere. The image of every bi-Lipschitz embedding $f:\E\times Y_\omega\to X_\omega$ is contained in an $\omega$-vertex space.
\end{prop}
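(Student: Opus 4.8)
The plan is to apply Proposition~\ref{prop:essential_split}: it suffices to prove that the image $P := f(\E\times Y_\omega)$ is not essentially split by any $\omega$-edge space. I would argue by contradiction, so suppose $P$ is essentially split by some $\omega$-edge space $E_\omega$, and let $V^+_\omega$, $V^-_\omega$ be the two $\omega$-vertex spaces containing $E_\omega$ (Proposition~\ref{prop:omega-spaces}), with $S^+, S^-$ the two sides of $E_\omega$. Since the signed distance function $r$ based at $E_\omega$ is continuous with $r^{-1}(0)=E_\omega$ and $f$ is continuous, the set $C := f^{-1}(E_\omega) = (r\circ f)^{-1}(0)$ is closed, and $(\E\times Y_\omega)\setminus C = f^{-1}(S^+)\sqcup f^{-1}(S^-)$ is a disjoint union of two open sets, both nonempty because $P$ meets both sides. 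Thus $C$ separates $\E\times Y_\omega$.

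The heart of the argument is to show that $f(C) = P\cap E_\omega$ is contained in a single $\omega$-fiber $F$ of $V^+_\omega$; such an $F$ lies in $E_\omega$ and, by Proposition~\ref{prop:fiber_intersection}, is bi-Lipschitz to $\E$. To do this I would write $f^{-1}(S^+)$ as the disjoint union of its (open) path components $\{U_\beta\}$: for each $\beta$ the set $f(U_\beta)$ is a connected subset of $S^+$, hence lies in a single path component $C_\beta$ of $X_\omega\setminus E_\omega$, and Proposition~\ref{prop:component_bdry} places $\partial C_\beta$ inside a single $\omega$-fiber $F_\beta\subseteq E_\omega$ of $V^+_\omega$; since $\partial U_\beta\subseteq C$ and $f(\overline{U_\beta})\subseteq\overline{C_\beta}$, we get $f(\partial U_\beta)\subseteq \overline{C_\beta}\cap E_\omega = \partial C_\beta\subseteq F_\beta$. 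One then must show that all the $F_\beta$, and the analogous fibers coming from the $S^-$-side, coincide, so that $f(C)$ is squeezed into one fiber $F$; the natural tool here is the continuous ``projection to the fiber'' map $\vartheta$ built in the proof of Proposition~\ref{prop:component_bdry}, used together with the fact that $C$ separates $\E\times Y_\omega$ and that $Y_\omega$ branches everywhere (which, for instance, forbids $C$ from having nonempty interior, since otherwise $C$ would contain a homeomorph of a product of an interval with a tripod, which cannot be embedded in $E_\omega\cong\E^2$). I expect controlling how the possibly infinitely many path components of $f^{-1}(S^\pm)$, and their accumulation set, sit over $E_\omega$ to be the main obstacle.

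Granting that $f(C)\subseteq F$, the restriction $f|_C$ is a bi-Lipschitz embedding of $C$ into $F\cong\E$, so $C$ is a bi-Lipschitz copy of a closed subset of $\E$ that separates $\E\times Y_\omega$. Since $Y_\omega$ is a geodesically complete $\R$-tree that branches everywhere, Lemma~\ref{lem:sep_fiber} then forces $C = \E\times\{y_0\}$ to be a full fiber of $\E\times Y_\omega$; as a bi-Lipschitz embedding $\E\to\E$ is surjective, $f(C)=F$. Now $(\E\times Y_\omega)\setminus C = \bigsqcup_i W_i$, where each $W_i$ satisfies $\partial W_i = C$, and each $f(W_i)$ is a connected subset of $X_\omega\setminus E_\omega$, hence lies in $S^+$ or in $S^-$. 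Because $P$ meets both sides, some $W_+$ has $f(W_+)\subseteq S^+$ and some $W_-$ has $f(W_-)\subseteq S^-$. Applying Proposition~\ref{prop:component_bdry} to the path component of $X_\omega\setminus E_\omega$ containing $f(W_-)$ gives $f(C)=f(\partial W_-)\subseteq G$ for some $\omega$-fiber $G$ of $V^-_\omega$. Then $F = f(C)\subseteq G$ with $F$ infinite, contradicting $|F\cap G|\le 1$ from Proposition~\ref{prop:fiber_intersection}(\ref{prop:fiber_intersection1}). This contradiction shows $P$ is not essentially split by any $\omega$-edge space, so Proposition~\ref{prop:essential_split} places it in an $\omega$-vertex space.
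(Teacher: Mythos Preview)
Your overall plan---reduce to Proposition~\ref{prop:essential_split}, argue by contradiction, and combine Proposition~\ref{prop:component_bdry}, Lemma~\ref{lem:sep_fiber}, and Proposition~\ref{prop:fiber_intersection}---is exactly right, and your endgame (once $C=\E\times\{y_0\}$) is fine. The gap is in what you call the heart of the argument: showing that all of $f(C)=P\cap E_\omega$ lands in a single $\omega$-fiber of $V^+_\omega$. You correctly flag this as the main obstacle, and you do not resolve it; merging the possibly infinitely many fibers $F_\beta$ coming from the components $U_\beta$ is genuinely awkward, and nothing in the available toolkit does it directly.

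The paper sidesteps this step entirely. Instead of controlling all of $C$, pick a \emph{single} path component $A^+$ of $P\setminus E_\omega$ contained in $S^+$, and similarly $A^-$ in $S^-$. Since $A^\pm$ lies in one path component of $X_\omega\setminus E_\omega$, Proposition~\ref{prop:component_bdry} already gives $\partial_P A^\pm\subseteq F^\pm$ for a single fiber $F^\pm$ of $V^\pm_\omega$---no merging needed. The set $f^{-1}(\partial_P A^\pm)=\partial(f^{-1}(A^\pm))$ separates $\E\times Y_\omega$ (its complement contains the nonempty open sets $f^{-1}(A^\pm)$ on opposite sides) and bi-Lipschitz embeds into $F^\pm\cong\E$; now Lemma~\ref{lem:sep_fiber} applies directly and yields $f^{-1}(\partial_P A^\pm)=\E\times\{x^\pm\}$. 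These two fibers of $\E\times Y_\omega$ are at finite Hausdorff distance, hence so are $\partial_P A^+$ and $\partial_P A^-$, and therefore so are $F^+$ and $F^-$ (a bi-Lipschitz copy of $\E$ inside $\E$ is cobounded). This contradicts Proposition~\ref{prop:fiber_intersection}(\ref{prop:fiber_intersection3}). The key point you are missing is that Lemma~\ref{lem:sep_fiber} should be applied to the boundary of \emph{one} component on each side, not to the whole preimage $C$.
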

\begin{proof}
	By Proposition~\ref{prop:essential_split}, it is sufficient to show $\Image(f)$ is not essentially split by any  $\omega$-edge space. Suppose for contradiction  $E_\omega$ essentially splits $\Image(f)$. Let $S^+_\omega$ and $S^-_\omega$ be the two sides of $E_\omega$, and let $V^\pm_\omega$  be the $\omega$-vertex spaces containing  $E_\omega$ such that $V^\pm_\omega\backslash E_\omega\subseteq S^\pm_\omega$.

	Let $A^\pm$ be a path component of $\Image(f)\backslash E_\omega$ contained in $ S^\pm_\omega$. Proposition~\ref{prop:component_bdry} ensures  $\partial A^\pm $ is contained in a fiber of $V_\omega^\pm$.
	Since $f^{-1}(\partial A^\pm)$ separates $\E\times T$, applying Lemma~\ref{lem:sep_fiber}, we deduce that $\partial A^\pm$ is a fiber of $V_\omega^\pm$ and that $f^{-1}(\partial A^\pm)=\E\times \{x^\pm\}$. Since $\E\times \{x^+\}$ and $\E\times \{x^-\}$ are at finite Hausdorff distance, so are $\partial A^+$ and $\partial A^-$ (as $f$ is a bi-Lipschitz embedding). This cannot be the case, since a fiber of $V_\omega^+$ and a fiber of $V_\omega^-$ cannot be at finite Hausdorff distance by Proposition~\ref{prop:fiber_intersection}.
\end{proof}
Since every $\omega$-vertex space is bi-Lipschitz equivalent to $\E\times Y_\omega$, we use Proposition~\ref{prop:biLip-TxE} applied to a bi-Lipschitz equivalence $f:X_\omega\to X'_\omega$ and its inverse to deduce:
\begin{cor}
	For every bi-Lipschitz equivalence $f:X_\omega\to X'_\omega$ and every $\omega$-vertex space $V_\omega$, there exists an $\omega$-vertex space $V'_\omega$ such that $f(V_\omega)= V'_\omega$.
\end{cor}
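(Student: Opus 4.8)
The plan is to bootstrap from Proposition~\ref{prop:biLip-TxE} using the structure of $\omega$-vertex spaces, together with the fact that $X'_\omega$ is also an asymptotic cone of a tree of spaces of an admissible group, so that Proposition~\ref{prop:biLip-TxE} applies to $X'_\omega$ as well. First, I would fix an $\omega$-vertex space $V_\omega$ of $X_\omega$. By Proposition~\ref{prop:ultralimit_vertex_space}, there is a bi-Lipschitz equivalence $g_\omega\colon V_\omega\to \E\times Y_\omega$, where $Y_\omega$ is a geodesically complete $\R$-tree that branches everywhere. Composing $g_\omega^{-1}$ with the restriction $f|_{V_\omega}$ then gives a bi-Lipschitz embedding $\E\times Y_\omega\to X'_\omega$, and Proposition~\ref{prop:biLip-TxE} applied to $X'_\omega$ shows that $f(V_\omega)$ is contained in some $\omega$-vertex space $V'_\omega$ of $X'_\omega$.

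Next, I would run the identical argument with $f^{-1}$ in place of $f$ and $V'_\omega$ in place of $V_\omega$: this produces an $\omega$-vertex space $V''_\omega$ of $X_\omega$ with $f^{-1}(V'_\omega)\subseteq V''_\omega$. Applying $f^{-1}$ to the inclusion $f(V_\omega)\subseteq V'_\omega$ yields the chain
\[
V_\omega\subseteq f^{-1}(V'_\omega)\subseteq V''_\omega.
\]
It therefore remains to upgrade the inclusion $V_\omega\subseteq V''_\omega$ to an equality, which will force $V_\omega=f^{-1}(V'_\omega)$, i.e.\ $f(V_\omega)=V'_\omega$, as desired.

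The one point that needs (minor) care is that no $\omega$-vertex space is properly contained in another. To see this, write $V_\omega=\lim_\omega X_{v_i}$ and $V''_\omega=\lim_\omega X_{w_i}$; if $v_i\ne w_i$ $\omega$-almost surely, pick edges $e_i$ strictly between $v_i$ and $w_i$, so Lemma~\ref{lem:between_vespaces} gives $V_\omega=V_\omega\cap V''_\omega\subseteq \lim_\omega X_{e_i}$, contradicting Proposition~\ref{prop:omega-spaces}(1). Hence $v_i=w_i$ $\omega$-almost surely, so $V_\omega=V''_\omega$ by Proposition~\ref{prop:omega-spaces}(2), and the chain of inclusions collapses to $V_\omega=f^{-1}(V'_\omega)$. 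I do not expect any serious obstacle here: the corollary is a formal consequence of Propositions~\ref{prop:ultralimit_vertex_space},~\ref{prop:omega-spaces} and~\ref{prop:biLip-TxE} together with Lemma~\ref{lem:between_vespaces}, with the ``no proper containment'' observation being the only step not entirely immediate.
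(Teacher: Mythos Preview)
Your proposal is correct and follows exactly the approach the paper indicates: apply Proposition~\ref{prop:biLip-TxE} to $f$ and to $f^{-1}$, and then use that no $\omega$-vertex space is properly contained in another. The only difference is that the paper compresses all of this into a single sentence, leaving the ``no proper containment'' step implicit, whereas you have spelled it out via Lemma~\ref{lem:between_vespaces} and Proposition~\ref{prop:omega-spaces}(\ref{item:omega-spaces1}); this is precisely the argument used in the proof of Proposition~\ref{prop:omega-spaces}(\ref{item:omega-spaces2}).
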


Since every $\omega$-edge space is the intersection of its incident $\omega$-vertex spaces, we deduce:
\begin{cor}\label{cor:bilip_preserve_edge}
	For every bi-Lipschitz equivalence $f:X_\omega\to X_\omega$ and every $\omega$-edge space $E_\omega$, there exists an $\omega$-edge space $E'_\omega$ such that $f(E_\omega)= E'_\omega$.
\end{cor}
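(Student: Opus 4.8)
The plan is to derive this from the preceding corollary on $\omega$-vertex spaces, using the fact that an $\omega$-edge space is exactly the intersection of the two $\omega$-vertex spaces that contain it. First I would record this fact precisely: given $E_\omega\in\cE_\omega$, Proposition~\ref{prop:omega-spaces}(\ref{item:omega-spaces4}) supplies exactly two $\omega$-vertex spaces $V^{+}_\omega,V^{-}_\omega$ containing $E_\omega$, and writing $E_\omega=\lim_\omega X_{e_i}$ with $\{V^{+}_\omega,V^{-}_\omega\}=\{\lim_\omega X_{(e_i)_-},\lim_\omega X_{(e_i)_+}\}$, the inclusion $E_\omega\subseteq V^{+}_\omega\cap V^{-}_\omega$ is immediate, while Lemma~\ref{lem:between_vespaces} (applied to the fact that $e_i$ is strictly between $(e_i)_-$ and $(e_i)_+$) gives $V^{+}_\omega\cap V^{-}_\omega\subseteq E_\omega$; hence $E_\omega=V^{+}_\omega\cap V^{-}_\omega$, exactly as in the proof of Proposition~\ref{prop:fiber_intersection}. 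I would also note that, applied to a pair of $\omega$-fibers contained in $E_\omega$, Proposition~\ref{prop:fiber_intersection}(\ref{prop:fiber_intersection2}) shows that $E_\omega$ is bi-Lipschitz equivalent to $\E^{2}$.

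Next I would apply the preceding corollary to each of $V^{\pm}_\omega$, producing $\omega$-vertex spaces $W^{\pm}_\omega\coloneqq f(V^{\pm}_\omega)$. Since a bi-Lipschitz equivalence is a bijection, $W^{+}_\omega$ and $W^{-}_\omega$ are distinct and $f(E_\omega)=f(V^{+}_\omega\cap V^{-}_\omega)=W^{+}_\omega\cap W^{-}_\omega$. So the problem reduces to showing that the intersection of two distinct $\omega$-vertex spaces, provided it is bi-Lipschitz equivalent to $\E^{2}$ (which $W^{+}_\omega\cap W^{-}_\omega$ is, via $f$), must be an $\omega$-edge space.

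To prove this reduced statement I would write $W^{\pm}_\omega=\lim_\omega X_{w^{\pm}_i}$, note that $w^{+}_i\neq w^{-}_i$ $\omega$-almost surely by Proposition~\ref{prop:omega-spaces}(\ref{item:omega-spaces2}), and split into cases according to which of the sets $\{i:d_T(w^{+}_i,w^{-}_i)=1\}$, $\{i:d_T(w^{+}_i,w^{-}_i)=2\}$, $\{i:d_T(w^{+}_i,w^{-}_i)\geq 3\}$ belongs to $\omega$. If the last one does, the first four vertices along the geodesic $[w^{+}_i,w^{-}_i]$ determine four consecutive $\omega$-vertex spaces, each containing $W^{+}_\omega\cap W^{-}_\omega$ by Lemma~\ref{lem:between_vespaces}, so $|W^{+}_\omega\cap W^{-}_\omega|\leq 1$ by Corollary~\ref{cor:consecutive}; this is impossible, since $f(E_\omega)$ is bi-Lipschitz equivalent to $\E^{2}$ and hence infinite. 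If the second set does, let $u_i$ be the midpoint of $[w^{+}_i,w^{-}_i]$ and let $e_i,f_i\in\lk(u_i)$ be the two edges of that geodesic; they are distinct $\omega$-almost surely, so Proposition~\ref{prop:omega-spaces}(\ref{item:omega-spaces5}) shows $\lim_\omega X_{e_i}\cap\lim_\omega X_{f_i}$ is empty or an $\omega$-fiber of $\lim_\omega X_{u_i}$, while Lemma~\ref{lem:between_vespaces} shows this intersection contains $W^{+}_\omega\cap W^{-}_\omega$; since an $\omega$-fiber is bi-Lipschitz equivalent to $\E$ by Proposition~\ref{prop:ultralimit_vertex_space}, this would yield a bi-Lipschitz embedding of $\E^{2}$ into $\E$, contradicting invariance of domain (as in Lemma~\ref{lem:qi_halfspace}). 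Therefore $d_T(w^{+}_i,w^{-}_i)=1$ $\omega$-almost surely; letting $e_i$ be the edge joining $w^{+}_i$ and $w^{-}_i$, both $W^{+}_\omega$ and $W^{-}_\omega$ contain $\lim_\omega X_{e_i}$, Lemma~\ref{lem:between_vespaces} gives the reverse inclusion, and since $f(E_\omega)$ is nonempty so is $\lim_\omega X_{e_i}$; thus $E'_\omega\coloneqq\lim_\omega X_{e_i}\in\cE_\omega$ satisfies $f(E_\omega)=E'_\omega$.

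The first two paragraphs are bookkeeping. The main obstacle is the case analysis in the third: one must exclude the possibility that the image of $E_\omega$ collapses onto an $\omega$-fiber or onto a single point, and it is precisely here that the internal geometry established earlier must be fed in — that $\omega$-edge spaces are bi-Lipschitz copies of $\E^{2}$, $\omega$-fibers bi-Lipschitz copies of $\E$, invariance of domain, and the bound on the intersection of four consecutive $\omega$-vertex spaces.
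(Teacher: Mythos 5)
Your proof is correct and matches the paper's one-line justification (``every $\omega$-edge space is the intersection of its incident $\omega$-vertex spaces''), while honestly filling in the step the paper leaves implicit: namely, that if $W^{\pm}_\omega$ are distinct $\omega$-vertex spaces whose intersection is bi-Lipschitz equivalent to $\E^2$, they must be adjacent, so that $W^+_\omega\cap W^-_\omega$ is in fact an $\omega$-edge space. Your three-way case analysis on $d_T(w^+_i,w^-_i)$, using Corollary~\ref{cor:consecutive} to rule out distance $\geq 3$, Proposition~\ref{prop:omega-spaces}(\ref{item:omega-spaces5}) plus invariance of domain to rule out distance $2$, and Lemma~\ref{lem:between_vespaces} in both directions to identify the intersection with $\lim_\omega X_{e_i}$ when the distance is $1$, is exactly the argument needed and is consistent with the machinery the paper sets up in Propositions~\ref{prop:ultralimit_vertex_space}--\ref{prop:fiber_intersection}.
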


\subsection{Quasi-isometries preserve edge spaces}
We fix a non-principal ultrafilter $\omega$ and two admissible groups $G$ and $G'$. Let $(X,T)$ and $(X',T')$ be the associated trees of spaces.
In this section, we are going to prove the following proposition.

\begin{prop}\label{prop:edgespaces_preserved_QI}
	For every $K\geq 1$ and $A\geq 0$, there exists a constant $B=B(K,A,X, X')$ such that for every edge space $E$ of $X$ and $(K,A)$-quasi-isometry $f:X\to X'$, there exists an edge space $E'$ of $X'$ such that $d_\Haus(f(E),E')\leq B$.
\end{prop}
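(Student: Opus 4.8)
The plan is to reduce the statement to the asymptotic-cone result (Corollary~\ref{cor:bilip_preserve_edge}, or rather its evident analogue for a bi-Lipschitz equivalence $X_\omega\to X'_\omega$, which follows from the preceding corollary on $\omega$-vertex spaces since an $\omega$-edge space is the intersection of the two $\omega$-vertex spaces containing it) by a contradiction-and-rescaling argument.

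Suppose the proposition fails for some fixed $K\ge 1$ and $A\ge 0$. Then for every $i\in\N$ there are an edge space $E_i$ of $X$ and a $(K,A)$-quasi-isometry $f_i\colon X\to X'$ with $d_\Haus(f_i(E_i),E')>i$ for \emph{every} edge space $E'$ of $X'$. Since $G$ acts cocompactly on $X$ with finitely many orbits of edge spaces, after precomposing each $f_i$ with a suitable element of $G$ we may assume, $\omega$-almost surely, that $E_i=E_0$ for one fixed edge space $E_0$; fix a point $x_0\in E_0$. Likewise, since $G'$ acts cocompactly on $X'$ by isometries permuting the edge spaces of $X'$ (which leaves the failure hypothesis intact), after postcomposing with elements of $G'$ we may assume the basepoints $f_i(x_0)$ all lie in a fixed compact subset of $X'$.

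Now for any $\lambda_i\to\infty$ form $X_\omega=X_\omega((x_0),(\lambda_i))$ and $X'_\omega=X'_\omega((f_i(x_0)),(\lambda_i))$. By Lemma~\ref{lem:induced_bilip} the $f_i$ induce a $K$-bi-Lipschitz equivalence $f_\omega\colon X_\omega\to X'_\omega$, and since $x_0\in E_0$ the ultralimit $E_\omega\coloneqq\lim_\omega E_0$ is a nonempty $\omega$-edge space of $X_\omega$ (bi-Lipschitz to $\E^2$ by Proposition~\ref{prop:wellknownfacts} and Corollary~\ref{cor:ve_spaces_qi_embedded}). Hence $f_\omega(E_\omega)$ is an $\omega$-edge space of $X'_\omega$, so $f_\omega(E_\omega)=\lim_\omega E'_i$ for suitable edge spaces $E'_i$ of $X'$ with $\lim_\omega E'_i\neq\emptyset$; in particular $[(f_i(x_0))]\in\lim_\omega E'_i$, so $d(f_i(x_0),E'_i)=o(\lambda_i)$. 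If the inequality $d_\Haus(f_i(E_0),E'_i)>i$ is, after passing to an $\omega$-large set, witnessed by a point $a_i=f_i(e_i)\in f_i(E_0)$ with $d(a_i,E'_i)>i$ lying within $O(i)$ of $f_i(x_0)$, then taking $\lambda_i=i$ gives $[(a_i)]=f_\omega([(e_i)])\in f_\omega(E_\omega)=\lim_\omega E'_i$, whence $d(a_i,E'_i)=o(i)$, contradicting $d(a_i,E'_i)>i$; the symmetric case of a witness in $E'_i$ near $f_i(x_0)$ is handled the same way using a quasi-inverse of $f_i$.

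The remaining, and I expect principal, difficulty is to rule out the possibility that the Hausdorff-distance witnesses recede to infinity relative to the basepoint faster than any fixed rescaling can track: two coarse $2$-flats in $X'$ can have equal ultralimits in a given asymptotic cone and still be at infinite Hausdorff distance (two Euclidean planes through a point, one tilting toward the other), so Corollary~\ref{cor:bilip_preserve_edge} is not by itself formally sufficient. To deal with this one exploits that $E'_i$ is a genuine edge space: once $f_\omega(E_\omega)=\lim_\omega E'_i$, the essential-separation machinery (Lemma~\ref{lem:esssep_vespace}, Propositions~\ref{prop:essential_split} and~\ref{prop:component_bdry}) confines $f_i(E_0)$, up to a controlled error, inside the union of the two vertex spaces of $X'$ adjacent to $E'_i$; inside such a vertex space $X'_v\simeq \E\times Q'_v$ the edge spaces are exactly the products of $\E$ with cosets of the $2$-ended peripheral subgroups of $Q'_v$, and the hyperbolicity of $Q'_v$ makes the remaining one-dimensional comparison rigid, since two quasilines in a hyperbolic group that share an ultralimit lie at uniformly bounded Hausdorff distance. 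Assembling these uniform bounds with the rescaling above produces the constant $B=B(K,A,X,X')$ and completes the contradiction.
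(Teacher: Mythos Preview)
Your setup and your identification of the principal difficulty are both correct, but the final paragraph is a hand-wave rather than an argument, and the specific mechanism you propose does not work. The assertion that ``the essential-separation machinery \ldots\ confines $f_i(E_0)$, up to a controlled error, inside the union of the two vertex spaces of $X'$ adjacent to $E'_i$'' has no justification: Lemma~\ref{lem:esssep_vespace} and Propositions~\ref{prop:essential_split},~\ref{prop:component_bdry} are statements about $\omega$-objects in a fixed asymptotic cone; they control $f_i(E_0)$ only at scale $O(\lambda_i)$ around the basepoint, which is precisely the regime you have already handled. At larger scales they say nothing, so no confinement in $X'$ follows. The subsequent claim, that the projection to $Q'_v$ reduces matters to ``two quasilines that share an ultralimit,'' is also unfounded: even granting confinement, the projection of the coarse $2$-flat $f_i(E_0)$ to $Q'_v$ need not be a quasi-line, and the ultralimit statement you invoke is not established anywhere.

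The paper closes this gap by a two-scale argument with a \emph{moving} basepoint, not by any confinement in $X'$. One first runs your fixed-basepoint cone with $\lambda_i=i$ to produce edge spaces $\hat E_i$ of $X'$; then, for each $i$, one takes $b_i\in E_0$ to be the point closest to $b$ with $d(f_i(b_i),\hat E_i)\geq i$, so that every $x\in E_0$ with $d(x,b)\leq d(b_i,b)$ satisfies $d(f_i(x),\hat E_i)\leq i+K+A$, and one checks $\lim_\omega d(b_i,b)/i=\infty$ (Lemma~\ref{lem:basepoint_choose}). Now take a \emph{second} cone with basepoints $(b_i)$ and scaling $(i)$. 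In this cone the set $A_i=\{x\in E_0\mid d(x,b)\leq d(b_i,b)\}$ has ultralimit a Euclidean half-plane (Lemma~\ref{lem:limit_halfspace}), and by construction $f_\omega(A_\omega)\subseteq N_1(\hat E_\omega)$. But Corollary~\ref{cor:bilip_preserve_edge} gives $f_\omega(E_\omega)=E'_\omega$ for some $\omega$-edge space with $E'_\omega\neq \hat E_\omega$ (since $d(f_i(b_i),\hat E_i)\geq i$), and then Lemma~\ref{lem:closest_fiber} forces $f_\omega(A_\omega)\subseteq E'_\omega\cap N_1(\hat E_\omega)\subseteq N_1(F_\omega)$ for a single $\omega$-fiber $F_\omega$. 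This yields a quasi-isometric embedding $\E^2_{\geq 0}\to \E$, contradicting Lemma~\ref{lem:qi_halfspace}. The half-plane/line obstruction is the missing idea; nothing in your outline substitutes for it.
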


The proof of Proposition~\ref{prop:edgespaces_preserved_QI} is similar to the proof of~\cite[Corollary 8.33]{frigeriolafontsistosro2015rigidity}.

\begin{lem}\label{lem:basepoint_choose}
	If Proposition~\ref{prop:edgespaces_preserved_QI} is not true, then there exists an edge space $E$ and a sequence $f_i:X\to X'$ of $(K,A)$ quasi-isometries such that $f_i(E)\nsubseteq N_i(E')$ for any $E'\in \cE'$.
	Moreover, there exists a point $b\in E$, a sequence $(b_i)$ in $E$, and  a sequence $\hat E_i$ of edge spaces of $X'$ such that the following hold for every $i\in \N$:
	\begin{enumerate}
		\item $d(f_i(b_i),\hat E_i)\geq i$.
		\item If $x \in E$ and $d(x,b)\leq d(b_i,b)$, then $d(f_i(x),\hat E_i)\leq i+K+A$.
		\item $\lim_\omega \frac{d(b_i,b)}{i}=\infty$.
	\end{enumerate}
\end{lem}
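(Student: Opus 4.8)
The plan is to argue by contradiction, first upgrading the failure of Proposition~\ref{prop:edgespaces_preserved_QI} to the clean statement about a \emph{single} edge space, and then carrying out an extremal construction whose only nontrivial conclusion is condition~(3), which I will extract via an asymptotic cone argument.

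\emph{Reduction to one edge space.} I first record a one-sided refinement: there is an increasing function $\phi$ with $\phi(r)\to\infty$, depending only on $K,A,X,X'$, such that whenever $f\colon X\to X'$ is a $(K,A)$-quasi-isometry, $E$ is an edge space of $X$, $E'$ an edge space of $X'$, and $f(E)\subseteq N_r(E')$, then $d_\Haus(f(E),E')\le\phi(r)$. To see this, note edge spaces of $X$ and of $X'$ are uniformly quasi-isometrically embedded (Corollary~\ref{cor:ve_spaces_qi_embedded}) and uniformly quasi-isometric to $\E^2$ (via Proposition~\ref{prop:treeofspacesBST}, since edge groups are virtually $\Z^2$); if $f(E)\subseteq N_r(E')$, then the closest-point projection $X'\to E'$ restricted to $f(E)$ is a quasi-isometric embedding with additive constant linear in $r$, so pre- and post-composing with the uniform identifications $\E^2\to E$ and $E'\to\E^2$ yields a quasi-isometric embedding $\E^2\to\E^2$ whose coarse image, by Proposition~\ref{prop:wellknownfacts}(3), is all of $\E^2$ up to a defect controlled by $r$; translating back gives $E'\subseteq N_{\phi(r)}(f(E))$. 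Now if Proposition~\ref{prop:edgespaces_preserved_QI} fails there are $K,A$ so that for each $n$ some edge space has a $(K,A)$-quasi-isometric image at Hausdorff distance $>\phi(n)$ from every edge space of $X'$; by the refinement this image is not contained in $N_n(E')$ for any $E'$. Since $\cG$ is finite there are finitely many $G$-orbits of edge spaces, so after precomposing with a suitable element of $G$ and passing to an increasing sequence of values of $n$, we obtain a single edge space $E$ and $(K,A)$-quasi-isometries $f_i\colon X\to X'$ with $f_i(E)\not\subseteq N_i(E')$ for all $E'\in\cE'$ and all $i$.

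\emph{Setup and extremal definition.} Fix a vertex $b\in E$. As $G'$ acts cocompactly on $X'$, after postcomposing each $f_i$ with an isometry of $X'$ coming from $G'$ -- which changes neither "$(K,A)$-quasi-isometry" nor "$f_i(E)\not\subseteq N_i(E')$ for all $E'$", since $\cE'$ is $G'$-invariant -- I may assume $f_i(b)$ stays in a fixed bounded neighbourhood of a fixed edge space $E'_0$, hence (along the sequence) in $N_i(E'_0)$ for all $i$. For each $i$ put $n_i:=\max\{n\ge 0 : f_i(B_E(b,n))\subseteq N_i(E') \text{ for some } E'\in\cE'\}$, where $B_E(b,n)$ is the ball of radius $n$ in $E$; the set is nonempty because $n=0$ works. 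It is finite: any competing $E'$ passes within $i$ of $f_i(b)$, only finitely many edge spaces of $X'$ do so (properness of $X'$ and local finiteness of the family of edge spaces, as in Corollary~\ref{cor:proj_bound}), so if one competitor worked for all $n$ then $f_i(E)\subseteq N_i(E')$, contrary to hypothesis. Choose $\hat E_i\in\cE'$ realizing $n_i$, so $f_i(B_E(b,n_i))\subseteq N_i(\hat E_i)$. By maximality $f_i(B_E(b,n_i+1))\not\subseteq N_i(\hat E_i)$, so there is a vertex $b_i\in B_E(b,n_i+1)$ with $d(f_i(b_i),\hat E_i)\ge i$, giving (1), and necessarily $d(b_i,b)=n_i+1$. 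For (2): given $x\in E$ with $d(x,b)\le n_i+1$, take $x'$ on a geodesic of $E$ from $b$ to $x$ with $d(x',b)\le n_i$ and $d(x,x')\le 1$; then $f_i(x')\in N_i(\hat E_i)$ and $d(f_i(x),f_i(x'))\le K+A$, so $d(f_i(x),\hat E_i)\le i+K+A$.

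\emph{Condition (3): the crux.} It remains to show $\lim_\omega d(b_i,b)/i=\lim_\omega(n_i+1)/i=\infty$. Suppose not, so $\{i:n_i\le Mi\}\in\omega$ for some $M$; on this set $f_i(B_E(b,Mi+1))\not\subseteq N_i(E')$ for all $E'\in\cE'$. Form the cones $X_\omega=X_\omega((b),(i))$ and $X'_\omega=X'_\omega((f_i(b)),(i))$; by Lemma~\ref{lem:induced_bilip} the $f_i$ induce a bi-Lipschitz equivalence $f_\omega\colon X_\omega\to X'_\omega$. Since $E$ is a fixed edge space, $E_\omega:=\lim_\omega\frac1iE$ is an $\omega$-edge space of $X_\omega$, so by the corollary preceding Corollary~\ref{cor:bilip_preserve_edge}, $f_\omega(E_\omega)$ is an $\omega$-edge space $E''_\omega=\lim_\omega X'_{e'_i}$ of $X'_\omega$. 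Then $\lim_\omega f_i(B_E(b,Mi+1))=f_\omega\big(\lim_\omega\tfrac1iB_E(b,Mi+1)\big)\subseteq f_\omega(E_\omega)=E''_\omega$; as these sets stay within $O(i)$ of $f_i(b)$, this containment of ultralimits forces $\lim_\omega\tfrac1i\sup_{a}d(a,X'_{e'_i})=0$ (supremum over $a\in f_i(B_E(b,Mi+1))$), hence $\omega$-almost surely $f_i(B_E(b,Mi+1))\subseteq N_i(X'_{e'_i})$, contradicting the displayed property. So $\lim_\omega n_i/i=\infty$, finishing the proof. The main obstacle is precisely this last step: one must combine "$f_i(E)$ avoids every $i$-neighbourhood of an edge space" with "$E$ is coarsely $\Z^2$", and it is the asymptotic cone -- together with the preservation of $\omega$-edge spaces under bi-Lipschitz maps established in the previous subsection -- that turns this into a short argument; the bookkeeping in the first two paragraphs (the one-sided refinement, the orbit count, keeping $f_i(b)$ near a fixed edge space, and finiteness of $n_i$) is routine but is what makes the extremal definition of $\hat E_i$ and $b_i$ legitimate.
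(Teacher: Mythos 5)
Your proof is essentially correct and establishes the lemma, but it takes a genuinely different route in how $\hat E_i$ is produced. The paper obtains $\hat E_i$ directly from the asymptotic cone at the start: it forms $X_\omega$ (basepoint $(b)$, scaling $(i)$) and $X'_\omega$ (basepoint $(f_i(b))$), uses Corollary~\ref{cor:bilip_preserve_edge} to write $f_\omega(E_\omega)=\hat E_\omega=\lim_\omega\hat E_i$, and only then defines $b_i$ by minimality of $d(b_i,b)$ subject to $d(f_i(b_i),\hat E_i)\geq i$. With that choice, condition~(3) is nearly immediate: if $\lim_\omega d(b_i,b)/i<\infty$ then $[(b_i)]\in E_\omega$, hence $[(f_i(b_i))]\in\hat E_\omega$, contradicting $d(f_i(b_i),\hat E_i)\geq i$ which forces $d_\omega([(f_i(b_i))],\hat E_\omega)\geq 1$. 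You instead define $\hat E_i$ and $b_i$ via the extremal radius $n_i$, which makes conditions~(1)--(2) transparent but shifts all the work onto~(3), where you still invoke the cone and Corollary~\ref{cor:bilip_preserve_edge}, and additionally need the (correct, but extra) step of upgrading the ultralimit containment $\lim_\omega f_i(B_E(b,Mi+1))\subseteq\lim_\omega X'_{e'_i}$ to the uniform statement that $f_i(B_E(b,Mi+1))\subseteq N_i(X'_{e'_i})$ $\omega$-almost surely, via a maximizing sequence of points. Both routes are valid; the paper's choice of $\hat E_i$ makes the final step essentially free, while yours trades that for a cleaner extremal definition at the cost of more bookkeeping. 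Two small points worth tightening: your claim that postcomposition with $G'$-isometries ensures $f_i(b)\in N_i(E'_0)$ for \emph{all} $i$ only holds once $i$ exceeds the cocompactness constant, so $n_i$ may be undefined for finitely many small $i$ (harmless, since the $\omega$-limit is unaffected and those indices can be handled by hand); and the local finiteness of the collection of edge spaces near a point is standard but is not quite what Corollary~\ref{cor:proj_bound} asserts, so that citation is a stretch.
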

\begin{proof}
	By Proposition~\ref{prop:wellknownfacts}, we see that Proposition~\ref{prop:edgespaces_preserved_QI} is true if there exists a constant $B$ such that for every $(K,A)$-quasi-isometry $f:X\to X'$ and edge space $E$ of $X$, there is an edge space $E'$ of $X'$ with $f(E)\subseteq N_B(E')$.
	Therefore, if Proposition~\ref{prop:edgespaces_preserved_QI} is not true, then for each $i$, there is a $(K,A)$-quasi-isometry $f_i:X\to X'$ and some edge space  $E_i$ of $E$ such that $f_i(E_i)\nsubseteq N_i(E')$ for any edge space $E'$ of $X'$.
	Since there are only finitely many $G$-orbits of edge spaces in $X$, after passing to a subsequence and precomposing $f_i$ with left multiplication by an element of $G$, we can suppose all the $E_i$ are equal to some $E$.

	Pick $b\in E$. 	Consider the asymptotic cone $X_\omega$ of $X$ with basepoint $(b)$ and scaling sequence $(i)$.
	Let $X'_\omega$ be the asymptotic cone of $X'$ with basepoints $(f_i(b))$ and scaling constants $(i)$.
	Then $(f_i)$ induces  a bi-Lipschitz equivalence $f_\omega:X_\omega\to X'_\omega$. Let $E_\omega=\lim_\omega E\subseteq X_\omega$. By Corollary~\ref{cor:bilip_preserve_edge}, there is an $\omega$-edge space $\hat E_\omega=\lim_\omega \hat E_i$ such that $f_\omega(E_\omega)=\hat E_\omega$.
	Our hypotheses on $f_i$ ensure that for each $i$, $f(E)\nsubseteq N_i(\hat E_i)$. We pick $b_i\in E$ with $d(b_i,b)$ minimal such that
	\[
		d(f_i(b_i),\hat E_i)\geq i.
	\]

	The choice of $i$ ensures that for every $x\in E$ with $d(x,b)<d(b_i,b)$, we have $d(f_i(x),\hat E_i)\leq i$. Now for each $x\in E$ with $d(x,b)\leq d(b_i,b)$, there is some $x'\in E$ with $d(x,x')\leq 1$ and $d(x',b)<d(b_i,b)$. Hence \[
		d(f_i(x),\hat E_i)\leq d(f_i(x),f_i(x'))+d(f_i(x'),\hat E_i)\leq i+K+A
	\]

	Finally, suppose for contradiction  $\lim_\omega \frac{d(b_i,b)}{i} <\infty$. Then $[(b_i)]\in E_\omega$, so that $f_\omega([(b_i)])=[(f_i(b_i))]\in \hat E_\omega$. This leads to a contradiction, since the condition $d(f_i(b_i),\hat E_i)\geq i$ for all $i$ ensures that  \[\lim_\omega \frac{d(f_{i}(b_i),\hat E_i)}{i} \geq 1.\qedhere\]
\end{proof}

The proof of the following lemma is routine, so we leave as an exercise.
\begin{lem}\label{lem:limit_halfspace}
	Let $X=\E^2$. Suppose $(\mu_i)$ is a sequence in $\R_{>0}$ such that $\lim_\omega \frac{\mu_i}{i}=\infty$. Let $b_i=(0,\mu_i)\in \E^2$ and set $A_i=\{y\in X\mid d(y,b_i)\leq \mu_i\}$. If $X_\omega$ is the asymptotic cone of $X$ with respect to the base-point $((0,0))$ and scaling sequence $(i)$, then $X_\omega$ can be canonically be identified with $\E^2$, and $\lim_\omega A_i $ is the upper  half-space $\{(x,y)\in \E^2\mid y\geq 0\}$.
\end{lem}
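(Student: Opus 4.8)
The plan is to first pin down the canonical identification $X_\omega\cong\E^2$, and then compute $\lim_\omega A_i$ by a double inclusion. For the identification, I would define $\iota\colon\E^2\to X_\omega$ by $\iota(x,y)=[((ix,iy))_i]$. Since $d_\omega\big(\iota(x,y),\iota(x',y')\big)=\lim_\omega\tfrac{1}{i}\,d\big((ix,iy),(ix',iy')\big)=d\big((x,y),(x',y')\big)$, the map $\iota$ is an isometric embedding; it is surjective because, given an $\omega$-admissible sequence $(a_i)$ and writing $a_i=(s_i,t_i)$, the quantities $\tfrac{s_i}{i}$ and $\tfrac{t_i}{i}$ are $\omega$-almost surely bounded, so that $(x,y):=\big(\lim_\omega\tfrac{s_i}{i},\,\lim_\omega\tfrac{t_i}{i}\big)$ is well-defined in $\E^2$ and $[(a_i)]=\iota(x,y)$. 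From now on I would suppress $\iota$ and regard $X_\omega=\E^2$.

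Next I would record the elementary reformulation that, writing a point of $\E^2$ as $(s,t)$, the condition $d\big((s,t),b_i\big)\le\mu_i$ is equivalent to $s^2+t^2\le 2\mu_i t$; since $s^2+t^2\ge 0$ and $\mu_i>0$, this already forces $t\ge 0$. The inclusion $\lim_\omega A_i\subseteq\{(x,y)\in\E^2\mid y\ge 0\}$ is then immediate: if $a_i=(s_i,t_i)\in A_i$ for all $i$, then $t_i\ge 0$ for every $i$, so the second coordinate $\lim_\omega\tfrac{t_i}{i}$ of the point $[(a_i)]$ is nonnegative.

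For the reverse inclusion, fix $(x,y)$ with $y\ge 0$. If $y>0$, then $(ix,iy)\in A_i$ exactly when $i(x^2+y^2)\le 2y\mu_i$, that is, when $\tfrac{i}{\mu_i}\le\tfrac{2y}{x^2+y^2}$; since $\lim_\omega\tfrac{\mu_i}{i}=\infty$ and the right-hand side is a fixed positive constant, this holds $\omega$-almost surely. If $y=0$, I would instead use the point $\big(ix,\,i^2x^2/\mu_i\big)$, which lies in $A_i$ whenever $i|x|\le\mu_i$ (again $\omega$-almost surely) and satisfies $\tfrac{1}{i}\big(ix,\,i^2x^2/\mu_i\big)=\big(x,\,ix^2/\mu_i\big)\to(x,0)$. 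In either case, setting $a_i$ equal to this point on the $\omega$-large set of indices where it lies in $A_i$, and equal to $b_i\in A_i$ on the complement, produces a sequence with $a_i\in A_i$ for \emph{every} $i$ and $[(a_i)]=(x,y)$; hence $(x,y)\in\lim_\omega A_i$. Combining the two inclusions yields $\lim_\omega A_i=\{(x,y)\in\E^2\mid y\ge 0\}$, as claimed.

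All the computations above are one-liners, so there is no serious obstacle; the only subtlety, and presumably the reason this is left as an exercise, is the bookkeeping point that $\lim_\omega A_i$ is by definition built from sequences lying in $A_i$ for \emph{every} index rather than for $\omega$-almost every index, which is exactly what the fallback to the basepoints $b_i$ accommodates.
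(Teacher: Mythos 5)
The paper explicitly omits the proof of Lemma~\ref{lem:limit_halfspace} (``The proof of the following lemma is routine, so we leave as an exercise''), so there is no written argument to compare against. Your proof is correct and is the routine argument one would expect: the identification $\iota(x,y)=[(ix,iy)]$ is indeed a surjective isometry onto $X_\omega$; the algebraic simplification $d((s,t),b_i)\le\mu_i \iff s^2+t^2\le 2\mu_i t$ is right and immediately gives the forward inclusion; and both cases of the reverse inclusion check out, including the slightly fussy $y=0$ case where $(ix,\,i^2x^2/\mu_i)$ lands in $A_i$ precisely when $i|x|\le\mu_i$. You are also right to flag the one genuine subtlety, namely that $\lim_\omega A_i$ as defined in the paper requires $a_i\in A_i$ for \emph{every} $i$ rather than $\omega$-almost every $i$, and that patching in $b_i\in A_i$ on the $\omega$-null exceptional set changes neither the $\omega$-admissibility nor the resulting equivalence class, since both are determined by behaviour on $\omega$-large index sets. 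No gaps.
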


We are now ready for the proof of Proposition~\ref{prop:edgespaces_preserved_QI}.
\begin{proof}[{Proof of Proposition~\ref{prop:edgespaces_preserved_QI}}]
	Suppose for contradiction Proposition~\ref{prop:edgespaces_preserved_QI} is not true. Pick $E$, $(f_i)$, $(\hat E_i)$,  $b$, $(b_i)$ as in Lemma~\ref{lem:basepoint_choose}.
	We let $X_\omega$ be the asymptotic cone of $X$ with respect to the basepoints $(b_i)$ and scaling sequence $(i)$, and let $X'_\omega$ be the asymptotic cone of $X'$ with respect to the basepoints $(f_i(b_i))$ and scaling sequence $(i)$. The ultralimit of $(f_i)$ induces a bi-Lipschitz equivalence $f_\omega:X_\omega\to X'_\omega$.

	Let $E_\omega=\lim_\omega E\subseteq X_\omega$ and $\hat E_\omega=\lim_\omega \hat E_i\subseteq X'_\omega$. Since $b_i\in E$ and $d(f_i(b_i),\hat E_i)\leq i+K+A$, we see $E_\omega$ and $\hat E_\omega$ are non-empty, hence are $\omega$-edge spaces of $X_\omega$ and $X'_\omega$.

	By Corollary~\ref{cor:bilip_preserve_edge}, there exists an $\omega$-edge space $E'_\omega=\lim_\omega E'_i$ of $X'_\omega$ such that $f_\omega(E_\omega)=E'_\omega$. Since $[(b_i)]\in E_\omega$, we see $f_\omega([(b_i)])=[(f_i(b_i))]\in E'_\omega$. As  $d(f_i(b_i),\hat E_i)\geq i$, we deduce $d_\omega (f_\omega([(b_i)]),\hat E_\omega)\geq 1$. In particular,  $\hat E_\omega\neq E'_\omega$.

	Let $V_\omega$ be the $\omega$-vertex space that is both incident to $\hat E_\omega$ and between $E'_\omega$ and $\hat E_\omega$. By Lemma~\ref{lem:closest_fiber}, there is a fiber $F_\omega$ of $V_\omega$ such that every path from $\hat E_\omega$ to $E'_\omega$ passes through $F_\omega$.
	Let $A_i=\{x\in E\mid d(x,b)\leq d(b_i,b)\}$ and $A_\omega\coloneqq  \lim_\omega A_i$. By Lemma~\ref{lem:limit_halfspace}, there is a bi-Lipschitz equivalence $E_\omega\to \E^2$ that sends to $A_\omega$ to a half-space $\E^2_{\geq 0}$ in $\E^2$.
	We have $f_\omega(A_\omega)\subseteq E'_\omega$. Moreover, by Lemma~\ref{lem:basepoint_choose}, we have $d(f_i(a),\hat E_i)\leq i+K+A$ for all $a\in A_i$. Thus $d'_\omega(f_\omega(A_\omega),\hat E_\omega)\leq 1$, hence $f_\omega(A_\omega)\subseteq E'_\omega\cap N_1(\hat E_\omega)$.
	Since every path from $\hat E_\omega$ to $E'_\omega$ passes through the fiber $F_\omega$, we have $f_\omega(A_\omega)\subseteq N_1(F_\omega)$. Thus there is a quasi-isometric embedding $A_\omega\to F_\omega$. Lemma~\ref{lem:qi_halfspace} says that this is impossible as $A_\omega$ is bi-Lipschitz equivalent to a half-space $\E^2_{\geq 0}$ and $F_\omega$ is bi-Lipschitz equivalent to $\E$.
\end{proof}

\subsection{Quasi-isometries induce automorphisms of the Bass--Serre tree}

\begin{thm}\label{thm:qipreservespieces}
	We fix two admissible groups $G$ and $G'$, and let $(X,T)$ and $(X',T')$ be the associated trees of spaces. For any $(K,A)$-quasi-isometry $f:X\to X'$, there is a constant $B=B(K,A,X, X')$ such that the following holds. There is a tree isomorphism $f_*:T\to T'$ such that
	\[d_\Haus(f(X_x),X'_{f_*(x)})\leq B
	\]
	for every $x\in VT\cup ET$.
\end{thm}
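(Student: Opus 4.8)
The plan is to build $f_*$ on vertices of $T$ first, then extend to edges, and finally verify that $f_*$ is a tree isomorphism. The starting point is Proposition~\ref{prop:edgespaces_preserved_QI}: there is a constant $B_0 = B_0(K,A,X,X')$ such that for every edge space $E$ of $X$ there is an edge space $\sigma(E)$ of $X'$ with $d_\Haus(f(E),\sigma(E)) \le B_0$, and $\sigma(E)$ is unique up to the involution $E' \mapsto \bar{E'}$ by the ``moreover'' clause of Proposition~\ref{prop:coarse_int_vertex_edge} (two distinct edge spaces that are not mutually reversed are at infinite Hausdorff distance, so $B_0$ being uniform forces uniqueness for $i$ large). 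This gives a well-defined map on \emph{unoriented} edge spaces; to promote it to oriented edges, note that $f$ coarsely respects the ``sides'' structure of an edge space, since $f$ is a quasi-isometry and, by Proposition~\ref{prop:omega-spaces}\eqref{item:omega-spaces4}, each edge space is the coarse intersection of exactly two vertex spaces. Concretely, $X_e$ separates $X$ coarsely into two sides (one containing $X_{e_+}\setminus X_e$ up to bounded distance, the other $X_{e_-}\setminus X_e$), and $f$ maps these to the two sides of $\sigma(E)$; this determines the orientation, giving a map $f_* \colon ET \to E T'$ with $\overline{f_*(e)} = f_*(\bar e)$.

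Next I would define $f_*$ on vertices. Fix $v \in VT$. By Corollary~\ref{cor:ve_spaces_qi_embedded}, $X_v$ is uniformly quasi-isometrically embedded, and $X_v$ contains infinitely many edge spaces $X_e$, $e \in \lk(v)$; any two of them are ``linked'' inside $X_v$ in the sense that they cannot be essentially separated by a third edge space of $X$ (they all sit in the single vertex space $X_v$). The images $f(X_e)$ are uniformly Hausdorff-close to the edge spaces $\sigma(X_e)$ of $X'$, and I claim all these $\sigma(X_e)$ are incident to a common vertex $w \in VT'$. Indeed, take two of them, $\sigma(X_{e_1})$ and $\sigma(X_{e_2})$: if they were not incident to a common vertex of $T'$, then some edge space $X'_{e''}$ would be strictly between them, hence (by Lemma~\ref{lem:esssep_vespace}, after passing to the asymptotic cone, or directly by a coarse separation argument in $X'$) would essentially separate $f(X_{e_1})$ from $f(X_{e_2})$; pulling back through $f^{-1}$ (a quasi-isometry with uniform constants), some edge space of $X$ would essentially separate $X_{e_1}$ from $X_{e_2}$, contradicting that both lie in $X_v$. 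So define $f_*(v) := w$, the common vertex; it is unique because three pairwise-incident distinct edges of a tree share exactly one vertex, and $\lk(v)$ is infinite. One checks $d_\Haus(f(X_v), X'_{f_*(v)}) \le B$ for a uniform $B$: the ``$\subseteq$'' direction uses that $X_v$ is coarsely the union of (coarse neighborhoods of) its edge spaces together with bounded ``fibre'' data controlled by Corollary~\ref{cor:vspace_qitoprod} and Lemma~\ref{lem:dist_projs}, which $f$ sends near $X'_{f_*(v)}$; the ``$\supseteq$'' direction follows symmetrically by applying the same construction to a quasi-inverse $g$ of $f$ and checking $g_* \circ f_* = \mathrm{id}$ up to nothing, i.e.\ exactly.

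Finally I would verify $f_*$ is a graph morphism, i.e.\ $f_*(e)_\pm = f_*(e_\pm)$: this is immediate once we know $f(X_e)$ is Hausdorff-close to $X'_{f_*(e)}$ and $f(X_{e_\pm})$ is Hausdorff-close to $X'_{f_*(e_\pm)}$, since incidence of $X_e$ with $X_{e_\pm}$ (Proposition~\ref{prop:coarse_int_vertex_edge}) is a coarse-geometric condition preserved by the quasi-isometry $f$, and an edge of $T'$ is incident to exactly two vertices. That $f_*$ is a bijection on $VT \sqcup ET$ (hence a tree isomorphism, as it is an adjacency-preserving bijection of trees) follows by constructing $g_*$ from a quasi-inverse $g$ and observing $g_* \circ f_*$ and $f_* \circ g_*$ each fix every vertex and edge, because e.g.\ $X'_{f_*(v)}$ is at finite (uniformly bounded) Hausdorff distance from $g(f(X_v))$, which is at finite distance from $X_v$, and the ``moreover'' clause of Proposition~\ref{prop:coarse_int_vertex_edge} forces $g_*(f_*(v)) = v$. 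The uniform bound $B = B(K,A,X,X')$ is tracked through each step. I expect the \textbf{main obstacle} to be the first step: upgrading the single-edge-space statement of Proposition~\ref{prop:edgespaces_preserved_QI} to the assertion that the images of \emph{all} edge spaces incident to a fixed $v$ land near edge spaces incident to a \emph{single} vertex of $T'$ — i.e.\ proving the coarse separation/linking dichotomy for $f(X_{e_1}), f(X_{e_2})$ with constants uniform in the pair — since this is where one must genuinely use that quasi-isometries coarsely preserve the ``strictly between'' relation among vertex and edge spaces, which in our non-convex setting is only accessible via the asymptotic-cone machinery of the previous subsections (Lemma~\ref{lem:esssep_vespace}, Corollary~\ref{cor:consecutive}, Proposition~\ref{prop:essential_split}).
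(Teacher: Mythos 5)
Your overall plan is sound and ultimately proves the same statement, but it takes a genuinely harder route than the paper for the crucial step, namely showing that the edge spaces $\sigma(X_{e_1}),\sigma(X_{e_2})$ (the images under the map from Proposition~\ref{prop:edgespaces_preserved_QI} of two edge spaces incident to a common vertex $v$) are themselves incident to a common vertex of $T'$. You attack this via essential separation in asymptotic cones (Lemma~\ref{lem:esssep_vespace}, Corollary~\ref{cor:consecutive}), and you correctly flag it as the ``main obstacle.'' As written, that argument is not quite complete: essential separation is a statement about $\omega$-edge spaces in a chosen asymptotic cone, so one must pick basepoint and scaling sequences for which $\lim_\omega f(X_{e_1})$, $\lim_\omega f(X_{e_2})$ \emph{and} $\lim_\omega X'_{e''}$ (for the putative strictly-between edge $e''$) are all nonempty, and then run the pullback through the induced bi-Lipschitz map $f_\omega^{-1}$. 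This can be made to work but needs the same kind of basepoint bookkeeping as the proof of Proposition~\ref{prop:edgespaces_preserved_QI}.

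The paper sidesteps all of this with a shorter, scale-free argument using coarse intersection. Two edge spaces $X_e,X_{e'}$ have unbounded coarse intersection (i.e.\ $N_r(X_e)\cap N_r(X_{e'})$ is unbounded for some $r$) if and only if $e,e'$ are incident to a common vertex; this is immediate from Lemma~\ref{lem:edge_vertex_int_admissible}~(1) (edge groups meeting at a type $\cS$ vertex share $Z_v\cong\Z$ up to finite index, otherwise their intersection is finite) together with Lemma~\ref{lem:subgp_commensurability}. Since unbounded coarse intersection is manifestly a quasi-isometry invariant (no asymptotic cones required), the edge bijection automatically carries $\lk(v)$ onto some $\lk(v')$, and $R=\{\lk(v)\}$ is characterized intrinsically as the maximal subsets of $ET$ with pairwise unbounded coarse intersection. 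That is all one needs to read off the tree isomorphism. Your fallback phrase ``or directly by a coarse separation argument in $X'$'' gestures in this direction, but the coarse \emph{intersection} criterion is both the right tool and the one the paper uses.

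Two smaller points. First, your orientation discussion is unnecessary: a tree isomorphism is determined by a bijection of vertices preserving adjacency, and the paper handles the $X_e$-vs.-$X_{\bar e}$ ambiguity simply by observing that the induced map on $R\cong VT$ determines everything. Second, your verification of $d_\Haus(f(X_v),X'_{f_*(v)})\leq B$ is over-engineered: you do not need the fibre data of Corollary~\ref{cor:vspace_qitoprod} or Lemma~\ref{lem:dist_projs}. By the construction of the tree of spaces, $\bigcup_{e\in\lk(v)}X_e$ \emph{equals} $X_v$ (the cosets of edge subgroups inside a vertex group cover the vertex group), so the Hausdorff bound is immediate once the edge-space bound from Proposition~\ref{prop:edgespaces_preserved_QI} is in hand.
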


\begin{proof}
	It follows Proposition~\ref{prop:edgespaces_preserved_QI} that there is a map $f_*:ET\to ET'$ and a constant $B=B(K,A,X,X')$ such that $d_\Haus(f(X_e),X_{f_*(e)})\leq B$.
	Lemmas~\ref{lem:edge_vertex_int_admissible} and~\ref{lem:subgp_commensurability} imply that no  two edge spaces are at finite Hausdorff distance, so $f_*$ is well-defined and injective. Applying Proposition~\ref{prop:edgespaces_preserved_QI} to a coarse inverse to $f$, we see that $f_*$ is a bijection from $ET$ to $ET'$.

	We now make use of the notion of coarse intersection of subspaces; see~\cite{moshersageevwhyte2011quasiactions} for a comprehensive treatment of coarse intersection.
	Given a metric space $X$ and two subspaces $A$ and $B$ of $X$, we say $A$ and $B$ have \emph{unbounded coarse intersection} if there exists $r$ such that $N_r(A)\cap N_r(B)$ is unbounded. It is easy to see that if $f:X\to X'$ is a quasi-isometry such that $f(A)$ and $f(B)$ have finite Hausdorff distance from $A'$ and $B'$, then $A$ and $B$ have unbounded coarse intersection if and only if $A'$ and $B'$ do.

	Consider the set $R\coloneqq\{\lk(v)\mid v\in VT\}$, which is a collection of subsets of $ET$ naturally corresponding to $VT$. Define $R'\coloneqq\{\lk(v')\mid v'\in VT'\}$ similarly.
	Using Lemmas~\ref{lem:edge_vertex_int_admissible} and~\ref{lem:subgp_commensurability},  $R$ can be characterized as the set of maximal subsets $A$ of $ET$, such that for all $e,e'\in A$, the coarse intersection of $X_e$ and $X_{e'}$ is unbounded. Since quasi-isometries preserve having unbounded coarse intersection, we see $f$ induces a bijection $f_*:R\to R'$ taking $A$ to $\{f_*(a)\mid a\in A\}$, which is an element of $R'$. It follows that $f_*$ naturally induces a tree isomorphism $T\to T'$.
	It remains to show for every $v\in VT$, $d_\Haus(f(X_v),X_{f_*(v)})\leq B$. This follows immediately from the fact that $X_v$ has finite Hausdorff distance from $\bigcup_{e\in \lk v} X_e$.
\end{proof}

The fact that distinct vertex or edge spaces of $G$ are at infinite Hausdorff distance yields the following corollary.

\begin{cor}\label{cor:functionalmaner}
	Let $(X,T)$ be the tree of spaces associated to an admissible group. The following are satisfied for all quasi-isometries $f, g \colon X \to X$:
	\begin{enumerate}
		\item If $f$ and $g$ are close, then $f_{*} = g_{*}$.
		\item $(g \circ f)_{*} =  g_{*} \circ f_{*}$;
		\item $(id_{X})_{*} =  id_{T}$.
	\end{enumerate}
\end{cor}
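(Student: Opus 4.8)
The plan is to extract from Theorem~\ref{thm:qipreservespieces} a single rigidity statement and apply it uniformly in all three items: \emph{if $f\colon X\to X$ is a quasi-isometry and $\phi\colon T\to T$ is a graph isomorphism with $d_\Haus(f(X_v),X_{\phi(v)})<\infty$ for every $v\in VT$, then $\phi=f_*$.} To prove this, suppose $\phi$ and $\psi$ both satisfy the hypothesis. For each $v\in VT$ the triangle inequality for Hausdorff distance gives
\[
d_\Haus(X_{\phi(v)},X_{\psi(v)})\le d_\Haus(X_{\phi(v)},f(X_v))+d_\Haus(f(X_v),X_{\psi(v)})<\infty .
\]
By Proposition~\ref{prop:coarse_int_vertex_edge}, distinct vertex spaces of $X$ are at infinite Hausdorff distance, so $\phi(v)=\psi(v)$. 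Since $T$ is a simplicial tree, a graph isomorphism of $T$ is determined by its restriction to $VT$, so $\phi=\psi$; applying this with $\psi=f_*$ (which satisfies the hypothesis by Theorem~\ref{thm:qipreservespieces}) gives the claim. Consequently, to identify $f_*$ it suffices to exhibit \emph{any} graph isomorphism $\phi$ of $T$ with $d_\Haus(f(X_v),X_{\phi(v)})<\infty$ for all $v\in VT$.

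Item (3) is then immediate: $\mathrm{id}_X$ is a $(1,0)$-quasi-isometry, so Theorem~\ref{thm:qipreservespieces} gives $d_\Haus(X_v,X_{(\mathrm{id}_X)_*(v)})<\infty$ for every $v$, and taking $\phi=\mathrm{id}_T$ in the characterization yields $(\mathrm{id}_X)_*=\mathrm{id}_T$. For item (1), if $f$ and $g$ are $C$-close then $d_\Haus(f(X_v),g(X_v))\le C$ for every $v\in VT$, hence
\[
d_\Haus(f(X_v),X_{g_*(v)})\le d_\Haus(f(X_v),g(X_v))+d_\Haus(g(X_v),X_{g_*(v)})<\infty ,
\]
and applying the characterization to $f$ with $\phi=g_*$ gives $f_*=g_*$.

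For item (2), write $g$ as a $(K,A)$-quasi-isometry; being $(K,A)$-coarse Lipschitz, $g$ distorts Hausdorff distance between subsets by at most the factor $K$ and the additive constant $A$, so from $d_\Haus(f(X_v),X_{f_*(v)})<\infty$ we obtain $d_\Haus\bigl(g(f(X_v)),g(X_{f_*(v)})\bigr)<\infty$. Combining this with $d_\Haus\bigl(g(X_{f_*(v)}),X_{g_*(f_*(v))}\bigr)<\infty$ yields
\[
d_\Haus\bigl((g\circ f)(X_v),X_{(g_*\circ f_*)(v)}\bigr)<\infty
\]
for every $v\in VT$, and applying the characterization to the quasi-isometry $g\circ f$ with $\phi=g_*\circ f_*$ gives $(g\circ f)_*=g_*\circ f_*$.

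The argument uses only elementary manipulations of Hausdorff distance together with the infinite-Hausdorff-distance separation of distinct vertex spaces (Proposition~\ref{prop:coarse_int_vertex_edge}). The one point worth noting is that the constant $B$ produced by Theorem~\ref{thm:qipreservespieces} depends on the quasi-isometry constants of the map in question, so the maps $f$, $g$, $g\circ f$ and $\mathrm{id}_X$ come with a priori different bounds — this is precisely why the characterization is stated with ``$<\infty$'' rather than a uniform constant — and that a tree isomorphism is pinned down by its action on vertices. I do not anticipate any substantive obstacle: the corollary is a formal consequence of Theorem~\ref{thm:qipreservespieces}.
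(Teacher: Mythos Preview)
Your proof is correct and follows precisely the approach the paper indicates: the paper states the corollary with the single sentence ``The fact that distinct vertex or edge spaces of $G$ are at infinite Hausdorff distance yields the following corollary,'' and your argument is a clean unpacking of exactly that observation via Proposition~\ref{prop:coarse_int_vertex_edge}. Your formulation of the uniqueness characterization of $f_*$ is a nice way to organize the three items uniformly.
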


\subsection{Admissible groups are quasi-isometrically rigid}
\begin{thm}\label{thm:admQIrigidity}
	Let $G$ be an admissible group. If $H$ is a finitely generated group quasi-isometric to $G$, then $H$ has a finite index subgroup which is also an admissible group.
\end{thm}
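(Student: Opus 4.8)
The plan is to run the standard quasi-action-to-action machinery (Mosher--Sageev--Whyte / QI-rigidity of graphs of groups), using Theorem~\ref{thm:qipreservespieces} to see the quasi-action of $H$ on itself ``through the eyes of the tree of spaces''. Concretely, fix an admissible group $G$ with tree of spaces $(X,T)$ and Bass--Serre tree $T$. A quasi-isometry $\phi\colon H\to G$ composed with Proposition~\ref{prop:treeofspacesBST} gives a quasi-isometry $H\to X$, and conjugating the left-translation action of $H$ on itself yields a quasi-action of $H$ on $X$ by $(K,A)$-quasi-isometries for uniform $K,A$. By Theorem~\ref{thm:qipreservespieces} and Corollary~\ref{cor:functionalmaner}, each $h\in H$ induces a tree isomorphism $(\cdot)_* \colon T\to T$, and $h\mapsto h_*$ is a homomorphism $\rho\colon H\to \Aut(T)$ (using $(gf)_*=g_*f_*$ and $(\mathrm{id})_*=\mathrm{id}$, plus the fact that close quasi-isometries induce the same tree map). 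Moreover the induced action on $T$ is cobounded and the vertex/edge stabilizers in $\rho(H)$ act coboundedly on the corresponding vertex/edge spaces $X_v$, $X_e$ up to uniform Hausdorff error coming from the constant $B$.

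Next I would promote this to a genuine geometric action. First, replace $H$ by the finite-index subgroup $H_0=\ker(H\to \Aut(T)/\rho(H)^{\circ})$ or rather pass to the preimage of an appropriate finite-index subgroup so that the $\rho$-action on $T$ has no inversions; $T$ is then (up to subdividing) a genuine Bass--Serre-type tree for $H_0$ acting with finitely many orbits of vertices and edges. For each vertex $v$ the stabilizer $H_v=\Stab_{H_0}(v)$ quasi-acts on $X_v$, and since $X_v$ is quasi-isometric to $\E\times Y_v$ with $Y_v$ the Cayley graph of a relatively hyperbolic group with two-ended peripherals (Corollary~\ref{cor:cay_relhyp}, Corollary~\ref{cor:vspace_qitoprod}), one applies the known QI-rigidity for such ``$\Z$-by-(relatively hyperbolic with 2-ended peripherals)'' groups — via Kapovich--Kleiner--Leeb~\cite{kapovichkleinerleeb1998quasiisometries} and~\cite{Mar22} — to conclude $H_v$ is (virtually) of type $\cS$, i.e. contains an infinite cyclic normal subgroup $Z_v$ with hyperbolic quotient quasi-isometric to $Y_v$. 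Here one uses that the quasi-action of $H_v$ on $\E\times Y_v$ coarsely preserves the $\E$-fibers (they are the unique fibers at infinite Hausdorff distance from each other, by Proposition~\ref{prop:fiber_intersection}), hence descends to a cobounded quasi-action on $Y_v$, which by the relatively hyperbolic QI-rigidity can be straightened. Similarly, each edge stabilizer $H_e$ quasi-acts coboundedly on $X_e\simeq \E^2$, hence is virtually $\Z^2$; and Corollary~\ref{cor:espace_qi_embedded_in_vspace} together with the tree map shows $H_e\hookrightarrow H_v$ is a quasi-isometric embedding whose image coarsely coincides with the incident edge space.

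Finally I would assemble these pieces. Using Milnor--Schwarz for the action of $H_0$ on $X$ via the quasi-action (made into a genuine action after passing to a further finite-index subgroup, by the main theorem of~\cite{moshersageevwhyte2011quasiactions}-style arguments, or directly since $T$ carries a genuine action and the vertex groups are straightened), $H_0$ is the fundamental group of a finite graph of groups with underlying graph $T/H_0$, vertex groups the $H_v$ and edge groups the $H_e$. Conditions (1)--(4) of Definition~\ref{defn:extended} (in fact Definition~\ref{defn:admissible}, since there are no type $\cH$ vertices here) must then be checked: (1) edge groups are virtually $\Z^2$ by the above; (2) vertex groups are type $\cS$ by the above; (3) non-commensurability of distinct incident edge groups follows because the corresponding edge spaces have unbounded coarse intersection only when forced by the tree, which is preserved under QI (Lemmas~\ref{lem:edge_vertex_int_admissible} and~\ref{lem:subgp_commensurability}); (4) the finite-index condition on $Z_{e_-}, \tau_{\bar e}(Z_{e_+})$ inside $G_e$ is read off from Corollary~\ref{cor:dist_projs_cones}/Lemma~\ref{lem:dist_projs}, transported across the QI — the two $\E$-fiber directions inside the $\E^2$ edge space are independent, by Proposition~\ref{prop:fiber_intersection}(\ref{prop:fiber_intersection2}). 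Also one must verify the type $\cH$-style non-splitting hypothesis is vacuous here and that no new type $\cH$ vertices appear — but since all vertex spaces of $G$ are $\E\times(\text{hyperbolic})$ and QIs preserve this (Proposition~\ref{prop:ultralimit_vertex_space}), all vertex groups of $H_0$ are type $\cS$.

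\textbf{Main obstacle.} The crux — and the step I expect to be hardest — is straightening the cobounded quasi-action of the vertex stabilizer $H_v$ on $X_v\simeq \E\times Y_v$ into an honest action realizing $H_v$ as a $\Z$-by-hyperbolic group, i.e. the input from~\cite{kapovichkleinerleeb1998quasiisometries,Mar22}: one must check the quasi-action coarsely permutes the $\E$-fibers (so that it descends to $Y_v$), that the descended quasi-action on the relatively hyperbolic Cayley graph $Y_v$ is itself cobounded and can be geometrized, and that the central extension data reassembles correctly and compatibly across all vertices so that the graph-of-groups $H_0$ built from the pieces genuinely has $\pi_1\cong H_0$ (not merely a QI). Handling the ``no inversions / pass to finite index'' bookkeeping uniformly across all vertex and edge groups at once, so that a single finite-index subgroup of $H$ works, is the technical heart of the argument.
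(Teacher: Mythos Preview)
Your approach is correct and essentially the same as the paper's: conjugate the left action of $H$ to a quasi-action on $X$, use Corollary~\ref{cor:functionalmaner} to get a genuine action on $T$, pass to index $\leq 2$ to remove inversions, and check that the resulting Bass--Serre graph of groups satisfies Definition~\ref{defn:extended}.

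However, the ``main obstacle'' you identify is not an obstacle at all, and worrying about it leads you to overcomplicate the argument. Once $H'$ acts genuinely on $T$ (which it does --- Corollary~\ref{cor:functionalmaner} gives an honest homomorphism to $\Aut(T)$, not a quasi-action on $T$), Bass--Serre theory immediately realizes $H'$ as $\pi_1$ of the quotient graph of groups with vertex and edge groups equal to the stabilizers $H'_v$, $H'_e$. There is nothing to ``reassemble'' and no compatibility to check: the graph of groups is canonically determined by the tree action. Likewise, you do not need to straighten the quasi-action of $H'_v$ on $X_v$ or invoke any quasi-action machinery of~\cite{moshersageevwhyte2011quasiactions}. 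All you need is that $H'_v$ is quasi-isometric to $X_v$ (a routine consequence of the proper cobounded quasi-action on $X$ and the uniform Hausdorff bound from Theorem~\ref{thm:qipreservespieces}), and then~\cite[Theorem~A]{Mar22} is a black box: any group quasi-isometric to a $\Z$-by-hyperbolic group is $\Z$-by-hyperbolic. No fiber-preservation argument, no descent to $Y_v$, no geometrization of a quasi-action on a relatively hyperbolic group is required. For condition~(4), the paper uses a cleaner route than your fiber argument: for an edge $e$ with $e_-=v$, $e_+=w$, the intersections $H'_e\cap H'_{e'}$ for $e'\in\lk(v)\setminus\{e\}$ and $e'\in\lk(w)\setminus\{\bar e\}$ are two-ended with finite intersection (by the coarse-intersection Lemmas~\ref{lem:edge_vertex_int_admissible} and~\ref{lem:subgp_commensurability} transported through the quasi-isometry), hence generate a finite-index subgroup of the virtually $\Z^2$ group $H'_e$; these intersections are commensurable with the kernels $Z_v$, $Z_w$.
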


\begin{proof}
	Let $(X, T)$ be the tree of spaces associated with an admissible group $G$. Since $H$ is quasi-isometric to $G$, $H$ is also quasi-isometric to $X$. Thus $H$ admits a proper and cobounded quasi-action on $X$. It follows from Corollary~\ref{cor:functionalmaner} that the quasi-action of $H$ on $X$ induces an action of $H$ of $T$. Specifically if $\{f_h\}_{h\in H}$ is a quasi-action of $H$ on $X$, then $h\mapsto (f_h)_*$ is an action of $H$ on $T$.
	The action of $H$ on $T$ may have edge inversions, but passing to a subgroup $H^{\prime} \leq H$ with index at most 2 gives an  action of $H^{\prime}$ on $T$  without inversions.

	Suppose the quasi-action $H^{\prime} \curvearrowright_{q . a} X$ is a $(K,A)$-quasi-action. By enlarging $A$ if necessary, we can assume that for all $x,x'\in X$, there is some $h\in H'$ such that $d(x',f_h (x))\leq A$, and that $d_\Haus(f_h(X_x),X_{({f_h})_*(x)})\leq A$ for all $x\in VT\sqcup ET$ and $h\in H'$. Through a routine argument, it can be shown that the quotient $H'\backslash T$  is a finite graph, and the stabilizer $\operatorname{Stab}_{H'}(x)=H'_x$ of some $x \in XT\sqcup ET$ is quasi-isometric to the vertex or  edge space $X_x$. Thus the action of $H'$ on $T$ yields a finite graph of groups decomposition $\cG'$ of $H'$  where:
	\begin{enumerate}
		\item\label{proofThm1.1:1} The underlying graph $\Gamma'$ is the quotient $H'\backslash T$.

		\item\label{proofThm1.1:2}
		      Each vertex (resp.\ edge) group $H'_{x}$ of $\cG'$ is isomorphic to the stabilizer $H'_{\tilde x}$ of some vertex (resp.\ edge) $\tilde x$ of $T$ projecting to $x$ under the quotient  $T \to H'\backslash T$.
	\end{enumerate}

	We will show that $H'$ is an admissible group.
	According to~\cite{Gro81,pansu83croissance}, any finitely generated group that is quasi-isometric to $\Z^d$  contains a finite-index subgroup is isomorphic to $\Z^d$. By~(\ref{proofThm1.1:2}), every edge group of $\cG'$  is virtually $\mathbb{Z}^2$.  Applying  Theorem~A of~\cite{Mar22}, (\ref{proofThm1.1:2}) implies every vertex group of $\cG'$ is $\Z$-by-hyperbolic.  Thus $H'$ satisfies Conditions (1) and (2) of Definition~\ref{defn:extended}, with all vertex groups of type $\cS$.

	We now use  the construction of the action $H'\curvearrowright T$ and Lemmas~\ref{lem:edge_vertex_int_admissible} and~\ref{lem:subgp_commensurability} to deduce Conditions (3) and (4) of Definition~\ref{defn:extended}. It follows that for two distinct edges $e\neq e'\in T$, the edge spaces $X_e$ and $X_{e'}$ have bounded coarse intersection, so $H_e$ and $H_{e'}$ are not commensurable. This implies Condition (3) of Definition~\ref{defn:extended}. Moreover, if $e$ is an edge with $v=e_-$ and $w=e_+$. Let $e_v\in \lk(v)\setminus \{e\}$ and $e_w\in \lk(w)\setminus \{e\}$. Then by the above lemmas, we see that  $H'_{e}\cap H'_{e_v}$  and $H'_{e}\cap H'_{e_w}$  are two-ended subgroups with finite intersection, hence generate a finite index subgroup of $H'_e$ as it is virtually $\Z^2$.  Thus $H'$ satisfies  Condition (4) of Definition~\ref{defn:extended}, hence is an admissible group.
\end{proof}

\section{Quasi-isometric rigidity of extended admissible groups}

In this section, we are going to prove Theorems~\ref{thm:CKrigidity_qi} and~\ref{thm:CKrigidity_graphofgroups}.

\subsection{Bowditch boundary}
There are multiple equivalent definitions of relative hyperbolicity. We use relative hyperbolicity from~\cite{dructusapir2005treegraded} in previous sections and from~\cite{GM08} in this section.
Given a finitely generated group $G$ and a finite collection of finitely generated subgroups $\PP$, we say $S$ is a \emph{compatible} generating set of $(G,\PP)$ if $S$ generates $G$ and  $S\cap P$ generates $P$ for every $\PP$.

\begin{defn}[Combinatorial horoball~\cite{GM08}]
	Let $T$ be any graph with the vertex set $V$. We define the \emph{combinatorial horoball} based at $T$, $\mathcal{H}(=\mathcal{H}(T)$) to be the following graph:
		\begin{enumerate}
			\item $\mathcal{H}^{(0)}= V\times (\{0\}\cup \mathbb N)$.
			\item $\mathcal{H}^{(1)} = \{((t, n), (t, n + 1))\}\cup \set{((t_1, n), (t_2, n))}{d_T(t_1,t_2)\leq 2^n}$.
		\end{enumerate}
		We call edges of the first set \emph{vertical} and of the second \emph{horizontal}. The \emph{depth zero subgraph} of $\cH(T)$ is the full subgraph of $\cH(T)$ with vertex set $V\times \{0\}$.
\end{defn}
We note that the depth zero subgraph of $\cH(T)$ is isomorphic to $T$.

\begin{defn}[Cusped space~\cite{GM08}]\label{cspaces}
	Let $G$ be a finitely generated group and $\PP$ a finite collection of finitely generated subgroups of $G$. Let $S$ be a compatible finite generating set of $(G,\PP)$  and let $\Gamma(G,S)$ be the Cayley graph of $G$ with respect to $S$. For each left coset $gP$ of a subgroup $P\in\PP$, let $\mathcal{H}(gP)$ be the horoball based at  $T_{gP}$, the full subgraph of $\Gamma(G,S)$ with vertex set $gP$. The \emph{cusped space} $\Cusp(G,\PP,S)$ is the union of $\Gamma(G,S)$ with $\mathcal{H}(gP)$ for every left coset of $P\in \PP$, identifying the subgraph $T_{gP}$ with the depth zero subgraph of $\mathcal{H}(gP)$. We suppress mention of $S$ and $\PP$ when they are clear from the context.
\end{defn}

\begin{defn}[Relatively hyperbolic group~\cite{GM08}]\label{minimal}
	Let $G$ be a finitely generated group and $\PP$ a finite collection of finitely generated subgroups of $G$. Let $S$ be a compatible finite generating set of $(G,\PP)$. If the cusped space $\Cusp(G,\PP,S)$ is $\delta$-hyperbolic, then we say that $G$ is \emph{hyperbolic relative to} $\PP$ or that $(G,\PP)$ is \emph{relatively hyperbolic}. %
\end{defn}

\begin{defn}[Bowditch boundary~\cite{bowditch2012relatively}]\label{defn001}
	Let $(G,\PP)$ be a finitely generated relatively hyperbolic group. Let $S$ be a compatible finite generating set of $(G,\PP)$. The \emph{Bowditch boundary}, denoted $\partial(G,\PP)$, is the Gromov boundary of the associated cusped space, $\Cusp(G,\PP, S)$.
\end{defn}

\begin{rem}
	If $S$ and $S'$ are two compatible finite generating sets of $(G,\PP)$, there is a quasi-isometry $\Cusp(G,\PP,S)\to \Cusp(G,\PP,S')$. Consequently,  the notion  of a relatively hyperbolic group and its Bowditch boundary does not depend on the choice of finite compatible generating set. For convenience, we frequently suppress the choice of generating set and write $\Cusp(G,\PP)$ to denote $\Cusp(G,\PP,S)$ for some compatible finite generating set $S$.
\end{rem}

\begin{defn}
	The \emph{limit set} $\Lambda H$ of a subgroup $H \le G$ is defined to be the set of limit
	points of any $H$-orbit in the Bowditch boundary $\partial (G, \mathbb P)$.
	Suppose $\Lambda H$ is a subset with
	at least two points. The \emph{join} of $\Lambda H$, denoted $\join (\Lambda H)$, is the union of all geodesic lines
	in $\Cusp (G, \mathbb P)$ joining pairs of points in $\Lambda H$.
\end{defn}

\subsection{Relative hyperbolicity of extended admissible groups}
\begin{lem}\label{lem:NRHGadmissible}
	Admissible groups are not relatively hyperbolic groups.
\end{lem}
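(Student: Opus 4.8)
The plan is to argue by contradiction, using the asymptotic-cone machinery developed in Section~\ref{sec:qirigidity} together with the characterization of relative hyperbolicity of Dru\c{t}u--Sapir (Theorem~\ref{thm:drutusapir_treegraded}). Suppose $G$ is an admissible group that is hyperbolic relative to a collection of proper subgroups $\cH$, and let $(X,T)$ be its associated tree of spaces. By Proposition~\ref{prop:treeofspacesBST}, $X$ is quasi-isometric to $G$, so every asymptotic cone $X_\omega$ of $X$ is bi-Lipschitz equivalent to an asymptotic cone $G_\omega$ of $G$, which (since $G$ is relatively hyperbolic) is tree-graded with respect to the collection $\cP_\omega$ of ultralimits of cosets of peripheral subgroups. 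In a tree-graded space, the complement of any point disconnects the space in a very strong way: no point of $X_\omega$ is a cut point unless it lies in at most one piece, and more to the point, tree-graded spaces have global cut points. So the first step is to recall/record that $X_\omega$ must contain a global cut point (this is standard for tree-graded spaces with more than one piece; if $\cH$ has a single peripheral subgroup equal to $G$ itself, then $G$ is already hyperbolic relative to itself trivially, which we should exclude, or handle by noting admissible groups are not hyperbolic).

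The heart of the argument is to show that $X_\omega$ has \emph{no} cut points, contradicting the above. The key structural input is Proposition~\ref{prop:ultralimit_vertex_space}: every $\omega$-vertex space $V_\omega$ is bi-Lipschitz equivalent to $\E\times Y_\omega$, where $Y_\omega$ is an $\R$-tree that branches everywhere. Such a product has no cut points — removing any single point of $\E\times Y_\omega$ leaves it path-connected, since the $\E$-factor provides a way around. So no point lying in (the interior of) an $\omega$-vertex space can be a cut point of $X_\omega$, because a small bi-Lipschitz product neighbourhood inside $V_\omega$ already connects nearby points, and points outside $V_\omega$ can be routed through $V_\omega$ using Lemma~\ref{lem:good_ultracone} (paths with no essential backtracking) and the separation analysis of Lemma~\ref{lem:esssep_vespace} and Proposition~\ref{prop:component_bdry}. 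For a point $p$ lying in an $\omega$-edge space $E_\omega$ but in no vertex space — but by Proposition~\ref{prop:omega-spaces}(\ref{item:omega-spaces4}) every $\omega$-edge space is contained in two $\omega$-vertex spaces, so this case does not arise, every point of every $\omega$-edge space lies in an $\omega$-vertex space. Finally, for a point $p$ lying in \emph{no} $\omega$-vertex space at all: since $X_\omega = \bigcup \cV_\omega$ up to... — actually we must check $X_\omega$ is covered by its $\omega$-vertex spaces; this follows because any point $[(x_i)]$ has each $x_i$ lying in some vertex space $X_{v_i}$ of $X$, and the sequence $(v_i)$ can be chosen so that $\lim_\omega X_{v_i}$ is a genuine $\omega$-vertex space containing $[(x_i)]$ (admissibility keeps vertex spaces coarsely dense, e.g.\ Proposition~\ref{prop:treeofspacesBST}). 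Hence every point of $X_\omega$ lies in some $\omega$-vertex space, so no point is a cut point.

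Putting it together: $X_\omega$ is bi-Lipschitz to the tree-graded $G_\omega$, which must have a cut point (as $\cH$ consists of proper subgroups and $G$ is not hyperbolic, so $G_\omega$ is not a single piece and not an $\R$-tree); but $X_\omega$ has no cut points. This contradiction shows $G$ is not relatively hyperbolic. The main obstacle I anticipate is the ``no cut point'' verification for points on $\omega$-edge spaces and the careful routing of a path around a point using the product structure of the ambient $\omega$-vertex spaces — this is precisely where Lemmas~\ref{lem:good_ultracone}, \ref{lem:closest_fiber}, \ref{lem:esssep_vespace} and Proposition~\ref{prop:component_bdry} must be combined, much as in the proof of Proposition~\ref{prop:essential_split}; one must also take care of the degenerate case where $\cH$ contains a subgroup of finite index in $G$, which is excluded since admissible groups, having infinitely many ends of pairs / being one-ended with cut points only at the $\omega$-fiber level, are genuinely not hyperbolic.
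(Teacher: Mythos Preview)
Your approach is genuinely different from the paper's, and it has a real gap.

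The paper's proof is short and uses the machinery of thickness from \cite{BD14} and \cite{BDM09}: vertex groups are quasi-convex in $G$ (via Corollary~\ref{cor:ve_spaces_qi_embedded}) and unconstricted (their asymptotic cones, being bi-Lipschitz to $\E\times Y_\omega$, have no cut points), hence strongly thick of order~$0$; since $\cG$ has infinite edge groups, $G$ is strongly thick of order at most~$1$; thick groups are not relatively hyperbolic.

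You attempt instead to derive a contradiction by showing directly that $X_\omega$ has no cut points. The step that fails is the passage from ``each $\omega$-vertex space $V_\omega$ has no cut points'' to ``$X_\omega$ has no cut points''. To connect two points $x,y\in X_\omega\setminus\{p\}$ you propose to route through $V_\omega$ using a path with no essential backtracking (Lemma~\ref{lem:good_ultracone}). But such a path may pass through $p$, and there is no evident local surgery: the chain of $\omega$-vertex spaces that the path traverses is indexed by the totally ordered set $K$ in the proof of Lemma~\ref{lem:good_ultracone}, which is typically uncountable, so an inductive ``propagate connectivity to the adjacent $\omega$-vertex space'' argument does not terminate. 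The lemmas you cite (\ref{lem:esssep_vespace}, \ref{lem:closest_fiber}, Proposition~\ref{prop:component_bdry}) control how $\omega$-edge spaces separate, but do not by themselves produce a detour around an arbitrary point.

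A repair that stays close to your idea works at the group level rather than inside a single asymptotic cone. Each vertex group $G_v$ is undistorted (Corollary~\ref{cor:ve_spaces_qi_embedded}) and unconstricted; hence by \cite[Theorem~4.1]{BDM09} it lies in a bounded neighbourhood of a single peripheral coset $g_vP_v$. Adjacent vertex groups share an infinite edge group, while distinct peripheral cosets have bounded coarse intersection, so adjacent vertex groups lie near the \emph{same} coset. Now propagate over the Bass--Serre tree $T$, which is connected (no transfinite induction needed): all vertex groups lie near one coset, hence so does $G$, contradicting properness of the peripherals. This is precisely the content of ``thick of order~$1$ $\Rightarrow$ not relatively hyperbolic'', which the paper invokes as a black box.
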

\begin{proof}
	Let $G$ be an admissible group. By Corollary~\ref{cor:ve_spaces_qi_embedded}, the inclusion of a vertex group $G_v \to G$ is a quasi-isometric embedding, and hence for any two points $x, y \in G_v$, a geodesic $\gamma$ in $G_v$ connecting $x$ to $y$ will be a uniform quasi-geodesic in $G$. This shows that the graph $G_v$ satisfies the quasi-convexity property as defined in~\cite[\S 4.1]{BD14}.
	Since every asymptotic cone of a vertex group of $G$ is without cut-points, it follows that vertex groups of $G$ are strongly algebraically thick of order zero in the sense of~\cite{BD14}. We have that $G$ is strongly thick of order at most $1$ since graphs of groups with infinite edge groups and whose vertex groups are thick of order $n$, are thick of order at most $n+1$, by~\cite[Proposition~4.4 \& Definition 4.14]{BD14}. Thus $G$ is not a relatively hyperbolic group by~\cite[Corollary~7.9]{BDM09}. %
\end{proof}

Let $G$ be an extended admissible group with graph of groups $\cG$ and underlying graph  $\Gamma$. By the normal form theorem, for each connected subgraph $\Gamma'$ of $\Gamma$, there is a subgroup $G_{\Gamma'}\leq G$ which is the fundamental group of the graph of groups with underlying graph $\Gamma'$, and with vertex, edge groups and edge monomorphisms coming from $\cG$.
Let $\Lambda$ be the full subgraph of $\Gamma$ with vertex set $\{v\in V\Gamma\mid \cG_v \text{ is type $\cS$}\}$. For each component $\Gamma'$ of $\Lambda$, we say that $G_{\Gamma'}$ is \begin{enumerate}
	\item a \emph{maximal admissible component} if $\Gamma'$ contains an edge;
	\item an \emph{isolated type $\cS$ vertex group} if  $\Gamma'$ consists  of a single vertex of type $\cS$.
\end{enumerate}  is a subgroup $G_{\Gamma'}\leq G$ for some connected component $\Gamma'$ of $\Lambda$.

The next lemma can be deduced from the Combination Theorem of relatively hyperbolic groups~\cite[Theorem~0.1]{Dah03}.

\begin{lem}\label{lem:RHGadmissible}
	Let $G$ be an extended admissible group with the graph of groups structure $\mathcal{G}$ such that it contains at least one vertex group of type $\mathcal{H}$.
	Let $G_1, . . . , G_k$ be the maximal admissible components
	and isolated vertex pieces of type $\mathcal{S}$ of an extended admissible group $G$. Let $G_{e_1}, \ldots, G_{e_m}$ be the edge groups so that both its associated vertex groups $G_{(e_i)_{\pm}}$ are of type $\mathcal{H}$, and let $T_1, \ldots, T_{\ell}$ be groups in $\cup \PP_v$ which are not edge groups of $G$.
	Then $G$ is hyperbolic relative to
	\[
		\PP = \{G_i\}_{i=1}^{k} \cup \{G_{e_s}\}_{s =1}^{m} \cup \{T_i\}_{i=1}^{\ell}
	\]
\end{lem}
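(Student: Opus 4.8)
The plan is to realise $G$ as the fundamental group of a graph of groups whose vertex groups are, on one side, the relatively hyperbolic type $\cH$ vertex groups, and on the other side the subgroups $G_i$, treated as (entirely) parabolic pieces; relative hyperbolicity of $G$ then follows from Dahmani's combination theorem~\cite[Theorem~0.1]{Dah03}. Concretely, let $\Lambda\subseteq\Gamma$ be the full subgraph spanned by the type $\cS$ vertices, as above, and let $\hat{\cG}$ be the graph of groups obtained from $\cG$ by collapsing each connected component of $\Lambda$ to a single vertex; this is a standard operation that does not change the fundamental group. The vertex groups of $\hat{\cG}$ are the groups $G_1,\dots,G_k$ carried by the components of $\Lambda$ together with the type $\cH$ vertex groups of $\cG$, and the edge groups of $\hat{\cG}$ are exactly the $G_e$ with $e$ incident to at least one type $\cH$ vertex. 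Among those edges, the ones joining two type $\cH$ vertices carry the groups $G_{e_1},\dots,G_{e_m}$, and the remaining ones join a type $\cH$ vertex to some $G_i$; finally, $T_1,\dots,T_\ell$ are precisely the members of the families $\PP_v$ ($v$ of type $\cH$) that are not edge groups.

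First I would check the hypotheses of the combination theorem for $\hat{\cG}$. Each type $\cH$ vertex group $G_v$ is hyperbolic relative to $\PP_v$, and by Definition~\ref{defn:extended}(2)(b) every edge group of $\hat{\cG}$ incident to $v$ belongs to $\PP_v$, hence is a peripheral subgroup of $G_v$. Each $G_i$ is regarded as a parabolic vertex group (that is, as hyperbolic relative to $\{G_i\}$), and each incident edge group of $\hat{\cG}$ is contained in $G_i$, hence is parabolic there. It remains to verify that the action of $G$ on the Bass--Serre tree of $\hat{\cG}$ is acylindrical. Here one uses that, by Definition~\ref{defn:extended}(3), each edge group at a type $\cH$ vertex $v$ is self-commensurating in $G_v$ and distinct edges at $v$ carry non-conjugate edge groups, which as peripheral subgroups of $G_v$ are therefore almost malnormal; combined with the fact that in $\hat{\cG}$ the $G_i$-vertices are adjacent only to type $\cH$ vertices, this shows that the pointwise stabiliser of any geodesic of length at least three in the Bass--Serre tree is finite. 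Thus the action is $3$-acylindrical.

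With the hypotheses in place, Dahmani's combination theorem shows that $G$ is hyperbolic relative to the collection of maximal parabolic subgroups, each obtained by amalgamating peripheral subgroups of the vertex groups of $\hat{\cG}$ along incident edge groups, and the remaining step is to identify this collection with $\PP=\{G_i\}\cup\{G_{e_s}\}\cup\{T_j\}$. Using Lemma~\ref{lem:subgp_commensurability}, Lemma~\ref{lem:edge_vertex_int_admissible} and Definition~\ref{defn:extended}(3), I would argue in three cases. A member $T_j\in\PP_v$ that is not an edge group is merged with nothing, since no edge group is commensurable into it (distinct members of $\PP_v$ are non-commensurable, and a virtually $\Z^2$ group has no virtually $\Z^2$ subgroup of infinite index), so $\{T_j\}$ is its own maximal parabolic. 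An edge group $G_{e_s}$ joining two type $\cH$ vertices is peripheral at both ends and, by Definition~\ref{defn:extended}(3), corresponds at each end to a unique incident edge, so it too is its own maximal parabolic. Finally, every edge of $\hat{\cG}$ incident to the $G_i$-vertex carries an edge group contained in $G_i$, and --- again by Definition~\ref{defn:extended}(3) at the type $\cH$ endpoints --- these are the only peripheral subgroups that get merged with $G_i$, so the maximal parabolic containing $G_i$ is exactly $G_i$. Putting the three cases together yields the stated peripheral structure.

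The main obstacle I anticipate is matching the construction precisely to the hypotheses of~\cite[Theorem~0.1]{Dah03} --- in particular, handling the vertex groups $G_i$, which by Lemma~\ref{lem:NRHGadmissible} are genuinely not relatively hyperbolic and so must be incorporated purely as parabolic (peripheral) pieces, and checking the acylindricity condition in that mixed setting. Once those points are settled, tracking the amalgamation of peripheral subgroups to recover $\PP$ is bookkeeping resting on Definition~\ref{defn:extended}(3) and the commensurability facts already established.
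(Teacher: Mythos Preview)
Your proposal is correct and takes essentially the same approach as the paper: the paper's entire argument is the single sentence ``can be deduced from the Combination Theorem of relatively hyperbolic groups~\cite[Theorem~0.1]{Dah03}'', and your outline supplies the details of how that deduction goes (via the collapsed graph of groups that the paper calls $\mathcal{C}$ in Remark~\ref{rem:anothersplitting}, checking acylindricity, and tracking the resulting peripheral structure). The obstacle you flag --- that the $G_i$ must be treated as wholly parabolic vertex groups with edge groups that are not maximal parabolic in them --- is real and is exactly the point where care is needed in matching Dahmani's hypotheses, but the paper does not address it either.
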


\begin{rem}\label{rem:anothersplitting}
	Note that $G$ has another graph of groups decomposition $\mathcal{C}$ (not the same as $\mathcal{G}$). In $\cC$, vertex groups are either maximal admissible components of $G$, isolated vertex groups  of type $\mathcal{S}$, or vertex groups of type $\mathcal{H}$. Edge groups are virtually $\Z^2$
\end{rem}

\subsection{JSJ tree for splittings}
By a \emph{splitting} of a group, we mean a realization of $G$ as the fundamental
group of a reduced finite graph of groups with at least one edge. A splitting is said to be over a class $\mathcal{E}$ if every
edge group in the splitting is an element of $\mathcal{E}$. A splitting of a group $G$ is said to
be \emph{relative} to a collection of subgroups $\mathbb{P}$ if every $P \in \mathbb{P}$ fixes a vertex of the Bass--Serre tree.

If $(G,\mathbb{P})$ is relatively hyperbolic, a \emph{cut point} of the Bowditch boundary $\partial(G, \mathbb P)$  is a point $\xi \in \partial(G, \mathbb P)$ such that $\partial(G, \mathbb P) \setminus \{\xi\}$ is disconnected.
A subset $C$ in $\partial(G, \mathbb P)$  is a \emph{cyclic element} if $C$ consists of a single
cut point or contains a non-cutpoint $p$ and all points $q$ that are not separated
from $p$ by any cut point of $\partial (G, \mathbb P)$. A cyclic element is \emph{non-trivial} if it contains at
least two points.

The following theorem is cited from~\cite[Theorem~8.1]{HH23}, in which it is referred to as a combination of~\cite[Theorem~9.2]{Bow01} and~\cite[Theorem~1.1]{DH23}.

\begin{thm} [{\cite{Bow01,DH23}}]\label{thm:HH238.1}
	Let $(G, \mathbb{P})$ be relatively hyperbolic with connected boundary $M=\partial(G, \mathbb{P})$. Let $T$ be the bipartite graph with vertex set $\mathcal{V} \sqcup \mathcal{W}$, where $\mathcal{V}$ is the set of cut points and $\mathcal{W}$ is the set of non-trivial cyclic elements of $M$. Two vertices $v \in \mathcal{V}$ and $w \in \mathcal{W}$ are connected by an edge in $T$ if and only if the cut point $v$ is contained in the cyclic element $w$.

	Then the graph $T$ is a JSJ tree for splittings of $G$ over parabolic subgroups relative to $\mathbb{P}$. There are only finitely many $G$-orbits of edges of $T$,   and the stabilizer of each edge is finitely generated.
\end{thm}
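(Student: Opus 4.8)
The plan is to deduce this directly from the two results it is attributed to, namely \cite[Theorem~9.2]{Bow01} and \cite[Theorem~1.1]{DH23}, so the only work is to explain how they combine. First I would verify that $T$ really is a tree carrying a simplicial $G$-action. Since $M=\partial(G,\mathbb{P})$ is connected, a theorem of Bowditch gives that $M$ is moreover locally connected, hence a Peano continuum, and for a Peano continuum the cut points $\mathcal{V}$ together with the non-trivial cyclic elements $\mathcal{W}$, with incidence relation ``$v\in w$'', form the vertex set of a simplicial tree --- the classical block (or cyclic-element) tree, in the form used by Bowditch. Because $G$ acts on $M$ by homeomorphisms it permutes cut points and cyclic elements and preserves incidence, so it acts on $T$ simplicially; the action has no inversions because $G$ preserves the bipartition $\mathcal{V}\sqcup\mathcal{W}$.

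Next I would identify this action with a splitting of $G$. The key input, from \cite[Theorem~9.2]{Bow01}, is that every cut point of the connected boundary $M$ is a bounded parabolic point; consequently the $G$-stabilizer of any $v\in\mathcal{V}$, and hence every edge stabilizer of $T$, is a parabolic subgroup, while the stabilizer of $w\in\mathcal{W}$ fixes the subcontinuum $w$. Each $P\in\mathbb{P}$ fixes its parabolic point in $M$, hence fixes the unique cut point or cyclic element containing it, so $P$ is elliptic in $T$. Therefore $T$ is the Bass--Serre tree of a splitting of $G$ over parabolic subgroups relative to $\mathbb{P}$. I would extract the two finiteness statements --- finitely many $G$-orbits of edges, and finite generation of edge stabilizers --- from the same references: finitely many orbits of cut points and of cyclic elements comes from an acylindricity-type argument for the action of $G$ on $M$, and each edge stabilizer is a cut-point stabilizer, hence a finitely generated subgroup sitting inside a peripheral group under the standing hypotheses.

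Finally I would invoke \cite[Theorem~1.1]{DH23} for the assertion that $T$ is a \emph{JSJ} tree, i.e.\ that it is universally elliptic over parabolics relative to $\mathbb{P}$ and dominates every such splitting. Universal ellipticity is the easy half: a cyclic-element or cut-point stabilizer fixes a connected $G$-invariant subset of $M$, which must lie inside a single vertex block of any other relative-parabolic splitting, forcing ellipticity there. The hard part --- and the step I expect to be the main obstacle --- is maximality: one must show that an arbitrary splitting of $G$ over a parabolic subgroup relative to $\mathbb{P}$ induces a $G$-invariant family of cut points of $M$, so that the full cut-point tree $T$ refines it. This requires the dynamical correspondence between such splittings and $G$-invariant closed subsets of the Bowditch boundary developed in \cite{Bow01,DH23}, which genuinely goes beyond cyclic-element theory, and I would cite it rather than reprove it.
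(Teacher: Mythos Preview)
The paper gives no proof of this theorem: it is quoted as \cite[Theorem~8.1]{HH23}, with the single sentence that HH23 presents it as a combination of \cite[Theorem~9.2]{Bow01} and \cite[Theorem~1.1]{DH23}. So your proposal already goes well beyond what the paper does, and as an outline of how those two sources fit together it is essentially correct.

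One point where your division of labor is slightly off. The construction of the cut-point tree, the fact that cut points are parabolic, and the identification of $T$ with the maximal peripheral splitting are all in Bowditch; indeed Remark~\ref{rem:samesplittings} of the present paper points to \cite[Theorem~9.2]{Bow01} for precisely this. What Bowditch does \emph{not} establish in full generality is the step you describe as ``a theorem of Bowditch gives that $M$ is moreover locally connected'' and, relatedly, that an arbitrary relative-parabolic splitting forces cut points in $M$: Bowditch needs tameness hypotheses on the peripherals here, and it is \cite[Theorem~1.1]{DH23} that removes them. So you have correctly located the hard step (maximality via ``splitting $\Rightarrow$ cut points''), but the local-connectedness input at the very start is also part of what \cite{DH23} supplies, not pure Bowditch. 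Finally, your claim that edge stabilizers are finitely generated because they are cut-point stabilizers is not quite right as stated: an edge stabilizer is $\Stab(v)\cap\Stab(w)$, a generally proper subgroup of the maximal parabolic $\Stab(v)$, and its finite generation needs the relative quasiconvexity of cyclic-element stabilizers rather than just containment in a peripheral.
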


The following result is a combination of~\cite[Section~7]{Bow01} and~\cite[Theorem~1.3]{Bow01}.
\begin{prop}[{\cite[Proposition~8.2]{HH23}}]\label{prop:Prop8.2GH23}
	Let $(G, \mathbb P)$ be relatively hyperbolic with connected boundary, and let $C$ be a non-trivial cyclic element of $\partial (G, \mathbb P)$. Then the following hold:
	\begin{enumerate}
		\item The set $C$ is connected and locally connected.

		\item The stabilizer $H$ of $C$ is hyperbolic relative to a family $\mathcal{O}$ of representatives of the conjugacy classes of infinite subgroups of the form $H \cap gPg^{-1}$
		      where $g \in G$ and $P \in \mathbb P$. Additionally, the boundary $\partial (H, \mathcal{O})$ is $H$-equivariantly homeomorphic to $C$.
	\end{enumerate}
\end{prop}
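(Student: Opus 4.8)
The plan is to deduce this from Bowditch's structure theory for boundaries of relatively hyperbolic groups; indeed the statement is precisely \cite[Proposition~8.2]{HH23}, which in turn packages \cite[\S 7]{Bow01} together with \cite[Theorem~1.3]{Bow01}, so I will only outline the argument rather than reproduce it.

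For part (1), the first step is to recall that a \emph{connected} Bowditch boundary $M=\partial(G,\PP)$ is automatically locally connected: Bowditch's analysis of global cut points and of the peripheral splitting reduces this to the cut-point-free case, where local connectedness is proved directly. Once $M$ is known to be a Peano continuum, the second step is purely topological: by classical cyclic element theory (Whyburn), every non-trivial cyclic element of a Peano continuum is itself a connected, locally connected subcontinuum. Combining these gives (1).

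For part (2), the engine is the dynamical characterisation of relative hyperbolicity (\cite{bowditch2012relatively}): $G$ acts on $M$ as a geometrically finite convergence group whose maximal parabolic subgroups are exactly the conjugates $gPg^{-1}$, $P\in\PP$. Writing $H=\Stab_G(C)$, I would establish, in order: (i) $\Lambda H=C$ — the inclusion $\Lambda H\subseteq C$ is immediate, while equality uses that a non-degenerate cyclic element forces the $H$-action on $C$ to be non-elementary with full limit set, which is where Bowditch's structural description of cyclic elements enters; (ii) restricting the convergence action of $G$ to $H$ acting on the closed $H$-invariant set $C$ is again a convergence action; (iii) this restricted action is geometrically finite, i.e.\ every $\xi\in C$ is a conical limit point or a bounded parabolic point \emph{for $H$}. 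For (iii), a conical limit point of $G$ lying in $C$ remains conical for $H$ once $\Lambda H=C$ is known, and a parabolic point $\xi\in C$ has $H$-stabiliser $H\cap gPg^{-1}$, which is a bounded parabolic subgroup for the $H$-action by a cocompactness argument on $C\backslash\{\xi\}$; the infinite such stabilisers, up to $H$-conjugacy, form the finite family $\mathcal{O}$. Finally, applying Yaman's characterisation of relative hyperbolicity via geometrically finite convergence actions to the $H$-action on the compact metrisable space $C$ yields that $(H,\mathcal{O})$ is relatively hyperbolic with $\partial(H,\mathcal{O})$ $H$-equivariantly homeomorphic to $C$.

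The hard part will be step (i) together with the conical-point half of (iii): one must show that a cyclic element is ``thick enough'' that its stabiliser has it as full limit set, and that conicality survives the restriction to this stabiliser. This is the substantive content of \cite[\S 7]{Bow01}, and it is exactly what upgrades the splitting of $G$ over parabolic subgroups relative to $\PP$ to a genuine JSJ decomposition whose vertex stabilisers are the cyclic-element stabilisers.
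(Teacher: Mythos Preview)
Your outline is correct and matches the paper exactly: the paper does not give a proof of this proposition at all, but simply cites it as \cite[Proposition~8.2]{HH23} and notes (just as you do) that it is a combination of \cite[\S 7]{Bow01} and \cite[Theorem~1.3]{Bow01}. Your sketch of how those Bowditch ingredients fit together is accurate and goes beyond what the paper itself records.
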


\begin{rem}\label{rem:samesplittings}
	In Theorem~\ref{thm:HH238.1}, it is shown that $T$ is the  JSJ tree $T$ for splittings of $G$ over parabolic subgroups relative to $\mathbb P$. This tree is referred to as the \emph{maximal peripheral splitting} in~\cite{Bow01}. For more details, see~\cite[Theorem~9.2]{Bow01}.

	In the setting of
	Lemma~\ref{lem:RHGadmissible}, recall that no vertex group of type $\mathcal{H}$ splits over any subgroup of a peripheral subgroup. This guarantees that vertex groups of type $\mathcal{H}$ are elliptic in any peripheral splitting. Therefore, the splitting $\mathcal{C}$ in Remark~\ref{rem:anothersplitting} must be a maximal peripheral splitting in the sense of Bowditch~\cite{Bow01}, and consequently from the above paragraph, the JSJ tree $T$ for splitting of $G$ over parabolic subgroups relative to $\mathbb P$ constructed in Theorem~\ref{thm:HH238.1}  coincides with the splitting $\mathcal{C}$ of $G$.
	Combined with part~(2) of Proposition~\ref{prop:Prop8.2GH23}, it follows that if $G_v$ is a vertex group of type $\mathcal{H}$ in the graph of groups structure $(G, \mathcal{C})$, it is the stabilizer of a non-trivial cyclic element $C$ in the JSJ tree $T$ for splitting of $G$ over parabolic subgroups relative to $\mathbb P$.
\end{rem}

\subsection{Proofs of Theorem~\ref{thm:CKrigidity_graphofgroups} and Theorem~\ref{thm:CKrigidity_qi} }
We make use of the following result, which is a special case of a theorem of Groff~\cite{groff2013quasiisometries}. We note that $G$ can be identified with a subset of vertices of a cusped space $\Cusp(G,\mathbb P)$.
\begin{prop}[{\cite[Theorem 6.3]{groff2013quasiisometries}}]\label{prop:inducedqi}
	Let $G$ be a  finitely generated group and let $f:G\to G$ be a $(K,A)$-quasi-isometry. Suppose $G$ is hyperbolic relative to $\mathbb P$, and that no $P\in \mathbb P$ is relatively hyperbolic. Then  $f$ extends to a $(K_1,A_1)$-quasi-isometry $F:\Cusp(G,\mathbb P)\to \Cusp(G,\mathbb P)$, where $K_1$ and $A_1$ depend only on $G$,  $K$ and $A$.
\end{prop}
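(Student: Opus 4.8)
The plan is to follow Groff's argument, whose two key inputs are (i) that a quasi-isometry of $G$ coarsely permutes the left cosets of the peripheral subgroups, and (ii) that a quasi-isometry between the base graphs of two combinatorial horoballs extends, with controlled constants, to a quasi-isometry of the horoballs. For (i): since $(G,\mathbb{P})$ is relatively hyperbolic, $G$ is asymptotically tree-graded with respect to the collection $\mathcal{P}$ of all left cosets of elements of $\mathbb{P}$ (Theorem~\ref{thm:drutusapir_treegraded}), and in every asymptotic cone the pieces are exactly the sets $\lim_\omega(g_iP_i)$. The hypothesis that no $P\in\mathbb{P}$ is relatively hyperbolic is precisely what makes this tree-graded structure canonical --- the pieces have no cut-points --- and hence invariant under the bi-Lipschitz equivalences of asymptotic cones induced by quasi-isometries (Lemma~\ref{lem:induced_bilip}); combined with Lemma~\ref{lem:ultralimitsvscosets} and a standard passage from asymptotic cones back to $G$ itself, one deduces that $f$ coarsely permutes $\mathcal{P}$: there is a bijection $\phi\colon\mathcal{P}\to\mathcal{P}$ and a constant $D=D(G,K,A)$ with $d_{\Haus}(f(gP),\phi(gP))\le D$ for all $gP\in\mathcal{P}$. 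The uniformity of $D$ is no extra difficulty, since there are finitely many conjugacy classes of peripheral subgroups and $f$ has fixed quasi-isometry constants.

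For (ii), I would use the distance estimate for combinatorial horoballs of Groff--Manning: up to a uniformly bounded additive error, $d_{\mathcal{H}(T)}\big((x,m),(y,n)\big)$ is a function of $m$, $n$ and $\lceil\log_2(1+d_T(x,y))\rceil$ alone. Consequently, if $h\colon T\to T'$ is a $(K,A)$-quasi-isometry between base graphs, the map $\hat h\colon\mathcal{H}(T)\to\mathcal{H}(T')$ given on vertices by $\hat h(x,m)=(h(x),m)$ is a $(K_0,A_0)$-quasi-isometry, where $K_0,A_0$ depend only on $K$ and $A$. Indeed $h$ changes $\log_2 d_T(x,y)$ by at most $\log_2 K$ plus a bounded constant, which by the distance formula changes distances in the horoball by a bounded additive amount; the lower bound is identical using a coarse inverse of $h$.

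To assemble $F$, for each $gP\in\mathcal{P}$ write $\phi(gP)=g'P'$; since $f(gP)\subseteq N_D(g'P')$, pick a map $h_{gP}\colon gP\to g'P'$ sending each vertex to a nearest vertex of $g'P'$. This $h_{gP}$ is a quasi-isometry of base graphs with constants depending only on $K,A,D$, and it is $D$-close to $f|_{gP}$. Now define $F$ to equal $f$ on $\Gamma(G,S)=G$ and to equal $\widehat{h_{gP}}$ on each horoball $\mathcal{H}(gP)$, identifying the depth-zero subgraph of $\mathcal{H}(gP)$ with $T_{gP}$ and that of $\mathcal{H}(g'P')$ with $T_{g'P'}$. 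The two definitions of $F$ disagree by at most $D$ on each overlap $T_{gP}$, so $F$ is defined up to a uniformly bounded ambiguity. That $F$ is a quasi-isometry with constants depending only on $G,K,A$ is then a routine check: coarse Lipschitzness is a local condition, and every edge of $\Cusp(G,\mathbb{P})$ lies either in $\Gamma(G,S)$ or in a single horoball, on each of which $F$ is uniformly coarsely Lipschitz. For the lower bound and coarse surjectivity, one runs the same construction on a coarse inverse $g$ of $f$ (a $(K',A')$-quasi-isometry with $K',A'$ depending only on $K,A$) to obtain $\bar F\colon\Cusp(G,\mathbb{P})\to\Cusp(G,\mathbb{P})$, and checks that $\bar F\circ F$ and $F\circ\bar F$ are uniformly close to the identity --- which reduces to the corresponding fact on $\Gamma(G,S)$ together with the observation that the horoball extension of a composition is close to the composition of the horoball extensions.

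The main obstacle is input (i): the assertion that $f$ coarsely permutes the peripheral cosets with a constant depending only on $G,K,A$. This is the one place where the hypothesis on $\mathbb{P}$ is essential --- if some $P$ were relatively hyperbolic, the tree-graded structure witnessing relative hyperbolicity of $G$ would no longer be canonical, a peripheral coset need not be recognizable from the coarse geometry of $G$, and the extension to $\Cusp(G,\mathbb{P})$ could genuinely fail. Once (i) is in hand, the rest is a controlled computation with the combinatorial horoball metric.
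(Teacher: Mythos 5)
The paper itself does not prove this proposition: it is quoted directly from Groff \cite{groff2013quasiisometries}, and the accompanying remark records only that the uniform dependence of $K_1,A_1$ on $G,K,A$ is established in Groff's proof and that this particular result of Groff is correct (see \cite{HH23}). Your three-step outline --- coarse permutation of peripheral cosets, extension over each horoball via the Groves--Manning distance estimate, gluing along depth-zero subgraphs --- is the scheme underlying Groff's argument, and steps (ii) and (iii) are essentially right as written.

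The issue is your justification of step (i). You assert that the hypothesis that no $P\in\mathbb P$ is relatively hyperbolic ``is precisely what makes this tree-graded structure canonical --- the pieces have no cut-points.'' That implication is false in general: not being relatively hyperbolic does not force the asymptotic cones of $P$ to be cut-point-free. For instance, $F_2 * \mathrm{MCG}(\Sigma)$, with $\Sigma$ a closed surface of genus at least two, is hyperbolic relative to $\{\mathrm{MCG}(\Sigma)\}$; the mapping class group is not relatively hyperbolic, yet all of its asymptotic cones have cut-points. So the pieces $\lim_\omega(g_iP_i)$ of $G_\omega$ may have cut-points, and the characterization of pieces as maximal path-connected cut-point-free subsets --- which your argument implicitly relies on --- is unavailable. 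The correct mechanism, which is what Groff and \cite{BDM09} use, is that if $P$ is not relatively hyperbolic then every $(L,C)$-quasi-isometric embedding of $P$ into a space asymptotically tree-graded with respect to a family $\mathcal{A}'$ has image within a uniformly bounded (in $L,C$) neighborhood of a single element of $\mathcal{A}'$; this rigidity statement does not require pieces to lack cut-points. Once step (i) is replaced with an appeal to \cite{BDM09}, the rest of your sketch goes through and yields the stated uniformity.
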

\begin{rem}
	Although the dependence of $K_1,A_1$ only on $G$, $K$ and $A$ is not evident in the statement of {\cite[Theorem 6.3]{groff2013quasiisometries}}, it is shown in the  proof of this theorem.
	We also note that although parts of~\cite{groff2013quasiisometries} are incorrect, the proof of the preceding result is correct; see~\cite{HH23}. 
\end{rem}

The following theorem easily implies Theorem~\ref{thm:CKrigidity_graphofgroups} from the introduction.
If $G$ is  an extended admissible group with associated tree of spaces $(X,T)$, we use the terms
``type $\mathcal{S}$'' and ``type $\mathcal{H}$'' to describe the associated vertex spaces or left cosets associated to vertex groups of type $\cS$ or $\cH$ as in Definition~\ref{defn:extended}.

\begin{thm}\label{thm:vertextovertexextend}
	Let $G$ be an extended admissible group and let $(X,T)$ be the associated tree of spaces.
	For every $(K,A)$-quasi-isometry $f:X\to X$ and vertex space $X_v$, there exists a unique vertex space $X_{v'}$ of the same type  such that the Hausdorff distance between $f(X_v)$ and $X_{v'}$ is at most $ B=B(K,A,X)$.
\end{thm}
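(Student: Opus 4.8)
## Proof strategy

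The plan is to reduce the case of a general extended admissible group to the already-established case of admissible groups (Theorem~\ref{thm:qipreservespieces}), by exploiting the two-level structure described in Remark~\ref{rem:anothersplitting} and Remark~\ref{rem:samesplittings}. We split into two cases according to whether $G$ has a type $\cH$ vertex.

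\textbf{Case 1: $G$ is admissible.} If $X_v$ is a vertex space, then by Theorem~\ref{thm:qipreservespieces} there is a tree isomorphism $f_*:T\to T$ and a uniform constant $B$ with $d_\Haus(f(X_v),X_{f_*(v)})\le B$. Taking $v'=f_*(v)$ gives existence; since in the admissible case every vertex is of type $\cS$, the ``same type'' clause is automatic. Uniqueness follows because Lemma~\ref{lem:edge_vertex_int_admissible} and Lemma~\ref{lem:subgp_commensurability} (via Proposition~\ref{prop:coarse_int_vertex_edge}) show that distinct vertex spaces are at infinite Hausdorff distance.

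\textbf{Case 2: $G$ has a type $\cH$ vertex.} Here the idea is to use the coarser splitting $\cC$ from Remark~\ref{rem:anothersplitting}, whose vertex groups are either maximal admissible components, isolated type $\cS$ vertex groups, or type $\cH$ vertex groups, and whose edge groups are virtually $\Z^2$. By Lemma~\ref{lem:RHGadmissible}, $G$ is hyperbolic relative to the family $\PP$ consisting of the maximal admissible components, the isolated type $\cS$ vertex groups, the edge groups between two type $\cH$ vertices, and the non-edge peripheral subgroups; note no element of $\PP$ is itself relatively hyperbolic (admissible components are not relatively hyperbolic by Lemma~\ref{lem:NRHGadmissible}, and the virtually $\Z^2$ and type $\cH$ peripherals are either not relatively hyperbolic or one-ended). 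Thus by Proposition~\ref{prop:inducedqi}, the quasi-isometry $f$ (identified with a self-quasi-isometry of $G$ via Proposition~\ref{prop:treeofspacesBST}) extends to a uniform quasi-isometry $F$ of the cusped space $\Cusp(G,\PP)$, and hence induces a homeomorphism $\partial f$ of the Bowditch boundary $\partial(G,\PP)$. By Theorem~\ref{thm:HH238.1} and Remark~\ref{rem:samesplittings}, the JSJ tree $T_{\cC}$ of the splitting $\cC$ is reconstructed from $\partial(G,\PP)$ purely in terms of cut points and non-trivial cyclic elements; since $\partial f$ preserves cut points and cyclic elements, it induces a tree automorphism $f_{\cC,*}:T_{\cC}\to T_{\cC}$. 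Moreover a quasi-isometry coarsely sends a peripheral subgroup's join to the join of its image (by the standard fact that quasi-isometries of relatively hyperbolic groups permute peripheral cosets), so $f$ coarsely preserves the vertex and edge spaces of the $\cC$-tree of spaces, and the $\cC$-type (type $\cH$, admissible-component, isolated type $\cS$) is a quasi-isometry invariant distinguishing them (e.g.\ admissible components and type $\cH$ pieces have different numbers of ends or different asymptotic-cone behaviour; here one invokes the quasi-isometry classification of the peripheral subgroups).

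It remains to descend from the $\cC$-level to the $\cG$-level, i.e.\ to handle vertex spaces $X_v$ with $G_v$ of type $\cS$ lying inside a maximal admissible component $G_{\Gamma'}$. For this, observe that $f$ coarsely permutes the maximal admissible components; fixing one such component $G_{\Gamma'}$, the restriction of $f$ to (a neighbourhood of) $X_{\Gamma'}$ is, up to bounded error, a quasi-isometry $X_{\Gamma'}\to X_{\Gamma''}$ between trees of spaces of \emph{admissible} groups, with uniform constants. Applying Theorem~\ref{thm:qipreservespieces} (and Corollary~\ref{cor:ve_spaces_qi_embedded} to control the coarse geometry of $X_{\Gamma'}$ inside $X$) we get that $f(X_v)$ is within uniform Hausdorff distance of a type $\cS$ vertex space $X_{v'}$. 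The isolated type $\cS$ vertex groups are handled directly as peripheral subgroups at the $\cC$-level. In all cases, type is preserved because admissible vertex spaces and type $\cH$ vertex spaces are never at finite Hausdorff distance (their coarse intersections with incident edge spaces behave differently, by Lemma~\ref{lem:edge_vertex_int_admissible} and the malnormality in Definition~\ref{defn:extended}), and uniqueness again follows since distinct vertex spaces are at infinite Hausdorff distance. Uniformity of $B$ throughout comes from uniformity in Proposition~\ref{prop:inducedqi}, Theorem~\ref{thm:qipreservespieces}, and the finiteness of $G$-orbits of vertex and edge spaces.

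\textbf{Main obstacle.} The delicate point is the interplay between the two splittings $\cG$ and $\cC$: one must check that a quasi-isometry respecting the coarse $\cC$-structure, when restricted to a maximal admissible component, genuinely gives a quasi-isometry \emph{of that component's tree of spaces} with uniformly controlled constants, so that Theorem~\ref{thm:qipreservespieces} applies. This requires knowing that each $X_{\Gamma'}$ is quasi-isometrically embedded in $X$ with uniform constants (an extension of Corollary~\ref{cor:ve_spaces_qi_embedded} to sub-graph-of-groups, provable by the same argument as Lemma~\ref{lem:vertex_edge_QIembeddings} together with Theorem~\ref{thm:edge_qi_embedded}) and that $f$ does not ``leak'' across the type $\cH$ pieces separating distinct admissible components — which is exactly what the Bowditch-boundary argument guarantees.
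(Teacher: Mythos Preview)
Your overall architecture matches the paper's: reduce to relative hyperbolicity via Lemma~\ref{lem:RHGadmissible}, extend $f$ to the cusped space via Proposition~\ref{prop:inducedqi}, and read off the effect on the $\cC$-splitting from the induced homeomorphism of the Bowditch boundary. The paper carries this out for type $\cH$ vertices by identifying each such $G_v$ with the stabilizer of a non-trivial cyclic element $C_v$ (Proposition~\ref{prop:Prop8.2GH23}, Remark~\ref{rem:samesplittings}), showing $G_v$ is relatively quasiconvex, and then using the Extended Morse Lemma on $\join(\Lambda G_v)$ to turn $\partial F(\Lambda G_v)=g\Lambda G_w$ into a uniform Hausdorff-distance bound between $f(G_v)$ and $gG_w$. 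Your treatment of type $\cH$ is the same idea phrased via the JSJ tree rather than limit sets.

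Where you go further than the paper is in the type $\cS$ case: the paper's written proof of Theorem~\ref{thm:vertextovertexextend} only argues the type $\cH$ case explicitly, whereas you spell out the descent through the peripheral structure and Theorem~\ref{thm:qipreservespieces}. That descent is correct, and your ``main obstacle'' is the right one; the resolution (peripherals are undistorted, quasi-isometries permute peripheral cosets with uniform constants, finitely many orbits) is standard and sound.

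Two small corrections. First, type $\cH$ vertex groups are \emph{not} peripherals in $\PP$; they are the stabilizers of non-trivial cyclic elements, sitting opposite the parabolic (cut-point) vertices in the bipartite JSJ tree. Your parenthetical ``the virtually $\Z^2$ and type $\cH$ peripherals'' misstates this. Second, ``different numbers of ends'' does not distinguish admissible components from type $\cH$ pieces: both are one-ended. The correct invariant is exactly the one you allude to afterwards---admissible groups are not relatively hyperbolic (Lemma~\ref{lem:NRHGadmissible}) while type $\cH$ groups are, and relative hyperbolicity is a quasi-isometry invariant. Likewise, to see that isolated type $\cS$ vertices are not relatively hyperbolic (needed for Proposition~\ref{prop:inducedqi}), note their asymptotic cones are bi-Lipschitz to $\E\times(\text{branching tree})$ and hence have no cut points.
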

\begin{proof}
	Let $\cG$ be the graph of groups associated to $G$.
	By Proposition~\ref{prop:treeofspacesBST}, it is enough to prove the analogous statement with  quasi-isometries $f:G\to G$ and left cosets of vertex groups of $\cG$ instead of vertex spaces.
	By Lemma~\ref{lem:NRHGadmissible}, admissible groups are not relatively hyperbolic. Hence by Lemma~\ref{lem:RHGadmissible}, $G$ is hyperbolic relative to a collection $\PP$ of non-relatively hyperbolic subgroups. 
	
	Now suppose $G_v$ is a  type $\cH$ vertex group of $G$.  As peripheral subgroups are infinite and $G$ does not split over a finite subgroup relative to $\mathbb P$, the Bowditch boundary $\partial (G, \mathbb P)$ is connected.
	By Proposition~\ref{prop:Prop8.2GH23} and Remark~\ref{rem:samesplittings}, $G_v$ stabilizes a non-trivial cyclic element $C_v$ of $\partial(G,\PP)$, hence is hyperbolic relative to a family $\mathcal O_v$ as in Part~(2) of Proposition~\ref{prop:Prop8.2GH23}, and $\partial (G_v,\mathcal O_v)$ is $G_v$-equivariantly homeomorphic to $C_v$. Moreover, as $G_v$ is a vertex group in the maximal peripheral splitting of $G$,~\cite[Lemma~6.1]{HH23} ensures $G_v$ is relatively quasiconvex in $(G, \mathbb P)$. Thus by~\cite[Proposition~7.6]{Hru10},  the action of $G_v$ on $\join (\Lambda G_v)$   is cusp uniform, and hence the action of $G_v$  on the truncated space \[\widetilde{\join} (\Lambda G_v) : = \join (\Lambda G_v) \cap G\]  is cocompact, where $G$ is identified with a subset of vertices of $\Cusp(G,\mathbb P)$. This implies that $G_v \subseteq \Cusp (G, \mathbb P)$    has finite Hausdorff distance from $\widetilde{\join} (\Lambda G_v)$
	since they are both $G_v$-invariant. Since there are finitely many vertex groups, there is a constant $A_1=A_1(G,\PP)$ such that
	\begin{align*}
		d_{\Haus} (G_v, \widetilde{\join}(\Lambda (G_v))) \le A_1
	\end{align*}
	for all type $\cH$ vertex groups $G_v$. Hence for each $g\in G$ and type $\cH$ vertex group $G_v$ of $G$, we have \begin{align} \label{eqn:joinlimit2}
		d_{\Haus} (gG_v, \widetilde{\join}(g\Lambda (G_v))) \le A_1.
	\end{align}

	By Proposition~\ref{prop:inducedqi},  $f$ induces a quasi-isometry  of cusped spaces $F: \Cusp(G, \mathbb P) \to \Cusp(G, \mathbb P)$, and hence induces a homeomorphism $\partial {F} \colon \partial  (G, \mathbb P) \to \partial  (G, \mathbb P)$ of the Bowditch boundary. Moreover, the quasi-isometry constants of $F$ depend only on $G$, $K$ and $A$.
	Since  $\partial {F}$ preserves non-trivial cyclic elements,  Proposition~\ref{prop:Prop8.2GH23} and Remark~\ref{rem:samesplittings} ensures that  there is a type $\cH$ vertex group  $G_w$ and $g\in G$ such that
	\[
		\partial F (\Lambda (G_v)) = \Lambda (gG_wg^{-1})=g\Lambda (G_w).
	\]
	The Extended Morse Lemma for $\delta$-hyperbolic spaces implies that
	there is a constant $A_2= A_2 (K, A, G)$ such that
	\[
		d_{\Haus} (F (\join (\Lambda G_v)), g\join (\Lambda G_w)) \le A_2.
	\]
	Since $F$ is an extension of $f$, it follows that 
	\[
		d_{\Haus} (f (\widetilde {\join} (\Lambda G_v)), g \widetilde {\join} (\Lambda G_w)) \le A_3.
	\]
	for some  $A_3 = A_3 (K, A, G)$. Combining this with (\ref{eqn:joinlimit2}) yields
	\[
		d_{\Haus}(f(G_v),g G_w) \leq A_4
	\] for some $A_4 = A_4 (K, A, G)$, as required. The uniqueness of the left coset $gG_w$ follows from Proposition~\ref{prop:treeofspacesBST} and Proposition~\ref{prop:coarse_int_vertex_edge}.
\end{proof}

\begin{cor}\label{cor:tree_isom_extend}
	Let $G$ be an extended admissible group and let $(X,T)$ be the associated tree of spaces.
	For any $(K,A)$-quasi-isometry $f:X\to X$, there is a constant $B=B(K,A,X)$ such that the following holds. There is a unique tree isomorphism $f_*:T\to T$ such that
	\[d_\Haus(f(X_x),X'_{f_*(x)})\leq B
	\]
	for every $x\in VT\cup ET$.
\end{cor}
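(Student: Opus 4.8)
The strategy is to combine Theorem~\ref{thm:vertextovertexextend}, which handles vertex spaces, with the structure of the tree $T$ in order to build a tree isomorphism $f_*$, then verify it is compatible with edge spaces and unique. First I would invoke Theorem~\ref{thm:vertextovertexextend}: for every vertex space $X_v$ there is a unique vertex space $X_{v'}$ of the same type with $d_\Haus(f(X_v),X_{v'})\leq B$ where $B=B(K,A,X)$. Defining $f_*(v)\coloneqq v'$ on $VT$ gives a well-defined map by the uniqueness clause. Applying the same theorem to a coarse inverse $\bar f$ of $f$ (whose constants depend only on $K$, $A$, $X$), we obtain a map $\bar f_*$ on $VT$, and using Proposition~\ref{prop:coarse_int_vertex_edge} — specifically that distinct vertex spaces are at infinite Hausdorff distance — together with the fact that $\bar f\circ f$ is uniformly close to the identity, we get $\bar f_*\circ f_* = \mathrm{id}_{VT}$ and $f_*\circ \bar f_* = \mathrm{id}_{VT}$. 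Hence $f_*\colon VT\to VT$ is a bijection.

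Next I would show $f_*$ is an isometry of $T$, equivalently that it preserves adjacency of vertices. Two vertices $v,w\in VT$ are adjacent if and only if the vertex spaces $X_v$ and $X_w$ are at finite Hausdorff distance from subspaces of $X$ whose coarse intersection is ``large'' in a precise sense — more directly, by Lemma~\ref{lem:edge_vertex_int_admissible}(2) and Proposition~\ref{prop:coarse_int_vertex_edge}, $v$ and $w$ are at distance $\leq 2$ in $T$ and some type $\cS$ vertex lies between them precisely when $X_v$ and $X_w$ have unbounded coarse intersection, and $v,w$ are adjacent exactly when additionally their common edge space realizes this coarse intersection. Since a quasi-isometry preserves unbounded coarse intersection of subspaces (as $f(X_v),f(X_w)$ are uniformly Hausdorff-close to $X_{f_*(v)},X_{f_*(w)}$), the map $f_*$ preserves the relation ``$d_T(v,w)\leq 2$ with a type $\cS$ vertex strictly between''. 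Combined with the same statement for $\bar f_*$, this is enough to conclude that $f_*$ preserves adjacency and hence is a tree automorphism of $T$. An alternative, cleaner route: $f_*$ preserves distance $\le 2$, and a bijection of a tree preserving ``distance $\le 2$'' in both directions is an isometry; I would use whichever is shortest to write.

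Once $f_*$ is a tree isomorphism, I extend it to edges by $f_*(e)\coloneqq$ the unique edge of $T$ joining $f_*(e_-)$ and $f_*(e_+)$, with $f_*(\bar e)=\overline{f_*(e)}$. It remains to check $d_\Haus(f(X_e),X_{f_*(e)})\leq B$ after possibly enlarging $B$. For this, note $X_e$ is at uniform Hausdorff distance from a subset of $X_{e_-}\cap X_{e_+}$ realizing their coarse intersection; more precisely, using Corollary~\ref{cor:espace_qi_embedded_in_vspace}-type facts and Proposition~\ref{prop:coarse_int_vertex_edge}, $X_e$ is coarsely the unique ``maximal'' common coarse intersection of $X_{e_-}$ and $X_{e_+}$. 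Since $f$ sends $X_{e_\pm}$ to within $B$ of $X_{f_*(e_\pm)}$, it sends $X_e$ to within some $B'=B'(K,A,X)$ of the analogous common coarse intersection of $X_{f_*(e)_-}$ and $X_{f_*(e)_+}$, which is $X_{f_*(e)}$ up to uniform Hausdorff distance. (One must be slightly careful: the coarse intersection of two vertex spaces is only coarsely well-defined, so I would phrase this as: $f(X_e)\subseteq N_{B'}(X_{f_*(e)})$ follows from $f(X_e)\subseteq N_{B}(X_{f_*(e_-)})\cap N_B(X_{f_*(e_+)})$ together with a bound on $N_r(X_a)\cap N_r(X_b)$ for adjacent $a,b$, which holds uniformly because edge spaces are uniformly quasi-isometrically embedded — this needs the admissible-type argument of Corollary~\ref{cor:ve_spaces_qi_embedded} in the type $\cS$ part and relative quasiconvexity/Bowditch-boundary considerations otherwise, but all constants are uniform.) The reverse inclusion $X_{f_*(e)}\subseteq N_{B'}(f(X_e))$ follows by applying the same argument to $\bar f$. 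Finally, uniqueness of $f_*$: if $f_*'$ is another such tree isomorphism, then $X_{f_*(x)}$ and $X_{f_*'(x)}$ are both within $2B'$ Hausdorff distance of $f(X_x)$, hence within $4B'$ of each other; since distinct vertex or edge spaces are at infinite Hausdorff distance (Proposition~\ref{prop:coarse_int_vertex_edge}), we get $f_*(x)=f_*'(x)$ for all $x$ (bearing in mind that $X_e$ and $X_{\bar e}$ coincide, so uniqueness is as maps on $VT$, which then determines the edge map).

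The main obstacle I expect is the edge-space compatibility step: unlike in the admissible case treated in Theorem~\ref{thm:qipreservespieces}, where Proposition~\ref{prop:edgespaces_preserved_QI} directly gives that quasi-isometries send edge spaces near edge spaces, here I am deducing the edge behavior a posteriori from the vertex behavior, and I must control the coarse intersection of two adjacent vertex spaces uniformly — in particular ruling out that $N_r(X_{f_*(e_-)})\cap N_r(X_{f_*(e_+)})$ is much larger than $X_{f_*(e)}$. This requires knowing that adjacent vertex spaces have ``thin'' coarse intersection equal, up to uniform error, to the connecting edge space, which in turn rests on the relative hyperbolicity structure (Lemma~\ref{lem:RHGadmissible}, Proposition~\ref{prop:Prop8.2GH23}) and the quasiconvexity statements cited in the proof of Theorem~\ref{thm:vertextovertexextend}; assembling these into a single uniform bound, valid across both type $\cS$ and type $\cH$ vertices, is the delicate part.
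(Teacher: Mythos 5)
Your skeleton matches the paper's, and your edge-space paragraph correctly fills in a detail the paper's short proof leaves implicit. The gap is in the adjacency-preservation step. You assert that $f_*$ preserving the relation ``$d_T(v,w)\leq 2$ with a type $\cS$ vertex strictly between'' is ``enough to conclude that $f_*$ preserves adjacency,'' but this does not follow: that relation is disjoint from adjacency (adjacent vertices have no vertex strictly between them), so preserving it is vacuous for adjacency. Your alternative route --- that a bijection of a tree preserving ``$d\leq 2$'' in both directions is an isometry --- is false: in a star every pair of vertices is at distance $\leq 2$, so every permutation preserves the relation, yet swapping the center with a leaf is not an isometry. Your characterization of unbounded coarse intersection also misses the case of two adjacent type-$\cH$ vertices $v,w$, for which $G_v\cap G_w$ is the virtually $\Z^2$ edge group (so the coarse intersection is unbounded) even though no type-$\cS$ vertex lies between them.

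The repair --- and what the paper does --- is to track the quasi-isometry \emph{type} of the coarse intersection rather than its mere unboundedness. By Lemma~\ref{lem:edge_vertex_int_admissible}, for distinct $v,w\in VT$ the intersection $G_v\cap G_w$ is virtually $\Z^2$ precisely when $v,w$ are adjacent; it is virtually $\Z$ when $d_T(v,w)=2$ with a type-$\cS$ vertex in between, and finite otherwise. Combined with Lemma~\ref{lem:subgp_commensurability} and the Mosher--Sageev--Whyte identification of the coarse intersection of cosets with a coset of the subgroup intersection, this says the coarse intersection of $X_v$ and $X_w$ is quasi-isometric to $\Z^2$ if and only if $v$ and $w$ are adjacent. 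Since quasi-isometries preserve coarse intersection up to uniformly bounded Hausdorff distance, and $f(X_v)$ lies within $B$ of $X_{f_*(v)}$, it follows that $f_*$ preserves adjacency in both directions and hence induces a tree automorphism; from there your edge-space and uniqueness arguments go through.
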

\begin{proof}
	By Theorem~\ref{thm:vertextovertexextend}, $f$ induces  a bijection $f_*:VT\to VT$ such that $d_\Haus(f(X_v),X_{f_*(v)})\leq B$ for some $B=B(K,A,X)$. By Lemmas~\ref{lem:edge_vertex_int_admissible} and~\ref{lem:subgp_commensurability}, the coarse intersection of two vertex spaces $X_v$ and $X_w$ is quasi-isometric to $\mathbb Z^2$ if and only if $v$ and $w$ are adjacent. Since quasi-isometries preserve coarse intersection of subspaces,  $v$ and $w$ are adjacent if and only if $f_*(v)$ and $f_*(w)$ are. Thus $f_*$ induces a unique tree isomorphism, which we also call $f_*$.
\end{proof}

We now prove Theorem~\ref{thm:CKrigidity_qi}.
\begin{proof}[Proof of Theorem~\ref{thm:CKrigidity_qi} and Corollary~\ref{cor:qi_class}]
	Assume $G$ is an extended admissible group with the graph of groups structure $\mathcal{G}$ and tree of spaces $(X,T)$. If $G$ does not have a vertex group of type $\mathcal{H}$, then $G$ is an admissible group, and we apply Theorem~\ref{thm:admQIrigidity}. We thus  assume that $G$ has at least one vertex group of type $\mathcal{H}$.
	Theorem~\ref{thm:CKrigidity_graphofgroups} implies that there exists a tree isomorphism $f_*:T\to T$, such that  $f(X_x)$ is at  uniform finite Hausdorff distance from $X_{f_*(x)}$  for every $x\in VT\cup ET$.
	
	Let $G'$ be a finitely generated group quasi-isometric to $G$. Following the proof  of Theorem~\ref{thm:admQIrigidity}, using Corollary~\ref{cor:tree_isom_extend} instead of Theorem~\ref{thm:qipreservespieces},  we see after replacing  $G'$  by a subgroup of index at most two,  $G'$ has a graph of groups structure $\cG'$, where the edge groups of $\cG'$ are virtually $\Z^2$, and vertex groups of $\cG'$ are quasi-isometric to vertex groups of $\cG$. According to~\cite[Theorem~A]{Mar22}, if a group is quasi-isometric to a vertex group of type $\mathcal{S}$, then that group is $\Z$-by-hyperbolic. Groups that are quasi-isometric to relatively hyperbolic groups with virtually $\Z^2$ peripheral subgroups are also relatively hyperbolic groups with virtually $\Z^2$ peripheral subgroups, as shown in~\cite{dructusapir2005treegraded} and~\cite{BDM09}. Moreover,~\cite[Theorem~1.1]{DH23} states that for a relatively hyperbolic group $(G, \PP)$, the existence of a nontrivial splitting relative to $\PP$ is equivalent to the existence of cut-points on its Bowditch boundary. Since Bowditch boundaries are quasi-isometric invariant, it follows that groups quasi-isometric to groups of type $\mathcal{H}$ are also groups of type $\mathcal{H}$. Therefore, vertex groups of $\cG'$ are either of type $\mathcal{S}$ or type $\mathcal{H}$. The graph of groups  $\cG'$ satisfies conditions (1) and (2) of Definition~\ref{defn:extended}. Conditions (3) and (4) are also satisfied, using an identical argument to that given in Theorem~\ref{thm:admQIrigidity}.
	Therefore, we can conclude that $G'$ is also an extended admissible group.
	
	To deduce Corollary~\ref{cor:qi_class}, all that remains is to show that if two type $\cS$ vertex groups are quasi-isometric, then their hyperbolic quotients are quasi-isometric. This follows from a result of Kapovich--Kleiner--Leeb~\cite{kapovichkleinerleeb1998quasiisometries}, who show that any quasi-isometry between $\mathbb Z$-by-hyperbolic groups induces a quasi-isometry between their hyperbolic quotients.
\end{proof}

In the rest of this paper, we will  prove Corollary~\ref{cor:comm}.

\begin{defn}
	Let $G$ be a finitely generated group acting geometrically on a proper
	geodesic hyperbolic space $X$. A \emph{$G$-symmetric pattern} $\mathcal{J}$ in $X$ is a non-empty $G$-invariant collection
	of quasi-convex subsets of $X$ such that:
	\begin{enumerate}
		\item for every $J \in \mathcal{J}$, the stabilizer $\Stab_{G}(J)$ acts cocompactly on $J$ and is an infinite, infinite-index subgroup of $G$;
		\item $\mathcal{J}$  contains only finitely many $G$-orbits.
	\end{enumerate}
\end{defn}

A \emph{symmetric pattern in $X$} is a $G$-symmetric pattern for some finitely generated group $G$ acting geometrically on $X$. We denote $X$ together with a symmetric pattern $\mathcal{J}$ by $(X, \mathcal{J})$. A \emph{pattern-preserving quasi-isometry} $f:(X, \mathcal{J}) \rightarrow\left(X^{\prime}, \mathcal{J}^{\prime}\right)$ is a quasi-isometry $f: X \rightarrow X^{\prime}$ such that there exists a constant $A \geq 0$ so that:
\begin{enumerate}
	\item for all $J_1 \in \mathcal{J}$, there exists a $J_2 \in \mathcal{J}^{\prime}$ such that $d_{\Haus}\left(f(J_1), J_2\right) \leq A$;
	\item for all $J_2 \in \mathcal{J}^{\prime}$, there exists a $J_1 \in \mathcal{J}$ such that $d_{\Haus}\left(f(J_1), J_2\right) \leq A$.
\end{enumerate}

Let $\text{QI}(X, \mathcal{J})\leq\text{QI}(X)$ be the subgroup of  equivalence classes of pattern-preserving quasi-isometries of $\mathcal{J}$.

\begin{thm}[\cite{Bis12}]\label{thm:Biwas}
	Suppose $\mathcal{J}$ and $\mathcal{J}^{\prime}$ are symmetric patterns in $\mathbb{H}^n$ for some $n \geq 3$. If $f:\left(\mathbb{H}^n, \mathcal{J}\right) \rightarrow\left(\mathbb{H}^n, \mathcal{J}^{\prime}\right)$ is a pattern-preserving quasi-isometry, then there is a hyperbolic isometry $f^{\prime}: \mathbb{H}^n \rightarrow \mathbb{H}^n$ such that \[\sup _{x \in \mathbb{H}^n} d\left(f(x), f^{\prime}(x)\right)<\infty.\]
\end{thm}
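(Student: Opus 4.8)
The plan is to push $f$ to the boundary and reduce the statement to the classical rigidity of $1$--quasiconformal self-maps of round spheres of dimension $\ge 2$. First I would record that every quasi-isometry of $\mathbb{H}^n$ extends to a homeomorphism $\partial f\colon S^{n-1}\to S^{n-1}$ of the boundary sphere, and that, because $n-1\ge 2$, this boundary map is quasiconformal (equivalently quasisymmetric). Next, for each $J\in\mathcal J$ the stabilizer $\Stab_G(J)$ acts cocompactly on the $K$--quasi-convex set $J$, so $J$ lies within bounded Hausdorff distance of the union of all geodesics joining pairs of points of its limit set $\Lambda J\subseteq S^{n-1}$; moreover $\Lambda J$ is a non-empty closed \emph{proper} subset of $S^{n-1}$, since $\Lambda J=S^{n-1}$ would force $J$ to be coarsely dense in $\mathbb{H}^n$ and hence $\Stab_G(J)$ to act cocompactly on $\mathbb{H}^n$, contradicting infinite index. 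Because $f$ is pattern-preserving and two quasi-convex subsets of $\mathbb{H}^n$ are at finite Hausdorff distance if and only if they have the same limit set, $\partial f$ carries $\{\Lambda J:J\in\mathcal J\}$ bijectively onto $\{\Lambda J':J'\in\mathcal J'\}$.

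The heart of the argument --- and the step I expect to be the main obstacle --- is to upgrade $\partial f$ from quasiconformal to $1$--quasiconformal using pattern-invariance, following Schwartz. A quasiconformal self-homeomorphism of $S^{n-1}$ is differentiable almost everywhere with non-degenerate derivative; fix a point $\xi$ of differentiability. Using cocompactness of the $G$--action together with $G$--invariance of $\mathcal J$, I would choose elements $g_k\in G$ and a fixed $J_0\in\mathcal J$ so that the sets $g_k\Lambda J_0$ ``zoom in'' on $\xi$ at an unbounded sequence of scales; conjugating $\partial f$ by the boundary M\"obius transformations realizing the $g_k$ together with suitable M\"obius rescalings and passing to a limit, one obtains a map that, up to pre- and post-composition with M\"obius maps, is the linear derivative $D(\partial f)_\xi$ and that still sends a translate of the limit-set pattern to a translate of $\{\Lambda J'\}$. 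One then argues that a linear map sending infinitely many such recurrently nested limit sets to limit sets must be a conformal linear map; the essential geometric input is that the $\Lambda J$ accumulate densely and recurrently on $S^{n-1}$ and are not concentrated on proper round subspheres in a degenerate way. Hence $D(\partial f)_\xi$ is conformal at almost every $\xi$, i.e.\ $\partial f$ is $1$--quasiconformal. The delicate points here are controlling the blow-up limit and identifying it with the derivative, and checking that the exceptional (non-differentiability and degeneracy) sets have Lebesgue measure zero.

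Finally I would invoke the rigidity of $1$--quasiconformal maps: a $1$--quasiconformal self-homeomorphism of $S^{n-1}$ with $n-1\ge 2$ is conformal, and by Liouville's theorem (when $n-1\ge 3$) or by the identification of conformal automorphisms of $S^2$ with $\PSL_2(\mathbb{C})$ (when $n-1=2$) it is the restriction of a M\"obius transformation of $S^{n-1}=\partial\mathbb{H}^n$. Such a M\"obius transformation is the boundary map of a unique isometry $f'$ of $\mathbb{H}^n$, so $\partial f'=\partial f$. To conclude, it remains to observe that two quasi-isometries of $\mathbb{H}^n$ inducing the same boundary homeomorphism lie at bounded distance: given $x\in\mathbb{H}^n$, choose an ideal triangle whose ``incenter region'' contains $x$; by the Morse Lemma both $f(x)$ and $f'(x)$ lie in the incenter region of the image ideal triangle $\Delta(\partial f\,\xi_1,\partial f\,\xi_2,\partial f\,\xi_3)$, which has uniformly bounded diameter, whence $\sup_{x}d(f(x),f'(x))<\infty$. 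This gives the desired $f'$.
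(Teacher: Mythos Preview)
The paper does not prove this statement; it is quoted from \cite{Bis12} and invoked as a black box in the proof of Corollary~\ref{cor:comm}. There is no proof in the paper to compare your attempt against.

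Your outline is Schwartz's strategy: extend $f$ to a quasiconformal boundary map, blow up at a point of differentiability, argue that pattern-preservation forces the derivative to be conformal, then invoke Liouville. The first and last paragraphs are standard and correct. The middle paragraph, however, is where all the content lies, and as written it is a gap rather than an argument. Your claim that ``a linear map sending infinitely many such recurrently nested limit sets to limit sets must be a conformal linear map'' is not justified. Even in Schwartz's original setting of geodesic patterns, where each $\Lambda J$ is merely a pair of points, this step is delicate: any linear map sends two-point sets to two-point sets, so conformality must be extracted from the specific combinatorics of which pairs arise and how they accumulate, and Schwartz's actual argument is considerably more involved than your one-line summary. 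More seriously, the theorem as stated covers \emph{arbitrary} symmetric patterns, so $\Lambda J$ can be the limit set of any infinite, infinite-index quasi-convex subgroup --- a Cantor set, a round subsphere, a quasi-circle, and so on --- and you give no reason why the blow-up of such a collection should force conformality of a linear map. Handling this general case is precisely the contribution of \cite{Bis12} beyond Schwartz, and it is not covered by the sketch you give.
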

Theorem~\ref{thm:Biwas} has the following corollary; see~\cite{Bis12,MSSW23}.

\begin{cor}

	Suppose a group $G$ acts faithfully, discretely  and cocompactly   on $\mathbb{H}^n$ for some $n \geq 3$ and $\mathcal{J}$ is a $G$-symmetric pattern in $\mathbb{H}^n$. Then, $\mathrm{QI}\left(\mathbb{H}^n, \mathcal{J}\right)$ can be identified with a discrete subgroup of $\operatorname{Isom}\left(\mathbb{H}^n\right)$, and under this identification, $G$ is a finite-index subgroup of $\mathrm{QI}\left(\mathbb{H}^n, \mathcal{J}\right)$.
\end{cor}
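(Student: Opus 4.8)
The plan is to promote Theorem~\ref{thm:Biwas} into a faithful representation of $\mathrm{QI}(\mathbb{H}^n,\mathcal{J})$ as a discrete group of isometries of $\mathbb{H}^n$ containing $G$ with finite index. First I would construct the homomorphism $\rho\colon \mathrm{QI}(\mathbb{H}^n,\mathcal{J})\to \Isom(\mathbb{H}^n)$. Given a pattern-preserving quasi-isometry $f$, Theorem~\ref{thm:Biwas} (applied with $\mathcal{J}'=\mathcal{J}$) produces an isometry $f'$ with $\sup_{x}d(f(x),f'(x))<\infty$; since two isometries of $\mathbb{H}^n$ that stay within bounded distance of one another must be equal (for distinct $g,h\in\Isom(\mathbb{H}^n)$ the quantity $d(gx,hx)$ is unbounded as $x$ varies), $f'$ is unique and depends only on the class $[f]$. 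Setting $\rho([f])=f'$, one checks, using that $f'$ is an isometry and that $f_i$ is close to $f_i'$, that $(f_1\circ f_2)'$ is close to $f_1'\circ f_2'$, hence equal; so $\rho$ is a homomorphism. It is injective because $\rho([f])=\mathrm{id}$ forces $f$ to be close to the identity, so $[f]$ is trivial.

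Next I would locate $G$ inside $\mathrm{QI}(\mathbb{H}^n,\mathcal{J})$. Since $\mathcal{J}$ is $G$-invariant, each $g\in G$ is a pattern-preserving quasi-isometry of $(\mathbb{H}^n,\mathcal{J})$ (it is an isometry that permutes $\mathcal{J}$, so the constant in the definition may be taken to be $0$), giving a homomorphism $\iota\colon G\to \mathrm{QI}(\mathbb{H}^n,\mathcal{J})$ with $\rho\circ\iota$ equal to the original faithful geometric action of $G$. Thus $\iota$ is injective and $\Delta:=\rho(\mathrm{QI}(\mathbb{H}^n,\mathcal{J}))$ contains the image of $G$, which is a uniform lattice in $\Isom(\mathbb{H}^n)$.

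The heart of the matter is to show $\Delta$ is discrete; granting this, $\Delta$ is a discrete subgroup of $\Isom(\mathbb{H}^n)$ containing the uniform lattice $G$, and comparing covolumes yields $[\Delta:G]<\infty$, i.e.\ $G$ has finite index in $\mathrm{QI}(\mathbb{H}^n,\mathcal{J})$. To prove discreteness I would first record that, because $\mathcal{J}$ has finitely many $G$-orbits and each element has cocompact stabilizer, the sets in $\mathcal{J}$ are uniformly quasiconvex and $\bigcup\mathcal{J}$ is coarsely dense in $\mathbb{H}^n$. Passing to limit sets, every $\gamma\in\Delta$ induces a permutation of $\Lambda\mathcal{J}=\{\Lambda J: J\in\mathcal{J}\}$, since Hausdorff-close quasiconvex sets have the same limit set and $\gamma$ lies at bounded distance from a pattern-preserving quasi-isometry. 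As two uniformly quasiconvex subsets of $\mathbb{H}^n$ with the same limit set lie at uniformly bounded Hausdorff distance, there is a single constant $A_0=A_0(\mathcal{J})$, independent of $\gamma\in\Delta$, such that $\gamma$ permutes $\mathcal{J}$ up to Hausdorff distance $A_0$. Now if $\gamma_k\to\mathrm{id}$ in $\Delta$, I would fix $x_0$ and some $J_0\in\mathcal{J}$ with $d(x_0,J_0)$ bounded, observe that $\gamma_k J_0$ coincides with $J_0$ on arbitrarily large balls about $x_0$, and combine this with $d_{\Haus}(\gamma_k J_0,J_k)\le A_0$ for $J_k\in\mathcal{J}$ and the fact that distinct elements of $\mathcal{J}$ are uniformly separated near infinity to conclude $J_k=J_0$, then $\gamma_k\Lambda J_0=\Lambda J_0$, placing $\gamma_k$ in a fixed bounded subset of $\Isom(\mathbb{H}^n)$ and forcing $\gamma_k=\mathrm{id}$ for large $k$. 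This is the standard pattern-rigidity argument, and I would refer to~\cite{Bis12} and the proof of~\cite[Corollary~11.12]{MSSW23} for the remaining details.

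I expect the main obstacle to be exactly this last step. Theorem~\ref{thm:Biwas} supplies, for each \emph{individual} element of $\mathrm{QI}(\mathbb{H}^n,\mathcal{J})$, only a Hausdorff-distance bound by which it permutes $\mathcal{J}$, and these bounds are a priori not uniform over the group; the real work is to upgrade them to a uniform bound, via uniform quasiconvexity of $\mathcal{J}$ and stability of limit sets under bounded perturbation, so that the elementary observation ``an isometry of $\mathbb{H}^n$ sufficiently close to the identity that permutes an unbounded, coarsely dense, quasiconvex pattern with uniform error must be the identity'' can be applied to conclude discreteness.
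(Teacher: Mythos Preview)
The paper does not give its own proof of this corollary; it simply states it and points to \cite{Bis12,MSSW23}. Your sketch follows exactly the standard argument one finds there: use Theorem~\ref{thm:Biwas} to straighten each pattern-preserving quasi-isometry to a unique isometry, check this gives an injective homomorphism $\rho\colon\mathrm{QI}(\mathbb{H}^n,\mathcal{J})\to\Isom(\mathbb{H}^n)$ whose image contains the uniform lattice $G$, and then argue that the image is discrete so that a covolume comparison finishes. In that sense your approach is the approach the paper is implicitly invoking.

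One step in your discreteness argument is misstated. Knowing $\gamma_k\Lambda J_0=\Lambda J_0$ for a \emph{single} $J_0$ does not place $\gamma_k$ in a bounded subset of $\Isom(\mathbb{H}^n)$: the stabilizer of a proper limit set such as a geodesic endpoint pair or a round subsphere is noncompact. What actually works is to run your argument for finitely many $J_0,\dots,J_m\in\mathcal{J}$ whose limit sets together contain enough points of $\partial\mathbb{H}^n$ in general position (which is available since $\bigcup\mathcal{J}$ is coarsely dense, hence $\bigcup\Lambda J=\partial\mathbb{H}^n$); then for $k$ large each $\gamma_k$ fixes every $\Lambda J_i$, and the common stabilizer of these limit sets in $\Isom(\mathbb{H}^n)$ is trivial, forcing $\gamma_k=\mathrm{id}$. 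Equivalently, one observes that the subgroup of $\Isom(\mathbb{H}^n)$ permuting the locally finite collection $\Lambda\mathcal{J}$ is closed with trivial identity component, hence discrete. Since you explicitly defer the remaining details to \cite{Bis12} and \cite{MSSW23}, this is a minor wording issue rather than a genuine gap.
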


Let $G$ be an admissible group with associated graph of groups $\cG$. For each vertex group $G_v$ with hyperbolic quotient $Q_v$, we note that $(Y_v,\{\ell_e\})$ is a $Q_v$-symmetric pattern, where  $(Y_v,\{\ell_e\})$ is as in  Section~\ref{sub:geovertexedge}.

\begin{lem}\label{lem:finiteindexinrelativeQI}
	Let $G$ be an admissible group with associated graph of groups $\cG$. Suppose every hyperbolic quotient $Q_v$ of a vertex group of $\cG$  is the fundamental group of a closed hyperbolic $n_v$--manifold with $n_v \ge 3$. Then each  hyperbolic quotient $Q_{v}$ of $\cG$ is a finite index subgroup of  $\mathrm{QI}(Y_v, \{\ell_e \})$.
\end{lem}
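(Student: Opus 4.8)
The plan is to transport the pattern rigidity statement from $\mathbb{H}^{n_v}$, where Theorem~\ref{thm:Biwas} and its corollary are available, to the Cayley graph $Y_v$ of $Q_v$, which is quasi-isometric to $\mathbb{H}^{n_v}$ since $Q_v$ is the fundamental group of a closed hyperbolic $n_v$--manifold. First I would fix a basepoint $p\in\mathbb{H}^{n_v}$ and, using the Milnor--Schwarz lemma, extend the orbit map $g\mapsto g\cdot p$ (for $g\in Q_v=VY_v$) to a quasi-isometry $\phi\colon Y_v\to\mathbb{H}^{n_v}$ which is $Q_v$--equivariant on vertices, together with a coarse inverse $\bar\phi$. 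Recall from the discussion preceding the lemma that $(Y_v,\{\ell_e\})$ is a $Q_v$--symmetric pattern, and from Corollary~\ref{cor:cay_relhyp} that each $\ell_e$ is a coset of a two-ended subgroup; since $Q_v$ is torsion-free, $q_v(G_e)$ is in fact infinite cyclic.

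Next I would build the transferred pattern in $\mathbb{H}^{n_v}$. The stabilizer in $Q_v$ of $\ell_e$ is a conjugate of the infinite cyclic subgroup $q_v(G_e)$, whose orbit in $\mathbb{H}^{n_v}$ lies at finite Hausdorff distance from the axis $J_e$ of a generator, a bi-infinite geodesic preserved by $q_v(G_e)$. Choosing such axes for the finitely many $Q_v$--orbit representatives of $\{\ell_e\}$ and defining $\mathcal{J}=\{J_e\}_{e\in\lk(v)}$ by taking $Q_v$--translates, I obtain a $Q_v$--invariant family of bi-infinite geodesics with $d_{\Haus}(\phi(\ell_e),J_e)$ constant on each $Q_v$--orbit, hence bounded uniformly in $e$. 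I would then verify that $\mathcal{J}$ is a $Q_v$--symmetric pattern in $\mathbb{H}^{n_v}$: it is $Q_v$--invariant, consists of geodesics (hence quasi-convex sets), has finitely many $Q_v$--orbits, and the stabilizer of each $J_e$ is a conjugate of the infinite, infinite-index subgroup $q_v(G_e)$ acting cocompactly on $J_e$.

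Then I would use conjugation by $\phi$ to produce an isomorphism $\Phi\colon\mathrm{QI}(Y_v)\to\mathrm{QI}(\mathbb{H}^{n_v})$, $[f]\mapsto[\phi\circ f\circ\bar\phi]$, and argue that it restricts to an isomorphism $\mathrm{QI}(Y_v,\{\ell_e\})\cong\mathrm{QI}(\mathbb{H}^{n_v},\mathcal{J})$: because $\phi(\ell_e)$ and $\bar\phi(J_e)$ are uniformly close to elements of $\mathcal{J}$ and $\{\ell_e\}$ respectively, a self-quasi-isometry of $Y_v$ permutes $\{\ell_e\}$ up to uniformly bounded Hausdorff distance if and only if its $\Phi$--image permutes $\mathcal{J}$ up to uniformly bounded Hausdorff distance. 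I would also note that, since $\phi$ is $Q_v$--equivariant on vertices, for $g\in Q_v$ the composite $\phi\circ(g\cdot)\circ\bar\phi$ is close to $g\cdot$, so $\Phi$ carries the canonical embedding $Q_v\hookrightarrow\mathrm{QI}(Y_v,\{\ell_e\})$ by left multiplication onto the canonical embedding $Q_v\hookrightarrow\mathrm{QI}(\mathbb{H}^{n_v},\mathcal{J})$. Applying the Corollary to Theorem~\ref{thm:Biwas} to the geometric action of $Q_v$ on $\mathbb{H}^{n_v}$ and the pattern $\mathcal{J}$ gives that $Q_v$ is a finite-index subgroup of $\mathrm{QI}(\mathbb{H}^{n_v},\mathcal{J})$; transporting this along $\Phi^{-1}$ shows $Q_v$ is a finite-index subgroup of $\mathrm{QI}(Y_v,\{\ell_e\})$, as required.

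The step that requires the most care is showing that $\Phi$ restricts to an isomorphism of the pattern-preserving subgroups, i.e.\ that conjugation by $\phi$ matches up the two pattern-preserving conditions; this depends essentially on the uniform bounds coming from there being only finitely many $Q_v$--orbits of pattern elements (and on the Morse lemma in $\mathbb{H}^{n_v}$ to replace each $\phi(\ell_e)$ by a genuine geodesic $J_e$). Everything else is routine bookkeeping with quasi-isometries and symmetric patterns.
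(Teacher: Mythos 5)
Your proposal is correct and follows essentially the same route as the paper's proof: use Milnor--Schwarz to get a $Q_v$-equivariant quasi-isometry $Y_v\to \mathbb{H}^{n_v}$, push the pattern $\{\ell_e\}$ across, observe that $\mathrm{QI}(Y_v,\{\ell_e\})\cong \mathrm{QI}(\mathbb{H}^{n_v},\mathcal J)$ via conjugation, and invoke the corollary to Theorem~\ref{thm:Biwas}. The one place where you add genuine detail that the paper compresses is the construction of the pattern $\mathcal J$ in $\mathbb{H}^{n_v}$: the paper simply asserts that $\{\ell_e\}$ ``maps to'' a $Q_v$-symmetric pattern under $h_v$, whereas you explicitly replace each $h_v(\ell_e)$ (a priori only a quasi-geodesic, and only coarsely invariant) by the geodesic axis of the corresponding infinite cyclic stabilizer, checking invariance, quasi-convexity, cocompactness of stabilizers, and finiteness of orbits. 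That is a real clarification worth keeping, but it does not change the structure of the argument.
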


\begin{proof}
		Using the Milnor--Schwarz Lemma, there is a $Q_{v}$-equivariant quasi-isometry $h_v \colon Y_v \to \mathbb{H}^{n_v}$. The quasi-inverse of $h_v$ is denoted by $\bar{h}_v$.
	Since the collection $\{\ell_e\}$ is a $Q_{v}$-symmetric pattern of $Y_v$, it maps to  a $Q_{ v}$-symmetric pattern, $\mathcal{J}_v$, of $\mathbb{H}^{n_v}$ under $h_v$. It follows that $\mathrm{QI}(Y_v, \mathcal{L}_v)$ and $\mathrm{QI}(\mathbb{H}^{n_v}, \mathcal{J}_v)$, are isomorphic, since $h_v$ is a pattern-preserving quasi-isometry.
	By Theorem~\ref{thm:Biwas}, $Q_{ v}$ is a finite index subgroup of $\mathrm{QI}(\mathbb{H}^{n_v}, \mathcal{J}_v)$ and is also a finite index subgroup of $\mathrm{QI}(Y_v, \mathcal{L}_v)$. This proves the lemma.
\end{proof}

\begin{proof}[Proof of Corollary~\ref{cor:comm}]
	Let $\cG$ be the graph of groups associated to $G$.
	According to Theorem~\ref{thm:CKrigidity_qi}, the finitely generated group $G'$ splits as a graph of groups $\mathcal{G}'$ whose edge groups are virtually $\mathbb{Z}^2$ and  whose  vertex groups are quasi-isometric to those of $G$. By the proof of Corollary~\ref{cor:qi_class}, it follows that for every vertex group $w$ of $\cG'$, there is a vertex group $v$ of $\cG$ and a pattern preserving quasi-isometry $(Y_v, \{\ell_e \})\to(Y'_w, \{\ell'_e \})$. Thus $\mathrm{QI}(Y_v, \{\ell_e\})$ is isomorphic to $\mathrm{QI}(Y_w, \{\ell_e'\})$ and Lemma~\ref{lem:finiteindexinrelativeQI} implies that modulo finite normal subgroups, the hyperbolic quotients $Q_v$ and $Q_w$ are finite-index subgroups of $\mathrm{QI}(Y_v, \{\ell_e\})$. Thus $Q_v$ and $Q_w$ are virtually isomorphic.
\end{proof}

\bibliographystyle{alpha}
\bibliography{Alex_Hoang}
\end{document}